\documentclass{amsart}

\usepackage{stmaryrd}
\usepackage{enumerate}
\usepackage[pdftex]{graphicx}
\usepackage{amsmath}
\usepackage{mathrsfs}
\usepackage{amsthm}

\newtheorem{thm}{Theorem}[section]
\newtheorem{prop}[thm]{Proposition}
\newtheorem{lem}[thm]{Lemma}
\newtheorem{cor}[thm]{Corollary}

\theoremstyle{definition}
\newtheorem{definition}[thm]{Definition}

\theoremstyle{remark}
\newtheorem{notation}[thm]{Notation}

\numberwithin{equation}{section}

\newcommand{\Span}{\mathrm{Span}} 
\newcommand{\suc}{\mathrm{suc}} 

\newcommand{\id}{\mathrm{id}}
\newcommand{\Ad}{\mathrm{Ad}}

\newcommand{\Hit}{\mathrm{Hit}}
\newcommand{\PSL}{\mathrm{PSL}}
\newcommand{\SO}{\mathrm{SO}}
\newcommand{\PGL}{\mathrm{PGL}}
\newcommand{\GL}{\mathrm{GL}}
\newcommand{\SL}{\mathrm{SL}}
\newcommand{\Gr}{\mathrm{Gr}}
\newcommand{\PSp}{\mathrm{PSp}}
\newcommand{\Sp}{\mathrm{Sp}}
\newcommand{\Stab}{\mathrm{Stab}} 

\newcommand{\Amc}{\mathcal{A}} 
\newcommand{\Bmc}{\mathcal{B}} 
\newcommand{\Cmc}{\mathcal{C}} 
\newcommand{\Dmc}{\mathcal{D}}

\newcommand{\Gmc}{\mathcal{G}} 
 
\newcommand{\Lmc}{\mathcal L}
\newcommand{\Mmc}{\mathcal{M}} 
\newcommand{\Nmc}{\mathcal{N}} 
\newcommand{\Pmc}{\mathcal{P}} 
\newcommand{\Qmc}{\mathcal{Q}} 
\newcommand{\Rmc}{\mathcal{R}} 
\newcommand{\Smc}{\mathcal{S}} 
\newcommand{\Tmc}{\mathcal{T}}
\newcommand{\Xmc}{\mathcal{X}} 
\newcommand{\Zmc}{\mathcal{Z}}

\newcommand{\Dbbb}{\mathbb{D}} 
 
\newcommand{\Nbbb}{\mathbb N} 
\newcommand{\Pbbb}{\mathbb{P}} 
\newcommand{\Rbbb}{\mathbb{R}} 
\newcommand{\Sbbb}{\mathbb{S}} 
 
\newcommand{\Zbbb}{\mathbb{Z}}

\newcommand{\Std}{\widetilde{S}}

\newcommand{\amf}{\mathfrak{a}}
\newcommand{\gmf}{\mathfrak{g}}
\newcommand{\kmf}{\mathfrak{k}}
\newcommand{\lmf}{\mathfrak{l}}
\newcommand{\omf}{\mathfrak{o}}
\newcommand{\pmf}{\mathfrak{p}}
\newcommand{\smf}{\mathfrak{s}}

\newcommand{\btd}{\widetilde{b}} 
\newcommand{\ctd}{\widetilde{c}} 
\newcommand{\etd}{\widetilde{e}} 
\newcommand{\ptd}{\widetilde{p}} 
\newcommand{\qtd}{\widetilde{q}}

\begin{document}

\title{Positively ratioed representations}

\author{Giuseppe Martone, Tengren Zhang}
\thanks{\\
G. Martone was partially supported by the NSF grant DMS-1406559.\\
T. Zhang was partially supported by the NSF grants DMS-1536017 and DMS-1566585.\\
The authors gratefully acknowledge support by NSF grants DMS-1107452, 1107263 and 1107367 ``RNMS: GEometric structures And Representation varieties'' (the GEAR Network).
}

\maketitle

\begin{abstract}
Let $S$ be a closed orientable surface of genus at least 2 and let $G$ be a semisimple real algebraic group of non-compact type. We consider a class of representations from the fundamental group of $S$ to $G$ called positively ratioed representations. These are Anosov representations with the additional condition that certain associated cross ratios satisfy a positivity property. Examples of such representations include Hitchin representations and maximal representations. Using geodesic currents, we show that the corresponding length functions for these positively ratioed representations are well-behaved. In particular, we prove a systolic inequality that holds for all such positively ratioed representations.
\end{abstract}

\section{Introduction}

Let $S$ be a closed, oriented, connected surface of genus at least $2$ with fundamental group $\Gamma$. The Teichm\"uller space of $S$, denoted $\Tmc(S)$, is the deformation space of hyperbolic structures on $S$. Via the holonomy, one can also think of $\Tmc(S)$ as a connected component of the space
\[\Xmc(\Gamma,\PSL(2,\Rbbb)):=\mathrm{Hom}(\Gamma,\PSL(2,\Rbbb))/\PGL(2,\Rbbb).\]
The representations in $\Tmc(S)$ can be characterized as the ones that are $[P]$-Anosov, where $P$ is the unique (up to conjugation) parabolic subgroup of $\PSL(2,\Rbbb)$. Let $\Cmc\Gmc(S)$ denote the set of free homotopy classes of closed curves (see also Definition \ref{closed geodesic}). Every hyperbolic structure $\rho\in\Tmc(S)$ induces a length function $\ell^\rho\colon\Cmc\Gmc(S)\to \Rbbb$ which associates to $c\in\Cmc\Gmc(S)$ the hyperbolic length, with respect to $\rho$, of the geodesic representative of $c$.

A \emph{geodesic current} on $S$ is a locally finite, $\Gamma$-invariant, Borel measure on the set of geodesics in the universal cover of $S$. Observe that the space of geodesic currents on $S$, denoted $\Cmc(S)$, is an open convex cone in an infinite dimensional vector space. Furthermore, $\Cmc\Gmc(S)$ can be identified with a subset of $\Cmc(S)$ (see Section \ref{Cross ratios and positively ratioed}). Bonahon \cite{Bon1} showed that $\Cmc(S)$ is naturally equipped with a continuous, bilinear \emph{intersection pairing} 
\[i:\Cmc(S)\times\Cmc(S)\to\Rbbb^+\cup \{0\}\]
which generalizes the geometric intersection number between free homotopy classes of closed curves in $\Cmc\Gmc(S)$. Also, he proved that for every hyperbolic structure $\rho\in\Tmc(S)$, there is a unique geodesic current $\mu^\rho\in\Cmc(S)$ with the property that for any $c\in\Cmc\Gmc(S)$, 
\[i(\mu^\rho,c)=\ell^\rho(c).\] 
The geodesic current $\mu^\rho$ is known as the \emph{intersection current} associated to $\rho$.

In this paper, we investigate the extent to which we can generalize this intersection current to the setting of $[P]$-Anosov representations $\rho:\Gamma\to G$, where $G$ is a non-compact semisimple, real algebraic group and $[P]$ is the conjugacy class of a parabolic subgroup $P\subset G$. Every conjugacy class of parabolic subgroups of $G$ determines a subset $\theta$ of the set of restricted simple roots $\Delta$ of $G$. We will assume, without loss of generality, that $\theta=\iota(\theta)$, where $\iota$ is the \emph{opposition involution} on $\Delta$ (see Sections  \ref{background} and \ref{Anosov representations}).

For each $\alpha\in\theta$, the corresponding restricted fundamental weight $\omega_\alpha$ allows us to define a length function 
\[\ell^\rho_\alpha:\Cmc\Gmc(S)\to\Rbbb^+\cup\{0\}\] 
for $\rho$, which generalizes the length function associated to a hyperbolic structure in $\Tmc(S)$. However, it is not true in general that there is a geodesic current $\nu$ so that $i(\nu,c)=\ell^\rho_\alpha(c)$ for every $c\in\Cmc\Gmc(S)$.

As such, we introduce the notion of a \emph{$[P]$-positively ratioed representation}. These are $[P]$-Anosov representations with the additional property that certain cross ratios associated to $\omega_\alpha$ for all $\alpha\in\theta$ are always positive (see Section \ref{Cross ratios and positively ratioed} for more details). Examples include $\PSL(n,\Rbbb)$-Hitchin representations and $\PSp(2n,\Rbbb)$-maximal representations. Combining the work of Hamenst\"adt \cite{Ham1},\cite{Ham2}, Otal \cite{Ota1} and Tits \cite{Tit1}, we have the following theorem.

\begin{thm}\label{positively ratioed to geodesic currents}
If $\rho:\Gamma\to G$ is a $[P]$-positively ratioed representation, then for any $\alpha\in\theta$, there is a unique geodesic current $\mu^\rho_\alpha$ so that $i(\mu^\rho_\alpha,c)=\ell^\rho_\alpha(c)$ for all $c\in\Cmc\Gmc(S)$.
\end{thm}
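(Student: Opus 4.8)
The plan is to realize $\ell^\rho_\alpha$ as the marked length spectrum of a H\"older-continuous \emph{positive} cross ratio on $\partial\Gamma$, and then to feed this cross ratio into the correspondence between positive cross ratios and geodesic currents that underlies the work of Otal \cite{Ota1} and Hamenst\"adt \cite{Ham1,Ham2}.

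First I would make the cross ratio precise. Since $\rho$ is $[P]$-Anosov, it admits $\rho$-equivariant, transverse, dynamics-preserving continuous boundary maps into $G/P$ and its opposite flag variety. Given the restricted fundamental weight $\omega_\alpha$, a quadruple of points $x,y,z,w\in\partial\Gamma$ whose image flags are pairwise transverse determines a real number $\mathbf B_\alpha(x,y,z,w)$, the $\omega_\alpha$-cross ratio of the four flags. Working inside $G$ with the Bruhat decomposition and the Tits system (this is the role of \cite{Tit1}), one checks that $\mathbf B_\alpha$ satisfies the cocycle and normalization identities required of a cross ratio in the sense of \cite{Ota1,Ham1}, and that its period at $\gamma\in\Gamma$ equals $\omega_\alpha$ evaluated on the Jordan projection of $\rho(\gamma)$, which is exactly $\ell^\rho_\alpha(c)$ for $c$ the free homotopy class of $\gamma$.

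Next I would translate the two hypotheses on $\rho$ into the two analytic inputs needed to build a current. The Anosov property forces the boundary maps, hence $\mathbf B_\alpha$, to be H\"older continuous with respect to a visual metric on $\partial\Gamma$ induced by an auxiliary hyperbolic structure on $S$; and the positively ratioed hypothesis is precisely the statement that $\mathbf B_\alpha(x,y,z,w)>0$ whenever $x,y,z,w$ lie in convex position on $\partial\Gamma$, i.e.\ that $\mathbf B_\alpha$ is a positive cross ratio. With these in place I would invoke the Otal--Hamenst\"adt construction: a H\"older-continuous positive cross ratio determines a locally finite, $\Gamma$-invariant, flip-invariant Borel measure $\mu^\rho_\alpha$ on the space of unoriented geodesics of $\widetilde{S}$, obtained by assigning to a box of geodesics with endpoint intervals $I$ and $J$ the value of $\mathbf B_\alpha$ on the four endpoints; positivity makes this assignment non-negative and the cross-ratio identities make it $\sigma$-additive, so $\mu^\rho_\alpha\in\Cmc(S)$. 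The same computation identifies $i(\mu^\rho_\alpha,c)$ with the period of $\mathbf B_\alpha$ at a representative $\gamma$ of $c$, which by the first step equals $\ell^\rho_\alpha(c)$; and uniqueness follows from the standard fact that a geodesic current is determined by its intersection numbers with the elements of $\Cmc\Gmc(S)$.

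The main obstacle is the second half of the first step: one must check that the purely algebraic quantity $\mathbf B_\alpha$, built from $\omega_\alpha$ and the flag maps, genuinely satisfies the axioms of a cross ratio in the exact form demanded by the Otal--Hamenst\"adt machinery --- the cocycle relations, the behaviour under permutations of the four points, and the correct normalization --- and that its periods are exactly the numbers $\ell^\rho_\alpha(c)$. This amounts to a careful computation in $G$ using properties of the fundamental weights and the Bruhat decomposition, and it is also the step where one has to make sure that the positivity built into the definition of ``$[P]$-positively ratioed'' matches the convexity hypothesis in the current construction, possibly after transporting the picture to $\Sbbb^1$ via a H\"older equivariant homeomorphism $\partial\Gamma\to\Sbbb^1$ and carefully reconciling oriented versus unoriented conventions. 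Once $\mathbf B_\alpha$ is known to fit the axioms, the existence and uniqueness of $\mu^\rho_\alpha$ are formal consequences of \cite{Ota1,Ham1,Ham2}.
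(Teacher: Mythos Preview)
Your overall architecture is the same as the paper's: produce a positive cross ratio whose periods are $\ell^\rho_\alpha$, then invoke the Hamenst\"adt/Otal correspondence to obtain the intersection current. Where you diverge is in how the cross ratio is built and in what you think the role of \cite{Tit1} is. You propose to define $\mathbf B_\alpha$ intrinsically in $G$ from $\omega_\alpha$ and the Bruhat decomposition, and then verify the cocycle and period identities by a computation in the Tits system. The paper instead avoids working inside $G$ altogether: Tits' theorem is used not for the Bruhat decomposition but to furnish, for each $\alpha\in\theta$, an irreducible linear representation $r_\alpha:G\to\SL(n,\Rbbb)$ whose highest restricted weight is a positive multiple $k\omega_\alpha$ with one-dimensional highest weight space. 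Composing with $\rho$ gives a projective Anosov representation $r_\alpha\circ\rho$, for which the Labourie cross ratio $L^{r_\alpha\circ\rho}$ is already explicit; one then sets $B^\rho_\alpha=\frac{1}{k}B^{r_\alpha\circ\rho}$ and reads off the period identity $\ell_{B^\rho_\alpha}=\ell^\rho_\alpha$ from a short eigenvalue computation. The definition of ``positively ratioed'' in the paper is exactly that this $B^\rho_\alpha$ is positive, so there is nothing further to reconcile.

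Two smaller points. First, H\"older regularity plays no role in the paper's construction of the current: the version of Hamenst\"adt's theorem used (and proved in the appendix) needs only continuity and positivity of the cross ratio, building the premeasure on boxes directly from $B^\rho_\alpha$ and extending by Carath\'eodory. Second, the step you flag as the main obstacle --- checking the cross-ratio axioms and the period formula intrinsically in $G$ --- is precisely what the paper sidesteps via the Tits linearization; your route is viable but strictly more work, whereas the paper's route reduces everything to a two-line computation in $\SL(n,\Rbbb)$.
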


By Theorem \ref{positively ratioed to geodesic currents}, to prove statements about $\ell^\rho_\alpha$, one needs only to prove the analogous statements in the setting of geodesic currents. Using this strategy, we prove the remaining results in this paper. In fact, all the results in this introduction can be stated in the more technical language of \emph{period minimizing} geodesic currents with full support. These are geodesic currents with full support that satisfy the property that the number of closed geodesics $c\in\Cmc\Gmc(S)$ so that $\ell_\nu(c):=i(c,\nu)<T$ is finite for all $T\in\Rbbb^+$. However, to emphasize the application we are interested in, we will state most of our results for positively ratioed representations in the introduction, and indicate the numbering of the analogous statement about geodesic currents in parenthesis.

We will need the following notation. For an essential subsurface $S'\subset S$, i.e. an incompressible subsurface with negative Euler characteristic, denote by $\Cmc\Gmc(S')$ the set of free homotopy classes of unoriented closed curves in $S'$. Notice that $S'$ is an orientable surface of genus $g'$ with $n'$ boundary components so that $2g'-2+n'>0$.  

The main point of this paper is that Theorem \ref{positively ratioed to geodesic currents} can be exploited to study the length functions of positively ratioed representations. As a first example, we have the following corollary about the asymptotic behavior of length functions along a sequence of positively ratioed representations. This was motivated by the work of Burger-Pozzetti \cite{BurPoz1}.

\begin{cor}[Proposition \ref{asymptotic lengths}]\label{Burger-Pozzetti}
Let $\{\rho_j:\Gamma\to G_j\}_{j=1}^\infty$ be a sequence of $[P_j]$-positively ratioed representations, let $\theta_j$ be a subset of the restricted simple roots of $G_j$ determined by $P_j$, and let $\alpha_j\in\theta_j$. Fix an auxiliary hyperbolic structure on $S$. Then there is 
\begin{itemize}
\item a subsequence of $\{\rho_j\}_{j=1}^\infty$, also denoted $\{\rho_j\}_{j=1}^\infty$,
\item a (possibly disconnected, possibly empty) essential subsurface $S'\subset S$, 
\item a (possibly empty) collection of pairwise non-intersecting, non-peripheral simple closed curves $\{c_1,\dots,c_k\}$ in $\Cmc\Gmc(S\setminus S')$
\end{itemize}
so that $A:=S'\cup\bigcup_{i=1}^kc_i$ is non-empty, and the following holds. Let $c\in\Cmc\Gmc(S)$ be a closed curve so that $c\notin\Cmc\Gmc(S\setminus A)$ and $c$ is not a multiple of $c_i$ for $i=1,\dots,k$.
\begin{enumerate}
	\item If $d\in\Cmc\Gmc(S\setminus A)$ or $d$ is a multiple of $c_i$ for some $i=1,\dots,k$, then $\displaystyle\lim_{j\to\infty}\frac{\ell^{\rho_j}_{\alpha_j}(d)}{\ell^{\rho_j}_{\alpha_j}(c)}=0$.
	\item If $d\in\Cmc\Gmc(S)$ is a closed curve so that $d\notin\Cmc\Gmc(S\setminus A)$ and $d$ is not a multiple of $c_i$ for $i=1,\dots,k$, then $\displaystyle\lim_{j\to\infty}\frac{\ell^{\rho_j}_{\alpha_j}(d)}{\ell^{\rho_j}_{\alpha_j}(c)}\in\Rbbb^+$.
\end{enumerate}
\end{cor}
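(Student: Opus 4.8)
The plan is to reduce everything to a statement about geodesic currents via Theorem~\ref{positively ratioed to geodesic currents}, and then extract asymptotics from the intersection pairing by a compactness argument in the projectivized space of currents. Concretely, for each $j$ let $\mu_j := \mu^{\rho_j}_{\alpha_j}$ be the geodesic current with $i(\mu_j, c) = \ell^{\rho_j}_{\alpha_j}(c)$ for all $c \in \Cmc\Gmc(S)$. Using the fixed auxiliary hyperbolic structure $\sigma$ with intersection current $\mu^\sigma$, normalize by setting $\nu_j := \mu_j / \ell^{\rho_j}_{\alpha_j}(\text{something})$; more usefully, rescale so that the ``total mass'' with respect to $\mu^\sigma$ is $1$, i.e. $i(\nu_j, \mu^\sigma) = 1$ (using that $i(\cdot, \mu^\sigma)$ is a proper, continuous, positive functional on $\Cmc(S)$, so this set is compact by Bonahon's compactness criterion). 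Passing to a subsequence, $\nu_j \to \nu_\infty$ in $\Cmc(S)$ for some nonzero current $\nu_\infty$. The heart of the matter is then to understand the support of $\nu_\infty$: the degeneration of the positively ratioed representations is encoded precisely in which geodesics are ``seen'' by $\nu_\infty$.

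The next step is to identify $A$ with (a neighborhood of) the support of $\nu_\infty$. By the structure theory of geodesic currents, the support of a current decomposes: its ``minimal components'' are either supported on a collection of disjoint simple closed curves $c_1, \dots, c_k$, or fill an incompressible subsurface $S'$; set $A := S' \cup \bigcup c_i$. The key continuity input is the bilinearity and continuity of $i$: for any fixed $c \in \Cmc\Gmc(S)$,
\[
\lim_{j\to\infty} \frac{\ell^{\rho_j}_{\alpha_j}(d)}{\ell^{\rho_j}_{\alpha_j}(c)} = \lim_{j\to\infty} \frac{i(\nu_j, d)}{i(\nu_j, c)} = \frac{i(\nu_\infty, d)}{i(\nu_\infty, c)},
\]
provided the denominator limit $i(\nu_\infty, c)$ is nonzero. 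So I must show: (a) if $c \notin \Cmc\Gmc(S \setminus A)$ and $c$ is not a multiple of any $c_i$, then $i(\nu_\infty, c) > 0$ --- this is exactly the statement that such a $c$ must cross the support of $\nu_\infty$ essentially, which follows from $A$ containing (a filling-in of) $\mathrm{supp}(\nu_\infty)$ together with the fact that a current filling $S'$ has positive intersection with every curve not isotopic into $S \setminus S'$, and a simple closed curve current $c_i$ has positive intersection with every curve crossing it; (b) if $d \in \Cmc\Gmc(S\setminus A)$ or $d$ is a multiple of some $c_i$, then $i(\nu_\infty, d) = 0$ --- for $d$ in the complementary subsurface this is because $\mathrm{supp}(\nu_\infty) \subseteq A$ forces disjointness, and for $d$ a multiple of $c_i$ it is because the $c_i$ are simple closed curves appearing as isolated ``weighted leaf'' components, so $i(c_i, c_i) = 0$ and $i$ of $c_i$ against the subsurface-filling part vanishes by disjointness of the components. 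Statement (1) of the corollary is then (a)$\wedge$(b) combined with the displayed limit, and statement (2) is (a) applied to both $c$ and $d$.

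The main obstacle, and where care is genuinely needed, is establishing the dichotomy in part (b) --- specifically ruling out the ``intermediate'' possibility that some curve $d$ not contained in $S \setminus A$ and not a multiple of a $c_i$ nevertheless has $i(\nu_\infty, d) = 0$, which would break the claimed trichotomy of limits $\{0\} \sqcup \Rbbb^+$. This requires that $A$ be chosen \emph{minimally}: $S'$ should be exactly the subsurface filled by the non-simple-closed-curve part of $\mathrm{supp}(\nu_\infty)$, and the $c_i$ exactly the simple closed curves carrying atoms of $\nu_\infty$ that lie outside $S'$. With this minimal choice, any $d$ not isotopic into $S \setminus A$ and not a multiple of a $c_i$ must essentially intersect $\mathrm{supp}(\nu_\infty)$, giving $i(\nu_\infty, d) > 0$; the technical content is the standard but somewhat delicate fact about geodesic currents that a current filling a subsurface has strictly positive intersection number with any curve it does not miss. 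A secondary subtlety is checking that the limiting current $\nu_\infty$ is nonzero and that the normalization is compatible across the sequence --- here I would invoke properness of $i(\cdot, \mu^\sigma)$ (equivalently, that $\mu^\sigma$ is period minimizing with full support, which is Bonahon's result for hyperbolic structures) to guarantee both the compactness used to extract $\nu_\infty$ and its non-triviality. Finally, the passage from the representation statement to the current statement is automatic once we note every hypothesis and conclusion is phrased purely in terms of $\ell^{\rho_j}_{\alpha_j} = i(\mu_j, \cdot)$, so the whole argument is really a proof of Proposition~\ref{asymptotic lengths} about sequences of period-minimizing full-support currents, with Theorem~\ref{positively ratioed to geodesic currents} supplying the currents $\mu_j$.
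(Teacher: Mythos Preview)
Your approach is essentially the paper's: pass to currents via Theorem~\ref{positively ratioed to geodesic currents}, use compactness of $\Pmc\Cmc(S)$ to extract a projective limit $\nu$, and read off $A$ from $\mathrm{supp}(\nu)$, then use continuity of $i$ to get the limiting ratios. Two points where the paper is more precise than your sketch: (i) the $c_i$ are not characterized as atoms of $\nu$ --- they are the simple closed curves lying in $\mathrm{supp}(\nu)$ that are not transversely crossed by any geodesic in $\mathrm{supp}(\nu)$; the paper builds them by first choosing a \emph{maximal} collection of disjoint simple closed curves not crossing $\mathrm{supp}(\nu)$ and then selecting those that lie in the support; (ii) the ``standard but delicate fact'' you correctly flag as the crux is exactly what the paper proves in the two cases at the end of Proposition~\ref{asymptotic lengths}, using the surgery Lemma~\ref{panted systole} to reduce any non-simple curve in $S'$ to one confined to a pair of pants $P\subset S'$, where it must transversely cross every geodesic entering $P$.
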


In the case when $G_j=\PSp(2n,\Rbbb)$ and $\rho_j$ is maximal for all $j$, Corollary \ref{Burger-Pozzetti} is a result of Burger-Pozzetti (Theorem 1.1 of \cite{BurPoz1}). More informally, this corollary states that the closed curves in $S$ whose lengths are growing at the fastest rate along a sequence of positively ratioed representations are exactly those that intersect a particular union of a subsurface of $S$ with a collection of pairwise non-intersecting simple closed curves in $S$.

A second important consequence of Theorem \ref{positively ratioed to geodesic currents} is that the length functions coming from positively ratioed representations behave as if they were the length functions of a negatively curved metric on $S$ when we perform surgery (see Section \ref{surgery and lengths}).

\begin{cor}[Proposition \ref{surgery}]\label{positively ratioed surgery}
Let $\rho:\Gamma\to G$ be $P_\theta$-positively ratioed for some $\theta\subset\Delta$. For $c\in\Cmc\Gmc(S)$ with $i(c,c)>0$, let $c_1,c_2,c_3\in\Cmc\Gmc(S)$ be obtained via surgery at a point of self-intersection of $c$ as in Proposition \ref{surgery}. Then for any $\alpha\in\theta$, we have
\[\ell_\alpha^\rho(c_1)< \ell_\alpha^\rho(c)\,\,\,\text{ and }\,\,\,\ell_\alpha^\rho(c_2)+\ell_\alpha^\rho(c_3)< \ell_\alpha^\rho(c).\]
\end{cor}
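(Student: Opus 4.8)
The plan is to reduce Corollary \ref{positively ratioed surgery} to the analogous statement about period minimizing geodesic currents with full support (Proposition \ref{surgery}), invoking Theorem \ref{positively ratioed to geodesic currents}. By that theorem, for each $\alpha\in\theta$ there is a geodesic current $\mu^\rho_\alpha$ with $\ell^\rho_\alpha(c)=i(\mu^\rho_\alpha,c)$ for every $c\in\Cmc\Gmc(S)$; so it suffices to show that $\mu^\rho_\alpha$ has full support and is period minimizing, and that for such a current $\nu$ one has the strict inequalities $i(\nu,c_1)<i(\nu,c)$ and $i(\nu,c_2)+i(\nu,c_3)<i(\nu,c)$ whenever $c_1,c_2,c_3$ arise by surgery at a self-intersection point of $c$. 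The fact that $\mu^\rho_\alpha$ has full support and is period minimizing is exactly what is extracted in the proof of Theorem \ref{positively ratioed to geodesic currents} from the work of Hamenst\"adt, Otal, and Tits, so I would quote it; the real content is the surgery inequality for geodesic currents.

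Next I would set up the surgery itself. Pick a point $p$ of self-intersection of the geodesic representative of $c$ with respect to the auxiliary hyperbolic metric; near $p$ the curve $c$ looks like two transverse arcs, and reconnecting them in the two possible ways produces either a single curve $c_1$ (the "smoothing" that stays connected) or a pair of curves $c_2,c_3$. The key geometric observation, which is classical for hyperbolic metrics (Otal) and holds verbatim at the level of currents, is that passing to the geodesic representatives of $c_1$, resp. $c_2\cup c_3$, can only shrink intersection with any fixed current: the piecewise geodesic representatives of $c_1$ and of $c_2,c_3$ obtained by the naive reconnection have the same "tangent line" data as $c$ away from $p$ and strictly fewer geodesic arcs passing through a neighborhood of $p$, so for any geodesic current $\nu$ the intersection $i(\nu,c_1)$, resp. $i(\nu,c_2)+i(\nu,c_3)$, is bounded by $i(\nu,c)$. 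To get \emph{strict} inequality I would use that $\nu$ has full support: the geodesics through $p$ transverse to \emph{both} strands of $c$ at $p$ — of which there is a positive-measure family, by full support — are counted (at least) once in $i(\nu,c)$ but can be arranged not to be counted for the surgered curves, since after the reconnection the two strands at $p$ have been separated. This is where the full-support hypothesis, and hence positively ratioed (rather than merely Anosov), is essential.

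The cleanest way to organize the strictness argument is to lift everything to $\widetilde S$: choose a lift $\widetilde p$ of $p$ and the two geodesic lines $g,g'$ of $\widetilde c$ crossing at $\widetilde p$; a small transverse geodesic segment $\sigma$ through $\widetilde p$ meets both $g$ and $g'$, and the set of complete geodesics in $\widetilde S$ meeting $\sigma$ near $\widetilde p$ and crossing both $g$ and $g'$ is open and non-empty, hence has positive $\widehat\nu$-measure where $\widehat\nu$ is the lift of $\nu$. Tracking how such geodesics meet the piecewise-geodesic surgered curves versus $\widetilde c$, and summing over the $\Gamma$-orbit in Bonahon's definition of $i$, gives the quantitative gap. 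I expect the main obstacle to be bookkeeping: making precise that the surgery can be performed so that the piecewise geodesic (not yet geodesic) representatives of $c_1$ and of $c_2,c_3$ have intersection number with $\nu$ no larger than that of $c$, handling the possibility that $c_2$ or $c_3$ is null-homotopic or that $c_i$ has lower self-intersection, and then arguing that replacing a piecewise geodesic curve by its geodesic representative does not increase $i(\nu,\cdot)$ — the last point again being a consequence of $\nu$ having full support together with the standard fact that geodesic representatives minimize intersection with currents, as in \cite{Bon1}.
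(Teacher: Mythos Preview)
Your overall plan---reduce to Proposition \ref{surgery} via Theorem \ref{positively ratioed to geodesic currents} and the fact that $\mu^\rho_\alpha$ has full support---is exactly what the paper does (the paper states Corollary \ref{positively ratioed surgery} as an immediate consequence of Theorem \ref{positively ratioed to geodesic currents}, Proposition \ref{strictly positively ratioed}, and Proposition \ref{surgery}; period minimizing is not needed here, only full support). Your sketch of the non-strict inequality for a general current $\nu$ via piecewise-geodesic representatives and straightening is also the same idea as the paper's, which carries it out cleanly in the universal cover: writing $\gamma=\gamma_3\gamma_2$ and $\gamma_1=\gamma_3^{-1}\gamma_2$ for a lift $\ptd$ of the self-intersection point on the axis of $\gamma$, one has
\[
i(c_2,\nu)+i(c_3,\nu)\leq \nu\big(G(\ptd,\gamma_2\cdot\ptd]\big)+\nu\big(G(\gamma_2\cdot\ptd,\gamma\cdot\ptd]\big)=\nu\big(G(\ptd,\gamma\cdot\ptd]\big)=i(c,\nu),
\]
using Lemma \ref{compute length}.

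The gap is in your strictness argument. Your claim that geodesics through $p$ transverse to both strands of $c$ ``can be arranged not to be counted for the surgered curves, since after the reconnection the two strands at $p$ have been separated'' is not correct: the naive piecewise-geodesic representatives of $c_1$ and of $c_2\cup c_3$ pass through exactly the same points as $c$ near $p$, so any geodesic crossing both strands of $c$ still crosses both arcs of the reconnected curve. No intersection is lost at the reconnection step. The strictness comes entirely from the \emph{straightening} step, and specifically from the strict form of Lemma \ref{compute length}(2): since the axes of $\gamma_2$, $\gamma_3$, $\gamma_1$ are disjoint from the axis of $\gamma$, the point $\ptd$ lies off each of these axes, so for $\nu$ of full support the inequality $i(c_j,\nu)\leq\nu\big(G(\ptd,\gamma_j\cdot\ptd]\big)$ is strict. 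You do mention the straightening inequality later, but only in its non-strict form; that is where the strictness actually lives.
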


For any period minimizing geodesic current $\nu\in\Cmc(S)$, let $\ell_\nu:\Cmc\Gmc(S)\to\Rbbb^+\cup\{0\}$ be the function defined by $\ell_\nu(c):=i(\nu,c)$. Using this, we can define the following three quantities associated to connected essential subsurfaces $S'\subset S$. The first is the \emph{entropy} of $S'$, which is defined to be 
\[h_\nu(S')=h(\ell_\nu,S'):=\limsup_{T\to\infty}\frac{1}{T}\log\#\left\{c\in\Cmc\Gmc(S'):\ell_\nu(c)\leq T\right\},\]
and the second is the \emph{systole length}, which is defined as
\[L_\nu(S')=L(\ell_\nu,S'):=\min\{\ell_\nu(c):c\in\Cmc\Gmc(S')\}.\]
To define the third, one chooses a \emph{minimal pants decomposition} $\Pmc_{\nu,S'}$ of $S'$, i.e. a maximal collection in $\Cmc\Gmc(S')$ of pairwise non-intersecting simple closed geodesics $\{c_1,\dots,c_{3g'-3+2n'}\}$ so that $c_{3g'-2+n'},\dots, c_{3g-3+2n'}$ are the boundary components and for all $j=0,\dots,3g'-4+n'$, $c_{j+1}$ is a non-peripheral systole in $\Cmc\Gmc\left(S'\setminus\bigcup_{i=1}^{j}c_i\right)$. These exists because of Corollary \ref{positively ratioed surgery} (see Section \ref{pants decomp}). The \emph{panted systole length} is then the quantity
\[K_\nu(S')=K(\ell_\nu,S'):=\min\{\ell_\nu(c):c\in\Cmc\Gmc(S')\text{ is not a multiple of a curve in }\Pmc_{\nu,S'}\}.\]
It turns out that the panted systole length does not depend on the choice of a minimal pants decomposition (see Lemma \ref{psl independent}), and hence is an invariant of the geodesic current $\nu$.

In this setting, our main theorem is the following.

\begin{thm}\label{main theorem}
There is a constant $C\in\Rbbb^+$ depending only on the topology of $S'$, so that for any period minimizing $\nu\in\Cmc(S)$, we have the inequalities
\[\frac{1}{4}\log(2)\leq h_\nu(S')K_\nu(S')\leq C\cdot\left(\log(4)+1+\log\left(1+\frac{1}{x_0}\right)\right),\]
where $x_0$ is the unique positive solution to the equation $(1+x)^{\left\lceil\frac{K_\nu(S')}{L_\nu(S')}-1\right\rceil}x=1$.
\end{thm}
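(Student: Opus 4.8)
The plan is to estimate $N(T):=\#\{c\in\Cmc\Gmc(S'):\ell_\nu(c)\le T\}$ from below and from above, since $h_\nu(S')=\limsup_{T\to\infty}\frac1T\log N(T)$. Note first that $h_\nu(S')K_\nu(S')$ and the ratio $K_\nu(S')/L_\nu(S')$ (hence $x_0$) are unchanged under the scaling $\nu\mapsto t\nu$, because $\ell_{t\nu}=t\ell_\nu$; so I normalize $L_\nu(S')=1$ and write $K=K_\nu(S')$, $n=\lceil K-1\rceil$. Full support of $\nu$ gives $K\ge 1$ and $n<\infty$, and period-minimality gives $N(T)<\infty$; curves of $\Cmc\Gmc(S')$ that are multiples of curves of $\Pmc_{\nu,S'}$ number only $O(T)$ and will be negligible throughout.

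\textbf{Lower bound.} I would start with a primitive $\gamma\in\Cmc\Gmc(S')$ with $\ell_\nu(\gamma)=K$ that is not a multiple of a curve of $\Pmc_{\nu,S'}$ (primitivity costs nothing: a root of a shorter realizer would be an even shorter non-$\Pmc$ curve). Since $\Pmc_{\nu,S'}$ is a maximal disjoint family of simple closed curves, $\gamma$ either self-intersects or crosses some curve of $\Pmc_{\nu,S'}$. In the first case, cutting the geodesic $\gamma$ at a self-intersection point $x$ writes it as a concatenation of two geodesic sub-arcs, giving loops $P,Q$ based at $x$ whose underlying arcs carry transverse $\nu$-masses summing to $\ell_\nu(\gamma)=K$. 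In the second case, let $c_i\in\Pmc_{\nu,S'}$ be the curve of smallest systolic index crossed by $\gamma$; then $\gamma$ is essential and non-peripheral in $S'\setminus\bigcup_{j<i}c_j$, so by the defining property of the minimal pants decomposition $\ell_\nu(c_i)\le\ell_\nu(\gamma)=K$, and I take $P=\gamma$, $Q=c_i$ as loops based at a crossing point $x\in\gamma\cap c_i$, with transverse $\nu$-masses $K$ and $\le K$. In either case $P$ and $Q$ generate a noncyclic, hence rank-two free, subgroup on which a ping-pong argument (or comparison with the fixed auxiliary hyperbolic length) shows that distinct cyclically reduced positive words determine distinct classes in $\Cmc\Gmc(S')$. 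For each $k$ and each $(a_1,b_1,\dots,a_k,b_k)\in\{1,2\}^{2k}$, the loop $P^{a_1}Q^{b_1}\cdots P^{a_k}Q^{b_k}$ based at $x$ carries transverse $\nu$-mass at most $(\sum a_j+\sum b_j)K\le 4kK$, so the associated class $c$ has $\ell_\nu(c)\le 4kK$. Hence $N(4kK)\ge 4^k/(2k)$, and letting $k\to\infty$ gives $h_\nu(S')K\ge\frac{\log 4}{4}=\frac12\log 2\ge\frac14\log 2$.

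\textbf{Upper bound.} For $c\in\Cmc\Gmc(S')$ with $\ell_\nu(c)\le T$ not a multiple of a curve of $\Pmc:=\Pmc_{\nu,S'}$, I would put the geodesic of $c$ in minimal position relative to $\Pmc$, cut it along $\Pmc$, and group maximal runs of arcs that spiral around a common curve of $\Pmc$; this exhibits $c$ as a cyclic word of $r$ \emph{segments}, each of which, closed up along $\Pmc$, represents an essential curve that is not a multiple of a curve of $\Pmc$. The two facts to establish, both via the surgery inequality of Proposition~\ref{surgery} (in its period-minimizing-current form), which forbids length-decreasing bigons and so controls cancellation, are: (i) the transverse $\nu$-mass that $c$ carries along each segment is at least $\ell_\nu$ of the essential curve that segment represents, hence at least $K$, so $r\le T/K$; and (ii) a segment spiralling $t$ times around a curve of $\Pmc$ already carries transverse $\nu$-mass $\ge tL=t$ from the spirals, so $t\le K-1$, i.e.\ $t\le n$. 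Given this, a closed geodesic of $\ell_\nu$-length $\le T$ is determined, up to $B(S')^{O(r)}$ choices with $B(S')$ depending only on the topology of $S'$, by the cyclic sequence of segment types, the spiral directions, and the spiral counts $t_1,\dots,t_r\in\{0,\dots,n\}$; hence $N(T)\le\mathrm{poly}(T)\cdot\big(B(S')^2(n+1)\big)^{T/K}$ and $h_\nu(S')K\le 2\log B(S')+\log(n+1)$. Replacing the crude factor $(n+1)^r$ by the sharp count — which weights a $t$-fold spiral by its $\nu$-mass and yields, after optimization, a generating function whose radius of convergence is governed by $x(1+x)^{n}=1$, i.e.\ by $x_0$ — turns $\log(n+1)$ into $\log(1+1/x_0)$ up to an additive constant (note $n$ and $\log(1/x_0)$ have the same order of magnitude); absorbing the topological constants into $C$ gives $h_\nu(S')K\le C\big(\log 4+1+\log(1+1/x_0)\big)$.

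The main obstacle is the upper bound, and within it facts (i) and (ii): one must show that cancellation in the geodesic representative — which Proposition~\ref{surgery} controls only qualitatively — cannot make a combinatorially complicated curve $\ell_\nu$-short, so that each essential segment genuinely costs at least $K_\nu(S')$ of transverse mass and cannot absorb more than $\lceil K_\nu(S')/L_\nu(S')-1\rceil$ spirals, and then one must run the sharp count carefully enough to produce the constant $x_0$. The lower bound, by contrast, uses only the elementary subadditivity of transverse $\nu$-mass along concatenations of loops through a common basepoint, together with the systolic ordering built into $\Pmc_{\nu,S'}$.
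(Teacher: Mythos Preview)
Your high-level strategy matches the paper's: for the lower bound, exhibit two loops through a common basepoint with $\nu$-length $O(K_\nu(S'))$ and count positive words (the paper does this by first finding, via Lemma~\ref{panted systole}, a figure-eight curve $e$ in a pair of pants with $i(e,\nu)\le 4K_\nu(S')$, then counting words in the two boundary generators); for the upper bound, encode $c$ by finite combinatorial data relative to $\Pmc_{\nu,S'}$, bound $i(c,\nu)$ from below linearly in that data, and count the combinatorial types.

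The genuine gap is in your facts (i) and (ii), and it is not just that they are hard---fact (i) is false as stated. An arc of $c$ inside a pair of pants, closed up along a boundary curve to an essential loop $d$, need \emph{not} carry transverse $\nu$-mass $\ge\ell_\nu(d)$: the inequality $\ell_\nu(d)\le\nu(G(p,\gamma_d\cdot p])$ of Lemma~\ref{compute length}(2) applies to a segment joining a point to its $\gamma_d$-translate, which your arc is not. What the paper actually proves (Theorem~\ref{length bound theorem}, via a chain of estimates in Lemmas~6.2--6.6 using an auxiliary ideal triangulation subordinate to $\Pmc_{\nu,S'}$) is the weaker linear bound $i(c,\nu)\ge b(c)\cdot K_\nu(S')/C + w_1(c)\cdot L_\nu(S')/C$, where $b(c)$ is a ``binodal-edge'' count analogous to your segment count $r$, $w_1(c)$ is the total winding, and $C=400\cdot 3^{3g-3+n}+96$ is precisely the constant of the theorem. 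This loss of a factor $C$ is unavoidable with current methods and is exactly why the theorem has a $C$ in it.

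Your derivation of $t\le n$ in (ii) is also not right: there is no per-segment upper bound on the winding. The paper instead bounds the \emph{total} winding by $w_1(c)\le (T-b(c)\overline{K})/\overline{L}$ and counts distributions of winding among $b(c)$ slots by a stars-and-bars binomial $\binom{\lfloor(T-Ki)/L\rfloor+i}{i}$; the quantity $x_0$ then emerges from maximizing this binomial over $i$ via Stirling (Appendix~B), not from a generating-function heuristic. Your crude count $(n+1)^r$ happens to have the right order of magnitude (since $1/x_0\asymp n$), but it does not arise from a correct argument.
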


Together, Theorem \ref{positively ratioed to geodesic currents} and Theorem \ref{main theorem} give a systolic inequality that holds for all positively ratioed representations. 

Let $h^\rho_\alpha:=h(\ell^\rho_\alpha,S)$ and $L^\rho_\alpha:=L(\ell^\rho_\alpha,S)$. As a first corollary to Theorem \ref{main theorem}, we have the following.

\begin{cor}[Corollary \ref{non-closed}]\label{easy inequality}
There is a constant $C$ depending only on the topology of $S$, so that for any $[P_\theta]$-positively ratioed representation $\rho:\Gamma\to G$, and any $\alpha\in\theta$, we have the inequality
\[h^\rho_\alpha L^\rho_\alpha\leq C.\]
\end{cor}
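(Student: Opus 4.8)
The plan is to derive Corollary \ref{easy inequality} from Theorem \ref{main theorem} applied to the whole surface $S' = S$. The first step is to pass from the representation $\rho$ to a geodesic current: by Theorem \ref{positively ratioed to geodesic currents}, the length function $\ell^\rho_\alpha$ is realized as $\ell_{\mu^\rho_\alpha}$ for the geodesic current $\nu := \mu^\rho_\alpha$. One must check that $\nu$ is period minimizing with full support; this is where the hypotheses in the introduction (and presumably a lemma proved in the body of the paper, to be cited) do the work — the Anosov property gives the proper discontinuity/finiteness needed for period minimization, and positivity of the cross ratios gives full support. So $h^\rho_\alpha = h_\nu(S)$ and $L^\rho_\alpha = L_\nu(S)$, and it suffices to bound $h_\nu(S) L_\nu(S)$.

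Next, I would invoke Theorem \ref{main theorem} with $S' = S$, which gives
\[
h_\nu(S) K_\nu(S) \leq C\cdot\left(\log(4)+1+\log\left(1+\frac{1}{x_0}\right)\right),
\]
where $x_0$ solves $(1+x)^{\lceil K_\nu(S)/L_\nu(S)-1\rceil} x = 1$. Since every closed curve is trivially not a multiple of a curve in the pants decomposition unless it literally is one, and since the systoles $L_\nu(S)$ range over all closed curves while $K_\nu(S)$ ranges over a restricted set, we have $L_\nu(S) \leq K_\nu(S)$. Hence $h_\nu(S) L_\nu(S) \leq h_\nu(S) K_\nu(S)$, so it remains only to bound the right-hand side above by a topological constant. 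The only non-constant term is $\log(1 + 1/x_0)$, so the crux is to show $x_0$ is bounded below by a positive constant depending only on the topology of $S$.

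The key observation for bounding $x_0$ from below is that the exponent $n:=\lceil K_\nu(S)/L_\nu(S) - 1\rceil$ appearing in the defining equation $(1+x)^n x = 1$ is itself bounded above by a topological constant. This is exactly where Corollary \ref{positively ratioed surgery} (equivalently Proposition \ref{surgery}) enters: the surgery inequalities force a quantitative relationship between the panted systole $K_\nu(S)$ and the systole $L_\nu(S)$. Concretely, any curve realizing $K_\nu(S)$ can be cut down by surgeries into simple pieces, and each surgery strictly decreases the length while the number of surgeries needed is controlled by the complexity (self-intersection number, hence ultimately the topology) of $S$; comparing with the shortest curve, one gets $K_\nu(S)/L_\nu(S) \leq N$ for some $N = N(S)$. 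I expect this step — extracting a clean topological upper bound on $K_\nu/L_\nu$ from the surgery estimates — to be the main obstacle, since it requires quantifying how self-intersection number grows relative to length, and this is presumably packaged as a lemma in Section \ref{pants decomp} that I would cite.

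Once $n \leq N(S)$, the function $x\mapsto (1+x)^n x$ is increasing on $\Rbbb^+$, so its inverse value $x_0$ at $1$ is bounded below: $x_0 \geq x_0^{(N)}$, where $x_0^{(N)}$ solves $(1+x)^N x = 1$ and depends only on $N$, hence only on the topology of $S$. Therefore
\[
\log\left(1 + \frac{1}{x_0}\right) \leq \log\left(1 + \frac{1}{x_0^{(N)}}\right) =: C'(S),
\]
and combining everything,
\[
h^\rho_\alpha L^\rho_\alpha = h_\nu(S) L_\nu(S) \leq h_\nu(S) K_\nu(S) \leq C\cdot\left(\log(4) + 1 + C'(S)\right),
\]
which is a constant depending only on the topology of $S$. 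Relabeling this constant as $C$ completes the proof. The only genuinely new content beyond citing earlier results is the passage $L_\nu \leq K_\nu$ (trivial) and the uniform lower bound on $x_0$ via the topological bound on $K_\nu/L_\nu$.
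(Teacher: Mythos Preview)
Your argument contains a genuine gap: the claim that $K_\nu(S)/L_\nu(S)$ is bounded above by a topological constant is false. Consider a hyperbolic structure on $S$ in which every curve of a fixed pants decomposition has length $\epsilon$. Then $L_\nu = \epsilon$, but any closed curve not a multiple of a pants curve must cross one of the pinched curves, and by the collar lemma its length is at least of order $\log(1/\epsilon)$. Hence $K_\nu/L_\nu \to \infty$ as $\epsilon \to 0$. The surgery argument you sketch cannot rescue this: the curve realizing $K_\nu$ may already be simple, and even when it is not, surgery only compares $K_\nu$ to lengths of curves that themselves may lie in the pants decomposition (hence contribute nothing to a lower bound involving $K_\nu$). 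There is no lemma in the paper giving such a bound, and none can exist.

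The paper's proof avoids this entirely. Starting from the refined form of Theorem \ref{main theorem} (Corollary \ref{useful corollary}), which replaces $1/x_0$ by $\tfrac{\sqrt{5}+1}{2}\cdot K_\nu/L_\nu$, one does \emph{not} use the crude estimate $h_\nu L_\nu \leq h_\nu K_\nu$. Instead one multiplies the inequality $h_\nu K_\nu \leq C'(\log 4 + 1 + \log(1 + c\cdot K_\nu/L_\nu))$ through by $L_\nu/K_\nu$. The constant terms are then multiplied by $L_\nu/K_\nu \leq 1$, and the dangerous term becomes
\[
\frac{L_\nu}{K_\nu}\log\!\left(1 + c\cdot\frac{K_\nu}{L_\nu}\right),
\]
which is bounded by the elementary inequality $x\log(1 + k/x) \leq k$ for all $x,k>0$. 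This is the step you are missing; it is what allows the ratio $K_\nu/L_\nu$ to be unbounded while $h_\nu L_\nu$ remains controlled.
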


We emphasize that $C$ does not depend on $G$. In particular, if $\{\rho_j:\Gamma\to G_j\}_{j\in J}$ is a collection of $[P_j]$-positively ratioed representations on $\Gamma$ so that $\{h_{\alpha_j}^{\rho_j}\}_{j\in J}$ is uniformly bounded below by a positive number, then Corollary \ref{easy inequality} implies that $\{L_{\alpha_j}^{\rho_j}\}_{j\in J}$ is uniformly bounded from above.

Similarly, given a negatively curved Riemannian metric $m$ on $S'$ with totally geodesic (possibly empty) boundary, one can also define a length function $\ell_m:\Cmc\Gmc(S')\to\Rbbb^+$ which assigns to each free homotopy class of closed curves the length of the geodesic representative in that free homotopy class. Theorem \ref{main theorem} also implies the following.

\begin{cor}\label{Riemannian}
There is a constant $C$ depending only on the topology of $S'$, so that for any negatively curved Riemannian metric $m$ on $S'$ with totally geodesic boundary,
\[h(\ell_m,S') L(\ell_m,S')\leq C.\]
\end{cor}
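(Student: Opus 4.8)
The plan is to deduce this corollary from Theorem \ref{main theorem} by associating to the Riemannian length function $\ell_m$ a period minimizing geodesic current $\nu_m$ on (a surface containing) $S'$. First I would observe that $S'$, being an orientable surface of genus $g'$ with $n'$ boundary components and $2g'-2+n'>0$, can be realized as an incompressible subsurface of some closed surface $S$ of genus at least $2$: indeed one can double $S'$ along its boundary, or cap off boundary components with handles, so that $S' \subset S$ with $\partial S'$ a collection of pairwise non-homotopic simple closed curves. The topological type of $S'$ is what matters, and the constant $C$ produced by Theorem \ref{main theorem} depends only on the topology of $S'$, so this embedding step is harmless.

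Next I would extend the negatively curved metric $m$ on $S'$ to a negatively curved metric $\hat m$ on all of $S$ (for instance, pasting it to a hyperbolic metric with totally geodesic boundary on $S \setminus S'$, possibly after a conformal interpolation in a collar to smooth the gluing; since $\partial S'$ is totally geodesic for $m$ one can arrange the glued metric to remain negatively curved, or one can invoke standard results on existence of negatively curved metrics realizing prescribed metrics on incompressible subsurfaces). To such a metric $\hat m$ there is associated, by the work of Otal and of Hamenst\"adt cited before Theorem \ref{positively ratioed to geodesic currents}, a geodesic current $\nu_{\hat m} \in \Cmc(S)$ with full support such that $i(\nu_{\hat m}, c) = \ell_{\hat m}(c)$ for every $c \in \Cmc\Gmc(S)$, where $\ell_{\hat m}$ is the $\hat m$-length function. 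This current is period minimizing: for any $T$ the set of closed geodesics of $\hat m$-length at most $T$ is finite, since $\hat m$ is a negatively curved metric on a closed surface. Crucially, because $S' \subset S$ is incompressible and $\hat m$ restricts to $m$ on $S'$, the geodesic representative in $S$ of a class $c \in \Cmc\Gmc(S')$ lies in $S'$ and equals its $m$-geodesic representative, so $i(\nu_{\hat m}, c) = \ell_m(c)$ for all $c \in \Cmc\Gmc(S')$; that is, $\ell_{\nu_{\hat m}}$ restricted to $\Cmc\Gmc(S')$ agrees with $\ell_m$.

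Now I would apply Theorem \ref{main theorem} to $\nu = \nu_{\hat m}$ and the subsurface $S'$, with any $\nu_{\hat m}$-minimal pants decomposition $\Pmc_{\nu_{\hat m}, S'}$. The theorem gives $h_{\nu_{\hat m}}(S') \, K_{\nu_{\hat m}}(S') \le C \cdot (\log 4 + 1 + \log(1 + 1/x_0))$. Since $\ell_{\nu_{\hat m}}|_{\Cmc\Gmc(S')} = \ell_m$, we have $h_{\nu_{\hat m}}(S') = h(\ell_m, S')$, and $L_{\nu_{\hat m}}(S') = L(\ell_m, S')$, and $K_{\nu_{\hat m}}(S') \ge L_{\nu_{\hat m}}(S') = L(\ell_m, S')$ by definition of the panted systole. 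The remaining point is to absorb the $x_0$ term into the constant: $x_0$ is the unique positive root of $(1+x)^{\lceil K/L - 1\rceil} x = 1$, and the ratio $K_{\nu_{\hat m}}(S')/L_{\nu_{\hat m}}(S')$ is bounded above in terms of the topology of $S'$ alone — this is exactly the kind of estimate already used to pass from Theorem \ref{main theorem} to Corollary \ref{easy inequality}, so I would invoke the same argument. That bound forces $x_0$ to be bounded below by a positive topological constant, hence $\log(1 + 1/x_0)$ is bounded above, and combining everything yields $h(\ell_m, S') L(\ell_m, S') \le C$ for a constant $C$ depending only on the topology of $S'$.

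The main obstacle I anticipate is the extension step: producing from $m$ on $S'$ a genuinely negatively curved metric $\hat m$ on a closed surface $S$ that restricts to $m$ and for which $\partial S'$ remains a union of simple closed geodesics, so that lengths of curves in $\Cmc\Gmc(S')$ are unchanged. The totally geodesic boundary hypothesis on $m$ is precisely what makes this feasible, but care is needed in the collar region to keep curvature negative after gluing; alternatively one can sidestep the construction of $\hat m$ entirely by working directly with the metric double of $S'$ across its boundary, which is a closed surface carrying a natural negatively curved metric and in which $S'$ sits as an incompressible subsurface with geodesic boundary, and then the rest of the argument goes through verbatim.
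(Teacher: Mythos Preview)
Your overall strategy matches the paper's: embed $S'$ in a closed surface $S$, extend $m$ to a negatively curved metric on $S$, take the associated Liouville current $\nu$ (the paper cites Frazier for $i(\nu,c)=\ell_m(c)$), and then apply the geodesic-current systolic inequality. The paper simply invokes Corollary~\ref{non-closed} at this point and is done.

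There is, however, a genuine gap in your final step. You assert that $K_{\nu}(S')/L_{\nu}(S')$ is bounded above by a constant depending only on the topology of $S'$, and that this is ``exactly the kind of estimate already used to pass from Theorem~\ref{main theorem} to Corollary~\ref{easy inequality}.'' Both claims are false. The ratio $K_\nu(S')/L_\nu(S')$ is \emph{not} topologically bounded: if you pinch a simple closed curve $c$ on a hyperbolic surface, then $L_\nu(S')=\ell(c)\to 0$ while $c$ becomes a curve in the minimal pants decomposition, and any curve not a multiple of a pants curve must cross $c$ and hence (by the collar lemma) has length tending to infinity; thus $K_\nu(S')/L_\nu(S')\to\infty$. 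Consequently $x_0$ is not bounded below by any topological constant, and your absorption argument fails.

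The actual passage from Theorem~\ref{main theorem} to Corollary~\ref{non-closed} works differently. One first replaces $\log(1+1/x_0)$ by $\log\bigl(1+\tfrac{\sqrt 5+1}{2}\,K_\nu(S')/L_\nu(S')\bigr)$ (Corollary~\ref{useful corollary}), then multiplies the inequality $h_\nu(S')K_\nu(S')\le C\bigl(\log 4+1+\log(1+\tfrac{\sqrt 5+1}{2}\,K/L)\bigr)$ through by $L_\nu(S')/K_\nu(S')\le 1$, and finally uses the elementary bound $x\log(1+k/x)\le k$ for $x,k>0$ to obtain a constant independent of $K/L$. Either cite Corollary~\ref{non-closed} directly, as the paper does, or carry out this algebraic step in place of your incorrect bound on $K/L$.
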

Corollary \ref{Riemannian} is a consequence of the work of Sabourau \cite{Sab1} in the case when $S'$ is a closed surface. The constants in the statements of Theorem \ref{main theorem}, Corollary \ref{easy inequality} and Corollary \ref{Riemannian} are explicit but not sharp, and depend exponentially on the Euler characteristic of the surface.

Another corollary of Theorem \ref{main theorem} is the following criterion for when the entropy along a sequence of ``thick'' positively ratioed representations converges to $0$. 

\begin{cor}[Corollary \ref{entropy degeneration corollary}]\label{mapping class}
Let $\{\rho_j:\Gamma\to G_j\}_{j=1}^\infty$ be a sequence of $[P_j]$-positively ratioed representations, let $\theta_j$ be a subset of the positive roots of $G_j$ determined by $[P_j]$. Also, let $\alpha_j\in\theta_j$ so that $\inf_jL^{\rho_j}_{\alpha_j}>0$. Then $\lim_{j\to\infty}h^{\rho_j}_{\alpha_j}=0$ if and only if for any subsequence of $\{\rho_j\}_{j=1}^\infty$, there is
\begin{itemize}
\item a further subsequence, which we also denote by $\{\rho_j\}_{j=1}^\infty$,
\item a sequence $\{f_j\}_{j=1}^\infty$ of elements in the mapping class group of $S$,
\item a (possibly empty) collection $\Dmc\subset\Cmc\Gmc(S)$ of pairwise non-intersecting simple closed curves,
\end{itemize}
so that 
\[\lim_{j\to\infty}\min\{\ell^{f_j\cdot\rho_j}_{\alpha_j}(c):c\in\Cmc\Gmc(S\setminus\Dmc) \text{ is non-peripheral}\}=\infty\] 
and 
\[\sup_j\max\{\ell^{f_j\cdot\rho_j}_{\alpha_j}(c):c\in\Dmc\}<\infty.\]
Here, $f_j\cdot\rho_j:=\rho_j\circ (f_j)_*$, where $(f_j)_*:\Gamma\to\Gamma$ is the group homomorphism induced by the mapping class $f_j:S\to S$.
\end{cor}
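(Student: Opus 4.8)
The plan is to reduce everything to geodesic currents and then to play the two inequalities of Theorem~\ref{main theorem} against one another. First I would use Theorem~\ref{positively ratioed to geodesic currents} to replace each $\ell^{\rho_j}_{\alpha_j}$ by $\ell_{\mu_j}$, where $\mu_j:=\mu^{\rho_j}_{\alpha_j}$ is a period minimizing geodesic current with full support. The mapping class group acts on $\Cmc(S)$ by push-forward, with $\ell_{f\cdot\mu}(c)=\ell_\mu(f_*c)$, and $h_\nu(S)$, $L_\nu(S)$, $K_\nu(S)$ are invariant under this action since $f_*$ permutes $\Cmc\Gmc(S)$ and carries minimal pants decompositions to minimal pants decompositions; hence it suffices to prove the corollary with $\rho_j,\,f_j\cdot\rho_j$ replaced by $\mu_j,\,f_j\cdot\mu_j$. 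The engine is this consequence of Theorem~\ref{main theorem}: if $\inf_jL_{\nu_j}(S')>0$ for an incompressible $S'$, then $h_{\nu_j}(S')\to0$ if and only if $K_{\nu_j}(S')\to\infty$. Indeed, writing $N=\lceil K/L-1\rceil$ and $y=1/x_0$, the equation $(1+1/y)^Ny^{-1}=1$ forces $\tfrac{N}{2\log y}\le y\le\tfrac{N}{\log y}$, hence $\log(1+1/x_0)\le\log(2K(S')/L(S'))$; the upper bound of Theorem~\ref{main theorem} then reads $h(S')\le C\,K(S')^{-1}(\log4+1+\log(2K(S')/L(S')))$, which tends to $0$ once $K(S')\to\infty$ with $L$ bounded below, while the lower bound gives $K(S')\ge\tfrac{\log2}{4h(S')}$. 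In particular $h_{\mu_j}(S)\to0\iff K_{\mu_j}(S)\to\infty$, since $S$ has no boundary.

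For the forward direction, assume $h_{\mu_j}(S)\to0$, so $K_{\mu_j}(S)\to\infty$. Choose a $\mu_j$-minimal pants decomposition $\Pmc_j$ of $S$. Since the mapping class group acts transitively on isotopy classes of pants decompositions, fix a model $\Pmc_0$ and pick $f_j$ with $(f_j)_*(\Pmc_0)=\Pmc_j$; then $\Pmc_0$ is an $(f_j\cdot\mu_j)$-minimal pants decomposition and $K_{f_j\cdot\mu_j}(S)=K_{\mu_j}(S)\to\infty$. After passing to a subsequence we may assume each $\ell_{f_j\cdot\mu_j}(c)$, $c\in\Pmc_0$, either stays bounded or diverges; let $\Dmc\subseteq\Pmc_0$ be the curves of bounded length (automatically pairwise non-isotopic). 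Then $\sup_j\max\{\ell_{f_j\cdot\mu_j}(c):c\in\Dmc\}<\infty$ by construction, and if $c$ is non-peripheral in $S\setminus\Dmc$ then $c$ is not a multiple of any curve of $\Dmc$, hence either a multiple of a curve in $\Pmc_0\setminus\Dmc$ (length $\to\infty$) or not a multiple of any curve in $\Pmc_0$ (so $\ell_{f_j\cdot\mu_j}(c)\ge K_{f_j\cdot\mu_j}(S)\to\infty$); thus the required minimum diverges.

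For the converse, suppose the subsurface picture is available along every subsequence but, for contradiction, $h^{\rho_j}_{\alpha_j}\ge\varepsilon>0$ along some subsequence; feeding it to the hypothesis produces $f_j$ and a collection $\Dmc$ (which we may take pairwise non-isotopic, so no component of $S\setminus\Dmc$ is an annulus) satisfying the two length conditions. Write $\mu_j':=f_j\cdot\mu_j$ and $M:=\sup_j\max_{c\in\Dmc}\ell_{\mu_j'}(c)$. No component $S_i'$ of $S\setminus\Dmc$ is a pair of pants (that would produce a bounded-length figure-eight non-peripheral in $S\setminus\Dmc$, against the hypothesis), and every non-peripheral curve of each $S_i'$ has $\mu_j'$-length $\to\infty$, so $K_{\mu_j'}(S_i')$, a minimum over non-peripheral curves, diverges. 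Granting the claim below that every closed curve meeting $\Dmc$ has $\mu_j'$-length $\to\infty$ uniformly, every curve of $\mu_j'$-length $\le M$ is a multiple of a curve in $\Dmc$; hence the successive non-peripheral systoles building a $\mu_j'$-minimal pants decomposition of $S$ are, while still of bounded length, precisely the curves of $\Dmc$, so $\Dmc$ lies in such a decomposition and its remaining curves have length $\to\infty$. A curve not a multiple of any decomposition curve is then not a multiple of a $\Dmc$-curve, and so is either non-peripheral in $S\setminus\Dmc$ or meets $\Dmc$; in both cases its length $\to\infty$, whence $K_{\mu_j'}(S)\to\infty$ and $h^{\rho_j}_{\alpha_j}=h_{\mu_j'}(S)\to0$ by the engine above, a contradiction.

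What remains, and what I expect to be the hard part, is the claim: \emph{every closed curve $c$ with $i(c,\Dmc)\ge1$ has $\ell_{\mu_j'}(c)\to\infty$ uniformly in $c$.} I would prove it by surgery: resolving on $\Dmc$ all intersection points of the multicurve $c\cup\Dmc$ produces, via the length estimates underlying Proposition~\ref{surgery}, a multicurve $m$ carried by $S\setminus\Dmc$ with $\ell_{\mu_j'}(m)\le\ell_{\mu_j'}(c)+\sum_{d\in\Dmc}\ell_{\mu_j'}(d)\le\ell_{\mu_j'}(c)+(\#\Dmc)M$; if some component of $m$ is non-peripheral in a component $S_i'$, then $\ell_{\mu_j'}(c)\ge K_{\mu_j'}(S_i')-(\#\Dmc)M\to\infty$, uniformly. \textbf{The genuine obstacle is showing that $m$ must have such a non-peripheral component}: one has to rule out the degenerate situation in which every resolution of $c$ against $\Dmc$ is entirely peripheral (a possibility related to $c$ being ``homologically absorbed'' by $\Dmc$), which requires a careful analysis of the arcs $c\cap(S\setminus\Dmc)$ and of which boundary curves of the $S_i'$ they join. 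An alternative is to bound $\#\{c:\ell_{\mu_j'}(c)\le T\}$ directly by encoding a closed curve as an arc system in $\bigsqcup_iS_i'$, a Dehn-twist exponent along each $d\in\Dmc$ (contributing only a factor $O(T^{\#\Dmc})$, since twisting $t$ times along $d$ adds at least $|t|\,\ell_{\mu_j'}(d)\ge|t|\cdot\inf_kL^{\rho_k}_{\alpha_k}$ to the length), and a regluing pattern; then the obstacle becomes controlling the number of short essential arcs in a component $S_i'$ that has become geometrically fat, using Theorem~\ref{main theorem} on $S_i'$ (so that $K_{\mu_j'}(S_i')\to\infty$ and $h_{\mu_j'}(S_i')\to0$) together with $\inf_kL^{\rho_k}_{\alpha_k}>0$.
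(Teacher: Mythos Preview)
Your overall architecture matches the paper's: pass to the intersection currents $\mu_j$, use Theorem~\ref{main theorem} (via Corollary~\ref{useful corollary}) to reduce $h\to0$ to $K\to\infty$ under a lower bound on $L$, and handle the mapping class group by normalising pants decompositions. Two points, one minor and one substantive.

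\medskip
\textbf{Forward direction (minor correction).} The mapping class group is \emph{not} transitive on isotopy classes of pants decompositions once $g\ge3$ (the orbit is determined by the trivalent dual graph). The paper's argument is exactly yours with one extra step: since there are only \emph{finitely many} orbits, first pass to a subsequence so that all the $\Pmc_{\mu_j}$ lie in a single orbit, and only then choose $f_j$ carrying a fixed $\Pmc_0$ to $\Pmc_{\mu_j}$. Your pair-of-pants exclusion (``bounded-length figure-eight'') is neither justified nor needed: the hypothesis already forces every non-peripheral curve in each component---figure-eights included---to have length $\to\infty$, so $K_{\mu_j'}(S_i')\to\infty$ holds regardless of whether $S_i'$ is a pair of pants.

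\medskip
\textbf{Backward direction (the real gap).} The obstacle you flag is genuine for the surgery route you chose: after resolving $c\cup\Dmc$ there is no easy reason why some component of the resulting multicurve must be non-peripheral in $S\setminus\Dmc$, and your length bound $\ell(m)\le\ell(c)+(\#\Dmc)M$ already needs care (each resolution absorbs a full copy of some $d$, so the natural bound is $\ell(c)+\sum_d i(c,d)\,\ell(d)$, which is not uniform). The paper avoids this entirely with a short direct construction. Work with the curve $d_j$ realising $K_{\mu_j'}(S)$; if it crosses $\Dmc$, take one arc of its axis between two consecutive lifts of $\Dmc$-curves, lying in a lift of a component $S''$. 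Let $\eta,\tau\in\pi_1(S'')$ be the primitive boundary elements whose axes contain the two endpoints of that arc. Then, by two applications of Lemma~\ref{compute length},
\[
i\big([[\tau\eta]],\mu_j'\big)\;\le\;2M+2\,i(d_j,\mu_j'),\qquad
i\big([[\tau^{-1}\eta]],\mu_j'\big)\;\le\;2M+2\,i(d_j,\mu_j'),
\]
and the elementary fact that $[[\tau\eta]]$ and $[[\tau^{-1}\eta]]$ cannot both be peripheral in $S''$ gives
\[
i(d_j,\mu_j')\;\ge\;\tfrac12\,B_j-M,\qquad B_j:=\min\{\ell_{\mu_j'}(c):c\in\Cmc\Gmc(S\setminus\Dmc)\text{ non-peripheral}\}.
\]
Since $B_j\to\infty$ by hypothesis, $K_{\mu_j'}(S)\to\infty$ and hence $h_{\mu_j'}(S)\to0$. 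This is the content of Corollary~\ref{entropy degeneration corollary}; once it is in hand, Corollary~\ref{mapping class} follows by the finite-orbit argument above. The $\tau\eta$/$\tau^{-1}\eta$ trick replaces your global multicurve surgery with a single local concatenation, and simultaneously produces the non-peripheral witness and the length bound you were missing.
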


Nie \cite{Nie1}, \cite{Nie2} and the second author \cite{Zha1}, \cite{Zha2} previously studied sequences of Hitchin representations whose entropy goes to zero. Corollary \ref{mapping class} includes all such sequences.

The rest of this article is organized as follows. In Section 2, we define positively ratioed representations and prove Theorem \ref{positively ratioed to geodesic currents}. Then, we show that Hitchin and maximal representations are examples of positively ratioed representations in Section 3. In Section 4, we prove Corollary \ref{Burger-Pozzetti} and some facts regarding geodesic currents and the intersection pairing, and Section 5 and 6 are devoted to the proof of Theorem \ref{main theorem}. Finally, in Section 7, we prove Corollary \ref{easy inequality}, Corollary \ref{Riemannian} and Corollary \ref{mapping class}. 

{\bf Acknowledgements:}
This work has benefitted from conversations with Ursula Hamenst\"adt, Beatrice Pozzetti, Francis Bonahon, Marc Burger, Fanny Kassel and Richard Canary. The authors are grateful for their input. The authors also would especially like to thank Guillaume Dreyer, who inspired us to work on this project. Much of the work in this project was done during the program ``Dynamics on Moduli spaces of Geometric structures'' at MSRI in the Spring of 2015, and the GEAR-funded workshops ``Workshop on $\Sp(4,\Rbbb)$-Anosov representations'' and ``Workshop on surface group representations''. The authors thank the organizers of these programs for their hospitality. The authors also thank the referees of this paper for their valuable input.
\tableofcontents

\section{Positively ratioed representations}\label{PRR}

The goal of this section is to describe a class of surface group representations, which we call positively ratioed representations. The main property these representations have is that certain length functions associated to them ``arise from geodesic currents". This forces the length functions associated to these representations to satisfy some strong properties that are explained in Section \ref{Background on geodesic currents}.

\subsection{Topological geodesics} 
We begin by carefully specifying what we mean by a geodesic and a closed geodesic on a topological surface. The notation developed in this section will be used in the rest of the paper.

First, we will define closed geodesics. Let $[\Gamma]$ denote the set of conjugacy classes in $\Gamma$, and let $\sim$ be an equivalence relation on $[\Gamma]$ given by $[\gamma]\sim[\gamma^{-1}]$. 

\begin{definition}\label{closed geodesic}
A \emph{closed geodesic} in $S$ is a non-identity equivalence class in $[\Gamma]/\sim$. Also, we say that a closed geodesic is \emph{primitive} if it has a primitive representative in $\Gamma$ (equivalently, all of its representatives in $\Gamma$ are primitive). 
\end{definition}

We will denote the set of all closed geodesics in $S$ by $\Cmc\Gmc(S)$, and denote the equivalence class in $\Cmc\Gmc(S)$ containing $\gamma\in\Gamma\setminus\{\id\}$ by $[[\gamma]]$. Observe that $\Cmc\Gmc(S)$ is naturally in bijection with the free homotopy classes of closed curves on $S$. Hence, if we choose a hyperbolic structure $\Sigma$ on $S$, then the closed geodesics in $S$ are identified with the closed hyperbolic geodesics in $\Sigma$ since every free homotopy class of closed curves in $\Sigma$ contains a unique closed hyperbolic geodesic. 

Next, we will define geodesics. It is well-known that $\Gamma$ is a hyperbolic group, so it admits a Gromov boundary $\partial\Gamma$, which is topologically a circle. 

\begin{definition}\label{geodesic}
A (unoriented) \emph{geodesic in $\Std$} is an element of the topological space
\[\Gmc(\Std):=\{(x,y)\in\partial\Gamma\times\partial\Gamma:x\neq y\}/\sim,\]
where $\sim$ is the equivalence relation defined by $(x,y)\sim(y,x)$. Also, a \emph{geodesic in $S$} is an element in $\Gmc(S):=\Gmc(\Std)/\Gamma$. 
\end{definition}

Denote the equivalence classes in $\Gmc(\Std)$ and $\Gmc(S)$ containing $(x,y)$ by $\{x,y\}$ and $[x,y]$ respectively. Observe that if we choose a hyperbolic structure $\Sigma$ on $S$, then this induces a hyperbolic structure $\widetilde{\Sigma}$ on $\Std$. The natural identification of $\partial\Gamma$ with the visual boundary $\partial\widetilde{\Sigma}$ of $\widetilde{\Sigma}$ then realizes geodesics in $\Std$ (or $S$) as hyperbolic geodesics in $\widetilde{\Sigma}$ (or $\Sigma$). 

Of course, closed geodesics in $S$ and geodesics in $S$ can be explicitly related in the following way. Any $\gamma\in\Gamma\setminus\{\id\}$ has an attracting and a repelling fixed point in $\partial\Gamma$, which we denote by $\gamma^+$ and $\gamma^-$ respectively. This allows us to define the map $F:\Cmc\Gmc(S)\to\Gmc(S)$ by $F:[[\gamma]]\mapsto[\gamma^-,\gamma^+]$. More informally, this sends every closed geodesic to the bi-infinite geodesic that ``wraps around" it. Note that the map $F$ is not injective; if $\gamma\in\Gamma$ is primitive, then $F^{-1}(F[[\gamma]])=\{[[\gamma^n]]:n\in\Zbbb\setminus\{0\}\}$. 

Finally, we have a notion for when two geodesics intersect transversely. 

\begin{definition}
We say that $\{a,b\},\{c,d\}\in\Gmc(\Std)$ \emph{intersect transversely} if $a,c,b,d$ lie in $\partial\Gamma$ in that (strict) cyclic order. Similarly, two geodesics in $\Gmc(S)$ \emph{intersect transversely} if they have representatives in $\Gmc(\Std)$ that intersect transversely, and two closed geodesics in $\Cmc\Gmc(S)$ \emph{intersect transversely} if their images under the map $F$ described above intersect transversely in $\Gmc(S)$.
\end{definition}

If we equip $S$ with a hyperbolic structure $\Sigma$, then a pair of geodesics or closed geodesics in $S$ intersect transversely if and only if they intersect transversely as geodesics or closed geodesics for the metric on $\Sigma$.

\subsection{Cross ratios and geodesic currents}\label{Cross ratios and positively ratioed}
The reader should be cautioned that there are many non-equivalent definitions of cross ratios in the literature, even in the restricted setting of Anosov representations. The definition we use here is one given by Ledrappier (Definition 1.f of \cite{Led2}). Consider the set 
\[\partial \Gamma^{[4]}:=\{(a,b,c,d)\in\partial\Gamma^4\colon \{a,b\}\cap\{c,d\}=\emptyset \}.\] 

\begin{definition}\label{def:crossratio}\
\begin{itemize}
\item A \emph{cross ratio} is a continuous function $B:\partial \Gamma^{[4]}\to\Rbbb$ that is invariant under the diagonal action of $\Gamma$ and satisfies the following:
\begin{enumerate}
	\item \emph{(Symmetry)} $B(x,y,z,w)=B(z,w,x,y)$;
	\item \emph{(Additivity)} $B(x,y,z,w)+B(x,y,w,u)=B(x,y,z,u)$.
\end{enumerate}
for all $x,y,z,w,u\in \partial \Gamma$ such that $(x,y,z,w),(x,y,w,u)\in\partial \Gamma^{[4]}$.
\item For any $[[\gamma]]=c\in\Cmc\Gmc(S)$, the \emph{$B$-period} of $c$ is $\ell_B(c):=B(\gamma^-,\gamma^+,\gamma\cdot a,a)$ for some $a\in\partial\Gamma-\{\gamma^-,\gamma^+\}$. 
\end{itemize}
\end{definition}

One easily shows that the $B$-period of $c$ does not depend on the choice of $a$ or $\gamma$. The following theorem of Otal (Theorem 2.2 of \cite{Ota1}, see also Theorem 1.f of Ledrappier \cite{Led2}) states that any cross ratio $B$ is determined by the $B$-periods.

\begin{thm}[Otal]\label{otal}
If $B_1,B_2:\partial\Gamma^{[4]}\to\Rbbb$ are cross ratios so that $\ell_{B_1}(c)=\ell_{B_2}(c)$ for all $c\in\Cmc\Gmc(S)$, then $B_1=B_2$.
\end{thm}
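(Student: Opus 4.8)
The plan is to reduce the statement to a density argument: two continuous $\Gamma$-invariant functions on $\partial\Gamma^{[4]}$ that agree on a suitably large subset must coincide everywhere. Concretely, I would first use the additivity and symmetry axioms to show that the value of a cross ratio $B$ at a point $(x,y,z,w)$ can be recovered from its values at ``special'' configurations involving fixed points of elements of $\Gamma$. The key observation is that the set
\[
\Fmc:=\{(\gamma^-,\gamma^+,\eta^-,\eta^+):\gamma,\eta\in\Gamma\setminus\{\id\},\ (\gamma^-,\gamma^+,\eta^-,\eta^+)\in\partial\Gamma^{[4]}\}
\]
is dense in $\partial\Gamma^{[4]}$, since the pairs of fixed points $(\gamma^-,\gamma^+)$ are dense in $\Gmc(\Std)$ (a standard fact for hyperbolic groups: the attracting fixed points of infinite-order elements are dense in $\partial\Gamma$, and one can arrange a second independent element to pin down the other pair). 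So if I can show $B$ is determined on $\Fmc$ by the periods $\ell_B$, then continuity of $B_1$ and $B_2$ forces $B_1=B_2$ on all of $\partial\Gamma^{[4]}$.

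So the heart of the matter is: for $\gamma,\eta\in\Gamma\setminus\{\id\}$ in general position, express $B(\gamma^-,\gamma^+,\eta^-,\eta^+)$ in terms of periods $\ell_B([[\,\cdot\,]])$. First I would record the elementary consequences of the axioms: antisymmetry in each pair of slots, $B(x,y,z,w)=-B(y,x,z,w)=-B(x,y,w,z)$ (from additivity with a degenerate term, or from the cocycle identity), and the cocycle relation $B(x,y,z,w)=B(x,u,z,w)+B(u,y,z,w)$ in the first two variables via symmetry plus the stated additivity in the last two. Then, by definition, $\ell_B([[\gamma]])=B(\gamma^-,\gamma^+,\gamma\cdot a,a)$ for any $a\notin\{\gamma^\pm\}$. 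The strategy of Otal is to choose $a$ cleverly — typically $a=\eta^+$ or $a=\eta^-$ — and use the $\Gamma$-invariance of $B$ together with the additivity/cocycle identities to telescope $B(\gamma^-,\gamma^+,\eta^-,\eta^+)$ into a combination of periods of $\gamma$, $\eta$, and products like $\gamma\eta$, $\gamma\eta^{-1}$, etc. I expect the cleanest route is to write $B(\gamma^-,\gamma^+,\eta^+,\eta^-)$ as a limit: iterate $\gamma$ to push $\eta^+$ toward $\gamma^+$ and use $B(\gamma^-,\gamma^+,\gamma^n\eta^+,\gamma^{n-1}\eta^+)=B(\gamma^{-n+1}\gamma^-,\gamma^{-n+1}\gamma^+,\gamma\eta^+,\eta^+)=B(\gamma^-,\gamma^+,\gamma\eta^+,\eta^+)$, so the additive telescoping sum $\sum_{k=0}^{n-1}B(\gamma^-,\gamma^+,\gamma^{k+1}\eta^+,\gamma^k\eta^+)=n\,\ell_B([[\gamma]])$ up to controlled boundary terms that converge by continuity. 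Comparing this asymptotic expansion for $B_1$ and $B_2$ — which have equal periods by hypothesis — yields $B_1(\gamma^-,\gamma^+,\eta^-,\eta^+)=B_2(\gamma^-,\gamma^+,\eta^-,\eta^+)$ for all such $\gamma,\eta$.

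The main obstacle, and the step I would be most careful about, is the bookkeeping in this telescoping/limiting argument: one must verify that the ``defect'' between $B(\gamma^-,\gamma^+,\eta^-,\eta^+)$ and the partial sum of periods really is a single continuous boundary term (of the form $B(\gamma^-,\gamma^+,z,\eta^-)$ with $z\to\gamma^+$) rather than something accumulating, and that $\gamma^k\eta^+\to\gamma^+$ ensures the relevant $4$-tuples stay in $\partial\Gamma^{[4]}$ for large $k$ so the continuity estimate applies. An alternative, perhaps more robust, packaging is to build the associated geodesic current or Hölder cocycle à la Ledrappier directly from $B$ and invoke that the current is determined by its periods on closed geodesics; but since the problem explicitly cites Otal's Theorem 2.2, the density-plus-telescoping proof above is the one I would write out, deferring the delicate convergence estimate to a short lemma. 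Everything else — the algebraic identities from the axioms and the density of fixed-point configurations — is routine.
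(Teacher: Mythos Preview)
The paper does not give its own proof of this statement: Theorem~\ref{otal} is quoted as a result of Otal (Theorem~2.2 of \cite{Ota1}; see also Ledrappier \cite{Led2}) and is used as a black box throughout Section~\ref{PRR}. So there is no ``paper's proof'' to compare against; what remains is to assess whether your sketch actually closes.

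It does not, and the gap is precisely the one you flag but then wave away. Your telescoping identity is correct: by additivity and the definition of the period,
\[
B(\gamma^-,\gamma^+,\gamma^n\eta^+,\eta^+)=\sum_{k=0}^{n-1}B(\gamma^-,\gamma^+,\gamma^{k+1}\eta^+,\gamma^k\eta^+)=n\,\ell_B([[\gamma]]).
\]
Subtracting the analogous identity for $B_2$ from that for $B_1$ and using additivity in the last slot gives, with $D:=B_1-B_2$,
\[
D(\gamma^-,\gamma^+,\eta^-,\eta^+)=D(\gamma^-,\gamma^+,\eta^-,\gamma^n\eta^+)\quad\text{for all }n\in\Zbbb.
\]
But this is where the argument stalls. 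You propose to let $n\to\infty$ and use continuity, so that the right side tends to $D(\gamma^-,\gamma^+,\eta^-,\gamma^+)$. That point is \emph{not} in $\partial\Gamma^{[4]}$ (the second and fourth entries coincide), and nothing in the axioms forces $D$ to extend continuously there; indeed for a generic cross ratio $B(\gamma^-,\gamma^+,\eta^-,z)$ diverges as $z\to\gamma^+$. So the displayed identity only says that the unknown number $D(\gamma^-,\gamma^+,\eta^-,\eta^+)$ equals itself. The ``single continuous boundary term'' you hope for does not exist in general, and density of fixed-point quadruples alone cannot finish the job.

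What is actually needed is the extra input you mention at the end: one reinterprets $B$ as a H\"older cocycle over the geodesic flow on $T^1\Sigma$ (for any auxiliary hyperbolic structure $\Sigma$) and invokes the Liv\v{s}ic theorem, which says that a H\"older cocycle with vanishing periods over all periodic orbits is a coboundary, hence its associated cross ratio vanishes. This is the route taken in \cite{Ota1} and \cite{Led2}. Your density-plus-telescoping argument is the right warm-up and yields the reduction ``$D$ has zero periods $\Rightarrow$ $D$ is $\langle\gamma\rangle$-invariant in its last slot for every $\gamma$'', but the passage from that invariance to $D\equiv 0$ is the genuine content of the theorem and requires the dynamical (Liv\v{s}ic-type) step, not just continuity at a boundary point of $\partial\Gamma^{[4]}$.
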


Cross ratios are intimately related with geodesic currents, which we will now define. The notion of a geodesic current was first introduced by Bonahon \cite{Bon1}, who used them to study Teichm\"uller space.

\begin{definition}
A \emph{geodesic current on $S$} is a $\Gamma$-invariant, locally finite (non-signed) Borel measure on $\Gmc(\Std)$. 
\end{definition}

Denote the space of geodesic currents on $S$ by $\Cmc(S)$. It can be naturally realized as an open cone in an infinite dimensional vector space equipped with the weak$^*$ topology (see Section 1 of Bonahon \cite{Bon2}). The $\Gamma$-invariance in the above definition ensures that every geodesic current $\nu\in\Cmc(S)$ descends to a finite measure $\widehat{\nu}$ on the compact space $\Gmc(S)$. However, the $\Gamma$-action on $\Gmc(\Std)$ is not proper, so $\Gmc(S)$ is not Hausdorff. As such, it is often more convenient to work with $\nu$ instead of $\widehat{\nu}$. 

An important example of geodesic currents are the ones associated to closed geodesics. Given any closed geodesic $c=[[\gamma]]\in\Cmc\Gmc(S)$, let $\mu_c\in\Cmc(S)$ be the geodesic current defined by
\[
\mu_c=\sum_{\eta\cdot\langle\gamma\rangle\in\Gamma/\langle\gamma\rangle}\delta_{\{\eta\cdot\gamma^+,\eta\cdot\gamma^-\}},
\]
where $\delta_{\{x,y\}}$ is the Dirac measure supported at the point $\{x,y\}\in\Gmc(\Std)$. When $c$ is primitive, $\hat\mu_c$ is the Dirac measure on $F(c)$. This realizes $\Cmc\Gmc(S)$ as a subset of $\Cmc(S)$. \textbf{Henceforth, we will blur the distinction between $\Cmc\Gmc(S)$ and the subset of $\Cmc(S)$ it is identified with, by using $c$ to denote $\mu_c$.}

On the space of geodesic currents, Bonahon (Section 4.2 of \cite{Bon1}) defined an important function that we will now describe. 

Let $\Dmc\Gmc(\Std)\subset\Gmc(\Std)\times\Gmc(\Std)$ be the open subset defined by 
\[\Dmc\Gmc(\Std):=\{(l_1,l_2)\in\Gmc(\Std)\times\Gmc(\Std):l_1,l_2\text{ intersect transversely}\}.\]
Note that $\Dmc\Gmc(\Std)$ is stabilized by the diagonal $\Gamma$ action on $\Gmc(\Std)\times\Gmc(\Std)$, so we can define $\Dmc\Gmc(S):=\Dmc\Gmc(\Std)/\Gamma$. In this case, the $\Gamma$ action on $\Dmc\Gmc(\Std)$ is proper, so $\Dmc\Gmc(S)$ is a Hausdorff space. For any $\mu,\nu\in\Cmc(S)$, the $\Gamma$-invariant measure $\mu\times\nu$ on $\Dmc\Gmc(\Std)$ descends to a measure $\widehat{\mu\times\nu}$ on $\Dmc\Gmc(S)$. 

\begin{definition}\label{def:intersection form}
The \emph{intersection form} on $\Cmc(S)$ is the map $i:\Cmc(S)\times\Cmc(S)\to\Rbbb$ given by
$i(\mu,\nu)=\widehat{\mu\times\nu}(\Dmc\Gmc(S))$.
\end{definition}

Bonahon proved that the intersection form is well-defined and continuous, and it is easy to verify that it is symmetric and bilinear. He also proves that if $c,c'\in\Cmc\Gmc(S)$ then $i(c,c')$ gives the geometric intersection number between $c$ and $c'$. More properties of the intersection form are later explained in Section \ref{properties of the intersection form}.

There are several ways one can relate geodesic currents to cross ratios. One way to do so is to associate to every H\"older continuous cross ratio a Gibbs current (Hamenst\"adt \cite{Ham1}, also see Ledrappier \cite{Led2}). However, this is not useful for the purposes of this paper because it is not easy to read off the periods of the cross ratio from the Gibbs current. Instead, given a cross ratio $B$, we would like to find an intersection current, which we now define.

\begin{definition}
Let $B$ be a cross ratio. A geodesic current $\mu$ is an \emph{intersection current} for $B$ if $\ell_B(c)=i(\mu,c)$ for all $c\in\Cmc\Gmc(S)$. 
\end{definition}

Unfortunately, it turns out that there are cross ratios for which the intersection current does not exist. On the other hand, Hamenst\"adt observed (page 103 and 104 of \cite{Ham2}) that one can always find intersection currents for cross ratios that satisfy the following positivity condition.

\begin{definition} \label{positive}
A cross ratio $B$ is \emph{positive} if for all $x, y, z, w\in\partial\Gamma$ in this cyclic order, one has $B(x,y,z,w)\geq 0$.
\end{definition}

\begin{thm}[Hamenst\"adt]\label{cross ratio intersection} 
If $B:\partial\Gamma^{[4]}\to\Rbbb$ is a positive cross ratio, then it has a unique intersection current. 
\end{thm}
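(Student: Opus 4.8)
The plan is to build the intersection current $\mu$ directly from the cross ratio $B$ as a measure on $\Gmc(\Std)$, by first defining it on "boxes" of geodesics and then extending by the usual measure-theoretic machinery. Concretely, for four points $a, b, c, d \in \partial\Gamma$ in this cyclic order, consider the set of geodesics $\{x,y\}$ with $x$ in the arc $(a,b)$ and $y$ in the arc $(c,d)$; call such a set a box $[a,b;c,d]$. These boxes generate the Borel $\sigma$-algebra on $\Gmc(\Std)$. First I would use positivity together with the additivity and symmetry axioms to check that the assignment $[a,b;c,d] \mapsto B(a,c,b,d)$ (with the correct sign/orientation convention — this is exactly the quantity that is nonnegative when the four points are in cyclic order) is finitely additive when a box is cut into sub-boxes by inserting a point into one of the two arcs: additivity of $B$ in one slot gives $B(a,c,b,d) = B(a,c,b,e) + B(a,e,b,d)$ type identities, and symmetry lets one do the same in the other pair of arguments. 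So the box function is a nonnegative, finitely additive, $\Gamma$-invariant premeasure.

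The second step is to promote this premeasure to a genuine Borel measure. Here I would invoke continuity of $B$ on $\partial\Gamma^{[4]}$: as the boxes shrink (say $b \to a$ or $d \to c$), the value $B(a,c,b,d)$ tends to $0$, which gives the countable-additivity / $\sigma$-additivity needed to apply a Carathéodory-type extension theorem and produce a Borel measure $\mu$ on $\Gmc(\Std)$. Local finiteness of $\mu$ follows because any compact subset of $\Gmc(\Std)$ is covered by finitely many boxes of finite $B$-value, and $\Gamma$-invariance of $\mu$ is inherited from the $\Gamma$-invariance of $B$ built into the premeasure on boxes. Hence $\mu \in \Cmc(S)$.

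The third step is to verify the defining property $\ell_B(c) = i(\mu, c)$ for every $c = [[\gamma]] \in \Cmc\Gmc(S)$. By definition $i(\mu, c) = \widehat{\mu \times \mu_c}(\Dmc\Gmc(S))$, and since $\mu_c$ is a sum of Dirac masses over $\Gamma/\langle\gamma\rangle$, this unwinds to $\mu$-measuring, inside a fundamental domain for $\langle\gamma\rangle$, the set of geodesics crossing the axis $\{\gamma^-,\gamma^+\}$. That set of crossing geodesics, restricted to one $\langle\gamma\rangle$-fundamental domain along $\partial\Gamma$, is precisely a box of the form $[a, \gamma a; \text{(complementary arc)}]$ for a basepoint $a \in \partial\Gamma \setminus \{\gamma^\pm\}$, split along $\gamma^-$ and $\gamma^+$; summing the $\mu$-masses (i.e. $B$-values) of the relevant pieces and using additivity telescopes to exactly $B(\gamma^-, \gamma^+, \gamma a, a) = \ell_B(c)$. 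This is the step I expect to require the most care: matching up the bookkeeping of which arcs and endpoints appear, checking the boundary geodesics (those through $\gamma^\pm$ themselves, or through $a$ and $\gamma a$) carry no mass, and confirming the orientation conventions so the signs land correctly.

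Finally, uniqueness is essentially free: if $\mu$ and $\mu'$ are both intersection currents for $B$, then $i(\mu, \cdot)$ and $i(\mu', \cdot)$ are both cross ratios (in the sense of Definition \ref{def:crossratio}, via the standard construction sending a geodesic current to its associated cross ratio) with the same periods, namely $\ell_B(c)$; Theorem \ref{otal} (Otal) then forces the two cross ratios to coincide, and since a geodesic current is determined by the cross ratio $i(\mu, \cdot)$ it induces — equivalently, since $i(\mu, \cdot) = i(\mu', \cdot)$ on all of $\Cmc\Gmc(S)$, hence on its closure $\Cmc(S)$ by continuity and density — we get $\mu = \mu'$. The main obstacle throughout is the third step's explicit identification of the intersection pairing against a closed geodesic with a telescoping sum of $B$-periods; the measure-theoretic extension in steps one and two, while needing positivity and continuity crucially, is otherwise routine.
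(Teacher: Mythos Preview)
Your proposal is correct and follows essentially the same route as the paper's proof in Appendix~\ref{From cross ratios to geodesic currents}: define a premeasure on boxes via $B$, use continuity of $B$ together with a compactness argument to get countable additivity, extend by Carath\'eodory (with $\sigma$-finiteness), and then verify periods via Lemma~\ref{compute length}(1). The only notable difference is in the uniqueness step: the paper obtains it from $\sigma$-finiteness and uniqueness of the Carath\'eodory extension (any intersection current must assign $B(x,y,z,w)$ to each box), whereas you route through Otal and density of $\Cmc\Gmc(S)$ in $\Cmc(S)$---both work, and yours is arguably more transparent.
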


The proof of Theorem \ref{cross ratio intersection} is a standard argument from analysis. However, for lack of a good reference for the proof, we give the full proof in Appendix \ref{From cross ratios to geodesic currents}. 

\subsection{Background on Semisimple Lie groups} \label{background}

We would like to exploit the existence of intersection currents for positive cross ratios to study the length functions of a certain class of Anosov representations. To define Anosov representations, we need to recall some basic facts regarding non-compact, semisimple, real algebraic groups and their real representations. See Chapter 2 of Eberlein \cite{Ebe1}, Chapter VI.3 of Helgason \cite{Hel1}, Chapter I - III of Humphreys \cite{Hum1}, and Section 4 of Gu\'eritaud-Guichard-Kassel-Wienhard \cite{GGKW} for more details. 

Let $G$ be a non-compact, semisimple, Lie group with Lie algebra $\mathfrak g$. We will also assume that $G$ is a finite union of connected components (for the real topology) of the real points $\mathbf{G}(\mathbb{R})$ of some algebraic group $\mathbf{G}$, and that the adjoint action of $G$ on its Lie algebra is by inner automorphisms, i.e. $\Ad(G)\subset\mathrm{Aut}(\mathfrak{g})_0$. It is well-known that there is a unique non-positively curved Riemannian symmetric space $X$ on which $G$ acts transitively by isometries, so that for any point $p\in X$, the stabilizer in $G$ of $p$ is a maximal compact Lie subgroup $K\subset G$. The transitivity of the $G$-action on $X$ implies that $X\simeq G/K$ as $G$-spaces. 

Let $\kmf\subset\gmf$ be the Lie subalgebra of $K\subset G$. One can prove that $\kmf\subset\gmf$ is a maximal subspace on which the Killing form on $\gmf$ is negative definite. Since $\gmf$ is semisimple, the Killing form on $\gmf$ is non-degenerate. This gives an orthogonal decomposition $\gmf=\kmf+\pmf$. 

\begin{definition}
The \emph{Cartan involution} $\tau_K:\gmf\to\gmf$ is the involution so that $\tau_K|_{\kmf}=\id$ and $\tau_K|_{\pmf}=-\id$. 
\end{definition}

Geometrically, via the canonical identification $T_pX\simeq \gmf/\kmf\simeq\pmf$, the involution $\tau_K|_{\pmf}:T_pX\to T_pX$ is the derivative of the geodesic involution of $X$ at $p$.

A useful way to study $G$ is to consider its linear representations. To do so, we will consider its restricted weights, which we now define. Let $\mathfrak a$ be a maximal abelian subspace in $\pmf$, then $\amf\subset\pmf\simeq T_pX$ is the tangent space to a maximal flat $F$ in $X$ containing $p$, i.e. $\exp_p(\amf)=F$. Given an irreducible, real, finite dimensional, linear representation $r:G\to \GL(V)$ and $\alpha\in\mathfrak a^*$, define
\[V_\alpha:=\{w\in V:r(\exp(v))w=e^{\alpha(v)}w\text{ for all }v\in\mathfrak a\}.\]

\begin{definition}
We call $\alpha$ a \emph{restricted weight} of the representation $(r,V)$ if $\alpha\neq 0$ and $V_\alpha$ is non-empty. If $\alpha$ is a restricted weight, then $V_\alpha$ is a \emph{restricted weight space}.
\end{definition}

Let $\Phi(r,V)$ denote the set of restricted weights of $(r,V)$. Since $r(\exp(\mathfrak a))$ is simultaneously diagonalizable over $\Rbbb$, we can decompose
\[V=V_0+\sum_{\alpha\in\Phi(r,V)}V_\alpha\]
into its restricted weight spaces. If we specialize to the adjoint representation $(r,V)=(\Ad,\mathfrak g)$, then the restricted weights of this representation are called the \emph{restricted roots}, and the restricted weight spaces are called the \emph{restricted root spaces}. In this case, we use the notation $\Sigma:=\Phi(\Ad,\mathfrak g)$ and $\mathfrak g_\alpha:=V_\alpha$. 

Let $\Phi:=\bigcup_{(r,V)}\Phi(r,V)$, where the union is taken over all irreducible, real, finite dimensional, linear representations of $G$. It turns out that there is an easy description of $\Phi$ in terms of $\Sigma$:
\[\{0\}\cup\Phi=\left\{\alpha\in\mathfrak a^*:2\frac{(\alpha,\beta)}{(\beta,\beta)}\in\Zbbb\text{ for all }\beta\in\Sigma\right\}.\]
where $(\cdot,\cdot)$ is the Killing form on $\mathfrak a^*$. In particular, $\{0\}\cup\Phi\subset\mathfrak a^*$ is a lattice. One would then like to find a base for the lattice $\{0\}\cup\Phi$. 

To do so, choose any $v_0\in\mathfrak a$ so that $\alpha(v_0)\neq 0$ for all $\alpha\in\Sigma$, and let 
\[\Sigma^+:=\{\alpha\in\Sigma:\alpha(v_0)>0\}.\] 
It is a standard fact that $\alpha\in\Sigma$ if and only if $-\alpha\in\Sigma$, so $\Sigma=\Sigma^+\cup\{-\alpha:\alpha\in\Sigma^+\}$. 

\begin{definition}
A restricted root in $\Sigma^+$ is \emph{simple} if it cannot be written as a non-trivial linear combination of the roots in $\Sigma^+$ with positive coefficients. 
\end{definition}

It turns out that the set of simple restricted roots, denoted $\Delta$, is a basis for $\mathfrak a^*$. However, $\Delta$ is not a base for the lattice $\{0\}\cup\Phi$. To convert $\Delta$ into a base, we perform an additional ``orthogonalization procedure" to every simple restricted root $\alpha$. This gives the following definition.

\begin{definition}
For any $\alpha\in\Delta$, the \emph{restricted fundamental weight} associated to $\alpha$ is the linear functional $\omega_\alpha\in\mathfrak a^*$ defined by
\[2\frac{(\omega_\alpha,\beta)}{(\beta,\beta)}=\delta_{\alpha,\beta}\text{ for all }\beta\in\Delta,\]
where $\delta_{\cdot,\cdot}$ is the Kronecker symbol. 
\end{definition}

It is well-known that $\{\omega_\alpha:\alpha\in\Delta\}$ is a base for the lattice $\{0\}\cup\Phi$. The choice of $v_0$ induces a natural partial ordering $\leq$ on $\{0\}\cup\Phi$ defined as follows. For any $\omega_1,\omega_2\in\{0\}\cup \Phi$, $\omega_1\leq\omega_2$ if $\omega_2-\omega_1$ is a non-negative linear combination of the simple roots in $\Delta$. For any irreducible representation $(r,V)$ of $G$, the set of weights $\{0\}\cup\Phi(r,V)$ has a unique maximal element in the partial ordering $\leq$. This is called the \emph{highest restricted weight} of $(r,V)$, and is a non-negative linear combination of the restricted fundamental weights.

With this, we can state the following theorem of Tits \cite{Tit1} that will play an important role later. Also, see Proposition 3.2 of Quint \cite{Qui1} or Lemma 4.5 of Gu\'eritaud-Guichard-Kassel-Wienhard \cite{GGKW}.

\begin{thm}[Tits]\label{prop:tits}
For any $\alpha\in\Delta$, there is an irreducible linear representation $r_\alpha:G\to \SL(n,\Rbbb)$ so that highest restricted weight $\chi$ of $(r_\alpha,\Rbbb^n)$ is a positive integer multiple of the restricted fundamental weight $\omega_\alpha$, and the weight space $(\Rbbb^n)_\chi$ is one-dimensional. 
\end{thm}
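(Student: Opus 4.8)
The plan is to construct $r_\alpha$ as an irreducible subrepresentation inside a sufficiently high tensor power of a faithful representation, and to isolate the relevant weight space via a highest-weight-vector argument. First I would fix a faithful irreducible (or direct sum of irreducible) representation $\rho_0:G\to\GL(W)$; since $G$ is real algebraic and semisimple, such a $W$ exists, and by the lattice description $\{0\}\cup\Phi=\{\alpha\in\mathfrak a^*: 2(\alpha,\beta)/(\beta,\beta)\in\Zbbb\ \forall\beta\in\Sigma\}$ every element of $\{0\}\cup\Phi$ is a restricted weight of \emph{some} finite-dimensional representation, hence (by taking tensor products, direct sums, and subrepresentations, and using that restricted weights add under tensor product) a suitable multiple of $\omega_\alpha$ occurs as the highest restricted weight of some irreducible $(r,V)$. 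I would make this precise as follows: choose $N$ so that $N\omega_\alpha$ lies in the monoid generated by the highest weights of the irreducible constituents of $W$, realize a representation with extreme weight $N\omega_\alpha$, and let $V$ be the irreducible constituent of that representation containing a vector of weight $N\omega_\alpha$. Setting $\chi:=N\omega_\alpha$, I then need: (i) $\chi$ is the highest restricted weight of $V$, and (ii) $V_\chi$ is one-dimensional.

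For (i), this is automatic once $V$ is the irreducible piece generated by a weight vector of weight $\chi$, provided $\chi$ is extreme — one checks that $\chi+\beta\notin\{0\}\cup\Phi(r,V)$ for all positive restricted roots $\beta$, using that $2(\omega_\alpha,\beta)/(\beta,\beta)\geq 0$ for all $\beta\in\Sigma^+$ (it equals $\delta_{\alpha,\beta'}$ summed with non-negative coefficients when $\beta$ is expanded in simple roots), so $\chi$ is $\leq$-maximal among weights of $V$. For (ii), the one-dimensionality of the highest restricted weight space is the subtler point: over $\Rbbb$ and working with \emph{restricted} weights (not the weights of a complexified maximal torus), one must rule out multiplicity. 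The standard route, which I would follow, is to pass to the complexification $\gmf_{\Cbbb}$, choose a maximal torus extending $\amf$, observe that the highest restricted weight space of $V$ is the sum of the $\gmf_{\Cbbb}$-weight spaces whose restriction to $\amf$ equals $\chi$, and then argue that only the genuine highest weight (for a compatible ordering) of the irreducible $\gmf_{\Cbbb}$-module restricts to $\chi$ — all other weights are strictly lower in the restricted ordering. Since the highest weight space of an irreducible module over $\gmf_{\Cbbb}$ is one-dimensional, this gives (ii). Finally, to land in $\SL(n,\Rbbb)$ rather than $\GL(n,\Rbbb)$: any finite-dimensional representation of a semisimple group has image in $\SL$ because $\gmf=[\gmf,\gmf]$ forces $\tr\circ\,dr=0$, and one rescales to a real form.

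The main obstacle I expect is the one-dimensionality in (ii): controlling restricted weight multiplicities of a real representation is genuinely more delicate than the complex highest-weight theory, because several complex weights can restrict to the same element of $\mathfrak a^*$. I would handle this by being careful about the choice of ordering on the complex root system compatible with $\Sigma^+$ (so that the complex highest weight restricts to the restricted highest weight and is the \emph{unique} complex weight doing so), invoking the theory of restricted root systems and the structure of the Satake diagram if needed; alternatively, one can cite Tits \cite{Tit1} or Lemma 4.5 of \cite{GGKW} for exactly this multiplicity-one statement, which is the pragmatic choice since the construction above is precisely the content of those references. The construction of $V$ as an irreducible constituent in a tensor power, and verifying extremality of $\chi$, are routine once the bookkeeping with the partial order $\leq$ is set up.
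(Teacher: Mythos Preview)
The paper does not prove this theorem: it is stated as a result of Tits \cite{Tit1}, with pointers to Proposition~3.2 of Quint \cite{Qui1} and Lemma~4.5 of \cite{GGKW}, and is then used as a black box. So there is no proof in the paper to compare your proposal against; you have in fact anticipated this at the end of your sketch when you note that citing Tits or \cite{GGKW} is ``the pragmatic choice.''

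That said, your sketch has a real gap at step (ii), which is worth flagging. It is \emph{not} true in general that for an irreducible real representation the highest restricted weight space is one-dimensional, nor that only a single complex weight of $V_{\Cbbb}$ restricts to the highest restricted weight $\chi$. The restricted highest weight space $V_\chi$ is a module for the compact group $M=Z_K(\amf)$, and its dimension is the dimension of that $M$-module; for a generic highest weight this can be larger than $1$. The content of Tits' theorem is precisely that one can choose the multiple $N$ of $\omega_\alpha$ so that the resulting $M$-representation on $V_\chi$ is one-dimensional --- this requires an argument about which dominant weights of the complexified group restrict to $N\omega_\alpha$ and how they interact with the Galois/Satake data, not just a compatibility of orderings. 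Your plan to ``choose an ordering on the complex root system compatible with $\Sigma^+$'' sets up the comparison but does not by itself force multiplicity one. If you want to actually carry this out rather than cite it, the cleanest route is to follow the argument in \cite{GGKW}, Lemma~4.5, which handles exactly this multiplicity issue.
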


We will refer to the representation $r_\alpha$ guaranteed by Theorem \ref{prop:tits} as an \emph{$\alpha$-Tits representation}.

The choice of $v_0$ also picks out a (closed) \emph{positive Weyl chamber} 
\[\overline{\mathfrak a^+}:=\{v\in\mathfrak a:\alpha(v)\geq 0\text{ for all }\alpha\in\Sigma^+\}.\]
One can show that $\overline{\mathfrak a^+}\subset T_pX\subset TX$ is a fundamental domain of the $G$-action on $TX$, i.e. for any $(p',v')\in TX$, there is a unique $v\in\overline{\mathfrak a^+}$ so that $g\cdot (p',v')=(p,v)$ for some $g\in G$. Since $X$ is complete, this means that for any pair of points $x,y\in X$, there is a unique vector $v_{x,y}\in\overline{\amf^+}$ so that $g\cdot x=p$ and $g\cdot y=\exp_p(v_{x,y})$ for some $g\in G$. This allows us to define the Weyl chamber valued distance as follows.

\begin{definition}
The \emph{Weyl chamber valued distance} is the function $d_{\overline{\mathfrak a^+}}:X\times X\to\overline{\amf^+}$ given by $d_{\overline{\mathfrak a^+}}(p,q):=v_{x,y}$.
\end{definition}

It is easy to see that the Weyl chamber valued distance descends to an injective map
$G\backslash(X\times X)\to\overline{\amf^+}$, and is thus a complete invariant of the $G$-orbits of pairs of points in $X$. Furthermore, $||d_{\overline{\mathfrak a^+}}(p,q)||=d(p,q)$, where $||\cdot||$ is the norm on $\overline{\amf^+}$ induced by the Riemannian metric on $X$ and $d$ is the distance on $X$. The classification of isometries on $G$ also implies that for any $g\in G$, either $\inf\{d(q,g\cdot q):q\in X\}=0$ or there is some $p\in X$ so that $\inf\{d(q,g\cdot q):q\in X\}=d(p,g\cdot p)$. Furthermore, if $\inf\{d(q,g\cdot q):q\in X\}=d(p,g\cdot p)=d(p',g\cdot p')$, then $d_{\overline{\mathfrak a^+}}(p,g\cdot p)=d_{\overline{\mathfrak a^+}}(p',g\cdot p')$. Using this, one can define the Jordan projection geometrically.

\begin{definition}
The \emph{Jordan projection} (sometimes also known as the \emph{Lyapunov projection}) is the map $\lambda_G=\lambda:G\to\overline{\mathfrak a^+}$ defined by 
\begin{itemize}
\item $\lambda:g\mapsto 0$ if $\inf\{d(q,g\cdot q):q\in X\}=0$, 
\item $\lambda:g\mapsto d_{\overline{\mathfrak a^+}}(p,g\cdot p)$ if there is some $p\in X$ so that $\inf\{d(q,g\cdot q):q\in X\}=d(p,g\cdot p)$.
\end{itemize}
\end{definition}

More algebraically, the Jordan projection can also be described as follows. The Jordan decomposition theorem (See Theorem 2.19.24 of Eberlein \cite{Ebe1}) ensures that any $g\in G$ can be written uniquely as a commuting product $g=g_hg_eg_u$, with $g_h$ hyperbolic, $g_e$ elliptic, and $g_u$ unipotent. Furthermore, the fact that $\overline{\amf^+}$ is a fundamental domain of the $G$ action on $TX$ implies that there is a unique vector $v_g\in\overline{\mathfrak a^+}$ so that $\exp(v_g)$ is conjugate to $g_h$. Then the Jordan projection is the map $\lambda:G\to\overline{\mathfrak a^+}$ that sends $g\in G$ to $v_g$. 

Since $X$ is non-positively curved, it has a visual boundary $\partial X$ that is topologically a sphere, and the $G$-action on $X$ extends to a $G$-action on $\partial X$. One can then consider the stabilizers in $G$ of points in $\partial X$.

\begin{definition}
A \emph{parabolic subgroup} of $G$ is the stabilizer of a point in $\partial X$. We say that two parabolic subgroups $P_1,P_2\subset G$ are \emph{opposite} if there is a geodesic in $X$ with endpoints $x_1,x_2\in\partial X$ so that $P_i=\Stab_G(x_i)$ for $i=1,2$.
\end{definition}

If $P_1$ and $P_1'$ are both parabolic subgroups that are opposite to $P_2$, then $P_1$ and $P_1'$ must be conjugate in $G$. As such, we can say that two conjugacy classes $[P_1]$ and $[P_2]$ are \emph{opposite} if for any representative $P_1$ of $[P_1]$ every opposite of $P_1$ lies in the conjugacy class $[P_2]$.

Using parabolic subgroups, we can define flag spaces.

\begin{definition}
Let $x\in\partial X$ and let $P:=\Stab_G(x)$. The $[P]$-flag space, denoted $\mathscr F_{[P]}$, is the $G$-orbit of $x$. 
\end{definition}

Observe that the $[P]$-flag space is a $G$-homogeneous space, so $\mathscr F_{[P]}\simeq G/P$ as $G$-spaces. In particular, as an abstract space, $\mathscr F_{[P]}$ does not depend on $x$. Furthermore, if $P'=gPg^{-1}$ for some $g\in G$, then there is canonical isomorphism between the $G$-spaces $G/P$ and $G/P'$. As such, $\mathscr F_{[P]}$ is well-defined, and depends only on the conjugacy class of $[P]$. 

The decomposition of $\gmf$ into its restricted root spaces can also be used to understand the parabolic subgroups of $G$. For any non-empty subset $\theta\subset\Delta$, the \emph{standard $\theta$-parabolic subgroup} is the parabolic subgroup $P_\theta\subset G$ with Lie algebra
\[
\mathfrak p_\theta:=\mathfrak g_0\oplus\bigoplus_{\alpha\in\Sigma^+}\mathfrak g_\alpha\oplus\bigoplus_{\alpha\in\Sigma^+\cap\Span_\Rbbb(\Delta-\theta)} \mathfrak g_{-\alpha}.
\]
Using this, we can define a map from non-empty subsets of $\Delta$ to conjugacy classes of parabolic subgroups in $G$ by $\theta\mapsto[P_\theta]$. It turns out that this map is in fact a bijection.

It is also well-known that there is some $g\in G$ so that $g\cdot\overline{\amf^+}=-\overline{\amf^+}$. In fact, for any $g'\in G$ so that $g'\cdot\overline{\amf^+}=-\overline{\amf^+}$, we know that $g'\cdot\amf=\amf$ and $g'|_{\amf}=g|_{\amf}$. Using this, we can define the \emph{opposition involution} $i:=-g\colon \overline{\mathfrak a^+}\to\overline{\mathfrak a^+}$. This gives an involution $\iota:\Delta\to\Delta$ defined by $\iota(\alpha)=\alpha\circ i$, which in turn induces an involution, also denoted by $\iota$, on conjugacy classes of parabolic subgroups defined by $\iota[P_\theta]\mapsto [P_{\iota(\theta)}]$. Geometrically, this involution sends the conjugacy class of any parabolic subgroup to the conjugacy class of its opposite. 

From the algebraic description of the Jordan projection $\lambda$, one can verify that for all $g\in G$ and for all $\alpha\in\Delta$, we have $\alpha\circ\lambda(g)=\iota(\alpha)\circ\lambda(g^{-1})$, which in turn implies that $\omega_{\alpha}\circ\lambda(g)=\omega_{\iota(\alpha)}\circ\lambda(g^{-1})$.

We finish this section by describing some of the objects defined above explicitly in the case when $G=\SL(n,\Rbbb)$, as this will be of particular importance to us. We can choose $K$ to be the maximal compact subgroup 
\[\SO(n):=\{k\in\SL(n,\Rbbb):k^Tk=\id\}\subset\SL(n,\Rbbb).\] 
In that case, $\kmf=\smf\omf(n)=\{A\in\smf\lmf(n,\Rbbb):A=-A^T\}$ and $\pmf=\{A\in\smf\lmf(n,\Rbbb):A=A^T\}$, so the Cartan involution $\tau_K:\smf\lmf(n,\Rbbb)\to\smf\lmf(n,\Rbbb)$ is given by $\tau:A\mapsto-A^T$. We can choose the maximal abelian subspace $\amf\subset\pmf$ to be the vector space of traceless diagonal matrices in $\smf\lmf(n,\Rbbb)$. This allows us to naturally identify 
\[\amf=\left\{(x_1,x_2,\dots,x_n)\in\Rbbb^n:\sum_{i=1}^n x_i=0\right\}.\]

With this identification, $\Sigma=\{\alpha_{i,j}\in\amf^*:i\neq j\}$, where $\alpha_{i,j}:\amf\to\Rbbb$ is given by $\alpha_{i,j}:(x_1,\dots,x_n)\mapsto x_i-x_j$. By making an appropriate choice of $v_0\in\amf$, we can also ensure that 
\begin{eqnarray*}
\Sigma^+&=&\{\alpha_{i,j}\in\amf^*:i<j\},\\
\Delta&=&\{\alpha_{i,i+1}\in\amf^*:i=1\dots,n-1\},\\
\overline{\amf^+}&=&\{(x_1,\dots,x_n)\in\amf:x_1\geq\dots\geq x_n\}.
\end{eqnarray*}
With these choices, the restricted fundamental weight $\omega_{\alpha_{i,i+1}}$ corresponding to $\alpha_{i,i+1}$ is given by the formula $\omega_{\alpha_{i,i+1}}(x_1,\dots,x_n)=x_1+\dots+x_i$, and the involution $\iota$ can also explicitly be given by $\iota(\alpha_{i,i+1})=\alpha_{n-i,n-i+1}$. Also, the Jordan projection evaluated on $g\in\SL(n,\Rbbb)$ is  
\[\lambda:g\mapsto \log \bar{g}\]
where $\bar{g}\in\SL(n,\Rbbb)$ is the diagonal matrix whose diagonal entries are the absolute values of the generalized eigenvalues of $g$, listed in descreasing order down the diagonal. The group element $\bar{g}$ is a conjugate of $g_h$.

Finally, if $\theta=\{\alpha_{1,2},\alpha_{n-1,n}\}$, then 
\[\mathfrak p_\theta=\left\{
\left(\begin{array}{ccccc}
*&*&\dots&*&*\\
0&*&\dots&*&*\\
\vdots&\vdots&\ddots&\vdots&\vdots\\
0&*&\dots&*&*\\
0&0&\dots&0&*
\end{array}\right)\in\smf\lmf(n,\Rbbb)\right\}
\]
and $\mathscr F_{[P_\theta]}=\{(L,H)\in\Rbbb\Pbbb^{n-1}\times(\Rbbb\Pbbb^{n-1})^*:L\in H\}$. As such, we refer to the conjugacy class $[P_\theta]$ as the \emph{line-hyperplane stabilizer} of $\SL(n,\Rbbb)$.

\subsection{Anosov representations and positively ratioed representations} \label{Anosov representations}
The notion of Anosov representations was first introduced by Labourie \cite{Lab1}, and later refined by Guichard-Wienhard \cite{GW}. Several other characterizations have been provided by Kapovich-Leeb-Porti \cite{KLP1} \cite{KLP2} and Gu\'eritaud-Guichard-Kassel-Wienhard \cite{GGKW}. In this article, we will only consider Anosov representations from the surface group $\Gamma$ to a non-compact, semisimple, real algebraic group, $G$.

Given a representation $\rho:\Gamma\to G$, a $\rho$-equivariant map $\xi\colon\partial\Gamma\to\mathscr F_{[P]}$ is \emph{dynamics-preserving} if for any $\gamma\in\Gamma\setminus\{\id\}$, $\xi(\gamma^+)$ is the attracting fixed point for the action of $\rho(\gamma)$ on $\mathscr F_{[P]}$. (In particular, $\rho(\gamma)$ has to have an attracting fixed point in $\mathscr F_{[P]}$.) A pair of maps $\xi:\partial\Gamma\to\mathscr F_{[P]}$ and $\eta:\partial\Gamma\to\mathscr F_{\iota[P]}$ is \emph{transverse} if for all $x\neq y$, $(\xi(x),\eta(y))$ lies in the unique open $G$-orbit of $\mathscr F_{[P]}\times\mathscr F_{\iota[P]}$. With this, we can define Anosov representations using a characterization by Gu\'eritaud-Guichard-Kassel-Wienhard (see Theorem 1.7 and Proposition 2.2 of \cite{GGKW}).

\begin{definition}\label{def:Anosov} A representation $\rho\colon\Gamma\to G$ is \emph{$[P]$-Anosov} if 
\begin{itemize}
\item there exist continuous, $\rho$-equivariant, dynamics-preserving and transverse maps $\xi\colon\partial\Gamma\to \mathscr F_{[P]}$ and $\eta\colon\partial\Gamma\to \mathscr F_{\iota[P]}$,
\item there exist $C,c>0$ such that $\alpha\circ\lambda\circ\rho(\gamma)>C\ell_\Gamma(\gamma)-c$ for all $\alpha\in\theta$ and $\gamma\in\Gamma$,
\end{itemize}
where $\ell_\Gamma(\gamma)$ is the translation distance of $\gamma\in\Gamma$ in the Cayley graph of $\Gamma$ with respect to some finite generating set. (Recall that $\lambda$ denotes the Jordan projection.) The maps $\xi$ and $\eta$ are called the \emph{limit curves} of $\rho$.
\end{definition}

Since the set of fixed points of group elements in $\Gamma$ is dense in $\partial\Gamma$, the maps $\xi$ and $\eta$ are unique. In particular, $\xi=\eta$ necessarily when $\theta=\iota(\theta)$. Also, it is a result of Guichard-Wienhard (Lemma 3.18 of \cite{GW}) that for any non-empty $\theta\subset\Delta$, $\rho:\Gamma\to G$ is $[P_\theta]$-Anosov if and only if it is $[P_{\theta\cap \iota(\theta)}]$-Anosov. Hence, we do not lose any generality by only considering parabolic subgroups of $P\subset G$ so that $[P]=\iota[P]$, i.e. non-empty subsets $\theta\subset\Delta$ so that $\theta=\iota(\theta)$. \begin{bf}We will do so for the rest of this article.\end{bf} Under this assumption, we can associate to any $[P]$-Anosov representation some natural length functions. 

\begin{definition}
Let $\rho:\Gamma\to G$ be a $[P_\theta]$-Anosov representation. 
\begin{itemize}
\item For any $\alpha\in\theta$, the \emph{$\alpha$-length function of $\rho$} is the function
\[\ell_\alpha^\rho:\Cmc\Gmc(S)\to\Rbbb\,\,\,\text{ given by }\,\,\,\ell^\rho_\alpha(c):=\big(\omega_\alpha+\omega_{\iota(\alpha)}\big)\circ\lambda\circ\rho(\gamma),\]
where $[[\gamma]]=c\in\Cmc\Gmc(S)$.
\item The \emph{entropy of $\ell_\alpha^\rho$} is the quantity
\[h(\ell_\alpha^\rho):=\limsup_{T\to\infty}\frac{1}{T}\log\#\{c\in\Cmc\Gmc(S)\colon \ell_\alpha^\rho(c)\leq T\}.\]
\end{itemize}
\end{definition}

One can verify that $\ell_\alpha^\rho$ is well-defined and $\ell^\rho_\alpha=\ell^\rho_{\iota(\alpha)}$. It is also a well-known consequence of the Anosovness of $\rho$ that $h(\ell^\rho_\alpha)\in\Rbbb^+$ (for example, see Theorem B of Sambarino \cite{Sam1}). When $G=\PSL(2,\Rbbb)$, one can choose $\amf$ to be the diagonal matrices in $\mathfrak s\mathfrak l(2,\Rbbb)$. If $\rho:\Gamma\to\PSL(2,\Rbbb)$ is a Fuchsian representation, then it is an easy exercise to verify that $\Delta=\{\alpha\}$, where $\alpha:\mathfrak a\to\Rbbb$ is defined by
\[\alpha:\left[\begin{array}{cc}
t&0\\
0&-t
\end{array}\right]\mapsto 2t,\]
and $\rho$ is $[P_\Delta]$-Anosov. In this case, for any $c\in\Cmc\Gmc(S)$, $\ell_\alpha^\rho(c)$ is the hyperbolic length of the geodesic $c$ measured in the hyperbolic structure $\Sigma$ on $S$ corresponding to $\rho$, and it is well-known that $h(\ell_\alpha^\rho)=1$.

For a general Anosov representation however, the length functions are so named purely by analogy as there is no natural metric on the surface that gives rise to these length functions.

As another example, we will consider projective Anosov representations. 

\begin{definition}
Let $[P]$ be the line-hyperplane stabilizer of $\SL(n,\Rbbb)$. A $[P]$-Anosov representation $\rho:\Gamma\to\SL(n,\Rbbb)$ is a \emph{projective Anosov representation}.
\end{definition} 

For any $g\in\SL(n,\Rbbb)$, let $\lambda(g)=(\lambda_1(g),\dots,\lambda_n(g))\in\overline{\amf^+}$ denote the Jordan projection of $g$. Recall that if $[P_\theta]$ is the line-hyperplane stabilizer of $\SL(n,\Rbbb)$, then $\theta=\{\alpha_{1,2},\alpha_{n-1,n}\}$. Hence, there is only one length function of $\rho$, which we will abbreviate by $\ell^\rho:\Cmc\Gmc(S)\to\Rbbb$. By the definition of $\ell^\rho$, we see that
\begin{eqnarray}\label{eqn:half}
\ell^\rho(c)&=&(\omega_{\alpha_{1,2}}+\omega_{\alpha_{n-1,n}})\circ\lambda\circ\rho(\gamma)\nonumber\\
&=&\lambda_1(\rho(\gamma))+\sum_{i=1}^{n-1}\lambda_i(\rho(\gamma))\\
&=&\lambda_1(\rho(\gamma))-\lambda_n(\rho(\gamma)).\nonumber
\end{eqnarray}

If $\rho:\Gamma\to\SL(n,\Rbbb)$ is projective Anosov, then the limit curve  
\[\xi:\partial\Gamma\to\mathscr F_{[P_\theta]}=\mathscr F_{\iota[P_\theta]}\]
corresponding to $\rho$ can be thought of as a pair of continuous, $\rho$-equivariant, maps $\xi^{(1)}:\partial\Gamma\to\Rbbb\Pbbb^{n-1}$ and $\xi^{(n-1)}:\partial\Gamma\to(\Rbbb\Pbbb^{n-1})^*\simeq\Gr(n-1,n)$ so that $\xi^{(1)}(x)\subset\xi^{(n-1)}(y)$ if and only if $x=y$. Since $\xi$ is dynamics preserving, we see that for any $\gamma\in\Gamma\setminus\{\id\}$, if $\gamma^+,\gamma^-\in\partial\Gamma$ denote the attractor and repeller of $\gamma$, then $\xi^{(1)}(\gamma^+)$ and $\xi^{(n-1)}(\gamma^-)$ are the attracting line and repelling hyperplane of $\rho(\gamma)$ respectively. 

In this case, Labourie \cite{Lab2} defined a function $L^\rho:\partial\Gamma^{[4]}\to\Rbbb$ given by 
\[L^\rho(x,y,z,w)=\frac{\langle\xi^{(n-1)}(x),\xi^{(1)}(y)\rangle\langle\xi^{(n-1)}(z),\xi^{(1)}(w)\rangle}{\langle\xi^{(n-1)}(x),\xi^{(1)}(w)\rangle\langle\xi^{(n-1)}(z),\xi^{(1)}(y)\rangle}.\]
Here, for any $p,q=x,y,z,w$, we choose a covector representative in $(\Rbbb^n)^*$ for $\xi^{(n-1)}(p)$ and a vector representative in $\Rbbb^n$ for $\xi^{(1)}(q)$ to evaluate $\langle\xi^{(n-1)}(p),\xi^{(1)}(q)\rangle$. One can verify that $L^\rho(x,y,z,w)$ does not depend on any of the choices made. 

We will refer to the function $L^\rho$ as the \emph{Labourie cross ratio}, even though it is not a cross ratio in the sense of Definition \ref{def:crossratio}. It is easy to see that 
\begin{equation}\label{Labourie additive}
L^\rho(x,y,z,w)\cdot L^\rho(x,w,z,t)=L^\rho(x,y,z,t).
\end{equation}
Furthermore, an easy computation proves that for all $\gamma\in\Gamma\setminus\{\id\}$ and for all $a\in\partial\Gamma\setminus\{\gamma^+,\gamma^-\}$, we have
\begin{equation}\label{Labourie period}
L^\rho(\gamma^-,\gamma\cdot a,\gamma^+,a)=\frac{e^{\lambda_1(\rho(\gamma))}}{e^{\lambda_n(\rho(\gamma))}}=L^\rho(\gamma\cdot a,\gamma^-,a,\gamma^+).
\end{equation}

Using (\ref{Labourie additive}), one can verify that the function
\[B^\rho(x,y,z,w):=\frac{1}{2}\log(L^\rho(x,z,y,w)\cdot L^\rho(z,x,w,y))\]
is indeed a cross ratio. Furthermore, for any $c=[[\gamma]]\in\Cmc\Gmc(S)$, (\ref{eqn:half}) and (\ref{Labourie period}) imply that 
\begin{eqnarray*}
\ell^\rho(c)&=&\frac{1}{2}\log(L^\rho(\gamma^-,\gamma\cdot a,\gamma^+,a)\cdot L^\rho(\gamma\cdot a, \gamma^-,a,\gamma^+))\\
&=&\ell_{B^\rho}(c).
\end{eqnarray*}
In particular, when $[P]$ is the line-hyperplane stabilizer of $\SL(n,\Rbbb)$, the length function of $[P]$-Anosov representations are the periods of a unique cross ratio (the uniqueness is a consequence of Theorem \ref{otal}). This is in fact true for any restricted simple root $\alpha\in\theta$ for any $[P_\theta]$-Anosov representation $\rho:\Gamma\to G$. To prove this, we need the following proposition, which is a special case of Proposition 4.3 of Guichard-Wienhard \cite{GW} (also see Proposition 4.6 of Gu\'eritaud-Guichard-Kassel-Wienhard \cite{GGKW}).

\begin{prop}[Guichard-Wienhard]\label{composition with Tits representation}
Let $\theta\subset\Delta$, let $\rho:\Gamma\to G$ be a $[P_\theta]$-Anosov representation, and let $\alpha\in\theta$. Also, let $r_\alpha:G\to\SL(n,\Rbbb)$ be an $\alpha$-Tits representation. Then $r_\alpha\circ\rho:\Gamma\to\SL(n,\Rbbb)$ is $[P]$-Anosov, where $[P]$ is the line-hyperplane stabilizer of $\SL(n,\Rbbb)$. Furthermore, the limit curve corresponding to $\rho$ is $f_\alpha\circ\xi:\partial\Gamma\to\mathscr F_{[P]}$, where $f_\alpha:\mathscr F_{[P_\theta]}\to\mathscr F_{[P]}$ is the unique $r_\alpha$-equivariant map.
\end{prop}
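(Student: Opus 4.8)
The plan is to verify the two conditions of Definition~\ref{def:Anosov} for $r_\alpha\circ\rho\colon\Gamma\to\SL(n,\Rbbb)$ relative to the line-hyperplane stabilizer $[P]$, the whole argument resting on first understanding the $r_\alpha$-equivariant map $f_\alpha$ between flag varieties, independently of $\rho$. Let $\chi$ be the highest restricted weight of $(r_\alpha,\Rbbb^n)$; by Theorem~\ref{prop:tits}, $\chi$ is a positive integer multiple of $\omega_\alpha$ and the weight space $\ell_\chi:=(\Rbbb^n)_\chi$ is one-dimensional. Since $\chi$ pairs nontrivially with the simple root $\alpha$ and trivially with every other simple root, $\chi-\beta$ is a restricted weight of $(r_\alpha,\Rbbb^n)$ only when $\beta=\alpha$; a weight-space computation then shows $\Stab_G(\ell_\chi)$ has Lie algebra $\mathfrak p_{\{\alpha\}}$, so $\Stab_G(\ell_\chi)=P_{\{\alpha\}}$. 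As $\alpha\in\theta$ we have $P_\theta\subset P_{\{\alpha\}}$, so composing the $G$-equivariant projection $\mathscr F_{[P_\theta]}\to\mathscr F_{[P_{\{\alpha\}}]}$ with the $r_\alpha$-equivariant embedding $gP_{\{\alpha\}}\mapsto r_\alpha(g)\cdot\ell_\chi$ produces an $r_\alpha$-equivariant map $f^{(1)}_\alpha\colon\mathscr F_{[P_\theta]}\to\Rbbb\Pbbb^{n-1}$. The dual representation $(r_\alpha)^*$ is an $\iota(\alpha)$-Tits representation (its highest weight is $\iota(\chi)$, a positive multiple of $\omega_{\iota(\alpha)}$, with one-dimensional highest weight space, since that space has the same dimension as the lowest weight space of $(r_\alpha,\Rbbb^n)$); running the same construction for it and using $\iota(\alpha)\in\iota(\theta)=\theta$ gives an $r_\alpha$-equivariant map $f^{(n-1)}_\alpha\colon\mathscr F_{[P_\theta]}\to(\Rbbb\Pbbb^{n-1})^*\simeq\Gr(n-1,n)$. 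Since $\ell_\chi$ lies in the span of all restricted weight spaces other than the lowest, evaluation at the base point gives $f^{(1)}_\alpha(z)\subset f^{(n-1)}_\alpha(z)$ for every $z$, so $f_\alpha:=(f^{(1)}_\alpha,f^{(n-1)}_\alpha)$ lands in $\mathscr F_{[P]}$; it is the unique $r_\alpha$-equivariant such map because $r_\alpha(P_\theta)$ fixes exactly one point of $\mathscr F_{[P]}$.

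Next I would check the first bullet of Definition~\ref{def:Anosov} for $r_\alpha\circ\rho$, with $f_\alpha\circ\xi$ as limit curve; it is continuous and $r_\alpha\circ\rho$-equivariant by construction. For transversality it is enough, by $r_\alpha$-equivariance of $f_\alpha$, to check that one transverse pair in $\mathscr F_{[P_\theta]}\times\mathscr F_{\iota[P_\theta]}$ is sent to a transverse pair in $\mathscr F_{[P]}\times\mathscr F_{[P]}$; taking the standard parabolic and its opposite, this reduces to the direct-sum decomposition $\Rbbb^n=\ell_\chi\oplus\bigoplus_{\mu\neq\chi}(\Rbbb^n)_\mu$ together with the analogous one at the lowest weight, where the one-dimensionality of the two extreme weight spaces is exactly what is used. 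For the dynamics-preserving property, fix $\gamma\in\Gamma\setminus\{\id\}$. Since $\rho$ is $[P_\theta]$-Anosov, $\alpha(\lambda_G(\rho(\gamma)))>0$; and for every restricted weight $\mu\neq\chi$ of $(r_\alpha,\Rbbb^n)$, the difference $\chi-\mu$ is a nonnegative integer combination of simple restricted roots in which $\alpha$ occurs with coefficient at least $1$ (the only simple root one may subtract from $\chi$ and remain among the weights is $\alpha$), so $(\chi-\mu)(\lambda_G(\rho(\gamma)))\geq\alpha(\lambda_G(\rho(\gamma)))>0$. Applying the same estimate to $(r_\alpha)^*$, the element $r_\alpha(\rho(\gamma))$ has a unique eigenvalue of strictly largest modulus and a unique one of strictly smallest modulus, so it has an attracting fixed point in $\mathscr F_{[P]}$; its line component is $f^{(1)}_\alpha(\xi(\gamma^+))$ and its hyperplane component is $f^{(n-1)}_\alpha(\xi(\gamma^+))$, because the projections of $\xi(\gamma^+)$ are the attracting fixed flags of $\rho(\gamma)$ in $\mathscr F_{[P_{\{\alpha\}}]}$ and $\mathscr F_{[P_{\{\iota(\alpha)\}}]}$ ($\xi$ being dynamics-preserving for $\rho$). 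Hence $f_\alpha\circ\xi$ is dynamics-preserving for $r_\alpha\circ\rho$.

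Finally I would establish the uniform gap and conclude. Because the Jordan decomposition is preserved by the morphism $r_\alpha$, the moduli of the eigenvalues of $r_\alpha(g)$ are $\{e^{\mu(\lambda_G(g))}\}$ over the restricted weights $\mu$ of $(r_\alpha,\Rbbb^n)$ counted with multiplicity; since $\lambda_G(g)\in\overline{\mathfrak a^+}$ and $\dim(\Rbbb^n)_\chi=1$, the two largest of these, in the ordering that defines $\lambda_{\SL(n,\Rbbb)}(r_\alpha(g))$, are $\chi(\lambda_G(g))$ and $\max_{\mu\neq\chi}\mu(\lambda_G(g))$, whence the weight inequality above yields
\[\alpha_{1,2}\bigl(\lambda_{\SL(n,\Rbbb)}(r_\alpha(g))\bigr)=\chi(\lambda_G(g))-\max_{\mu\neq\chi}\mu(\lambda_G(g))\geq\alpha(\lambda_G(g))\]
for all $g\in G$, and likewise $\alpha_{n-1,n}\circ\lambda_{\SL(n,\Rbbb)}\circ r_\alpha\geq\iota(\alpha)\circ\lambda_G$ (cf.\ Proposition~3.2 of Quint~\cite{Qui1} or Lemma~4.5 of~\cite{GGKW} for the parallel statement on Cartan projections). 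Feeding in the exponential lower bounds for $\alpha\circ\lambda_G\circ\rho$ and $\iota(\alpha)\circ\lambda_G\circ\rho$, available since $\alpha,\iota(\alpha)\in\theta$, coming from the second bullet of Definition~\ref{def:Anosov} for $\rho$, we obtain the required lower bound for $r_\alpha\circ\rho$; hence $r_\alpha\circ\rho$ is $[P]$-Anosov, and uniqueness of limit curves identifies its limit curve as $f_\alpha\circ\xi$. I expect the main obstacle to be the Lie-theoretic input rather than the Anosov formalism: identifying $\Stab_G(\ell_\chi)=P_{\{\alpha\}}$, describing the eigenvalue moduli of $r_\alpha(g)$ in terms of $\lambda_G(g)$, and verifying the weight inequality $(\chi-\mu)(\lambda_G(g))\geq\alpha(\lambda_G(g))$ for $\mu\neq\chi$; these are precisely the places where the defining properties of an $\alpha$-Tits representation ($\chi$ a positive multiple of $\omega_\alpha$ and $\dim(\Rbbb^n)_\chi=1$) enter. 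Once they are in hand, the rest is a routine check of Definition~\ref{def:Anosov}.
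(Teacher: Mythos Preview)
The paper does not prove this proposition; it quotes it as a special case of Proposition~4.3 of Guichard--Wienhard~\cite{GW} (and Proposition~4.6 of~\cite{GGKW}) and only adds, after the statement, an explicit description of $f_\alpha$ as $gP_\theta\mapsto r_\alpha(g)P$. There is therefore no ``paper's own proof'' to compare to.

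Your proposal is a correct direct verification of Definition~\ref{def:Anosov} and is in the spirit of the cited references. One small point deserves a cleaner justification: the parenthetical ``the only simple root one may subtract from $\chi$ and remain among the weights is $\alpha$'' establishes only that $\chi-\beta$ is not a weight for $\beta\neq\alpha$, not the full claim that every weight $\mu\neq\chi$ has $\alpha$-coefficient at least $1$ in $\chi-\mu$. A clean way to see the latter is to note that the $\mathfrak l$-submodule generated by the highest weight line, where $\mathfrak l$ is the Levi of $P_{\{\alpha\}}$, is exactly the span of all weight spaces $V_\mu$ with $n_\alpha(\chi-\mu)=0$; since $\langle\chi,\beta^\vee\rangle=0$ for all $\beta\in\Delta\setminus\{\alpha\}$, this $\mathfrak l$-submodule is one-dimensional, which forces $\mu=\chi$. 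With that in hand, your inequality $(\chi-\mu)(\lambda_G(\rho(\gamma)))\geq\alpha(\lambda_G(\rho(\gamma)))$ follows as you wrote, and the rest of the argument (stabilizer identification, transversality, dynamics-preservation, eigenvalue gap) goes through.
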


More explicitly, if $P\subset \SL(n,\Rbbb)$ is the representative in the conjugacy class $[P]$ so that $r_\alpha^{-1}(P)=P_\theta$, then $f_\alpha:\mathscr F_{[P_\theta]}=G/P_\theta\to \SL(n,\Rbbb)/P=\mathscr F_{[P]}$ is given by $f_\alpha:g\cdot P_\theta\mapsto r_\alpha(g)\cdot P$. Proposition \ref{composition with Tits representation} allows us to reduce the study of length functions of a general Anosov representation to the length function of a projective Anosov representations. In particular, we can prove the following.

\begin{prop}\label{cross ratio lengths}
Let $\theta\subset\Delta$ and let $\rho:\Gamma\to G$ be $[P_\theta]$-Anosov. For all $\alpha\in\theta$, there is a unique cross ratio $B^\rho_\alpha$ so that for all $c\in\Cmc\Gmc(S)$, 
\[\ell^\rho_\alpha(c)=\ell_{B^\rho_\alpha}(c).\]
\end{prop}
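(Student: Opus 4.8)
The plan is to reduce the general statement to the already-understood case of projective Anosov representations via the $\alpha$-Tits representation, and then transport the Labourie cross ratio back up. First I would fix $\alpha\in\theta$, choose an $\alpha$-Tits representation $r_\alpha\colon G\to\SL(n,\Rbbb)$ (Theorem \ref{prop:tits}), and set $\sigma:=r_\alpha\circ\rho\colon\Gamma\to\SL(n,\Rbbb)$. By Proposition \ref{composition with Tits representation}, $\sigma$ is $[P]$-Anosov for $[P]$ the line-hyperplane stabilizer of $\SL(n,\Rbbb)$, with limit curve $f_\alpha\circ\xi$. So $\sigma$ has an associated Labourie cross ratio $L^\sigma$ and, as computed in the excerpt, a genuine cross ratio
\[B^\sigma(x,y,z,w)=\tfrac{1}{2}\log\big(L^\sigma(x,z,y,w)\cdot L^\sigma(z,x,w,y)\big)\]
with $\ell_{B^\sigma}(c)=\ell^\sigma(c)$ for all $c$.

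The key step is to relate $\ell^\sigma$ to $\ell^\rho_\alpha$. Since the highest restricted weight $\chi$ of $(r_\alpha,\Rbbb^n)$ is a multiple $k\omega_\alpha$ of $\omega_\alpha$ with one-dimensional weight space, I would compute $\lambda_{\SL(n,\Rbbb)}(\sigma(\gamma))$ in terms of $\lambda_G(\rho(\gamma))$: the largest eigenvalue exponent of $\sigma(\gamma)$ is $\chi\circ\lambda_G(\rho(\gamma))=k\,\omega_\alpha\circ\lambda_G(\rho(\gamma))$, and dually the smallest is $-k\,\omega_{\iota(\alpha)}\circ\lambda_G(\rho(\gamma))$ (using that the lowest weight of $(r_\alpha,\Rbbb^n)$ is $-\iota(\chi)$, equivalently via the identity $\omega_\alpha\circ\lambda(g)=\omega_{\iota(\alpha)}\circ\lambda(g^{-1})$ recorded in Section \ref{background}). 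Hence by \eqref{eqn:half},
\[\ell^\sigma(c)=\lambda_1(\sigma(\gamma))-\lambda_n(\sigma(\gamma))=k\big(\omega_\alpha+\omega_{\iota(\alpha)}\big)\circ\lambda_G(\rho(\gamma))=k\,\ell^\rho_\alpha(c).\]
This requires knowing that the weight spaces realizing $\chi$ and $-\iota(\chi)$ are exactly the attracting line and repelling hyperplane of $\sigma(\gamma)$ — which is precisely the content of the dynamics-preserving property of $f_\alpha\circ\xi$ in Proposition \ref{composition with Tits representation}, together with the fact that the highest weight space is one-dimensional so the line-hyperplane flag of $\sigma(\gamma)$ is well-defined.

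With this proportionality in hand, I would define $B^\rho_\alpha:=\tfrac1k B^\sigma$. This is a cross ratio (the cross ratio axioms of Definition \ref{def:crossratio} are preserved under scaling and under pulling back along $f_\alpha\circ\xi$, all of which is inherited from $B^\sigma$), and for every $c=[[\gamma]]\in\Cmc\Gmc(S)$ we get $\ell_{B^\rho_\alpha}(c)=\tfrac1k\ell_{B^\sigma}(c)=\tfrac1k\ell^\sigma(c)=\ell^\rho_\alpha(c)$. Uniqueness is immediate from Otal's theorem (Theorem \ref{otal}): any two cross ratios with the same periods coincide.

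The main obstacle I anticipate is the eigenvalue bookkeeping in the middle step — correctly identifying the extreme eigenvalue exponents of $r_\alpha(g)$ with $\chi\circ\lambda_G(g)$ and $-\iota(\chi)\circ\lambda_G(g)$, and pinning down the constant $k$ so that the periods match on the nose rather than just up to a scalar. This is where one must be careful that $\chi=k\omega_\alpha$ (not $\omega_\alpha$ itself), that the one-dimensionality of $(\Rbbb^n)_\chi$ forces $\sigma(\gamma)$ to have a genuine attracting line (so that $\sigma$ really is projective Anosov and $L^\sigma$ is defined), and that the opposition-involution identity converts the lowest-weight contribution into the $\omega_{\iota(\alpha)}$ term appearing in the definition of $\ell^\rho_\alpha$. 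Everything else — the cross ratio axioms, well-definedness of periods, uniqueness — is either routine or already established in the excerpt.
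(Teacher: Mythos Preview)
Your proposal is correct and follows essentially the same route as the paper: compose with an $\alpha$-Tits representation to reduce to the projective Anosov case, compute that $\ell^{r_\alpha\circ\rho}=k\,\ell^\rho_\alpha$ via the highest-weight and opposition-involution identities, then set $B^\rho_\alpha:=\tfrac1k B^{r_\alpha\circ\rho}$ and invoke Otal for uniqueness. The only cosmetic difference is that the paper phrases the lowest-eigenvalue step entirely through the identity $\omega_{\alpha_{n-1,n}}\circ\lambda_{\SL(n,\Rbbb)}(g)=\omega_{\alpha_{1,2}}\circ\lambda_{\SL(n,\Rbbb)}(g^{-1})$ rather than appealing to the lowest weight $-\iota(\chi)$, but this is the same computation.
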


\begin{proof}
Let $r_\alpha:G\to\SL(n,\Rbbb)$ be an $\alpha$-Tits representation of $G$. Since $r_\alpha\circ\rho$ is Anosov with respect to the line-hyperplane stabilizer in $\SL(n,\Rbbb)$, $r_\alpha\circ\rho(\gamma)$ has a largest eigenvalue of multiplicity $1$ for all $\gamma\in\Gamma\setminus\{\id\}$. By Theorem \ref{prop:tits}, we have
\[\omega_{\alpha_{1,2}}\circ\lambda_{\SL(n,\Rbbb)}\circ r_\alpha\circ\rho(\gamma)=k\cdot\omega_\alpha\circ\lambda_G\circ\rho(\gamma)\]
for some $k\in\Zbbb^+$. Similarly, we have that
\begin{eqnarray*}
\omega_{\alpha_{n-1,n}}\circ\lambda_{\SL(n,\Rbbb)}\circ r_\alpha\circ\rho(\gamma)&=&\omega_{\alpha_{1,2}}\circ\lambda_{\SL(n,\Rbbb)}\circ r_\alpha\circ\rho(\gamma^{-1})\\
&=&k\cdot\omega_{\alpha}\circ\lambda_G\circ\rho(\gamma^{-1})\\
&=&k\cdot\omega_{\iota(\alpha)}\circ\lambda_G\circ\rho(\gamma).
\end{eqnarray*}
Together, these imply that for any $c=[[\gamma]]\in\Cmc\Gmc(S)$, 
\begin{eqnarray*}
\ell^{r_\alpha\circ\rho}(c)&=&(\omega_{\alpha_{1,2}}+\omega_{\alpha_{n-1,n}})\circ\lambda_{\SL(n,\Rbbb)}\circ r_\alpha\circ\rho(\gamma)\\
&=&(k\cdot\omega_\alpha+k\cdot\omega_{\iota(\alpha)})\circ\lambda_G\circ\rho(\gamma)\\
&=&k\cdot\ell^\rho_\alpha(c).
\end{eqnarray*}
Define $B^\rho_\alpha:=\frac{1}{k} B^{r_\alpha\circ\rho}$. Since $\ell^{r_\alpha\circ\rho}=\ell_{B^{r_\alpha\circ\rho}}$, it immediately follows that $\ell^\rho_\alpha=\ell_{B^\rho_\alpha}$. The uniqueness of $B^\rho_\alpha$ is a consequence of Theorem \ref{otal}.
\end{proof}

Using Proposition \ref{cross ratio lengths}, we can define positively ratioed representations.

\begin{definition}\label{positively ratioed}
A $[P_\theta]$-Anosov representation $\rho:\Gamma\to G$ is $[P_\theta]$-\emph{positively ratioed} if the cross ratio $B^\rho_\alpha$ is positive for all $\alpha\in\theta$ (see Proposition \ref{cross ratio lengths} and Definition \ref{positive}) .
\end{definition}

As a consequence of Theorem \ref{cross ratio intersection} and Proposition \ref{cross ratio lengths}, we see that for any $\theta\subset\Delta$, any $[P_\theta]$-positively ratioed representation $\rho:\Gamma\to G$, and any $\alpha\in\theta$, there is a unique geodesic current $\mu^\rho_\alpha\in\Cmc(S)$ so that $i(\mu^\rho_\alpha,c)=\ell^\rho_\alpha(c)$ for every $c\in\Cmc\Gmc(S)$. This is stated as Theorem \ref{positively ratioed to geodesic currents} in the introduction.

Let $\theta'\subset\theta$ be subsets of $\Delta$. Guichard-Wienhard (Lemma 3.18 of \cite{GW}) proved that if $\rho:\Gamma\to G$ is $[P_\theta]$-Anosov, then it is also $[P_{\theta'}]$-Anosov. It then follows from this definition that if $\rho$ is $[P_\theta]$-positively ratioed, then it is also $[P_{\theta'}]$-positively ratioed.

The intersection currents arising from positively ratioed representations satisfy some basic properties that we will now explain. 

Note that in the definition of a positive cross ratio, we used the weak inequality instead of the strict inequality. However, in the definition of positively ratioed representations, we can replace the weak inequality with a strict inequality without changing the definition. This is the content of the next proposition.

\begin{notation}\label{interval notation 2}
For any $x,y,z\in\partial\Gamma$, let $(x,y]_z$ denote the half-open subsegment of $\partial\Gamma$ that does not contain $z$ and has $x$ and $y$ as its open and closed endpoints respectively. We will also use $(x,y)_z$, $[x,y)_z$ and $[x,y]_z$ to denote the interval $(x,y]_z$, but with the appropriate closed and open endpoints.
\end{notation}

\begin{prop}\label{strictly positively ratioed}
Let $\theta\subset\Delta$, let $\rho:\Gamma\to G$ be a $[P_\theta]$-positively ratioed representation, and let $\alpha\in\theta$. Then $B^\rho_\alpha(x,y,z,w)>0$ for all $x,y,z,w\in\partial\Gamma$ in this cyclic order.
\end{prop}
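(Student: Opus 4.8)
The plan is to argue by contradiction, using the additivity and $\Gamma$-invariance of the cross ratio $B^\rho_\alpha$ together with the density of fixed-point pairs in $\partial\Gamma$. Suppose $B^\rho_\alpha(x,y,z,w)=0$ for some $x,y,z,w\in\partial\Gamma$ in this cyclic order. Since $\rho$ is positively ratioed, $B^\rho_\alpha$ takes only non-negative values on cyclically ordered $4$-tuples, so this is a boundary/degeneracy phenomenon, and I want to propagate the vanishing using additivity: if $B^\rho_\alpha(x,y,z,w)=0$ and $y'$ lies in $(y,z)_x$, then by additivity $B^\rho_\alpha(x,y,z,w)=B^\rho_\alpha(x,y,y',w)+B^\rho_\alpha(x,y',z,w)$, and both summands are non-negative, hence both vanish. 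The same works on the other "side'' using the symmetry relation $B^\rho_\alpha(x,y,z,w)=B^\rho_\alpha(z,w,x,y)$ and additivity in the first pair of arguments. In this way the vanishing spreads: once $B^\rho_\alpha$ vanishes on one cyclically ordered quadruple, it vanishes on a whole family of quadruples obtained by "shrinking'' the intervals $(x,y)$-side and $(z,w)$-side.

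The key step is then to show that this forces $\ell_{B^\rho_\alpha}(c)=0$ for some $c\in\Cmc\Gmc(S)$, contradicting the fact that $\ell^\rho_\alpha(c)=(\omega_\alpha+\omega_{\iota(\alpha)})\circ\lambda\circ\rho(\gamma)>0$ for all $c=[[\gamma]]$ (this positivity follows from the second Anosov condition, $\alpha\circ\lambda\circ\rho(\gamma)>C\ell_\Gamma(\gamma)-c$, which makes $\ell^\rho_\alpha(c)>0$ for all nontrivial $c$). Concretely: the four open arcs of $\partial\Gamma$ cut out by $x,y,z,w$ are nonempty open sets, so by density of attracting/repelling fixed points I can find $\gamma\in\Gamma$ with $\gamma^-$ in the arc $(w,x)_{y}$ (say) and $\gamma^+$ in the arc $(y,z)_x$, chosen so that the axis of $\gamma$ separates $\{x,w\}$ from $\{y,z\}$ appropriately. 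Then for a suitable point $a$, both $a$ and $\gamma\cdot a$ can be placed in arcs where the vanishing has been propagated, so that $B^\rho_\alpha(\gamma^-,\gamma^+,\gamma\cdot a,a)=0$, i.e. $\ell_{B^\rho_\alpha}(c)=0$ where $c=[[\gamma]]$. I will need to be slightly careful about the cyclic order conventions and about which of the two "sides'' each of $a,\gamma\cdot a$ lands in; the propagation argument should be set up so that vanishing holds on every quadruple $(p,q,r,s)$ with $p\in(w,x]_y$-type position and the pair $q,r$ squeezed near the appropriate arc.

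The main obstacle I expect is the bookkeeping of cyclic orders in the propagation step: one must check that starting from a single vanishing quadruple in cyclic order, the additivity relations (which require the intermediate point to lie in a specific arc so that the relevant $4$-tuples remain in $\partial\Gamma^{[4]}$ and in cyclic order) genuinely reach a quadruple of the form $(\gamma^-,\gamma^+,\gamma\cdot a,a)$. The cleanest way is probably to first prove an intermediate claim: \emph{if $B^\rho_\alpha$ vanishes on one cyclically ordered quadruple, then it vanishes on all cyclically ordered quadruples} (using that $\partial\Gamma$ is a circle and additivity lets one both subdivide and, by taking differences, extend), and then immediately contradict Otal's theorem or the positivity of periods. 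Actually, an even slicker route: if $B^\rho_\alpha$ vanishes identically on cyclically ordered quadruples, then by additivity and symmetry it vanishes on \emph{all} of $\partial\Gamma^{[4]}$, so $B^\rho_\alpha\equiv 0$, hence every period $\ell^\rho_\alpha(c)=0$, contradicting $h(\ell^\rho_\alpha)\in\Rbbb^+$ (or just $\ell^\rho_\alpha(c)>0$). So the real work is the "vanishing spreads to everything'' claim, and that is where I will spend the effort, handling the arc conditions for additivity carefully.
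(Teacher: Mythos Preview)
Your propagation strategy has a genuine gap that is not a matter of bookkeeping: the implication ``$B$ vanishes on one cyclically ordered quadruple $\Rightarrow$ some (or every) period of $B$ vanishes'' is \emph{false} for positive cross ratios in general, so it cannot be derived from continuity, symmetry, additivity, $\Gamma$-invariance and non-negativity alone. A counterexample is furnished by any filling measured geodesic lamination $\mu$ with no closed leaves. Such a $\mu$ is a geodesic current with no atoms, so $B_\mu(x,y,z,w):=\mu(\Gmc_{[x,y]_z,[z,w]_y})$ (extended by additivity and symmetry) is a continuous positive cross ratio. Because $\mu$ is filling, every closed curve crosses it, so $\ell_{B_\mu}(c)=i(\mu,c)>0$ for all $c\in\Cmc\Gmc(S)$. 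Yet the leaves of a lamination form a proper closed subset of $\Gmc(\Std)$, so there is a basic open set $\Gmc_{(x,y)_z,(z,w)_y}$ disjoint from $\mathrm{supp}(\mu)$, giving $B_\mu(x,y,z,w)=0$ on a cyclically ordered quadruple. Thus $B_\mu$ satisfies every hypothesis your argument uses, but neither your plan (A) (produce one $\gamma$ with zero period) nor plan (B) (spread vanishing to all quadruples) can succeed for it. The subdivide-and-merge moves you describe only ever produce vanishing on boxes $\gamma\cdot([x,y]\times[z,w])$, and you cannot decouple the two arcs under the diagonal $\Gamma$-action.

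The paper's proof uses precisely the extra structure your proposal never touches: the explicit projective formula for $B^\rho_\alpha$. One first reduces via an $\alpha$-Tits representation to a projective Anosov $\rho:\Gamma\to\SL(n,\Rbbb)$ with limit maps $\xi^{(1)},\xi^{(n-1)}$. Your subdivision step does give $B^\rho(x,y,t,w)=0$ for all $t\in[z,w]_x$, but then the formula for $B^\rho$ in terms of $\xi$ turns this into the linear-algebraic statement that $\xi^{(1)}\big([z,w]_x\big)$ lies in the proper subspace $\big(\xi^{(n-1)}(x)\cap\xi^{(n-1)}(y)\big)+\xi^{(1)}(w)$. Picking $\gamma$ with $\gamma^-\in(z,w)_x$ and iterating shows the span of $\xi^{(1)}\big([z,w]_x\big)$ is $\rho(\gamma)$-invariant and contains $\xi^{(1)}(\partial\Gamma)$; this forces $\xi^{(1)}(x)\in\xi^{(n-1)}(y)$, contradicting transversality. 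So the contradiction comes from the geometry of the limit curve, not from cross-ratio combinatorics.
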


\begin{proof}
Recall that $r_\alpha\circ\rho:\Gamma\to\SL(n,\Rbbb)$ is projective Anosov, and that $B^\rho_\alpha=\frac{1}{k}B^{r_\alpha\circ\rho}$ for some $k\in\Zbbb^+$. Therefore, we can assume that $\rho$ is projective Anosov. Let
\[\xi=\left(\xi^{(1)},\xi^{(n-1)}\right):\partial\Gamma\to\Rbbb\Pbbb^{n-1}\times(\Rbbb\Pbbb^{n-1})^*\] 
denote the limit curve of $\rho$. By assumption, $B^\rho(x,y,z,w)\geq 0$ for all $x,y,z,w\in\partial\Gamma$ in this cyclic order. Suppose for contradiction that there is some $x,y,z,w\in\partial\Gamma$ in this cyclic order so that $B^\rho(x,y,z,w)=0$. This means that for all $t\in[z,w]_x$, $B^\rho(x,y,t,w)=0$. It then follows from the definition of $B^\rho$ that $\xi^{(1)}\big([z,w]_x\big)$ lies in the proper subspace $\left(\xi^{(n-1)}(x)\cap\xi^{(n-1)}(y)\right)+\xi^{(1)}(w)\subset\Rbbb^n$. 

Let $V\subset\Rbbb^n$ be the minimal subspace containing $\xi^{(1)}\big([z,w]_x\big)$, and let $\gamma\in\Gamma\setminus\{\id\}$ so that its repellor $\gamma^-$ lies in $(z,w)_x$. Since $\xi^{(1)}\big([z,w]_x\big)\subset \rho(\gamma)\cdot \xi^{(1)}\big([z,w]_x\big)$, we see that $V\subset\rho(\gamma)\cdot V$, so $V$ is $\rho(\gamma)$-invariant. At the same time, observe that 
\[\bigcup_{n=0}^\infty\gamma^n\cdot[z,w]_x=\partial\Gamma\setminus\{\gamma^+\},\] 
so the continuity and $\rho$-equivariance of $\xi^{(1)}$ implies that $\xi^{(1)}(\partial\Gamma)\subset V$. However, $V\subset\left(\xi^{(n-1)}(x)\cap\xi^{(n-1)}(y)\right)+\xi^{(1)}(w)$, which means that $\xi^{(1)}(\partial\Gamma)\cap\xi^{(n-1)}(x)\subset\xi^{(n-1)}(x)\cap\xi^{(n-1)}(y)$. In particular, $\xi^{(1)}(x)\in\xi^{(n-1)}(y)$, but this violates the transversality of $\xi$.
\end{proof}

\begin{definition}\label{period minimizing}
Let $\nu\in\Cmc(S)$ be a geodesic current. We say that $\nu$ is \emph{period minimizing} if 
\[\left|\{c\in\Cmc\Gmc(S):i(\nu,c)<T\}\right|<\infty\]
for all $T\in\Rbbb^+$. Also, $\nu$ has \emph{full support} if $\nu(U)>0$ for any open set $U\subset\Cmc(S)$.
\end{definition}

It is well known that $|\{[\gamma]\in[\Gamma]:\ell_\Gamma(\gamma)<T\}|<\infty$ for all $T\in\Rbbb^+$. As such, an immediate consequence of the Anosovness of $\rho$ that $\mu^\rho_\alpha$ is period minimizing. In particular, measured laminations are not intersection currents coming from Anosov representations, because they are not period minimizing.

Let $x,y,z,w\in\partial\Gamma$ in this cyclic order, and let $\Gmc_{[x,y]_z,[w,z]_y}\subset\Gmc(\Std)$ be the set of geodesics in $\Std$ with one endpoint in $[x,y]_z$ and one endpoint in $[w,z]_y$. By the construction of $\mu^\rho_\alpha$ from $B^\rho_\alpha$ (see Appendix \ref{From cross ratios to geodesic currents}), we see that 
\begin{equation}\label{current ratio equation}\mu^\rho_\alpha\big(\Gmc_{[x,y]_z,[w,z]_y}\big)=B^\rho_\alpha(x,y,z,w).\end{equation}
In the degenerate case when $x=y\neq z=w$, this implies that $\mu^\rho_\alpha(\{x,z\})=0$, so the $\mu^\rho_\alpha$-measure of every point in $\Gmc(\Std)$ is zero. As a consequence, the intersection current arising from an Anosov representation has no atoms.

Finally, the intersection currents arising from Anosov representations have full support. To see this, observe that
\[\{\Gmc_{(x,y)_z,(w,z)_y}\subset\Gmc(\Std):x,y,z,w\in\partial\Gamma\text{ in this cyclic order}\}\] generates the topology on $\Gmc(\Std)$. Since $\mu^\rho_\alpha\big(\Gmc_{[x,y]_z,[w,z]_y}\big)=\mu^\rho_\alpha\big(\Gmc_{(x,y)_z,(w,z)_y}\big)$, (\ref{current ratio equation}) and Proposition \ref{strictly positively ratioed} immediately imply that $\mu^\rho_\alpha$ has full support.

\section{Examples of positively ratioed representations}\label{sec:examples}
In this section, we provide several important examples of positively ratioed representations to motivate the definition.

\subsection{Hitchin representations}
The \emph{Teichm\"uller space of $S$} can be defined to be
\[\Tmc(S):=\{\text{discrete, faithful }\rho:\Gamma\to\PSL(2,\Rbbb)\}/\PGL(2,\Rbbb).\]
This is the space of holonomy representations of hyperbolic structures on $S$. If we equip $\Tmc(S)$ with the compact-open topology, it is well-known that $\Tmc(S)$ is topologically a cell of dimension $6g-6$. Let 
\[
\iota_n:\PSL(2,\Rbbb)\to \PSL(n,\Rbbb)
\] 
be the projectivization of the unique (up to post-composing by an automorphism of $\SL(n,\Rbbb)$) $n$-dimensional irreducible representation of $\SL(2,\Rbbb)$ into $\SL(n,\Rbbb)$. If we equip 
\[\Xmc(S,\PSL(n,\Rbbb)):=\mathrm{Hom}(\Gamma,\PSL(n,\Rbbb))/\PGL(n,\Rbbb)\] 
with the compact-open topology, this gives us an embedding
\[i_n:\Tmc(S)\to\Xmc(S,\PSL(n,\Rbbb))\]
defined by $i_n[\rho]=[\iota_n\circ\rho]$. In particular, $i_n(\Tmc(S))\subset\Xmc(S,\PSL(n,\Rbbb))$ is connected.

\begin{definition}
The \emph{$\PSL(n,\Rbbb)$-Hitchin component} $\Hit_n(S)$ is the connected component of $\Xmc(S,\PSL(n,\Rbbb))$ that contains $i_n(\Tmc(S))$. The representations in $\Hit_n(S)$ are known as \emph{$\PSL(n,\Rbbb)$-Hitchin representations}.
\end{definition}

Often, we will simply use a representative $\rho$ in the conjugacy class $[\rho]$ to denote an element in $\Hit_n(S)$. It is classically known that $\Tmc(S)$ is a connected component of $\Xmc(S,\PSL(2,\Rbbb))$, so $\Hit_2(S)=\Tmc(S)$. As such, one can think of $\Hit_n(S)$ as a generalization of $\Tmc(S)$.

The Hitchin component was first studied by Hitchin \cite{Hit1}, who used Higgs bundle techniques to parameterize $\Hit_n(S)$ using certain holomorphic differentials on a Riemann surface homeomorphic to $S$. In particular, he showed that $\Hit_n(S)$ is topologically a cell of dimension $(n^2-1)(2g-2)$, where $g$ is the genus of $S$. With this, the global topology of $\Hit_n(S)$ is completely understood. However, the geometric properties of the representations in $\Hit_n(S)$ remained a mystery until a seminal theorem of Labourie. 

To explain this theorem, we first need the notion of a Frenet curve. Let $\mathscr F(\Rbbb^n)$ denote the space of complete flags in $\Rbbb^n$, i.e. $F\in\mathscr F(\Rbbb^n)$ is a properly nested sequence $F^{(1)}\subset\dots\subset F^{(n-1)}$ of linear subspaces in $\Rbbb^n$, where each $F^{(i)}$ has dimension $i$. When $G=\PSL(n,\Rbbb)$, it is easy to verify that $\mathscr F(\Rbbb^n)=\mathscr F_{[P_\Delta]}$.

\begin{definition}
A continuous map $\xi:S^1\to\mathscr F(\Rbbb^n)$ is \emph{Frenet} if the following hold:
\begin{itemize}
	\item For all $x_1,\dots,x_k\in S^1$ pairwise distinct and $m_1,\dots,m_k\in\Zbbb^+$ such that $k\leq n$ and $m_1+\dots+m_k=n$, we have that
\[\bigoplus_{i=1}^k\xi(x_i)^{(m_i)}=\Rbbb^n.\]
	\item Let $m_1,\dots,m_k\in\Zbbb^+$ such that $k\leq n$ and $m_1+\dots+m_k=m\leq n$, and let $\{(x_1^j,\dots,x_k^j)\}_{j=1}^\infty$ be a sequence a $k$-tuples of pairwise distinct points. If there is some $x\in S^1$ so that $\lim_{j\to\infty}x_i^j=x$ for all $i=1,\dots,k$, then
\[\lim_{j\to\infty}\bigoplus_{i=1}^k\xi(x_i^j)^{(m_i)}=\xi(x)^{(m)}.\]
\end{itemize}
\end{definition}

Labourie (Theorem 4.1 of \cite{Lab1}) proved that $\PSL(n,\Rbbb)$-Hitchin representations preserve a $\rho$-equivariant Frenet curve. Later, Guichard (Theorem 1 of \cite{Gui1}) proved the converse to this, thus giving us the following theorem.

\begin{thm}[Labourie, Guichard]\label{Labourie Guichard}
Let $\rho\in\Xmc(S,\PSL(n,\Rbbb))$. Then $\rho\in\Hit_n(S)$ if and only if there is a $\rho$-equivariant Frenet curve $\xi:\partial\Gamma\to\mathscr F_{[P_\Delta]}$.
\end{thm}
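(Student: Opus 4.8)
The statement is the combination of two deep results, due to Labourie \cite{Lab1} and to Guichard \cite{Gui1}; I would prove both implications by the same kind of argument, namely an ``open, closed, and non-empty'' argument anchored at the Fuchsian locus. The base case is the osculating curve of the rational normal curve: for $\rho_0\in\Tmc(S)$, identify $\Rbbb^n$ with $\mathrm{Sym}^{n-1}(\Rbbb^2)$ so that $\iota_n\circ\rho_0$ is the $(n-1)$st symmetric power of the Fuchsian representation, and identify $\partial\Gamma$ with $\Rbbb\Pbbb^1$ via the $\rho_0$-action. Then one sets $\xi(x)$ to be the osculating flag at $x$ of the curve $[v]\mapsto[v^{\otimes(n-1)}]$ and checks directly that $\xi$ is continuous, $\rho_0$-equivariant, and Frenet: the general-position condition reduces to the non-vanishing of Vandermonde-type minors on pairwise distinct points, and the osculating-limit condition is the classical statement about limits of osculating subspaces at colliding points. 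Hence $i_n(\Tmc(S))$ consists of $[P_\Delta]$-Anosov representations admitting an equivariant Frenet curve.

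\emph{Forward direction.} Let $U\subseteq\Hit_n(S)$ be the set of Hitchin representations that are $[P_\Delta]$-Anosov (Definition \ref{def:Anosov}) and whose limit curve is Frenet; the goal is $U=\Hit_n(S)$. Since $\Hit_n(S)$ is connected and $U$ contains $i_n(\Tmc(S))$ by the base case, it suffices to prove $U$ is open and closed in $\Hit_n(S)$. For openness: Anosov representations are structurally stable, so for $\rho\in U$ and $\rho'$ nearby the limit curve varies $C^0$-continuously; the general-position part of the Frenet condition, being the non-vanishing of finitely many continuous determinant functions, therefore persists, and the osculating-limit part is recovered by the same bootstrap through the exterior powers $\Lambda^k\rho$ that one uses to prove Anosovness in the first place (each $\Lambda^k\rho$ is irreducible and Anosov of the appropriate flag type, and an inductive transversality argument forces the Frenet structure). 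For closedness one needs uniform control of the Anosov constants along $\Hit_n(S)$ --- equivalently, a properness statement for the relevant spectral-gap functions on the component --- which makes the Frenet curves equicontinuous so that subsequential limits are again $\rho$-equivariant Frenet curves with no collapsed transversality.

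\emph{Converse direction.} Suppose $\rho$ admits a $\rho$-equivariant Frenet curve $\xi$. Then $\rho$ is irreducible, because a $\rho$-invariant proper subspace would, by the general-position property, be forced both to contain and to avoid subspaces of the flags $\xi(x)$; and $\rho$ is discrete and faithful, because for $\gamma\neq\id$ the flag $\xi(\gamma^+)$ is attracting for $\rho(\gamma)$, ruling out $\rho(\gamma)=\id$ and giving, via the convergence dynamics on $\partial\Gamma$, a quasi-isometric embedding; one then upgrades this to $[P_\Delta]$-Anosov as before. Let $\mathcal{F}_n\subseteq\Xmc(S,\PSL(n,\Rbbb))$ be the set of all representations admitting an equivariant Frenet curve. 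The same arguments as above show $\mathcal{F}_n$ is open and closed in $\Xmc(S,\PSL(n,\Rbbb))$, so it is a union of connected components; by the base case it contains the Fuchsian locus, hence the whole component $\Hit_n(S)$. To conclude $\mathcal{F}_n=\Hit_n(S)$ I would show that the space of equivariant Frenet curves is connected --- the Frenet condition being an open positivity-type condition --- so that $\mathcal{F}_n$ meets the Fuchsian locus in each of its components and therefore consists of that single component.

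\emph{Main obstacle.} In both directions the delicate point is closedness: one must rule out that a limit of Frenet curves degenerates into a merely ``weakly Frenet'' curve in which some direct sum $\bigoplus_i \xi(x_i)^{(m_i)}$ fails to fill $\Rbbb^n$. This is exactly where the uniform transversality (equivalently, uniform Anosov) estimates enter, and it is the technical heart of \cite{Lab1} and \cite{Gui1}. In the converse direction there is the extra subtlety of excluding an ``exotic'' component of $\Xmc(S,\PSL(n,\Rbbb))$ that carries an equivariant Frenet curve without being joined to the Fuchsian locus; this is what the connectedness of the space of Frenet curves is used for.
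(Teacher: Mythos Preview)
The paper does not prove this theorem. It is quoted as a result from the literature: the forward implication is attributed to Labourie (Theorem~4.1 of \cite{Lab1}) and the converse to Guichard (Theorem~1 of \cite{Gui1}), and the statement is then used as a black box in the proof of Theorem~\ref{Hitchin positively ratioed}. There is therefore nothing in the paper to compare your proposal against.

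As for the proposal itself, the base case (the osculating flag of the Veronese curve for $\iota_n\circ\rho_0$) is correct, and the overall open--closed--nonempty skeleton is the right organizational picture. But the argument as written is an outline, not a proof, and you say as much in your final paragraph. Two points deserve emphasis. First, in the forward direction your closedness step invokes ``uniform control of the Anosov constants along $\Hit_n(S)$''; this is essentially equivalent to what you are trying to prove, and Labourie's actual route is different --- he builds a dominated splitting for the lifted geodesic flow and extracts the Frenet property from that dynamical structure, rather than from a priori uniform estimates on the component. Second, in the converse direction, the assertion that ``the space of equivariant Frenet curves is connected'' is exactly the hard content of Guichard's paper; it is not a consequence of Frenet being an ``open positivity-type condition'', since openness alone says nothing about connectedness. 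Your irreducibility and discreteness observations are fine, but the step from ``$\mathcal F_n$ is a union of components'' to ``$\mathcal F_n=\Hit_n(S)$'' is where the real work is, and it is not supplied here.
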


As a consequence of this, we know that every $\rho\in\Hit_n(S)$ is $[P_\Delta]$-Anosov. In particular, for all $\rho\in\Hit_n(S)$ and $\alpha\in\Delta$, we can define $\ell^\rho_\alpha$ and the corresponding cross ratios $B^\rho_\alpha$ as per Section \ref{Anosov representations} and Section \ref{Cross ratios and positively ratioed} respectively. In fact, we have the following theorem.

\begin{thm}\label{Hitchin positively ratioed}
If $\rho\in \mathrm{Hit}_n(S)$, then $\rho$ is $[P_\Delta]$-positively ratioed.
\end{thm}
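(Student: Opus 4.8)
The plan is to reduce to the key positivity computation for the Labourie cross ratio of a Hitchin representation, using the Frenet property of the limit curve. By Proposition \ref{cross ratio lengths} and Definition \ref{positively ratioed}, it suffices to show that for each simple root $\alpha = \alpha_{i,i+1} \in \Delta$, the cross ratio $B^\rho_\alpha$ is positive. The $\alpha$-Tits representation for $\PSL(n,\Rbbb)$ can be taken to be the representation on $\Lambda^i \Rbbb^n$, i.e.\ the $i$-th exterior power representation; its highest weight is exactly $\omega_{\alpha_{i,i+1}}$ and the highest weight space is one-dimensional. Under this representation the limit curve $\xi \colon \partial\Gamma \to \mathscr F(\Rbbb^n)$ gets sent to the curve $x \mapsto \xi^{(i)}(x) \in \Lambda^i\Rbbb^n$ (the decomposable $i$-vectors), together with the hyperplane $x \mapsto \widehat{\xi^{(n-i)}(x)}$ in $(\Lambda^i\Rbbb^n)^*$ annihilating all $i$-vectors meeting $\xi^{(n-i)}(x)$. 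So $B^\rho_\alpha$ is (a multiple of) the cross ratio $B^{\Lambda^i\xi}$ built as in the excerpt from the Labourie cross ratio $L^{\Lambda^i\rho}$.

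Having made this reduction, what remains is to show: if $x,y,z,w \in \partial\Gamma$ lie in this cyclic order, then $L^{\Lambda^i\xi}(x,z,y,w) \cdot L^{\Lambda^i\xi}(z,x,w,y) \geq 1$, equivalently that the $\frac12\log$ of this product is nonnegative. Writing this out, the quantity is a ratio of products of pairings $\langle \widehat{\xi^{(n-i)}(p)}, \xi^{(i)}(q)\rangle$. These pairings are, up to a common determinant normalization, the maximal minors of the matrix whose columns are bases of $\xi^{(i)}(p)$ and $\xi^{(n-i)}(q)$ — equivalently, wedge products $\xi^{(i)}(q) \wedge \xi^{(n-i)}(p)$ expressed in a fixed volume form on $\Rbbb^n$. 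The Frenet condition guarantees all the relevant sums $\xi^{(i)}(q)\oplus\xi^{(n-i)}(p)$ are direct, so none of these minors vanishes; the real content is to pin down the \emph{signs}. The standard tool here (Labourie \cite{Lab1}, and Fock--Goncharov-style positivity) is that a Frenet curve in $\mathscr F(\Rbbb^n)$, after choosing a suitable lift to $\Rbbb^n$ along $\xi^{(1)}$, has all its generalized minors of one consistent sign when the points are in cyclic order — this is precisely the total positivity of the Frenet curve. From this one reads off that each of the four pairings appearing in $L^{\Lambda^i\xi}(x,z,y,w)\cdot L^{\Lambda^i\xi}(z,x,w,y)$ is nonzero with signs arranged so that the product of the two cross ratios is a positive real number $\geq 1$; one convenient way to see the inequality $\geq 1$ (not just $>0$) is to specialize $x,y$ to $\gamma^\mp$ and use \eqref{Labourie period}, or to observe directly that the logarithm is additive in $w$ along $[z,w]_x$ and vanishes at the degenerate endpoint, hence is monotone with the correct sign.

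I would organize the write-up as: (1) identify $r_\alpha = \Lambda^i$ and compute the induced limit curve, reducing $B^\rho_\alpha$ to $\tfrac1k B^{r_\alpha\circ\rho}$ with $r_\alpha\circ\rho$ projective Anosov and its limit curve $(\Lambda^i\xi^{(1)}, \text{dual})$ — here all that is needed is that $\Lambda^i\xi$ inherits transversality from the Frenet property; (2) state and invoke the total-positivity property of Frenet curves: for a suitable lift, all consecutive generalized minors along cyclically-ordered points are positive; (3) expand $B^{r_\alpha\circ\rho}(x,y,z,w)$ in terms of such minors and conclude positivity; (4) (if one wants the strict inequality for free) cite Proposition \ref{strictly positively ratioed}, or note nonvanishing from the Frenet direct-sum condition. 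The main obstacle is Step (2)–(3): making the sign bookkeeping precise, i.e.\ choosing the lift of $\xi$ and the reference volume form so that every pairing $\langle\widehat{\xi^{(n-i)}(p)},\xi^{(i)}(q)\rangle$ entering the formula has an explicitly controlled sign, and then checking the four signs combine to give a product $\geq 1$. This is where one genuinely uses that Hitchin representations (as opposed to arbitrary projective Anosov ones) have a \emph{Frenet} — hence totally positive — limit curve; everything else is formal manipulation of the cross ratio identities already recorded in the excerpt.
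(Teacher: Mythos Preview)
Your reduction via the exterior power $\Lambda^i$ is sound and lands you at essentially the same object the paper writes down directly: the cross ratio
\[
\tilde B^\rho_i(x,y,z,w)=\log\left|\frac{\xi(x)^{(n-i)}\wedge\xi(z)^{(i)}}{\xi(x)^{(n-i)}\wedge\xi(w)^{(i)}}\cdot\frac{\xi(y)^{(n-i)}\wedge\xi(w)^{(i)}}{\xi(y)^{(n-i)}\wedge\xi(z)^{(i)}}\right|,
\]
with $B^\rho_i=\tfrac12(\tilde B^\rho_i+\tilde B^\rho_{n-i})$. The period computation and the identification $B^\rho_{\alpha_{i,i+1}}=B^\rho_i$ via Otal's theorem are also fine.

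The genuine gap is exactly where you flag it: your steps (2)--(3). Knowing that all the wedge pairings $\xi(p)^{(n-i)}\wedge\xi(q)^{(i)}$ are nonzero with consistent signs (which is what ``total positivity of the Frenet curve'' buys you) only tells you that $L^{\Lambda^i\xi}(x,z,y,w)\cdot L^{\Lambda^i\xi}(z,x,w,y)$ is a \emph{positive} real number, hence that $B^\rho_i$ is real-valued. It does \emph{not} tell you the product is $\geq 1$, i.e.\ that $B^\rho_i\geq 0$. Neither of your proposed fixes works: specializing to $\gamma^\pm$ only computes periods, not the value at a general quadruple; and the ``additive in $w$, vanishes at the degenerate endpoint, hence monotone'' argument is circular, since monotonicity of an additive function along $[z,w]_x$ is equivalent to nonnegativity of its increments, which is precisely the positivity you are trying to establish.

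What the paper does instead is supply the missing inequality by an explicit telescoping decomposition. It writes
\[
\tilde B^\rho_i(x,y,z,w)=\sum_{k=1}^{n-i}\sum_{j=1}^{i}(x,y,z,w)_{i,j,k},
\]
where each summand is a log of a ratio of four wedge products of the form $\xi(x)^{(a)}\wedge\xi(y)^{(b)}\wedge\xi(z)^{(c)}\wedge\xi(w)^{(d)}$ with $a+b+c+d=n$ (a Fock--Goncharov--type ``double ratio'' at a single vertex of the $A_{n-1}$ triangle subdivision). Each such term is shown to be \emph{strictly} positive for cyclically ordered $x,y,z,w$; this is the content of Proposition~2.12 of \cite{Zha1}, which is where the Frenet property is actually used. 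Summing gives $\tilde B^\rho_i>0$ and hence $B^\rho_i>0$. So the key idea you are missing is not a sign argument for the four ``outer'' pairings, but a decomposition of the cross ratio into a sum of elementary four-flag invariants each of which is individually positive.
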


To prove Theorem \ref{Hitchin positively ratioed}, we will use Theorem \ref{Labourie Guichard} to construct positive cross ratios $B_i^\rho:\partial\Gamma^{[4]}\to\Rbbb$ for $i=1,\dots,n-1$ in the following way.

\begin{notation} Given flags $(F_1,F_2,\dots, F_s)$ for every $l=1,\dots,s$ choose vectors $f_{1,l},\dots,f_{n-1,l}\in\Rbbb^n$ so that 
\[
\Span_\Rbbb\{f_{1,l},\dots,f_{i,l}\}=F_l^{(i)}
\]
for all $i=1,\dots,n-1$. Fix once and for all an identification $\bigwedge^n\Rbbb^n\cong \Rbbb$. For any integers $i_l\geq 0$ with $\sum_li_l=n$, denote by $F_1^{(i_1)}\wedge \dots \wedge F_s^{(i_s)}$ the real number 
\[
f_{1,1}\wedge\dots\wedge f_{i_1,1}\wedge \dots\wedge f_{1,s}\wedge \dots\wedge f_{i_s,s}.
\]
This notation involves some choices, but none of the quantities we define using this notation will depend on them.
\end{notation}

Let $\tilde{B}^\rho_{i}\colon \partial\Gamma^{[4]}\to\Rbbb$ be the function
\[
\tilde{B}^\rho_i(x,y,z,w):=\log\left\vert\frac{\xi(x)^{(n-i)}\wedge\xi(z)^{(i)}}{\xi(x)^{(n-i)}\wedge\xi(w)^{(i)}}\frac{\xi(y)^{(n-i)}\wedge\xi(w)^{(i)}}{\xi(y)^{(n-i)}\wedge\xi(z)^{(i)}}\right\vert
\]
and set $B^\rho_i:=\frac{1}{2}\left(\tilde{B}^\rho_i+\tilde{B}^\rho_{n-i}\right)$. 

\begin{lem}\label{lem:Hitchin non-negativity} 
For $i=1,\dots,n-1$, $B^\rho_i$ is a positive cross ratio. 
\end{lem}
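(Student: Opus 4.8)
The plan is to verify that each $B^\rho_i$ satisfies the three requirements in Definition \ref{def:crossratio} (it is a well-defined continuous $\Gamma$-invariant function on $\partial\Gamma^{[4]}$, symmetric, and additive) and then separately check the positivity condition of Definition \ref{positive}. It suffices to do everything for $\tilde B^\rho_i$, since $B^\rho_i$ is a convex combination of $\tilde B^\rho_i$ and $\tilde B^\rho_{n-i}$, and all the relevant properties except possibly positivity are preserved under such combinations; positivity will in fact hold for each $\tilde B^\rho_i$ individually once we orient things correctly, so the symmetrization in the definition of $B^\rho_i$ is really there to also make $B^\rho_i = B^\rho_{\iota(\text{root})}$ match the length function, but for this lemma we just need that a convex combination of positive functions is positive.

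First I would record that $\tilde B^\rho_i$ is well-defined: the expression $\xi(x)^{(n-i)}\wedge \xi(z)^{(i)}$ is, by the Frenet property (the direct-sum condition with $k=2$, $m_1 = n-i$, $m_2 = i$), a nonzero real number, so the argument of the logarithm is a nonzero real; moreover, rescaling any of the chosen vector representatives multiplies numerator and denominator by the same factor, so the ratio is independent of all choices, and the absolute value kills the sign ambiguity coming from the wedge identification $\bigwedge^n \Rbbb^n \cong \Rbbb$ and from reordering basis vectors. Continuity is then immediate from continuity of $\xi$ and of the exterior product, together with the nonvanishing just noted. The $\Gamma$-invariance follows because $\xi$ is $\rho$-equivariant and $\rho(\gamma) \in \PSL(n,\Rbbb)$ acts on $\bigwedge^n \Rbbb^n$ by $\det \rho(\gamma) = \pm 1$, so each wedge changes by a global factor of $\pm 1$ that cancels in the ratio and disappears under $|\cdot|$. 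Symmetry $\tilde B^\rho_i(x,y,z,w) = \tilde B^\rho_i(z,w,x,y)$ is a direct inspection of the four-term product: swapping $(x,y)\leftrightarrow(z,w)$ permutes the four factors among themselves (with the roles of $(n-i)$-planes and $(i)$-planes exchanged, i.e. $\xi(z)^{(n-i)}\wedge \xi(x)^{(i)}$ etc.), and after noting that $F^{(n-i)}\wedge G^{(i)}$ and $G^{(n-i)}\wedge F^{(i)}$ differ by the sign $(-1)^{i(n-i)}$, the sign cancels in the ratio. Additivity $\tilde B^\rho_i(x,y,z,w) + \tilde B^\rho_i(x,y,w,u) = \tilde B^\rho_i(x,y,z,u)$ is immediate because the $\log$ turns the product telescoping $\frac{A_z}{A_w}\cdot\frac{A_w}{A_u} = \frac{A_z}{A_u}$ (with $A_t = \frac{\xi(x)^{(n-i)}\wedge\xi(t)^{(i)}}{\xi(y)^{(n-i)}\wedge\xi(t)^{(i)}}$) into a sum. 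These four verifications establish that $\tilde B^\rho_i$, hence $B^\rho_i$, is a cross ratio.

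The substantive point is positivity: for $x,y,z,w \in \partial\Gamma$ in this cyclic order we must show $\tilde B^\rho_i(x,y,z,w) \geq 0$, i.e. that the quantity inside $|\cdot|$ has absolute value $\geq 1$. The plan is to reduce to the case $i=1$ and $n=2$ geometrically, or more efficiently to exploit the Frenet/convexity structure directly. Concretely, the key observation is that for a Frenet curve the function $t \mapsto A_t = \dfrac{\xi(x)^{(n-i)}\wedge\xi(t)^{(i)}}{\xi(y)^{(n-i)}\wedge\xi(t)^{(i)}}$, restricted to an arc of $\partial\Gamma$ not containing $x$ or $y$, is strictly monotone: this is precisely the statement that the image of the Frenet curve is ``convex'' with respect to the flag at $x$ and the flag at $y$, and it can be proved by differentiating and using the limit (osculating) condition of the Frenet property, exactly as in Labourie's and Guichard's work, or by passing to the osculating curve in a suitable Grassmannian and invoking total positivity of Frenet curves. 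Granting monotonicity of $A_t$, the four points $x,y,z,w$ in cyclic order force $A_z$ and $A_w$ to lie on the same side, with $|A_z| \geq |A_w|$ (the point nearer to $x$ along the arc from $x$ to $y$ not through the other has larger modulus), whence $\bigl|\frac{\xi(x)^{(n-i)}\wedge\xi(z)^{(i)}}{\xi(x)^{(n-i)}\wedge\xi(w)^{(i)}} \cdot \frac{\xi(y)^{(n-i)}\wedge\xi(w)^{(i)}}{\xi(y)^{(n-i)}\wedge\xi(z)^{(i)}}\bigr| = \bigl|\frac{A_z}{A_w}\bigr| \geq 1$, giving $\tilde B^\rho_i(x,y,z,w) \geq 0$. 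Averaging $\tilde B^\rho_i$ and $\tilde B^\rho_{n-i}$ preserves this, so $B^\rho_i$ is a positive cross ratio.

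The main obstacle is the monotonicity/positivity claim in the last paragraph: turning the Frenet condition into a clean statement that $A_t$ is monotone (equivalently that the relevant $2\times 2$ ``cross-ratio'' determinants have a consistent sign along cyclically ordered points). I expect to handle this by the standard device of associating to the Frenet curve $\xi$ its $i$-th exterior power curve $\bigwedge^i \xi : \partial\Gamma \to \Pbbb(\bigwedge^i \Rbbb^n)$, which is again a hyperconvex (Frenet-type) curve, reducing the wedge expressions to ordinary projective cross ratios of four points on a convex curve in projective space; positivity of those is the classical statement that a convex curve meets a hyperplane in at most the expected number of points and that cross ratios of cyclically ordered points on it are $> 1$. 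I would cite Labourie \cite{Lab1} and Guichard \cite{Gui1} (and, if needed, Fock-Goncharov-style total positivity) for this hyperconvexity, then assemble the sign bookkeeping. Everything else is formal.
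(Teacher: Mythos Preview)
Your verification of well-definedness, continuity, $\Gamma$-invariance, and additivity is fine, but the symmetry argument contains an actual error. You claim that $\tilde B^\rho_i(x,y,z,w)=\tilde B^\rho_i(z,w,x,y)$ follows because ``$F^{(n-i)}\wedge G^{(i)}$ and $G^{(n-i)}\wedge F^{(i)}$ differ by the sign $(-1)^{i(n-i)}$''. This is false: what differs by a sign is $F^{(n-i)}\wedge G^{(i)}$ and $G^{(i)}\wedge F^{(n-i)}$; the quantity $G^{(n-i)}\wedge F^{(i)}$ involves entirely different subspaces and is unrelated. In fact the correct identity is $\tilde B^\rho_i(z,w,x,y)=\tilde B^\rho_{n-i}(x,y,z,w)$, so $\tilde B^\rho_i$ is \emph{not} a cross ratio for $i\neq n-i$, and the averaging $B^\rho_i=\tfrac12(\tilde B^\rho_i+\tilde B^\rho_{n-i})$ is precisely what makes the symmetry go through. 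Thus your reduction ``it suffices to do everything for $\tilde B^\rho_i$'' breaks down for symmetry; you must check it for $B^\rho_i$ directly. This is easily fixed, but it is a genuine gap as written.

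The more serious gap is in positivity. Your proposed monotonicity of $t\mapsto A_t$ is logically equivalent to the positivity statement you are trying to prove, so it is not a reduction. Your suggestion to prove it ``by differentiating'' does not apply: the Frenet curve in this paper is only assumed continuous. The exterior-power route reduces $\tilde B^\rho_i$ to a projective cross ratio for the curve $t\mapsto\xi(t)^{(i)}$ in $\Pbbb(\bigwedge^i\Rbbb^n)$, but that curve is only known to be projective-Anosov-transverse, and projective Anosov alone does \emph{not} imply positivity (that is the whole point of introducing ``positively ratioed''). You would still need a positivity statement for this exterior-power curve, which is exactly the original problem in disguise. So the plan, as stated, is circular.

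The paper's argument takes a different, concrete route that you should compare with. It telescopes $\tilde B^\rho_i(x,y,z,w)$ into a double sum $\sum_{k=1}^{n-i}\sum_{j=1}^{i}(x,y,z,w)_{i,j,k}$, where each summand is a specific ratio of four-flag wedge products
\[
\log\left|\frac{\xi(x)^{(n-i-k+1)}\wedge\xi(y)^{(k-1)}\wedge\xi(z)^{(i-j+1)}\wedge\xi(w)^{(j-1)}}{\xi(x)^{(n-i-k+1)}\wedge\xi(y)^{(k-1)}\wedge\xi(z)^{(i-j)}\wedge\xi(w)^{(j)}}\cdot\frac{\xi(x)^{(n-i-k)}\wedge\xi(y)^{(k)}\wedge\xi(z)^{(i-j)}\wedge\xi(w)^{(j)}}{\xi(x)^{(n-i-k)}\wedge\xi(y)^{(k)}\wedge\xi(z)^{(i-j+1)}\wedge\xi(w)^{(j-1)}}\right|,
\]
and then cites Proposition~2.12 of \cite{Zha1} for the strict positivity of each such term when $x,y,z,w$ are cyclically ordered. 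These terms are essentially Fock--Goncharov-type positive coordinates, and their positivity is where the Frenet condition is genuinely used (via a sign-consistency argument for nested wedge determinants, not via differentiation). If you want to avoid citing \cite{Zha1}, you would have to reproduce that argument; a bare appeal to ``hyperconvexity'' from Labourie or Guichard does not give it directly.
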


\begin{proof} Additivity and symmetry of $B^\rho_i$ are easy to check thanks to the explicit formula above. Continuity of $B^\rho_i$ follows from the continuity of $\xi$. Hence $B^\rho_i$ is a cross ratio for every $i=1,\dots, n-1$. To show positivity of $B_i^{\rho}$, we will write it as a sum of functions on $\partial\Gamma^{[4]}$ that are positive when evaluated on points $x,y,z,w$ lying in this cyclic order along $\partial\Gamma$. 

Fix $i\in\{1,\dots,n-1\}$. For $(x,y,z,w)\in\partial\Gamma^{[4]}$, $k=1,\dots,n-i$ and $j=1,\dots,i$, define
\begin{gather*}
(x,y,z,w)_{i,j,k}:=\log\left\vert\frac{\xi(x)^{(n-i-k+1)}\wedge\xi(y)^{(k-1)}\wedge \xi(z)^{(i-j+1)}\wedge \xi(w)^{(j-1)}}{\xi(x)^{(n-i-k+1)}\wedge\xi(y)^{(k-1)}\wedge \xi(z)^{(i-j)}\wedge \xi(w)^{(j)}}\right\vert\\
\ \ \ \ \ \ \ \ \ \ \ \ \ \ \ \ \ \ \ \ \ \ \ \ \ \ \ \ \ +\log\left\vert\frac{\xi(x)^{(n-i-k)}\wedge\xi(y)^{(k)}\wedge \xi(z)^{(i-j)}\wedge \xi(w)^{(j)}}{\xi(x)^{(n-i-k)}\wedge\xi(y)^{(k)}\wedge \xi(z)^{(i-j+1)}\wedge \xi(w)^{(j-1)}}\right\vert,
\end{gather*}
and observe that for all $(x,y,z,w)\in \partial\Gamma^{[4]}$
\[
\tilde{B}_i^\rho(x,y,z,w)=\sum_{k=1}^{n-i}\sum_{j=1}^i(x,y,z,w)_{i,j,k}.
\]
The functions $(x,y,z,w)_{i,j,k}$ were studied by the second author, who proved (Proposition 2.12 of \cite{Zha1}), that each $(x,y,z,w)_{i,j,k}$ is positive on quadruples of points $x,y,z,w$ in this cyclic order along $\partial\Gamma$. This shows the positivity of $B_i^\rho$.
\end{proof}

\begin{proof}[Proof of Theorem \ref{Hitchin positively ratioed}]
By Lemma \ref{lem:Hitchin non-negativity}, it is sufficient to show that $B^\rho_{\alpha_{i,i+1}}=B^\rho_i$. For any element $g\in\PSL(n,\Rbbb)$, let $\lambda(g)=(\lambda_1(g), \dots, \lambda_n(g))\in\overline{\amf^+}$ be its Jordan projection. An easy computation, using the explicit formula for the restricted fundamental weights, shows that for all $\gamma\in\Gamma\setminus\{\id\}$,
\[
\ell^\rho_{\alpha_{i,i+1}}[[\gamma]]=(\omega_{\alpha_{i,i+1}}+\omega_{\alpha_{n-i,n-i+1}})\circ \lambda \circ \rho (\gamma)=\sum_{k=1}^i\lambda_k(\rho(\gamma))-\sum_{k=n-i+1}^n\lambda_k(\rho(\gamma)).
\]
By Theorem \ref{otal}, it is thus sufficient to show that $\ell_{B^\rho_i}=\ell^\rho_{\alpha_{i,i+1}}$.

Choose a basis $e_1,\dots,e_n\subset \Rbbb^n$ such that $e_i$ spans the line $\xi(\gamma^+)^{(i)}\cap \xi(\gamma^{-})^{(n-i+1)}$. Then for this basis we have
\begin{gather*}
\big|\xi(\gamma^-)^{(n-i)}\wedge\xi(\gamma\cdot x)^{(i)}\big|=\left|e^{\lambda_{1}(\rho(\gamma))+\dots+\lambda_{i}(\rho(\gamma))}\xi(\gamma^-)^{(n-i)}\wedge\xi(x)^{(i)}\right|;\\
\big|\xi(\gamma^+)^{(n-i)}\wedge\xi(\gamma\cdot x)^{(i)}\big|=\left|e^{\lambda_{n-i+1}(\rho(\gamma))+\dots+\lambda_{n}(\rho(\gamma))}\xi(\gamma^+)^{(n-i)}\wedge\xi(x)^{(i)}\right|.
\end{gather*}
Hence, 
\begin{align*}
2B_i^\rho(\gamma^-,\gamma^+,\gamma x,x)&= \log\left\vert\frac{\xi(\gamma^-)^{(n-i)}\wedge\xi(\gamma\cdot x)^{(i)}}{\xi(\gamma^-)^{(n-i)}\wedge\xi(x)^{(i)}}\frac{\xi(\gamma^+)^{(n-i)}\wedge\xi(x)^{(i)}}{\xi(\gamma^+)^{(n-i)}\wedge\xi(\gamma\cdot x)^{(i)}}\right\vert \\
&\qquad+\log\left\vert\frac{\xi(\gamma^-)^{(i)}\wedge\xi(\gamma\cdot x)^{(n-i)}}{\xi(\gamma^-)^{(i)}\wedge\xi(x)^{(n-i)}}\frac{\xi(\gamma^+)^{(i)}\wedge\xi(x)^{(n-i)}}{\xi(\gamma^+)^{(i)}\wedge\xi(\gamma\cdot x)^{(n-i)}}\right\vert\\
&=\sum_{k=1}^{i}\lambda_k(\rho(\gamma))-\sum_{k=n-i+1}^n\lambda_k(\rho(\gamma))\\
&\qquad +\sum_{k=1}^{n-i}\lambda_k(\rho(\gamma))-\sum_{k=i+1}^n\lambda_k(\rho(\gamma))\\
&=2\left(\sum_{k=1}^i\lambda_k(\rho(\gamma))-\sum_{k=n-i+1}^n\lambda_k(\rho(\gamma))\right)=2\ell^\rho_{\alpha_{i,i+1}}[[\gamma]].\qedhere
\end{align*}
\end{proof}

\subsection{Maximal representations}

Another important feature of $\PSL(2,\Rbbb)$ is that it is a Lie group of Hermitian type.

\begin{definition} A connected semisimple Lie group $G$ is of \emph{Hermitian type} if it has finite center, has no compact factors and the associated Riemannian symmetric space $X$ admits a $G$-invariant complex structure.
\end{definition}

For our purposes, the main example of Lie group of Hermitian type will be $G=\PSp(2n,\Rbbb)$. Let $g$ be the Riemannian metric on the symmetric space $X$ and $J$ the $G$-invariant complex structure. This allows us to define a non-degenerate two-form $\omega_X$ by
\[
\omega_X(V,W):=g(JV,W)
\]
for any two vector fields $V$, $W$ on $X$. One can show (Lemma 2.1 of Burger-Iozzi-Labourie-Wienhard \cite{BILW}) that $(X, \omega_X)$ is a K\"ahler manifold. For any representation $\rho\colon \Gamma\to G$, the symplectic form $\omega_X$ defines an important invariant for $\rho$ as follows. Consider the bundle $S \times_\Gamma X:=(\Std\times X)/\Gamma$ over $S$, where $\Gamma$ acts on $\Std$ via deck transformations and on $X$ via $\rho$. The fiber of this bundle is $X$, which is contractible, so $S \times_\Gamma X$ admits a smooth section. Equivalently, there exists a smooth $\Gamma$-equivariant map $f\colon\Std\to X$. The pull back $f^*(\omega_X)$ is a $\Gamma$-invariant two-form on $\Std$, which descends to the two-form $\widehat{f^*(\omega_X)}$  on the compact surface $S$. We can define the \emph{Toledo invariant} of $\rho$ as
\[
T(\rho):=\frac{1}{2\pi}\int_S\widehat{f^*(\omega_X)}.
\]
Since any two $\rho$-equivariant maps $f,f'\colon \Std\to X$ are homotopic, $T(\rho)$ is well-defined. 

If $\text{rank}_\Rbbb X$ is the real rank of the symmetric space $X$, the Toledo invariant satisfies the inequality
\[
\vert T(\rho)\vert\leq -\chi(S)\text{rank}_\Rbbb X
\]
(see Turaev \cite{Tur1}, Dominic-Toledo \cite{DT1}, Clerc-{\O}rsted \cite{CO1}). In the case $G=\PSL(2,\Rbbb)$, this is the classical Milnor-Wood inequality \cite{Mil1}. Goldman \cite{Gol1} showed that $\mathcal T(S)$ is the unique connected component of $\Xmc(\Gamma,\PSL(2,\Rbbb))$ with Toledo invariant $2g-2$ (the real rank in this case is 1). This motivated Burger-Iozzi-Wienhard \cite{BIW} to define the following class of representations.

\begin{definition} A representation $\rho\colon\Gamma\to G$, with $G$ a Lie group of Hermitian type is \emph{maximal} if $\vert T(\rho)\vert= -\chi(S)\text{rank}_\Rbbb X$. 
\end{definition}

\textbf{For the rest of this section, fix the target group to be $G=\PSp(2n,\Rbbb)$.} We will show that in this case, maximal representations are also positively ratioed with respect to a particular parabolic subgroup. Recall that the maximal compact subgroup of $\PSp(2n,\Rbbb)$ is isomorphic to $U(n)$. At the level of Lie algebras, we can write $\mathfrak{sp}(2n,\Rbbb)=\kmf + \pmf$ with
\begin{align*}
\mathfrak u(n)\cong\kmf&=\Big\{\begin{pmatrix} A & B \\ -B & A\end{pmatrix}\colon A,B\in M_n(\Rbbb), A^t=-A, \ B^t=B\Big\}\\
\pmf&=\Big\{\begin{pmatrix} A & B \\ B & -A\end{pmatrix}\colon A,B\in M_n(\Rbbb), A^t=A, \ B^t=B\Big\}
\end{align*}
where $M_n(\Rbbb)$ is the set of $n\times n$ matrices. The maximal abelian subspace $\mathfrak a\subset\pmf$ can therefore be identified with diagonal, traceless matrices in $\mathfrak{sp}(2n,\Rbbb)$. With this identification, the restricted simple roots can be chosen to be 
\[
\alpha_{i,i+1}(x_1,\dots,x_n,-x_n,\dots,-x_1)=x_i-x_{i+1}\text{ for }i=1,\dots,n.
\] 
Moreover, the opposition involution is the identity.

Burger-Iozzi-Labourie-Wienhard [Theorem 6.1 of \cite{BILW}] proved that maximal representations are Anosov.

\begin{thm} [Burger-Iozzi-Labourie-Wienhard] \label{maximal is Anosov} 
If $\rho\colon\Gamma\to G$ is a maximal representation, then $\rho$ is $[P_{\alpha_{n,n+1}}]$-Anosov.
\end{thm}

The quotient $\mathscr F_{[P_{\alpha_{n,n+1}}]}$ is the Grassmannian of Lagrangian subspaces in $\Rbbb^{2n}$. Consider four Lagrangian subspaces $L_1,L_2,L_3,L_4\in \mathscr F_{[P_{\alpha_{n,n+1}}]}$ so that $L_1,L_3$ and $L_2,L_4$ are transverse pairs of Lagrangians, and let $(e_j^1,\dots,e_j^n)$ be a basis for $L_j$. For any $i,j=1,\dots,4$, let $A_{i,j}$ be the matrix whose $(k,m)$-th entry is
\[
(A_{i,j})_{k,m}=\Omega(e_{i}^k,e_j^{m}),
\]
where $\Omega$ is the symplectic form on $\Rbbb^{2n}$ preserved by the $\Sp(2n,\Rbbb)$ action. Using this, define
\[
\mathbb B(L_1,L_2,L_3,L_4):=\frac{\det(A_{1,2})\cdot \det(A_{3,4})}{\det(A_{1,4})\cdot\det(A_{3,2})}.
\]
Labourie (Section 4.2 of \cite{Lab3}) proved the following.

\begin{thm}[Labourie]\label{Labourie maximal} 
If $\rho\colon \Gamma\to G$ is a maximal representation with flag curve $\xi\colon \partial\Gamma\to \mathscr F_{[P_{\alpha_{n,n+1}}]}$, then 
\[
B(x,y,z,w):=\log\big\vert \mathbb B(\xi(x),\xi(z),\xi(y),\xi(w))\big\vert
\]
is a cross ratio. Also, $\ell_B(c)=2(\lambda_1(\rho(\gamma))+\lambda_2(\rho(\gamma))+\dots+\lambda_n(\rho(\gamma)))$ for all $c=[[\gamma]]\in\Cmc\Gmc(S)$. Moreover, for any four distinct points $x,y,z,w$ in this cyclic order along $\partial\Gamma$, we have that $B(x,y,z,w)>0$.
\end{thm}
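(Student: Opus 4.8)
The plan is to prove the three assertions in turn, following Labourie \cite{Lab3}. Write $\xi$ for the limit curve of $\rho$ into the Lagrangian Grassmannian of $(\Rbbb^{2n},\Omega)$ provided by Theorem \ref{maximal is Anosov}; being the limit curve of an Anosov representation with trivial opposition involution, $\xi$ sends distinct points of $\partial\Gamma$ to transverse Lagrangians, and each $\rho(\gamma)$ lifts to a symplectic transformation of $\Rbbb^{2n}$, well defined up to sign. Two elementary facts will be used throughout: $\det A_{i,j}\neq 0$ precisely when the Lagrangians $L_i,L_j$ are transverse, and each of $L_1,L_2,L_3,L_4$ occurs exactly once in the numerator and once in the denominator of $\mathbb{B}(L_1,L_2,L_3,L_4)$.

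First I would check that $B$ is a cross ratio. Independence of $\mathbb{B}$ from the choices of bases of the $L_j$ and from the sign of $\Omega$ follows from the second fact above, since a change of basis of some $L_j$ (resp. a rescaling of $\Omega$) multiplies all the determinants in which that $L_j$ appears (resp. all four determinants) by a common scalar that then cancels. From $\Omega(e_j^p,e_i^q)=-\Omega(e_i^q,e_j^p)$ one obtains $A_{j,i}=-A_{i,j}^{t}$, hence $\det A_{j,i}=(-1)^n\det A_{i,j}$, and feeding this into the definition of $\mathbb{B}$ yields the identities
\[
\mathbb{B}(L_1,L_2,L_3,L_4)=\mathbb{B}(L_2,L_1,L_4,L_3)\quad\text{and}\quad\mathbb{B}(L_1,L_2,L_3,L_4)\,\mathbb{B}(L_1,L_4,L_3,L_5)=\mathbb{B}(L_1,L_2,L_3,L_5),
\]
which, after the reshuffling of arguments built into the definition of $B$, translate exactly into the symmetry and additivity axioms. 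Continuity of $B$ on $\partial\Gamma^{[4]}$ is then immediate from continuity of $\xi$, since on $\partial\Gamma^{[4]}$ the four points are distinct and so all four determinants entering $\mathbb{B}$ are nonzero; and $\Gamma$-invariance follows from $\rho$-equivariance of $\xi$ together with the symplectic invariance of $\Omega$ (take a basis of $\rho(\gamma)L_j$ to be $\rho(\gamma)$ applied to a basis of $L_j$, and every $A_{i,j}$ is then unchanged).

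Next I would establish the period formula. For $c=[[\gamma]]$ and $a\in\partial\Gamma\setminus\{\gamma^+,\gamma^-\}$ we have $\ell_B(c)=\log\bigl|\mathbb{B}(\xi(\gamma^-),\xi(\gamma\cdot a),\xi(\gamma^+),\xi(a))\bigr|$. Since $\rho$ is $[P_{\alpha_{n,n+1}}]$-Anosov, $\rho(\gamma)$ preserves the splitting $\Rbbb^{2n}=\xi(\gamma^+)\oplus\xi(\gamma^-)$, where $\xi(\gamma^+)$ and $\xi(\gamma^-)$ are respectively the sums of the generalized eigenspaces of $\rho(\gamma)$ for its $n$ largest and $n$ smallest eigenvalue moduli, so that $\bigl|\det(\rho(\gamma)|_{\xi(\gamma^+)})\bigr|=\lambda_1^\rho(\gamma)\cdots\lambda_n^\rho(\gamma)$ and $\bigl|\det(\rho(\gamma)|_{\xi(\gamma^-)})\bigr|$ is its reciprocal. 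Choosing a basis of $\xi(\gamma\cdot a)=\rho(\gamma)\xi(a)$ of the form $\rho(\gamma)v^1,\dots,\rho(\gamma)v^n$ and using $\Omega(w,\rho(\gamma)v^m)=\Omega(\rho(\gamma)^{-1}w,v^m)$ with $w$ running over bases of $\xi(\gamma^-)$ and of $\xi(\gamma^+)$, one gets
\[
\bigl|\det A_{\xi(\gamma^-),\xi(\gamma\cdot a)}\bigr|=(\lambda_1^\rho(\gamma)\cdots\lambda_n^\rho(\gamma))\bigl|\det A_{\xi(\gamma^-),\xi(a)}\bigr|,\qquad\bigl|\det A_{\xi(\gamma^+),\xi(\gamma\cdot a)}\bigr|=(\lambda_1^\rho(\gamma)\cdots\lambda_n^\rho(\gamma))^{-1}\bigl|\det A_{\xi(\gamma^+),\xi(a)}\bigr|.
\]
The two unchanged determinants $\det A_{\xi(\gamma^\pm),\xi(a)}$ are nonzero by transversality of $\xi$, so they cancel in $\mathbb{B}$, leaving $\bigl|\mathbb{B}(\xi(\gamma^-),\xi(\gamma\cdot a),\xi(\gamma^+),\xi(a))\bigr|=(\lambda_1^\rho(\gamma)\cdots\lambda_n^\rho(\gamma))^2$; hence $\ell_B(c)=2\log(\lambda_1^\rho(\gamma)\cdots\lambda_n^\rho(\gamma))$, which is positive because $\alpha_{n,n+1}\circ\lambda\circ\rho(\gamma)=2\log\lambda_n^\rho(\gamma)>0$.

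The hard part is positivity. I would work in a symplectic chart adapted to the transverse pair $\xi(x),\xi(y)$: using $\Omega$ to identify $\xi(y)$ with $\xi(x)^{*}$ and the splitting $\Rbbb^{2n}=\xi(x)\oplus\xi(y)$, parametrize the Lagrangians transverse to $\xi(y)$ by symmetric bilinear forms on $\xi(x)$, so that $\xi(x)$ itself corresponds to $0$. A direct evaluation of the matrices $A_{i,j}$ in a basis adapted to this splitting collapses the cross ratio to $\mathbb{B}(\xi(x),\xi(z),\xi(y),\xi(w))=\det S_z/\det S_w$, where $S_z,S_w$ are the symmetric forms parametrizing $\xi(z),\xi(w)$. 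It therefore suffices to show $|\det S_z|>|\det S_w|>0$ whenever $x,y,z,w$ lie in this cyclic order, and this is where maximality of $\rho$ is essential. By the characterization of maximal representations as those with monotone boundary map (Burger--Iozzi--Wienhard \cite{BIW}; see also \cite{BILW}), the continuous curve $\xi$ is strictly monotone with respect to the cyclic order on $\partial\Gamma$ and the partial order on Lagrangians transverse to $\xi(y)$ induced by the Maslov cocycle, which in this chart is the Loewner order on symmetric forms. Consequently, on the component of $\partial\Gamma\setminus\{x,y\}$ containing $z$ and $w$ the forms $S_t$ are definite of a fixed sign, strictly monotone, blow up at the endpoint near $y$ and vanish at the endpoint near $x$; whichever the sign, this forces $|\det S_z|>|\det S_w|>0$, so $\mathbb{B}$ has absolute value greater than $1$ and $B(x,y,z,w)>0$. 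I expect the main obstacle to be precisely this last step: not the chart computation, which is routine, but correctly invoking the continuity and monotonicity of the maximal boundary map and, above all, identifying the relevant arc and the Maslov orientation conventions carefully enough to see that the $S_t$ really are definite and strictly monotone on it.
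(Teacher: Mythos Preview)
The paper does not prove this theorem: it is stated as a result of Labourie and attributed to Section~4.2 of \cite{Lab3}, then used as a black box to deduce the corollary that maximal representations are $[P_{\alpha_{n,n+1}}]$-positively ratioed. So there is no ``paper's own proof'' to compare against.

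Your sketch is a faithful outline of Labourie's argument in \cite{Lab3}. The verification that $B$ is a cross ratio and the period computation are correct as written; in particular your identification of $\bigl|\det(\rho(\gamma)|_{\xi(\gamma^+)})\bigr|$ with $\lambda_1^\rho(\gamma)\cdots\lambda_n^\rho(\gamma)$ is exactly the point. For positivity, your chart computation reducing $\mathbb{B}(\xi(x),\xi(z),\xi(y),\xi(w))$ to $\det S_z/\det S_w$ is right, and the remaining step is indeed the one you flag: maximality is equivalent (via \cite{BIW}) to the statement that $\xi$ sends positively oriented triples to triples of Lagrangians with maximal Maslov index, which in your chart forces all the $S_t$ on the arc from $y$ to $x$ through $z,w$ to be definite of a fixed sign, and an application of the Maslov cocycle to the triple $(\xi(z),\xi(w),\xi(y))$ then gives that $S_z-S_w$ is definite of that same sign, hence $|\det S_z|>|\det S_w|$. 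The orientation and sign bookkeeping is genuinely delicate, as you anticipate, but there is no missing idea.
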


Combining Theorem \ref{maximal is Anosov} and Theorem \ref{Labourie maximal}, we obtain the following corollary.

\begin{cor} If $\rho\colon \Gamma\to G$ is a maximal representation, then $\rho$ is $[P_{\alpha_{n,n+1}}]$- positively ratioed. 
\end{cor}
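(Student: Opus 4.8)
The plan is to match the canonical cross ratio attached to $\rho$ by Proposition~\ref{cross ratio lengths} with Labourie's cross ratio $B$ from Theorem~\ref{Labourie maximal}, and then simply transport positivity. By Theorem~\ref{maximal is Anosov}, a maximal $\rho$ is $[P_{\alpha_{n,n+1}}]$-Anosov, so Proposition~\ref{cross ratio lengths} produces a unique cross ratio $B^\rho_{\alpha_{n,n+1}}$ with $\ell^\rho_{\alpha_{n,n+1}}=\ell_{B^\rho_{\alpha_{n,n+1}}}$. Since the relevant set of simple roots is $\theta=\{\alpha_{n,n+1}\}$ and the opposition involution is the identity here, Definition~\ref{positively ratioed} says it suffices to show that $B^\rho_{\alpha_{n,n+1}}$ is positive in the sense of Definition~\ref{positive}.

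First I would compute the period $\ell^\rho_{\alpha_{n,n+1}}([[\gamma]])$ explicitly. Under the identification $\mathfrak a\cong\{(x_1,\dots,x_n,-x_n,\dots,-x_1)\}$, the simple root $\alpha_{n,n+1}$ is the (long) root $x\mapsto 2x_n$, and a direct check against the defining equations $2(\omega_{\alpha_{n,n+1}},\beta)/(\beta,\beta)=\delta_{\alpha_{n,n+1},\beta}$ gives $\omega_{\alpha_{n,n+1}}(x_1,\dots,x_n,-x_n,\dots,-x_1)=x_1+\dots+x_n$. Because $\iota(\alpha_{n,n+1})=\alpha_{n,n+1}$, for $c=[[\gamma]]$ this yields
\[
\ell^\rho_{\alpha_{n,n+1}}(c)=2\,\omega_{\alpha_{n,n+1}}\circ\lambda\circ\rho(\gamma)=2\log\big(\lambda^\rho_1(\gamma)\cdots\lambda^\rho_n(\gamma)\big),
\]
where $\lambda^\rho_1(\gamma),\dots,\lambda^\rho_n(\gamma)$ are as in Theorem~\ref{Labourie maximal}. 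That theorem also records $\ell_B(c)=2\log(\lambda^\rho_1(\gamma)\cdots\lambda^\rho_n(\gamma))$ for every $c$. Hence $\ell_B$ and $\ell_{B^\rho_{\alpha_{n,n+1}}}$ agree on all of $\Cmc\Gmc(S)$, and since both $B$ and $B^\rho_{\alpha_{n,n+1}}$ are cross ratios, Otal's rigidity theorem (Theorem~\ref{otal}) forces $B=B^\rho_{\alpha_{n,n+1}}$.

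It then remains only to observe that $B$ is positive: Theorem~\ref{Labourie maximal} gives $B(x,y,z,w)>0$ for pairwise distinct $x,y,z,w$ in cyclic order along $\partial\Gamma$, and since $B$ is continuous on $\partial\Gamma^{[4]}$ this extends to the weak inequality of Definition~\ref{positive} for all cyclically ordered quadruples. Thus $B^\rho_{\alpha_{n,n+1}}=B$ is a positive cross ratio, so by Definition~\ref{positively ratioed}, $\rho$ is $[P_{\alpha_{n,n+1}}]$-positively ratioed (and Proposition~\ref{strictly positively ratioed} even upgrades this to strict positivity everywhere). I do not expect a serious obstacle here: the statement is essentially a bookkeeping combination of Theorems~\ref{maximal is Anosov} and~\ref{Labourie maximal}. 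The one step demanding care is the normalization in the period computation --- pinning down the explicit form of $\omega_{\alpha_{n,n+1}}$ and tracking the factor $2$ coming from $\ell^\rho_\alpha=(\omega_\alpha+\omega_{\iota(\alpha)})\circ\lambda\circ\rho(\gamma)$ --- so that the $B$-periods and the $\alpha_{n,n+1}$-lengths coincide exactly, with no stray positive constant, which is precisely what is needed to invoke Theorem~\ref{otal}.
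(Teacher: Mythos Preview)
Your proposal is correct and follows essentially the same approach as the paper: compute $\omega_{\alpha_{n,n+1}}(x_1,\dots,x_n,-x_n,\dots,-x_1)=x_1+\dots+x_n$ to get $\ell^\rho_{\alpha_{n,n+1}}(c)=2\log(\lambda_1^\rho(\gamma)\cdots\lambda_n^\rho(\gamma))=\ell_B(c)$, then invoke Theorem~\ref{otal} to identify $B^\rho_{\alpha_{n,n+1}}$ with Labourie's $B$, whose positivity is given by Theorem~\ref{Labourie maximal}. Your write-up is simply more explicit about the normalization and the passage from strict to weak positivity.
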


\begin{proof}
The restricted fundamental weight $\omega_{\alpha_{n,n+1}}$ corresponding to $\alpha_{n,n+1}$ is given by $\omega_{\alpha_{n,n+1}}(x_1,\dots,x_n,-x_n,\dots, -x_1)=x_1+\dots+x_n$ and therefore
\[
\ell^\rho_{\alpha_{n,n+1}}[[\gamma]]=2(\lambda_1(\rho(\gamma))+\lambda_2(\rho(\gamma))+\dots+\lambda_n(\rho(\gamma))).
\]
Theorem \ref{otal} then implies that $B^\rho_{\alpha_{n,n+1}}$ is the cross ratio $B$ defined in Theorem \ref{Labourie maximal}, which is a positive cross ratio. 
\end{proof}

\section{Background on geodesic currents}\label{Background on geodesic currents}
In this section, we will introduce some notation, terminology and basic lemmas to study length functions on subsurfaces of $S$ induced by geodesic currents on $S$. 

\subsection{Extension to subsurfaces}
We begin by a definition for the kind of subsurfaces of $S$ that we consider.

\begin{definition}
Let $\Dmc$ be a (possibly empty) collection of pairwise non-intersecting, pairwise non-homotopic, non-contractible, simple, closed curves on $S$. An \emph{essential subsurface} $S'$ of $S$ is a union of connected components of $S\setminus\Dmc$.
\end{definition}

If $S'$ is connected, let $\Gamma'$ be the fundamental group of $S'$ and let $\Std'$ denote the universal cover of $S'$. By choosing appropriate base points in $\Std$ and $\Std'$, the inclusion $S'\subset S$ induces inclusions $\Gamma'\subset\Gamma$ and $\Std'\subset\Std$. Also, the inclusion $\Gamma'\subset\Gamma$ realizes the Gromov boundary $\partial\Gamma'$ of $\Gamma'$ as a subset of $\partial\Gamma$. 

If we choose a hyperbolic structure $\Sigma$ on $S$, then any connected essential subsurface $S'$ of $S$ is homotopic to a connected subsurface $\Sigma'\subset\Sigma$ with totally geodesic boundary. Also, denote the universal cover of $\Sigma'$ by $\widetilde{\Sigma}'$, then the inclusion $\Std'\subset\Std$ gives an inclusion $\widetilde{\Sigma}'\subset\widetilde{\Sigma}$ as the convex hull in $\widetilde{\Sigma}$ of $\partial\Gamma'\subset\partial\Gamma\simeq\partial\widetilde{\Sigma}$. 

Previously (see Definitions \ref{closed geodesic} and \ref{geodesic}), we defined a topological notion of geodesics in $\Std$ and $S$, as well as closed geodesics in $S$ using only $\Gamma$. Note that we can define oriented geodesics and geodesics in $\Std'$ and $S'$, as well as closed geodesics in $S'$ in the same way, using $\Gamma'$ in place of $\Gamma$. We will denote the set of geodesics in $\Std'$, the set of geodesics in $S'$, and the set of closed geodesics in $S'$ by $\Gmc(\Std')$, $\Gmc(S')$ and $\Cmc\Gmc(S')$ respectively. 

Since the closed geodesics in $S'$ are in a natural bijection with the free homotopy classes of closed curves on $S'$, we say that a closed geodesic in $S'$ is \emph{simple} if its corresponding free homotopy class contains a simple curve, and we say that it is \emph{peripheral} if its corresponding free homotopy class is peripheral. 

If $S'=\bigcup_{i=1}^k S_i$ is a disconnected union, then we define $\Cmc\Gmc(S'):=\bigcup_{i=1}^k\Cmc\Gmc(S_i)$ and $\Gmc(S'):=\bigcup_{i=1}^k\Gmc(S_i)$.

\textbf{For the rest of this paper, we will use the notations $S'\subset S$, $\Std'\subset\Std$, $\Gamma'\subset\Gamma$, $\partial\Gamma'\subset\partial\Gamma$, $\Cmc\Gmc(S')\subset\Cmc\Gmc(S)$, $\Gmc(\Std')\subset\Gmc(\Std)$ and $\Gmc(S')\subset\Gmc(S)$ as above. Also, whenever we choose a hyperbolic structure on $S$, we will identify  $\Sigma'$, $\Sigma$, $\widetilde{\Sigma}'$ and $\widetilde{\Sigma}$ with $S'$, $S$, $\Std'$ and $\Std$ respectively without any further comment.}

\subsection{Properties of the intersection form}\label{properties of the intersection form}
Although the intersection form (see Definition \ref{def:intersection form}) was defined purely topologically as the measure of the set $\Dmc\Gmc(S)$, it is often convenient to choose a hyperbolic structure $\Sigma$ on $S$. This choice allows us to use the following description of $\Dmc\Gmc(S)$, which will be useful for computing the intersection form. 

The tangent bundle of the Poincar\'e disc $T\Dbbb$ is a vector bundle over $\Dbbb$, so we can projectivize its fibers to obtain a fiber bundle over $\Dbbb$, which we denote by $\Pbbb(T\Dbbb)$. Let $\Pbbb(T\Dbbb)\oplus\Pbbb(T\Dbbb)$ be the fiber bundle over $\Dbbb$ obtained by taking the fiber-wise product of $\Pbbb(T\Dbbb)$ with itself. An element of $\Pbbb(T\Dbbb)\oplus\Pbbb(T\Dbbb)$ is thus a triple $(p,l_1,l_2)$, where $p\in\Dbbb$ and $l_1, l_2$ are lines through the origin in $T_p\Dbbb$. Clearly, the $\PGL(2,\Rbbb)=\mathrm{Isom}(\Dbbb)$ action on $\Pbbb(T\Dbbb)\oplus\Pbbb(T\Dbbb)$ leaves invariant the subset
\[\mathrm{Trans}\big(\Pbbb(T\Dbbb)\oplus\Pbbb(T\Dbbb)\big):=\{(p,l_1,l_2)\in\Pbbb(T\Dbbb)\oplus\Pbbb(T\Dbbb):l_1\neq l_2\}.\]

A choice of a hyperbolic metric $\Sigma$ on $S$ induces a unique (up to post composition by $\PGL(2,\Rbbb)$) isometry between $\Std$ and $\Dbbb$. The action of $\Gamma$ on $\Std$ by deck transformations then conjugates to a free and proper $\Gamma$ action on $\Dbbb$, which in turn induces a free and proper action of $\Gamma$ on $\Pbbb(T\Dbbb)\oplus\Pbbb(T\Dbbb)$ that stabilizes $\mathrm{Trans}\big(\Pbbb(T\Dbbb)\oplus\Pbbb(T\Dbbb)\big)$. This allows us to define the Hausdorff space 
\[\Qmc(\Sigma):=\mathrm{Trans}\big(\Pbbb(T\Dbbb)\oplus\Pbbb(T\Dbbb)\big)/\Gamma.\]

The isometry between $\Std$ and $\Dbbb$ also induces an obvious $\Gamma$-equivariant homeomorphism between $\Dmc\Gmc(\Std)$ and $\mathrm{Trans}\big(\Pbbb(T\Dbbb)\oplus\Pbbb(T\Dbbb)\big)$, which descends to a homeomorphism between $\Dmc\Gmc(S)$ and $\Qmc(\Sigma)$. This identification allows us to prove Lemma \ref{compute length} below. However, to state Lemma \ref{compute length}, we first need to develop some notation.

\begin{notation}\label{interval notation} 
For any $q,p\in\Dbbb$, let $(q,p]$ denote the half-open geodesic in $\Dbbb$ with open endpoint $q$ and closed endpoint $p$. Similarly, $(q,p)$, $[q,p)$ and $[q,p]$ will denote the interval $(q,p]$, but with the appropriate open and closed endpoints. 
\end{notation}

\begin{notation}\label{geodesic notation}
For any $q,p\in\Dbbb$, let $I$ be one of the four geodesic segments in $\Dbbb$ described in Notation \ref{interval notation} with endpoints $p$ and $q$. Then let $G(I)$ denote the set of geodesics in $\Dbbb$ that intersect $I$ transversely. Similarly, for any $x,y,z,w\in\partial\Dbbb$ in that cyclic order, let $L$ be the geodesic in $\Dbbb$ with endpoints $x,w$ and let $J$ be either of the following four subsegments of $\partial\Gamma$:
\[(y,z)_x=(y,z)_w,\,\,\,\,\, [y,z)_x=[y,z)_w,\,\,\,\,\,  (y,z]_x=(y,z]_w\,\,\,\,\, \text{ or }\,\,\,\,\, [y,z]_x=[y,z]_w.\] Then let $G_{\{x,w\}}(J)$ denote the set of geodesics in $\Dbbb$ that intersect $L$ and have one endpoint in $J$. 
\end{notation}

\begin{lem}\label{compute length}
Let $\nu\in\Cmc(S)$ and let $c=[[\gamma]]\in\Cmc\Gmc(S)$. Let $\{x,y\}$ be the set of fixed points of $\gamma$. Also, choose any hyperbolic structure on $S$ and let $L$ be the axis of $\gamma$ in $\Std=\Dbbb$. Finally, let $q\in \Dbbb$ and let $z\in\partial\Dbbb\setminus\{x,y\}$. Then the following hold:
\begin{enumerate}
\item If $q\in L$, then
\[i(c,\nu)=\nu\big(G(q,\gamma\cdot q]\big)=\nu\big(G_{\{x,y\}}(z,\gamma\cdot z]\big).\]
\item If $q\notin L$, then 
\[i(c,\nu)\leq\nu\big(G(q,\gamma\cdot q]\big)\]
and the inequality holds strictly if $\nu$ has full support.
\end{enumerate}
\end{lem}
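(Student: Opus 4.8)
The plan is to unwind the two identifications developed just before the lemma: the homeomorphism $\Dmc\Gmc(S)\cong\Qmc(\Sigma)$ and the definition $i(c,\nu)=\widehat{\mu_c\times\nu}(\Dmc\Gmc(S))$. First I would treat (1). Fix a hyperbolic structure so that $\Std=\Dbbb$, let $L$ be the axis of $\gamma$, and pick $q\in L$. The geodesics in $\Dbbb$ meeting $L$ transversely that are counted by $\widehat{\mu_c\times\nu}$ are exactly those whose $\Gamma$-orbit intersects the ``fundamental strip'' of $L$ cut off by $q$ and $\gamma\cdot q$; more precisely, since $\mu_c=\sum_{\eta\langle\gamma\rangle}\delta_{\{\eta\gamma^+,\eta\gamma^-\}}$, the product measure $\mu_c\times\nu$ restricted to $\Dmc\Gmc(\Std)$ is supported on pairs $(\{\eta\gamma^+,\eta\gamma^-\},l)$ with $l$ transverse to $\eta\cdot L$, and a fundamental domain for the diagonal $\langle\gamma\rangle$-action on the set of geodesics transverse to $L$ is precisely $G(q,\gamma\cdot q]$ (each $\langle\gamma\rangle$-orbit of a transverse geodesic meets the half-open arc $(q,\gamma\cdot q]$ along $L$ in exactly one point — here the half-openness is what makes ``exactly one'' correct). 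Pushing this down to $\Dmc\Gmc(S)$ gives $i(c,\nu)=\nu(G(q,\gamma\cdot q])$. For the second equality in (1), I would observe that $G(q,\gamma\cdot q]$ and $G_{\{x,y\}}(z,\gamma\cdot z]$ differ only by geodesics that are ``missed'' on one side and ``gained'' on the other when one slides the cutting locus from the segment $(q,\gamma\cdot q]\subset L$ to the boundary arc $(z,\gamma\cdot z]_x$; a transverse geodesic crosses $L$, so the bookkeeping of endpoints shows these two families have the same $\gamma$-translates and hence equal $\nu$-measure. (One can also phrase this via the cross-ratio-type additivity: both measures equal $\nu$ of ``one full period's worth'' of geodesics crossing $L$.)

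For (2), suppose $q\notin L$. The set $G(q,\gamma\cdot q]$ still contains a fundamental domain's worth of transverse geodesics, so $i(c,\nu)\le\nu(G(q,\gamma\cdot q])$; but now the map from $\langle\gamma\rangle$-orbits of transverse geodesics to their intersection with the segment $(q,\gamma\cdot q]$ need not be injective — a single orbit may hit the segment $(q,\gamma\cdot q]$ (which is no longer contained in $L$, hence no longer $\gamma$-invariant as a ``slice'') more than once, or the segment may be crossed by geodesics that also cross $\gamma^{-1}\cdot(q,\gamma\cdot q]$, producing overcounting. Concretely I would exhibit an explicit open set of geodesics $U\subset\Gmc(\Std)$ that cross $(q,\gamma\cdot q]$ but whose images in $\Gmc(S)$ are already counted by another portion of the segment; then $\nu(G(q,\gamma\cdot q])\ge i(c,\nu)+\widehat{\nu}(\text{image of }U)$, and if $\nu$ has full support then $\widehat\nu$ of a nonempty open set is positive, forcing strict inequality.

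The main obstacle will be (2): making the overcounting argument rigorous requires a clean combinatorial description of how the $\langle\gamma\rangle$-orbit of a transverse geodesic meets the non-invariant segment $(q,\gamma\cdot q]$, and then producing a genuinely nonempty open set of doubly-counted geodesics. I expect the cleanest route is to compare $(q,\gamma\cdot q]$ with a nearby segment $(q',\gamma\cdot q']$ for $q'\in L$ close to the foot of the perpendicular from $q$ to $L$: since $q\notin L$, the geodesic through $q$ perpendicular to $L$ (or some small perturbation of it) together with its $\gamma$-translates will cross $(q,\gamma\cdot q]$ but correspond, after translating by $\gamma$, to geodesics also crossing the segment — this localizes the defect to an explicit open cone of geodesics near the endpoints $q$ and $\gamma\cdot q$, and full support of $\nu$ then closes the argument. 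The remaining steps (well-definedness, independence of the auxiliary choices $q$ and $z$) are routine once the fundamental-domain picture in (1) is set up, since any two choices differ by the same $\gamma$-translation bookkeeping.
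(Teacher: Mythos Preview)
Your treatment of (1) is essentially the paper's: identify $G(q,\gamma\cdot q]$ as a fundamental domain for the $\langle\gamma\rangle$-action on geodesics transverse to $L$, then do the endpoint bookkeeping. For the second equality the paper makes your ``bookkeeping'' explicit by partitioning both $G(q,\gamma\cdot q]$ and $G_{\{x,y\}}(z,\gamma\cdot z]$ into pieces $G^k$ indexed by $k\in\Zbbb$ and checking that $G^k(q,\gamma\cdot q]=\gamma^k\cdot G_{\{x,y\}}^{-k}(z,\gamma\cdot z]$.

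For (2) your strategic instinct---compare with $(p,\gamma\cdot p]$ where $p$ is the foot of the perpendicular from $q$ to $L$---is exactly what the paper does. But your diagnosis of the mechanism is off, and this is a genuine gap. The excess in $\nu\big(G(q,\gamma\cdot q]\big)$ over $i(c,\nu)$ does \emph{not} come from $\langle\gamma\rangle$-orbits of $L$-transverse geodesics being counted twice, and your proposed open set $U$ (perturbations of the perpendicular from $q$ to $L$) consists of geodesics that \emph{do} cross $L$ and hence are already accounted for exactly once in $i(c,\nu)$; they contribute nothing extra. The actual excess comes from geodesics that cross $(q,\gamma\cdot q]$ but do not cross $L$ at all. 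Concretely, let $L'$ be the bi-infinite geodesic through $q$ and $\gamma\cdot q$, with endpoints $z,w$ chosen so that $z,x,y,w$ lie in cyclic order (so $L\cap L'=\emptyset$). Then any geodesic with one endpoint in the arc $(z,x]_y$ and the other on the far side of $L'$ crosses $(q,\gamma\cdot q]$ but stays entirely on the $q$-side of $L$; likewise for the arc $[y,w)_x$. These are the nonempty open sets $A$ and $B$ in the paper's proof. The paper then matches the remaining geodesics in $G(q,\gamma\cdot q]$---those that \emph{do} cross $L$---piecewise with $G(p,\gamma\cdot p]$ via powers of $\gamma$, yielding
\[
\nu\big(G(q,\gamma\cdot q]\big)=\nu(A)+\nu(B)+\nu\big(G(p,\gamma\cdot p]\big),
\]
from which both the inequality and its strictness under full support follow immediately.
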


\begin{proof} Note that $\{x,y\}$ is the set of endpoints for $L$ in $\partial\Dbbb$. 

Proof of (1). Let $\eta\in\Gamma$ be the primitive element so that $\gamma=\eta^k$ for some positive integer $k$, and let $c':=[[\eta]]\in\Cmc\Gmc(S)$. By definition, $i(c,\nu)$ is the mass of a fundamental domain of the $\Gamma$-action on $\Dmc\Gmc(\Std)=\mathrm{Trans}\big(\Pbbb(T\Dbbb)\oplus\Pbbb(T\Dbbb)\big)$ in the measure $c\times\nu$. Since the support of $c\times\nu$ lies in the set 
\[\{(p,l_1,l_2)\in\mathrm{Trans}\big(\Pbbb(T\Dbbb)\oplus\Pbbb(T\Dbbb)\big)\colon\exp_p(l_1)\text{ is a lift of }c\text{ to }\Dbbb\},\]
this means that 
\begin{eqnarray*}
i(c,\nu)&=&(c\times\nu)\big(\{(p,l_1,l_2)\colon p\in (q,\eta\cdot q]\text{ and }\exp_p(l_1)=L\}\big)\\
&=&k(c'\times\nu)\big(\{(p,l_1,l_2)\colon p\in (q,\eta\cdot q]\text{ and }\exp_p(l_1)=L\}\big)\\
&=&(c'\times\nu)\big(\{(p,l_1,l_2)\colon p\in (q,\gamma\cdot q]\text{ and }\exp_p(l_1)=L\}\big)\\
&=&\nu\big(G(q,\gamma\cdot q]\big).
\end{eqnarray*}

Next, we will prove that $\nu\big(G(q,\gamma\cdot q]\big)=\nu\big(G_{\{x,y\}}(z,\gamma\cdot z]\big)$. For any $k\in\Zbbb$, let $G^k(q,\gamma\cdot q]\subset G(q,\gamma\cdot q]$ be the subset of geodesics with one endpoint in $\gamma^k\cdot (z,\gamma\cdot z]_x$. It is clear that $G(q,\gamma\cdot q]$ can be written as the disjoint union
\[G(q,\gamma\cdot q]=\bigcup_{k\in\Zbbb}G^k(q,\gamma\cdot q].\]
Also, for any $k\in\Zbbb$, let $G^k_{\{x,y\}}(z,\gamma\cdot z]\subset G_{\{x,y\}}(z,\gamma\cdot z]$ be the subset of geodesics that intersect $\gamma^k\cdot (q,\gamma\cdot q]$. As before, $G_{\{x,y\}}(z,\gamma\cdot z]$ can be written as the disjoint union 
\[G_{\{x,y\}}(z,\gamma\cdot z]=\bigcup_{k\in\Zbbb}G^k_{\{x,y\}}(z,\gamma\cdot z].\]

Finally, observe that $G^k(q,\gamma\cdot q]=\gamma^k\cdot G_{\{x,y\}}^{-k}(z,\gamma\cdot z]$. Hence,
\begin{eqnarray*}
\nu\big(G(q,\gamma\cdot q]\big)&=&\sum_{k\in\Zbbb}\nu\big(G^k(q,\gamma\cdot q]\big)\\
&=&\sum_{k\in\Zbbb}\nu\big(\gamma^k\cdot G_{\{x,y\}}^{-k}(z,\gamma\cdot z]\big)\\
&=&\sum_{k\in\Zbbb}\nu\big(G_{\{x,y\}}^{-k}(z,\gamma\cdot z]\big)\\
&=&\nu\big(G_{\{x,y\}}(z,\gamma\cdot z]\big).
\end{eqnarray*}

Proof of (2). Let $p$ be the foot of the perpendicular from $q$ to $L$ and let $L'$ be the bi-infinite geodesic through $q$ and $\gamma\cdot q$. Observe that $\gamma\cdot p$ is also the foot of the perpendicular from $\gamma\cdot q$ to $L$, and $L\cap L'$ is empty. Let $z,w\in\partial\Dbbb$ be the endpoints of $L'$ so that $z,x,y,w\in\partial\Dbbb$ in that order (see Figure \ref{figure1}). By (1), we know that $i(c,\nu)=\nu\big(G(p,\gamma\cdot p]\big)$, so it is sufficient to show that $\nu\big(G(q,\gamma\cdot q]\big)\geq\nu\big(G(p,\gamma\cdot p]\big)$, and that this inequality is strict when $\nu(U)>0$ for all open sets $U\subset\Gmc(\Std)$.

\begin{figure}
\includegraphics[scale=0.5]{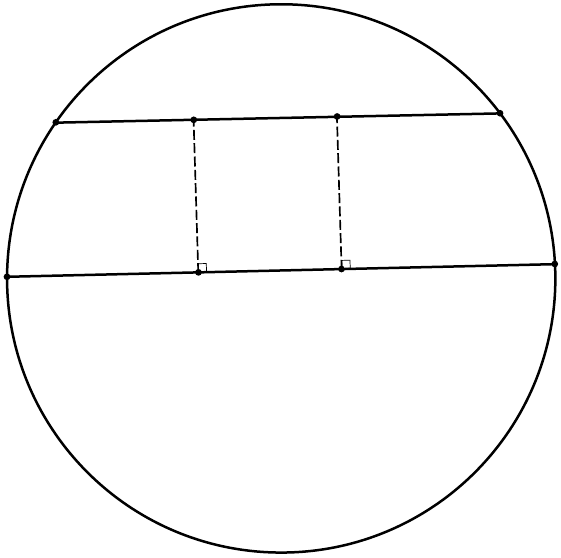}
\put (-244, 69){\makebox[0.7\textwidth][r]{\footnotesize$y$ }}
\put (-384, 66){\makebox[0.7\textwidth][r]{\footnotesize$x$ }}
\put (-373, 104){\makebox[0.7\textwidth][r]{\footnotesize$z$ }}
\put (-256, 106){\makebox[0.7\textwidth][r]{\footnotesize$w$ }}
\put (-336, 108){\makebox[0.7\textwidth][r]{\footnotesize$q$ }}
\put (-295, 109){\makebox[0.7\textwidth][r]{\footnotesize$\gamma\cdot q$ }}
\put (-335, 63){\makebox[0.7\textwidth][r]{\footnotesize$p$ }}
\put (-294, 64){\makebox[0.7\textwidth][r]{\footnotesize$\gamma\cdot p$ }}
\put (-270, 72){\makebox[0.7\textwidth][r]{\footnotesize$L$ }}
\put (-274, 99){\makebox[0.7\textwidth][r]{\footnotesize$L'$ }}
\put (-380, 87){\makebox[0.7\textwidth][r]{\footnotesize$(z,x]_y$ }}
\put (-228, 87){\makebox[0.7\textwidth][r]{\footnotesize$(y,w]_x$ }}
\caption{Proof of (2) of Lemma \ref{compute length}}
\label{figure1}
\end{figure}

For any $k\in\Zbbb$, let $G_k(p,\gamma\cdot p]\subset G(p,\gamma\cdot p]$ be the subset of geodesics that intersect $\gamma^k\cdot (q,\gamma\cdot q]$. Then $G(p,\gamma\cdot p]$ can be written as the disjoint union
\[G(p,\gamma\cdot p]=\bigcup_{k\in\Zbbb}G_k(p,\gamma\cdot p].\]
Similarly, for any $k\in\Zbbb$, let $G_k(q,\gamma\cdot q]\subset G(q,\gamma\cdot q]$ be the subset of geodesics that intersect $\gamma^k\cdot (p,\gamma\cdot p]$. Also, let $A\subset G(q,\gamma\cdot q]$ be the subset of geodesics with one endpoint in $(z,x]_y$ and let $B\subset G(q,\gamma\cdot q]$ be the subset of geodesics with one endpoint in $[y,w)_x$. Observe that $G(q,\gamma\cdot q]$ can again be written as the disjoint union
\[G(q,\gamma\cdot q]=A\cup B\cup\bigcup_{k\in\Zbbb}G_k(q,\gamma\cdot q].\]

Since $G_k(p,\gamma\cdot p]=\gamma^k\cdot G_{-k}(q,\gamma\cdot q]$ for all $k\in\Zbbb$, we have 
\begin{eqnarray*}
\nu\big(G(q,\gamma\cdot q]\big)&=&\nu(A)+\nu(B)+\sum_{k\in\Zbbb}\nu\big(G_k(q,\gamma\cdot q]\big)\\
&=&\nu(A)+\nu(B)+\sum_{k\in\Zbbb}\nu\big(\gamma^k\cdot G_{-k}(q,\gamma\cdot q]\big)\\
&=&\nu(A)+\nu(B)+\sum_{k\in\Zbbb}\nu\big(G_k(p,\gamma\cdot p]\big)\\
&=&\nu(A)+\nu(B)+\nu\big(G(p,\gamma\cdot p]\big)\\
&\geq&\nu\big(G(p,\gamma\cdot p]\big).
\end{eqnarray*}
It is clear that $A$ and $B$ contain open subsets of geodesics in $\Gmc(S)$, so the strictness statement holds.
\end{proof}

\subsection{Surgery and lengths}\label{surgery and lengths}
Let $c\in\Cmc\Gmc(S')$ be a primitive closed geodesic of $S'$ with positive geometric self-intersection number. Choose a representative $\bar{c}$ in the free homotopy class of closed curves corresponding to $c$, so that $\bar{c}$ has only transverse self-intersections and minimal self-intersection number. We can also assume that $\bar{c}$ only has simple self-intersection points, i.e. if we choose a parameterization of $c$ by $\Sbbb^1$, then $c(t_1)=c(t_2)=c(t_3)$ implies that $t_1\in\{t_2,t_3\}$. Let $p$ be a point of self-intersection for $\bar{c}$. 

There is a well-known procedure one can apply to $\bar{c}$ known as \emph{surgery at $p$} to obtain new closed curves in $S'$. To do so, choose a small topological disc in $U\subset S'$ so that $\bar{c}\cap\partial U$ is four points $x,y,z,w$ that lie along $\partial U$ in that order, and $\bar{c}\cap U$ is the union of two simple paths that intersect at $p$, one with endpoints $x$ and $z$, and the other with endpoints $y$ and $w$. We can then modify the curve $\bar{c}$ by replacing the two simple paths $\bar{c}\cap U$ that intersect at $p$ with two simple paths in $U$ that do not intersect. There are two ways to do so; we can either replace $\bar{c}\cap U$ with two simple, non-intersecting paths in $U$ with endpoints $x,y$ and $z,w$, or we can replace $\bar{c}\cap U$ with two simple, non-intersecting paths in $U$ with endpoints $y,z$ and $x,w$. 

These two different ways of performing surgery to $\bar{c}$ at $p$ yield either one closed curve $\bar{c}_1$ in $S'$ or two closed curves $\bar{c}_2$ and $\bar{c}_3$ in $S'$. For $i=1,2,3$, let $c_i\in\Cmc\Gmc(S')$ correspond to the free homotopy class of closed curves in $S'$ that contains $\bar{c}_i$ (see Figure \ref{figure2}). It is easy to see that $c_1$, $c_2$ and $c_3$ do not depend on the choice of $U$. Moreover, an easy homotopy argument shows that $c_1$, $c_2$ and $c_3$ also do not depend on $\bar{c}$ in the following sense. If $\bar{c}'$ is another closed curve in the free homotopy class corresponding to $c$ with minimal geometric self-intersection number and only simple, transverse self-intersection points, then the homotopy between $\bar{c}$ and $\bar{c}'$ gives a bijection 
\[h_{\bar{c},\bar{c}'}:\{\text{self-intersection points of }\bar{c}\}\to\{\text{self-intersection points of }\bar{c}'\}.\] 
If we perform surgery to $\bar{c}'$ at the self-intersection point $h_{\bar{c},\bar{c}'}(p)$ in both ways, then the closed geodesics corresponding to the free homotopy classes of closed curves we obtain are exactly $c_1$, $c_2$ and $c_3$. 

\begin{figure}
\includegraphics[scale=0.5]{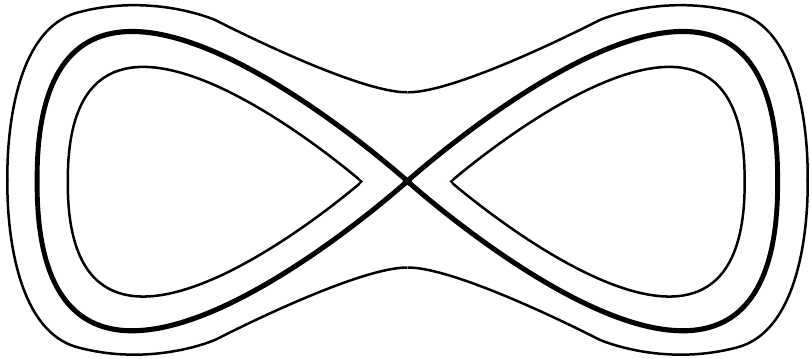}
\put (-345, 35){\makebox[0.7\textwidth][r]{\footnotesize$p$ }}
\put (-332, 57){\makebox[0.7\textwidth][r]{\footnotesize$c$ }}
\put (-418, 50){\makebox[0.7\textwidth][r]{\footnotesize$c_2$ }}
\put (-267, 50){\makebox[0.7\textwidth][r]{\footnotesize$c_3$ }}
\put (-332, 70){\makebox[0.7\textwidth][r]{\footnotesize$c_1$ }}
\caption{Surgery}
\label{figure2}
\end{figure}

The following proposition gives useful inequalities involving $i(c_1,\nu)$, $i(c_2,\nu)$, $ i(c_3,\nu)$ and $i(c,\nu)$.

\begin{prop}\label{surgery}
Let $\nu\in\Cmc(S)$ and let $c\in\Cmc\Gmc(S)$ be a primitive closed geodesic so that $i(c,c)>0$. By performing surgery to $c$ at a point of self-intersection in two different ways (see discussion above), we obtain either a single geodesic $c_1$ or a pair of geodesics $c_2$, $c_3$. Then 
\[i(c_1,\nu)\leq i(c,\nu)\,\,\,\text{ and }\,\,\,i(c_2,\nu)+i(c_3,\nu)\leq i(c,\nu).\]
Furthermore, these inequalities are strict when $\nu$ has full support.
\end{prop}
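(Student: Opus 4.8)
The plan is to realize all four closed geodesics $c, c_1, c_2, c_3$ as curves built from the same two arcs in the universal cover, so that the intersection numbers can be compared geodesic-by-geodesic using Lemma \ref{compute length}. Fix a hyperbolic structure on $S$, and work in $\Std = \Dbbb$. Let $\bar c$ be a taut representative of $c$ with a single transverse self-intersection point $p$ chosen for the surgery, and split $\bar c$ at $p$ into two oriented arcs $a$ and $b$, each running from $p$ back to $p$. Lift the picture: choose a lift $\ptd \in \Dbbb$ of $p$ and lifts $\atd$, $\btd$ of $a$, $b$ starting at $\ptd$. Then $c$ is represented by the element $\gamma = \alpha\beta \in \Gamma$, where $\alpha$ (resp.\ $\beta$) is the deck transformation carrying the start of $\atd$ (resp.\ $\btd$) to its end; and one checks directly that the surgered curves are represented by $\gamma_1 = \alpha\beta^{-1}$ (or its inverse, which has the same period), while $\gamma_2 = \alpha$ and $\gamma_3 = \beta$ — this is exactly the combinatorial content of the surgery described before the proposition.

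Next, the key geometric input: since $c_1$, $c_2$, $c_3$ are obtained by reconnecting the arcs $a,b$ inside the disc $U$, there is a piecewise-geodesic (broken) closed curve in $\Dbbb$ built from lifts of $a$ and $b$ that projects to a curve freely homotopic to each $c_i$, and whose ``translates'' tile an invariant broken line $L_i'$ for the cyclic group $\langle \gamma_i\rangle$. Concretely, for $c_1$ take the broken path $\atd$ followed by $\gamma_1\cdot\btd^{-1}$ concatenated at the lift point, and translate by powers of $\gamma_1$; this is an $\langle\gamma_1\rangle$-invariant path with a vertex $\qtd_1$ in a fundamental domain, and the set of geodesics of $\Dbbb$ crossing one fundamental segment $(\qtd_1, \gamma_1\cdot\qtd_1]$ of it is in bijection, via the tiling, with a subset of the geodesics crossing the fundamental segment $(\ptd, \gamma\cdot\ptd]$ of the analogous broken line for $c$. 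The point is that the broken line underlying $c$ consists of copies of \emph{both} arc-types $a$ and $b$, whereas the surgeries split this material up: the fundamental domain for $\gamma_2 = \alpha$ uses only $a$-arcs, the one for $\gamma_3 = \beta$ only $b$-arcs, and these are disjoint pieces of the $c$-picture, while for $c_1$ the reconnection only changes how the arcs close up, not which geodesics of $\Dbbb$ meet the arcs. After applying part (2) of Lemma \ref{compute length} to replace each broken-line count $\nu\big(G(\qtd_i,\gamma_i\cdot\qtd_i]\big)$ by the genuine period $i(c_i,\nu) \le \nu\big(G(\qtd_i,\gamma_i\cdot\qtd_i]\big)$, and part (1) to write $i(c,\nu) = \nu\big(G(\ptd,\gamma\cdot\ptd]\big)$ when $\ptd$ lies on the axis (and part (2), i.e.\ the inequality with strictness under full support, when it does not), the inequalities $i(c_1,\nu)\le i(c,\nu)$ and $i(c_2,\nu)+i(c_3,\nu)\le i(c,\nu)$ follow from the inclusion/disjointness of the corresponding sets of geodesics in $\Gmc(\Std)$ together with $\Gamma$-invariance of $\nu$, exactly as in the telescoping computations in the proof of Lemma \ref{compute length}.

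For the strictness under the full-support hypothesis, I would identify, in each case, a non-empty open subset of $\Gmc(\Std)$ that is counted by the $c$-side but not by the $c_i$-side (or not by $c_2$ and $c_3$ together). For $c_2$ and $c_3$ the leftover is evident: the broken-line material for $c$ is a concatenation of $a$- and $b$-segments joined at corners, and any geodesic of $\Dbbb$ that crosses near a corner joining an $a$-segment to a $b$-segment is counted by $c$ but by neither $c_2$ nor $c_3$, and such geodesics form an open set. For $c_1$ the leftover comes from geodesics crossing the small disc $U$ that separate the two strands of $\bar c$ in the way that is destroyed by the reconnection; again these form an open set, and one argues much as with the sets $A$, $B$ in the proof of part (2) of Lemma \ref{compute length}.

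The main obstacle I expect is the bookkeeping in the second paragraph: making fully precise the bijection between geodesics crossing a fundamental segment of the broken line for $c$ and those crossing fundamental segments of the broken lines for the $c_i$, and being careful that the reconnection at $U$ does not secretly create or destroy intersections with $\nu$-generic geodesics beyond the open ``leftover'' set one wants for strictness. This is purely topological and follows the template already set by Lemma \ref{compute length}, but it requires choosing the representative $\bar c$, the disc $U$, and the basepoint lifts compatibly, and then carefully tracking the $\Gamma$-action; once that setup is fixed, the rest is the same telescoping argument with a nonnegative (and, under full support, strictly positive) error term.
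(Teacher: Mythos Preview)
Your approach is essentially the paper's: lift to $\Dbbb$, write $c=[[\gamma]]$ with $\gamma=\gamma_3\gamma_2$ (your $\alpha\beta$), $c_2=[[\gamma_2]]$, $c_3=[[\gamma_3]]$, $c_1=[[\gamma_3^{-1}\gamma_2]]$, and use Lemma~\ref{compute length} to bound each $i(c_i,\nu)$ by $\nu$ of a set of crossing geodesics that sits inside the one for $c$. The paper's execution is considerably cleaner than your sketch, though, and I would urge you to simplify in two places. First, choose the lift $\ptd$ of the self-intersection point to lie \emph{on the axis} $L_\gamma$; then $\gamma_2\cdot\ptd$ also lies on $L_\gamma$ (it is another lift of $p$), so $(\ptd,\gamma\cdot\ptd]$ is a genuine geodesic segment and splits exactly as $(\ptd,\gamma_2\cdot\ptd]\cup(\gamma_2\cdot\ptd,\gamma\cdot\ptd]$ with no ``broken line'' apparatus needed. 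The inequality $i(c_2,\nu)+i(c_3,\nu)\le i(c,\nu)$ is then immediate from Lemma~\ref{compute length}(2) applied to each summand, and for $c_1$ one uses the triangle inequality $\nu\big(G(\ptd,\gamma_3^{-1}\gamma_2\cdot\ptd]\big)\le\nu\big(G(\ptd,\gamma_2\cdot\ptd]\big)+\nu\big(G(\gamma_2\cdot\ptd,\gamma_3^{-1}\gamma_2\cdot\ptd]\big)$ together with the observation that $\Gamma$-invariance gives $\nu\big(G(\qtd,\gamma_3^{-1}\cdot\qtd]\big)=\nu\big(G(\qtd,\gamma_3\cdot\qtd]\big)$.

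Second, your strictness argument is misdirected. For $c_2,c_3$ you look for ``leftover'' geodesics at a corner of the $c$-path, but on the $c$ side there is no corner: $\ptd$, $\gamma_2\cdot\ptd$, $\gamma\cdot\ptd$ are collinear on $L_\gamma$, so the decomposition $\nu\big(G(\ptd,\gamma_2\cdot\ptd]\big)+\nu\big(G(\gamma_2\cdot\ptd,\gamma\cdot\ptd]\big)=\nu\big(G(\ptd,\gamma\cdot\ptd]\big)$ is exact. The slack is entirely on the $c_i$ side, and the paper dispatches it in one line: the axes of $\gamma,\gamma_1,\gamma_2,\gamma_3$ are pairwise disjoint (they correspond to distinct free homotopy classes), so $\ptd\in L_\gamma$ lies on none of the other three axes, and the strictness clause of Lemma~\ref{compute length}(2) applies directly. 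Your plan to relocate the open sets $A,B$ from that lemma's proof would eventually work, but it duplicates effort already packaged there.
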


\begin{proof}
Let $\gamma\in\Gamma$ be a group element so that $[[\gamma]]=c\in\Cmc\Gmc(S)$. Choose a hyperbolic structure on $S$, and let $\bar{c}$ be a closed curve homotopic to $c$ with transverse, minimal self-intersection and only simple self-intersection points. The homotopy between $\bar{c}$ and $c$ gives a surjection
\[h_{\bar{c},c}:\{\text{self-intersection points of }\bar{c}\}\to\{\text{self-intersection points of }c\}.\] 
Let $q$ be the self-intersection point of $\bar{c}$ where the surgeries to obtain $c_1$, $c_2$ and $c_3$ are performed, and let $p=h_{\bar{c},c}(q)$.

Let $L\subset\Std=\Dbbb$ be the axis of $\gamma$, and observe that $L$ is a lift of the geodesic $c$. Let $\ptd$ be a point in $L$ whose image under the covering map $\Pi:\Std\to S$ is $p$. Then $\gamma\cdot\ptd$ also lies in $L$ and $\Pi(\gamma\cdot\ptd)=p$ as well. Let $\gamma_2,\gamma_3\in\Gamma$ be the group elements so that $[[\gamma_2]]=c_2$, $[[\gamma_3]]=c_3$, $\gamma=\gamma_3\gamma_2$ and $\gamma_2\cdot\ptd\in (\ptd,\gamma\cdot\ptd]$ (see Notation \ref{interval notation}). It is clear that $\Pi(\gamma_2\cdot\ptd)=p$ and $[[\gamma_1:=\gamma_3^{-1}\gamma_2]]=c_1$.

We will first prove the inequality $i(c_2,\nu)+i(c_3,\nu)\leq i(c,\nu)$. By (2) of Lemma \ref{compute length}, we have
\begin{eqnarray*}
i(c_2,\nu)+i(c_3,\nu)&\leq&\nu\big(G(\ptd,\gamma_2\cdot\ptd]\big)+\nu\big(G(\gamma_2\cdot\ptd,\gamma_3\gamma_2\cdot\ptd]\big)\\
&=&\nu\big(G(\ptd,\gamma\cdot \ptd]\big)\\
&=&i(c,\nu).
\end{eqnarray*}
To prove the inequality $i(c_1,\nu)\leq i(c,\nu)$, first make the following observation. For any $\qtd\in\Dbbb$, let $L'$ be the geodesic in $\widetilde{S}$ containing $[\qtd,\gamma_3\cdot\qtd]$. If $\nu(\{L'\})=0$, then $\nu(\gamma_3\cdot \{L'\})=0$ as well, so the $\nu$-measure of the set of geodesics through $\gamma_3\cdot\widetilde{q}$ that are transverse to $L'$, is equal to the $\nu$-measure of the set of geodesics through $\gamma_3\cdot\widetilde{q}$, which is again equal to the $\nu$-measure of the set of geodesics through $\gamma_3\cdot\widetilde{q}$ transverse to $\gamma_3\cdot L'$. This is in turn equal to the $\nu$-measure of the set of geodesics through $\widetilde{q}$ transverse to $L'$. Hence, we may conclude that if $\nu(\{L'\})=0$, then
\[\nu\big(G(\qtd,\gamma_3^{-1}\cdot\qtd]\big)=\nu\big(G[\qtd,\gamma_3\cdot\qtd)\big)=\nu\big(G(\qtd,\gamma_3\cdot\qtd]\big).\]

Now, observe that the geodesic containing $[\gamma_2\cdot\ptd,\gamma_3\gamma_2\cdot\ptd]$ is $L$. Hence, the previous observation, together with (2) Lemma \ref{compute length}, allows us to conclude that if $\nu(\{L\})=0$, then
\begin{eqnarray*}
i(c_1,\nu)&\leq&\nu\big(G(\ptd,\gamma_1\cdot\ptd]\big)\\
&\leq&\nu\big(G(\ptd,\gamma_2\cdot\ptd]\big)+\nu\big(G(\gamma_2\cdot\ptd,\gamma_3^{-1}\gamma_2\cdot\ptd]\big)\\
&=&\nu\big(G(\ptd,\gamma_2\cdot\ptd]\big)+\nu\big(G(\gamma_2\cdot\ptd,\gamma_3\gamma_2\cdot\ptd]\big)\\
&=&\nu\big(G(\ptd,\gamma\cdot \ptd]\big)\\
&=&i(c,\nu).
\end{eqnarray*}

With this, we can prove $i(c_1,\nu)\leq i(c,\nu)$ for general $\nu$. Since $c_1$ is obtained from $c$ by performing surgery, it is clear that $i(c_1,c)\leq i(c,c)$. Also, since $L$ is a lift of $c$, we can write $\nu=\mu+kc$ for some $\mu\in\Cmc(S)$ so that $\mu(\{L\})=0$. This means that 
\[i(c_1,\nu)=i(c_1,\mu)+ki(c_1,c)\leq i(c,\mu)+ki(c,c)=i(c,\nu).\]

Finally, we argue that these inequalities are strict when $\nu$ has full support. By the strictness statement in (2) of Lemma \ref{compute length}, it is sufficient to show that $\ptd$ does not lie along the axes of $\gamma_1$, $\gamma_2$ and $\gamma_3$. This is obvious, since the axes of $\gamma$, $\gamma_1$, $\gamma_2$ and $\gamma_3$ are pairwise disjoint.
\end{proof}

As a consequence of Theorem \ref{positively ratioed to geodesic currents}, Proposition \ref{strictly positively ratioed} and Proposition \ref{surgery}, we have Corollary \ref{positively ratioed surgery}.

\subsection{Asymptotics of lengths}\label{sec:BurgPoz} 
In \cite{BurPoz1}, Burger and Pozzetti consider a metric compactification of the space of $\mathrm{Sp}(2n,\mathbb R)$-maximal representations. The limit points correspond to actions via isometries of $\Gamma$ on certain asymptotic cones. They prove (Theorem 1.1 of \cite{BurPoz1}) that a boundary point determines a decomposition of $S$ into essential subsurfaces. This decomposition is defined in terms of the asymptotic behavior of the length function. 

In this section, we obtain an analogous result for sequences of positively ratioed representations as a consequence of Theorem \ref{positively ratioed to geodesic currents}. Here, we use the compactness of the space of \emph{projectivized geodesic currents} $\mathcal{PC}(S):=\Cmc(S)/\Rbbb^+$ (Corollary 5 of \cite{Bon2}) to describe the limit points.
First, we need the following lemma.

\begin{lem}\label{panted systole}
Let $\nu\in\Cmc(S)$ and $\bar{e}\in\Cmc\Gmc(S')$ be a primitive non-simple curve. Then there is some geodesic pair of pants $P\subset S'$ and $e\in\Cmc\Gmc(P)$ so that
\begin{itemize}
\item $i(e,\nu)\leq i(\bar{e},\nu)$
\item $e$ is primitive and has a unique self-intersection point $p$
\item the three closed geodesics $e_1$, $e_2$ and $e_3$ obtained by performing surgery to $e$ at $p$ in the two different ways specified in Section \ref{surgery and lengths} are the three boundary components of $P$.
\item if a curve $c\in \Cmc\Gmc(S')$ intersects $\bar{e}$ transversely, then $c$ intersects $e$ transversely.
\end{itemize}
\end{lem}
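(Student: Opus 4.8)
The plan is to reduce a non-simple curve step by step to a curve supported in a pair of pants, never increasing the $\nu$-length, using the surgery inequality of Proposition \ref{surgery}. Concretely, I would argue by induction on some complexity of $\bar e$, say the geometric self-intersection number $i(\bar e,\bar e)>0$ together with the genus and number of boundary components of the smallest incompressible subsurface of $S'$ carrying $\bar e$. First I would fix a hyperbolic structure on $S'$ so that $\bar e$ is realized as a closed geodesic with transverse, minimal, and simple self-intersections, pick a self-intersection point $p$, and perform the two surgeries described in Section \ref{surgery and lengths}, obtaining either a single curve $c_1$ or a pair $c_2,c_3$. By Proposition \ref{surgery} we have $i(c_1,\nu)\le i(\bar e,\nu)$ and $i(c_2,\nu)+i(c_3,\nu)\le i(\bar e,\nu)$; in the second case at least one of $c_2,c_3$, call it $c'$, satisfies $i(c',\nu)\le i(\bar e,\nu)$, and moreover $c'$ has strictly smaller self-intersection number than $\bar e$ (a standard fact: smoothing a self-intersection of a curve in accordance with orientation strictly drops the minimal self-intersection count).

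The key point is to run this reduction while keeping track of the topology. The situation I want to reach is an $e$ that is primitive, non-simple with exactly one self-intersection point, and whose three surgery curves bound a pair of pants $P\subset S'$. I would organize the induction as follows: if $\bar e$ already has a single self-intersection point, then the three surgery curves $\bar e_1,\bar e_2,\bar e_3$ are disjoint simple closed curves, and a regular neighborhood of $\bar e$ is a pair of pants (an immersed figure-eight thickens to a pair of pants) whose boundary components are freely homotopic to $\bar e_1,\bar e_2,\bar e_3$; one then has to check these boundary curves are essential in $S'$, which one arranges by first discarding any inessential or peripheral pieces (see below). If $\bar e$ has more than one self-intersection point, I perform one surgery to get a curve of strictly smaller complexity with no larger $\nu$-length; if the surgered curve is essential and non-simple I recurse, and if it becomes simple or inessential at some stage I need a separate argument — but a simple closed curve can be isotoped off a pants only when it was already homotopically trivial or peripheral to begin with, so by choosing $p$ appropriately (at a self-intersection that is "non-separating" in the relevant sense) I can always keep the surgered curve essential and non-simple until it has exactly one self-intersection point.

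The main obstacle is precisely this bookkeeping: ensuring that the reduction process terminates at a genuinely non-simple curve with one self-intersection point bounding an \emph{essential} pair of pants, rather than degenerating to a simple curve or a curve with a contractible or peripheral component. I would handle this by a careful choice of which self-intersection point to smooth at each stage — choosing, among all self-intersection points of the current curve, one whose smoothing keeps the curve essential and non-simple whenever such a choice exists, and showing that it always does unless the curve already has exactly one self-intersection (in which case we are done). The existence of such a good self-intersection point follows from the classification of how neighborhoods of immersed curves sit inside subsurfaces: if every smoothing produced an inessential or simple result, the curve would have to be homotopic into a pants already, contradicting having $\ge 2$ self-intersections after minimal position. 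Once termination is established, the resulting $e$ has all three required properties, and the $\nu$-length bound $i(e,\nu)\le i(\bar e,\nu)$ follows by chaining the inequalities from Proposition \ref{surgery} across all the reduction steps.
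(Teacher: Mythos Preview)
Your approach is the same as the paper's in its core idea (iterate surgery using Proposition~\ref{surgery} to decrease self-intersection number while keeping $i(\cdot,\nu)$ from increasing), but you over-engineer the termination step. You try to \emph{choose} the self-intersection point cleverly so that the surgered curve stays non-simple at every stage, and you acknowledge this is where the difficulty lies; your justification (``classification of how neighborhoods of immersed curves sit inside subsurfaces'') is vague and would need real work.

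The paper avoids this entirely. At each step it picks \emph{any} self-intersection point $q$, performs both surgeries to get $e_1,e_2,e_3$, and asks whether \emph{some} $e_{j_0}$ fails to be a multiple of a simple curve. If so, pass to the primitive root of $e_{j_0}$ (you omit this: surgery can produce a proper power) and recurse --- this curve is primitive, non-simple, with strictly fewer self-intersections and no larger $\nu$-length. If not --- i.e.\ all three of $e_1,e_2,e_3$ are multiples of simple curves --- then \emph{stop}: this is precisely the terminal state you want. From this one argues directly that $e$ has a unique self-intersection point, and the three (now simple, pairwise disjoint) curves $e_1,e_2,e_3$ bound the pair of pants $P$. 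So rather than steering the process away from simplicity, the paper uses ``all three surgery outputs are simple multiples'' as the halting criterion itself. This removes the need for any careful choice of $p$ and makes the bookkeeping trivial.
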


\begin{proof}
Let $q$ be any self intersection point of $\bar{e}$ and let $\bar{e}_1$, $\bar{e}_2$ and $\bar{e}_3$ be the three closed geodesics obtained by performing surgery to $\bar{e}$ at $q$. It is clear that the self-intersection numbers of $\bar{e}_1$, $\bar{e}_2$ and $\bar{e}_3$ are less than that of $\bar{e}$. Also,  Proposition \ref{surgery} implies that $i(\bar{e}_j,\nu)\leq i(\bar{e},\nu)$ for all $j=1,2,3$. Suppose that there is some $j_0=1,2,3$ so that $\bar{e}_{j_0}$ is not a multiple of a simple curve. Then let $\bar{\bar{e}}$ be the closed geodesic so that $\bar{\bar{e}}=[[\gamma]]$ for some primitive $\gamma\in\Gamma$ with the property that $[[\gamma^k]]=\bar{e}_{j_0}$ for some $k\in\Zbbb$. Then $\bar{\bar{e}}$ is primitive, non-simple, has fewer self-intersection points than $\bar{e}$, and $i(\bar{\bar{e}},\nu)\leq i(\bar{e}_{j_0},\nu)\leq i(\bar{e},\nu)$. Replace $\bar{e}$ with $\bar{\bar{e}}$.

Iterate the replacement procedure above. This iteration will terminate to give a non-simple $e\in\Cmc\Gmc(S')$ so that $i(e,\nu)\leq i(\bar{e},\nu)$, and for any self-intersection point $p$ of $e$, the three closed geodesics $e_1$, $e_2$ and $e_3$ obtained by performing surgery to $e$ at $p$ are multiples of simple curves in $S'$. This then implies that $e$ must have a unique self-intersection point. In particular, $e_1$, $e_2$ and $e_3$ are simple and are pairwise non-intersecting. The homotopy from $e_1$, $e_2$ and $e_3$ to $e$ is a pair of pants $P$ that contains $e$, and has $e_1$, $e_2$ and $e_3$ as its boundary components.
\end{proof}

\begin{prop}\label{asymptotic lengths}
Fix an auxiliary hyperbolic structure on $S$. Let $\{\nu_j\}_{j=1}^\infty\subset \Cmc(S)$ be a sequence of non-zero geodesic currents. There is 
\begin{itemize}
	\item a subsequence of $\{\nu_j\}_{j=1}^\infty$, also denoted $\{\nu_j\}_{j=1}^\infty$, 
	\item a (possibly disconnected, possibly empty) essential subsurface $S'\subset S$,
	\item a (possibly empty) collection of pairwise non-intersecting, simple closed geodesics $\{c_1,\dots, c_k\}\subset\Cmc\Gmc(S\setminus S')$
\end{itemize} 
so that $A:=S'\cup \bigcup_{i=1}^kc_i\subset S$ is non-empty, and the following property holds. Let $c\in\Cmc\Gmc(S)$ be a closed curve so that $c\notin\Cmc\Gmc(S\setminus A)$ and $c$ is not a multiple of $c_i$ for $i=1,\dots,k$.
\begin{enumerate}
	\item If $d$ is a multiple of $c_i$ for some $i=1,\dots,k$ or $d\in\Cmc\Gmc(S\setminus A)$, then $\displaystyle\lim_{j\to\infty}\frac{i(\nu_j,d)}{i(\nu_j,c)}=0$.
	\item If $d\in\Cmc\Gmc(S)$ is a closed curve so that $d\notin\Cmc\Gmc(S\setminus A)$ and $d$ is not a multiple of $c_i$ for $i=1,\dots,k$, then $\displaystyle \lim_{j\to\infty}\frac{i(\nu_j,d)}{i(\nu_j,c)}\in\Rbbb^+$.
\end{enumerate}
\end{prop}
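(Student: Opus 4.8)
Proof proposal for Proposition \ref{asymptotic lengths}.

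The plan is to extract the subsurface $S'$ and the curves $c_i$ from a limit of the projectivized currents $[\nu_j]$, using the compactness of $\mathcal{PC}(S)$ (Corollary 5 of Bonahon \cite{Bon2}). First I would pass to a subsequence so that $[\nu_j]\to[\nu_\infty]$ for some nonzero $\nu_\infty\in\Cmc(S)$; concretely, after rescaling we may assume $\nu_j\to\nu_\infty$ weakly-$*$ with $\nu_\infty\neq 0$. The structure of a geodesic current on a closed surface is well understood: $\nu_\infty$ decomposes (up to the intersection pairing, which is all that matters here) according to which geodesics it ``charges.'' Let $A\subset S$ be the ``support subsurface'' of $\nu_\infty$, built as follows: take the union of all simple closed geodesics $c_i$ that are atoms of $\nu_\infty$ (these form a finite pairwise non-intersecting collection, since $i(\nu_\infty,\nu_\infty)<\infty$ — indeed $i(c_i,c_i')=0$ for any two such, and self-intersecting atoms are impossible by finiteness of $i(\nu_\infty,\nu_\infty)$), together with the smallest incompressible subsurface $S'$ carrying the non-atomic part of $\nu_\infty$; set $A:=S'\cup\bigcup_i c_i$. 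Since $\nu_\infty\neq 0$, $A$ is non-empty. The key dichotomy that makes $A$ well-behaved is: for $d\in\Cmc\Gmc(S)$, one has $i(\nu_\infty,d)=0$ if and only if $d$ can be realized disjointly from $A$, i.e.\ $d\in\Cmc\Gmc(S\setminus A)$ or $d$ is a multiple of some $c_i$; and $i(\nu_\infty,d)>0$ otherwise. The forward direction of this dichotomy is immediate from the definition of $A$; the reverse direction — that a curve genuinely crossing $A$ has positive intersection with $\nu_\infty$ — uses that $\nu_\infty$ restricted to $S'$ has full support on the geodesics of $\widetilde{S'}$ by minimality of $S'$, plus the atomic contributions of the $c_i$.

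Granting this dichotomy for $\nu_\infty$, the two conclusions follow by a normalization-and-limit argument. Write $\lambda_j>0$ for the scalars with $\lambda_j\nu_j\to\nu_\infty$. For the fixed curve $c$ with $c\notin\Cmc\Gmc(S\setminus A)$ and $c$ not a multiple of any $c_i$, the dichotomy gives $i(\nu_\infty,c)>0$, hence by continuity and bilinearity of $i$ (Bonahon) we get $\lambda_j\,i(\nu_j,c)\to i(\nu_\infty,c)>0$. For conclusion (1), if $d\in\Cmc\Gmc(S\setminus A)$ or $d$ is a multiple of some $c_i$, then $i(\nu_\infty,d)=0$, so $\lambda_j\,i(\nu_j,d)\to 0$, and therefore
\[
\frac{i(\nu_j,d)}{i(\nu_j,c)}=\frac{\lambda_j\,i(\nu_j,d)}{\lambda_j\,i(\nu_j,c)}\longrightarrow\frac{0}{i(\nu_\infty,c)}=0.
\]
For conclusion (2), if $d$ is a curve with $d\notin\Cmc\Gmc(S\setminus A)$ and $d$ not a multiple of any $c_i$, then $i(\nu_\infty,d)>0$, so the same computation gives $i(\nu_j,d)/i(\nu_j,c)\to i(\nu_\infty,d)/i(\nu_\infty,c)\in\Rbbb^+$.

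I expect the main obstacle to be the reverse direction of the dichotomy, namely showing that a curve $d$ which crosses $A$ essentially (not homotopable off it) satisfies $i(\nu_\infty,d)>0$ — equivalently, that the ``support subsurface plus atomic curves'' $A$ genuinely captures every geodesic charged by $\nu_\infty$. The atomic part is easy: if $d$ intersects some $c_i$ transversally, the Dirac mass at the axis of $c_i$ contributes a positive term to $i(\nu_\infty,d)$. The delicate part is the non-atomic piece on $S'$: one needs that $S'$ is defined so that the restriction of $\nu_\infty$ has full support on $\Gmc(\widetilde{S'})$ (this is exactly the content of choosing $S'$ minimal among incompressible subsurfaces carrying the non-atomic part — one discards from $S'$ any subsurface on whose geodesics $\nu_\infty$ vanishes, and one must check this process yields a well-defined incompressible subsurface whose boundary curves are pairwise non-homotopic simple closed curves, so that $S'$ is incompressible in the sense of the excerpt's definition). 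Lemma \ref{compute length} and Lemma \ref{panted systole} are the technical tools here: the former lets one compute $i(\nu_\infty,d)$ as a $\nu_\infty$-measure of a set of geodesics crossing a fundamental arc of $d$, and full support of $\nu_\infty|_{S'}$ forces that measure to be positive whenever $d$ genuinely enters $S'$; Lemma \ref{panted systole} guarantees $S'$ is rich enough (contains pairs of pants) that the notion of ``genuinely entering $S'$'' is nonvacuous. Once the well-definedness of $A$ and this positivity statement are nailed down, the rest is the routine limiting argument above.
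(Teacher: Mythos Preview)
Your overall strategy---pass to a projectivized limit $\nu_\infty$ via compactness of $\mathcal{PC}(S)$, build $A$ from $\nu_\infty$, and deduce both conclusions from continuity of $i$ once the dichotomy ``$i(\nu_\infty,d)>0$ iff $d$ genuinely crosses $A$'' is established---is exactly the paper's approach, and the limiting computation in your middle paragraph is correct.

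The gap is in your construction of $A$. The atomic/non-atomic decomposition does not behave as you claim. First, atoms of a geodesic current need not be simple: $\nu_\infty=\mu_c$ for a non-simple closed $c$ is a legitimate current with $i(\nu_\infty,\nu_\infty)=i(c,c)<\infty$, so finiteness of self-intersection rules out neither non-simple atoms nor intersecting pairs of atoms. Second, and more seriously, the non-atomic part of $\nu_\infty$ restricted to $S'$ will almost never have full support on $\Gmc(\Std')$: if $\nu_\infty$ is a minimal filling measured lamination on $S'$, its support is a nowhere-dense set of leaves. What you actually need is weaker than full support: every curve genuinely entering $S'$ must cross $\mathrm{supp}\,\nu_\infty$.

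The paper avoids both issues by working directly with $B:=(\mathrm{supp}\,\nu_\infty)/\Gamma\subset\Gmc(S)$ and never decomposing $\nu_\infty$. One takes a \emph{maximal} collection $\{c'_1,\dots,c'_l\}$ of pairwise non-intersecting simple closed geodesics having no transverse intersection with any geodesic in $B$; then $S'$ is the union of those components of $S\setminus\{c'_1,\dots,c'_l\}$ meeting $B$, and $\{c_1,\dots,c_k\}:=B\cap\{c'_1,\dots,c'_l\}$. With this definition the dichotomy reduces to ``$i(\nu_\infty,d)>0$ iff $d$ crosses some geodesic in $B$'', which is immediate from the definition of support; the remaining work is the combinatorial equivalence between ``$d$ crosses $B$'' and ``$d\notin\Cmc\Gmc(S\setminus A)$ and $d$ is not a multiple of any $c_i$''. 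The nontrivial direction does use Lemma~\ref{panted systole} as you anticipated (to trap a non-simple $d\in\Cmc\Gmc(S')$ in a pair of pants $P\subset S'$, where every non-peripheral geodesic crosses every other non-peripheral geodesic and every properly embedded arc), together with a short concatenation argument for curves that only pass through $S'$ along an arc.
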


\begin{proof} Since the weak$^*$ topology on $\Cmc(S)$ is metrizable and $\mathcal{PC}(S)$ is compact, there exist
\begin{itemize}
\item a subsequence of $\{\nu_j\}_{j=1}^\infty$, also denoted $\{\nu_j\}_{j=1}^\infty$,
\item a sequence of positive real numbers $\{\lambda_j\}_{j=1}^\infty$,
\item a non-zero geodesic current $\nu$,
\end{itemize}
such that $\lim_{j\to\infty}\lambda_j\nu_j=\nu$. Define
\[
\text{supp }\nu:=\{g\in \Gmc(\Std)\colon \nu(U_g)>0 \text{ for all neighborhoods $U_g$ of $g$}\},
\]
and consider $B:=(\text{supp }\nu)/\Gamma\subset \Gmc(S)$. 

For our choice of an auxiliary hyperbolic metric on $S$, let $\{c'_1,c'_2,\dots, c'_l\}$ be a maximal (possibly empty) collection of pairwise non-intersecting simple closed geodesics that do not have transverse intersections with any geodesic in $B$. Then let $\{S_1,\dots, S_m\}$ be the list of connected components of $S\setminus \{c'_1,c'_2,\dots, c'_l\}$ and define
\[
S':=\bigcup_{\{t\in\{1,\dots,m\}:S_t\cap B\neq\emptyset\}} S_t.
\]
If $B\cap\{c'_1,c'_2,\dots,c'_l\}=\emptyset$, set $k$ to be $0$. Otherwise, let $c_1,\dots,c_k$ be the closed geodesics in $B\cap\{c'_1,\dots,c'_l\}$. 

Notice that $A$ is non-empty because $\text{supp }\nu$ is non-empty. Since the intersection pairing is continuous, for any $a\in\Cmc\Gmc(S)$ and for any $b$ intersecting transversely a geodesic in $B$, up to passing to a further subsequence, we have 
\[\lim_{j\to\infty}\frac{i(\nu_j,a)}{i(\nu_j,b)}=\lim_{j\to\infty}\frac{i(\lambda_j\nu_j,a)}{i(\lambda_j\nu_j,b)}=\frac{i(\nu,a)}{i(\nu,b)}.\]
Also, $i(\nu,a)>0$ if and only if $a$ intersects some geodesic in $B$ transversely. It is thus sufficient to show that for any $a\in\Cmc\Gmc(S)$, $a$ intersects a geodesic in $B$ transversely if and only if $a$ is not a multiple of $c_i$ for $i=1,\dots,k$ and $a\notin\Cmc\Gmc(S\setminus A)$. 

Clearly, if $a$ is a multiple of $c_i$ for some $i=1,\dots,k$ or $a\in\Cmc\Gmc(S\setminus A)$, then $a$ does not intersect any geodesic in $\text{supp }\nu$ transversely. To prove the converse, suppose that $a$ is not a multiple of $c_i$ for $i=1,\dots,k$, and $a\notin\Cmc\Gmc(S\setminus A)$. If $a$ intersects $c_i$ transversely for some $i=1,\dots,k$, we are done. Hence, for the rest of the proof, we will assume that $a$ intersects the interior of $S'$. The proof proceeds in two cases.

\textbf{Case 1:} Suppose $a\in\Cmc\Gmc(S')$. By the way $S'$ is constructed, if $a$ is simple, then it must intersect a geodesic in $B$ (otherwise the maximality of $\{c_1',\dots,c_l'\}$ is contradicted). Hence, we may assume that $a$ is non-simple. By Lemma \ref{panted systole}, we may also assume that $a$ is contained in a geodesic pair of pants $P\subset S'$. Since $a$ is a non-peripheral geodesic in $P$, it has transverse intersections with every non-peripheral geodesic in $\Gmc(P)$ and every geodesic segment in $P$ with endpoints in $\partial P$. Also, because $P\subset S'$, there is some geodesic in $B$ that intersects the interior of $P$. Hence, $a$ intersects some geodesic in $B$ transversely.

\textbf{Case 2:} Suppose $a$ is not entirely contained in $S'$. Let $S_0'$ be a connected component of $S'$ so that $a$ intersects the interior of $S_0'$, and let $B_0'$ be the set of geodesics in $B$ that lie in $S_0'$. Let $x_1$ and  $x_2$ be a pair of points where $a$ intersects the boundary of $S'_0$ in $S$, so that there is a subsegment $e$ of $a$ with endpoints $x_1$ and $x_2$ that is entirely contained in $S'_0$. Let $b_1$ and $b_2$ be the boundary components of $\overline{S'_0}$ containing $x_1$ and $x_2$, respectively. 

For $s=1,2$, choose a parameterization $f_s:[0,1]\to b_s$ so that $f_s(0)=f_s(1)=x_s$ and choose a parameterization $g:[0,1]\to e$ so that $g(0)=x_1$ and $g(1)=x_2$. Consider, the closed curve $\gamma=g^{-1}*f_2^2*g*f_1^2$, where $*$ is the symbol for concatenation. Observe that $\gamma$ is freely homotopic to a non-peripheral geodesic $d$ in $S'_0$. By the previous case, we know that $d$ intersects a geodesic in $B_0'$, so $\gamma$ also intersects a geodesic in $B_0'$. Since $b_1$ and $b_2$ are boundary geodesics, they do not intersect any geodesics in $B_0'$. This means that $e$ intersects a geodesic in $B_0'$. Moreover, since $e$ is a geodesic segment, this intersection is transverse. Hence, $a$ intersects a geodesic in $B$ transversely in this case as well.
\end{proof}

As a consequence of Theorem \ref{positively ratioed to geodesic currents} and Proposition \ref{asymptotic lengths}, we have Corollary \ref{Burger-Pozzetti}.

\subsection{Systoles and minimal pants decompositions} \label{pants decomp}
We will now explore the consequences of Proposition \ref{surgery} on systole lengths of any essential subsurface $S'\subset S$. If $\nu\in\Cmc(S)$ is period minimizing, then the function $\Cmc\Gmc(S)\to\Rbbb$ given by $c\mapsto i(c,\nu)$ is minimized at some $c\in\Cmc\Gmc(S)$. The same idea gives us a notion of systoles for essential subsurfaces, which we will now define.

\begin{definition}
Let $S'\subset S$ be an essential subsurface, and let $\nu\in\Cmc(S)$ be period minimizing. The \emph{$\nu$-systole length} of $S'$ is
\[L_\nu(S'):=\min\{i(c,\nu):c\in\Cmc\Gmc(S')\},\]
and a \emph{$\nu$-systole} of $S'$ is a closed geodesic $c\in\Cmc\Gmc(S')$ so that $i(c,\nu)=L_\nu(S')$.
Also, define the \emph{$\nu$-interior systole length }of $S'$ to be 
\[L_\nu^{int}(S'):=\min\{i(c,\nu):c\in\Cmc\Gmc(S')\text{ is non-peripheral}\},\]
and a \emph{$\nu$-interior systole} of $S'$ is a non-peripheral closed geodesic $c\in\Cmc\Gmc(S')$ so that $i(c,\nu)=L_\nu^{int}(S')$. In the case when $S=S'$, we will denote $L_\nu:=L_\nu(S)=L_\nu^{int}(S)$.
\end{definition}

Using Proposition \ref{surgery}, we can prove the following corollary.

\begin{cor}\label{shortest is simple}
Let $S'\subset S$ be a connected essential subsurface and let $\nu\in\Cmc(S)$ be period minimizing. Suppose that $S'$ is not a pair of pants. Then the following hold. 
\begin{enumerate}
\item There is a $\nu$-interior systole of $S'$ that is simple. 
\item If $\nu$ has full support, then every $\nu$-interior systole of $S'$ is simple.
\end{enumerate}
\end{cor}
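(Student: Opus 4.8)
The plan is to prove part (1) first, then deduce part (2) by a slight refinement of the same argument. For part (1), I would start by picking any $\nu$-interior systole $c$ of $S'$, which exists because $\nu$ is period minimizing and $\Cmc\Gmc(S')$ is non-empty and closed under taking $i(\cdot,\nu) < T$. If $c$ is already simple we are done, so suppose $i(c,c)>0$. The idea is to run a surgery argument: by replacing $c$ with a primitive underlying curve if necessary (which does not increase $i(\cdot,\nu)$ and keeps the curve non-peripheral, since a peripheral multiple has peripheral primitive root), we may assume $c$ is primitive with $i(c,c)>0$. Then Proposition \ref{surgery} produces either a single curve $c_1$ with $i(c_1,\nu) \le i(c,\nu)$, or a pair $c_2, c_3$ with $i(c_2,\nu) + i(c_3,\nu) \le i(c,\nu)$. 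The key points to check are (a) that at least one of the resulting curves is non-peripheral in $S'$, and (b) that at least one of the resulting curves is non-trivial (not the identity class) — this is where the hypothesis that $S'$ is not a pair of pants must enter, since on a pair of pants every simple closed curve is peripheral and the surgery could in principle return only peripheral or trivial curves.

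For the main step, I would argue as follows. Perform surgery at a self-intersection point $p$ of $c$; among $c_1$ (resp. $c_2,c_3$) consider the curves that are non-peripheral in $S'$. In the one-curve case, $c_1$ is freely homotopic to $c$ with the orientation reversed on one of the two sub-loops at $p$; a standard argument (or Lemma \ref{panted systole}) shows $c_1$ is non-trivial and, since $c$ is non-peripheral and $S'$ is not a pair of pants, $c_1$ cannot be peripheral either, giving $i(c_1,\nu) \le i(c,\nu)$ with $c_1$ non-peripheral and $i(c_1,c_1) < i(c,c)$. In the two-curve case, at least one of $c_2,c_3$ is non-peripheral and non-trivial — again using that $S'$ is not a pair of pants (on a pair of pants $c_2$ and $c_3$ would both be boundary curves) — so for that curve $c_{j}$ we get $i(c_j,\nu) \le i(c_2,\nu)+i(c_3,\nu) \le i(c,\nu)$ and $i(c_j,c_j)<i(c,c)$. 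In either case we have produced a non-peripheral $c' \in \Cmc\Gmc(S')$ with $i(c',\nu)\le i(c,\nu)$ and strictly smaller self-intersection number. Since $c$ was a systole, in fact $i(c',\nu) = L_\nu^{int}(S')$, so $c'$ is again a systole. Iterating, the self-intersection number strictly decreases at each step, so after finitely many steps we reach a simple $\nu$-interior systole, proving (1).

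For part (2), suppose $\nu$ has full support, and let $c$ be any $\nu$-interior systole of $S'$; I claim $i(c,c) = 0$. If not, the same surgery procedure applies, but now Proposition \ref{surgery} gives the \emph{strict} inequalities $i(c_1,\nu) < i(c,\nu)$ and $i(c_2,\nu)+i(c_3,\nu) < i(c,\nu)$. Picking the non-peripheral non-trivial curve $c'$ among the surgered curves as in part (1), we get $i(c',\nu) < i(c,\nu) = L_\nu^{int}(S')$ with $c' \in \Cmc\Gmc(S')$ non-peripheral, contradicting the definition of $L_\nu^{int}(S')$. Hence every $\nu$-interior systole is simple.

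The main obstacle I anticipate is the bookkeeping in the surgery step: verifying that among the curves $c_1$ (or $c_2, c_3$) obtained from surgery at $p$ there is always one that is both non-trivial in $\Gamma$ and non-peripheral in $S'$, and correctly isolating where the "not a pair of pants" hypothesis is needed. This is a purely topological lemma about surgery on curves in surfaces — essentially the content already packaged in the proof of Lemma \ref{panted systole} — but one must be careful about degenerate configurations (e.g. when a surgered component bounds a disc or is parallel to $\partial S'$) and about the fact that $c$ may a priori be a proper power, which is why the reduction to a primitive representative at the start is essential.
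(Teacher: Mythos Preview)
Your overall strategy---surger, find a non-peripheral product of surgery with smaller self-intersection, iterate for (1), use strictness for (2)---is exactly the paper's. The gap is in the step you yourself flag as the main obstacle: you treat the two surgeries separately and claim each one independently yields a non-peripheral curve. Both claims are false. For the two-curve surgery, take $S'$ a four-holed sphere and let $c$ be the figure-eight winding once around boundary components $b_1$ and $b_2$; then $c_2=b_1$ and $c_3=b_2$ are \emph{both} peripheral, while $c_1$ (the simple curve separating $\{b_1,b_2\}$ from $\{b_3,b_4\}$) is not. Symmetrically, there are examples where $c_1$ is peripheral but one of $c_2,c_3$ is not. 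So neither surgery can be analyzed in isolation, and your assertion ``since $c$ is non-peripheral and $S'$ is not a pair of pants, $c_1$ cannot be peripheral either'' is unjustified.

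The paper's fix is to look at all three curves at once. Write $\gamma=\gamma_3\gamma_2$ and $\gamma_1=\gamma_3^{-1}\gamma_2$. If $c_1,c_2,c_3$ are \emph{all} peripheral, they are all simple, hence $c$ has a single self-intersection and a regular neighborhood of $c$ is a pair of pants $P$ with $\partial P=c_1\cup c_2\cup c_3\subset\partial S'$; since $S'$ is connected this forces $S'=P$, contradicting the hypothesis. So at least one $c_{j_0}$ among the three is non-peripheral, and you proceed with that one. Replace your case split with this single observation and the rest of your argument goes through verbatim.
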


\begin{proof}
Let $c$ be a $\nu$-interior systole of $S'$. We may assume without loss of generality that $c$ is primitive. Suppose that $c=[[\gamma]]$ has $k\geq1$ self-intersections. Then we can perform surgery to $c$ at some point of self-intersection to obtain $c_1=[[\gamma_1]]$, $c_2=[[\gamma_2]]$ and $c_3=[[\gamma_3]]$ with $\gamma=\gamma_3\gamma_2$ and $\gamma_1=\gamma_3^{-1}\gamma_2$. If $c_1$, $c_2$ and $c_3$ are all peripheral, then the relation $\gamma_1=\gamma_3^{-1}\gamma_2$ implies that $S'$ is a pair of pants, which contradicts the hypothesis of the corollary. Hence, for some $j_0=1,2,3$, $c_{j_0}$ is a non-peripheral closed geodesic whose self-intersection number is strictly less than the self-intersection number of $c$. 

Proof of (1). By Proposition \ref{surgery}, we know that $c_{j_0}$ is also a $\nu$-interior systole of $S'$, so we can iterate the above procedure with $c_{j_0}$ in place of $c$. This will eventually terminate after at most $k$ steps to give a $\nu$-interior systole that is simple.

Proof of (2). In the case when $\nu$ has full support, Proposition \ref{surgery} tells us that $i(c_{j_0},\nu)< i(c,\nu)$. This contradicts the fact that $c$ is a $\nu$-interior systole. 
\end{proof}

In particular, if we have a period minimizing $\nu\in\Cmc(S)$, we can build a \emph{$\nu$-minimal pants decomposition}, denoted $\Pmc_\nu(S')$, on any essential subsurface $S'\subset S$. Let $c_1,\dots, c_n$ be the $n$ boundary components of $S'$. If $S'$ is a disjoint union of pairs of pants, then $n$ is three times the number of components of $S'$ and $\Pmc_\nu(S')=\{c_1,\cdots,c_n\}$. Otherwise, Corollary \ref{shortest is simple} implies that there is a $\nu$-interior systole of $S'$ that is simple. Let $c_{n+1}$ be such a $\nu$-interior systole of $S'$, then $S'\setminus c_{n+1}$ is again an essential subsurface of $S$. Hence, we can iterate this procedure until we have a pants decomposition $\Pmc_\nu(S')$. Denote $\Pmc_\nu(S)$ simply by $\Pmc_\nu$.

\section{Combinatorial description of $\Cmc\Gmc(S')$}\label{combinatorial}

\textbf{In this section, fix some $\nu\in\Cmc(S)$ that is period minimizing.} An important ingredient in the proof of Theorem \ref{main theorem} is a finite combinatorial description, defined below, for each conjugacy class in $\Gamma'$ that is adapted to $\nu$. The methods in this section and the following one are inspired by work of the second author \cite{Zha2}.

\subsection{Minimal pants decompositions and related structures}\label{Minimal pants decompositions and related structures}
First, we need to equip $S$ with an ideal triangulation which depends on $S'$ and $\nu$.
 
\begin{definition}
An \emph{ideal triangulation} of $\Std$ is a maximal $\Gamma$-invariant subset $\widetilde{\Tmc}\subset\Gmc(\Std)$ such that the following hold:
\begin{enumerate}
\item Any two pairs of geodesics $\{x, y\}, \{z, w\}\in\widetilde{\Tmc}$ do not intersect transversely.
\item For any geodesic $\{x,y\}\in\widetilde{\Tmc}$, one of the following must hold:
\begin{itemize}
\item There is some $z$ in $\partial\Gamma$ such that $\{x,z\},\{y,z\}\in\widetilde{\Tmc}$.
\item There is some $\gamma\in\Gamma$ such that $\{x,y\}$ is the set of fixed points of $\gamma$.
\end{itemize}
\end{enumerate}
An \emph{ideal triangulation} of $S$ is then the quotient $\Tmc:=\widetilde{\Tmc}/\Gamma$ for some ideal triangulation of $\widetilde{\Tmc}$ of $\Std$. A \emph{triangle} is an unordered triple of geodesics in $\widetilde{\Tmc}$ of the form $\big\{\{x,y\}, \{y,z\}, \{z,x\}\big\}$.
\end{definition}

If we choose a hyperbolic structure on $S$, then every ideal triangulation $\widetilde{\Tmc}$ of $\Std$ can be realized as an ideal triangulation of $\Std=\Dbbb$ (in the classical sense) by assigning to each pair $\{x,y\}\in\widetilde{\Tmc}$ the unique hyperbolic geodesic in $\Dbbb$ with endpoints $x,y\in\partial\Dbbb$. Moreover, this ideal triangulation is $\Gamma$-invariant, so $\Tmc$ can be thought of as an ideal triangulation (in the classical sense) of $S$. 

For our purposes, we will consider a particular ideal triangulation $\Tmc_{\nu,S'}$ of $S$, defined as follows. \textbf{Choose an orientation on $S$.} Recall that we previously constructed a $\nu$-minimal pants decomposition $\Pmc_\nu(S')$ of $S'$ as a consequence of Corollary \ref{shortest is simple}. Extend this to a pants decomposition $\Pmc_{\nu,S'}$ of $S$, and let $P_1,\dots,P_{2g-2}$ be the pairs of pants given by $\Pmc_{\nu,S'}$ where $g$ is the genus of $S$.  For each $j=1,\dots,2g-2$, orient each component of $\partial P_j$ so that $P_j$ lies on the left of the boundary component. Let $\gamma_{1,j}, \gamma_{2,j},\gamma_{3,j}\in\Gamma$ be primitive group elements corresponding to the three boundary components of $P_j$ equipped with their orientations, so that $\gamma_{3,j}\gamma_{2,j}\gamma_{1,j}=\id$. For each $i=1,2,3$ and $j=1,\dots,2g-2$, let $\gamma_{i,j}^+,\gamma_{i,j}^-\in\partial\Gamma$ denote the attracting and repelling fixed points of $\gamma_{i,j}$ respectively. 

Let $\widetilde{\Qmc}_j$ and $\widetilde{\Pmc}_j$ be the subsets of $\Gmc(\Std)$ defined by
\[\widetilde{\Qmc}_j:=\bigcup_{\gamma\in\Gamma}\big\{\gamma\cdot\{\gamma_{1,j}^-,\gamma_{2,j}^-\},\gamma\cdot\{\gamma_{2,j}^-,\gamma_{3,j}^-\},\gamma\cdot\{\gamma_{3,j}^-,\gamma_{1,j}^-\}\big\},\]
\[\widetilde{\Pmc}_j:=\bigcup_{\gamma\in\Gamma}\big\{\gamma\cdot\{\gamma_{1,j}^-,\gamma_{1,j}^+\},\gamma\cdot\{\gamma_{2,j}^-,\gamma_{2,j}^+\},\gamma\cdot\{\gamma_{3,j}^-,\gamma_{3,j}^+\}\big\},\]
and note that both $\widetilde{\Qmc}_j$ and $\widetilde{\Pmc}_j$ do not depend on the choice of $\gamma_{1,j}$, $\gamma_{2,j}$ and $\gamma_{3,j}$. They are also $\Gamma$-invariant, so we can define $\Qmc_j:=\widetilde{\Qmc}_j/\Gamma$ and $\Pmc_j:=\widetilde{\Pmc}_j/\Gamma$. With this, define 
\[\widetilde{\Qmc}:=\bigcup_{l=1}^{2g-2}\widetilde{\Qmc}_j,\,\,\,\,\,\widetilde{\Pmc}:=\bigcup_{l=1}^{2g-2}\widetilde{\Pmc}_j,\,\,\,\,\,\Pmc:=\bigcup_{l=1}^{2g-2}\Pmc_j,\,\,\,\,\,\Qmc:=\bigcup_{l=1}^{2g-2}\Qmc_j,\] 
and observe that $\widetilde{\Tmc}_{\nu,S'}:=\widetilde{\Qmc}\cup\widetilde{\Pmc}$ and $\Tmc_{\nu,S'}:=\Qmc\cup\Pmc$ are ideal triangulations of $\Std$ and $S$ respectively.  

\begin{figure}
\includegraphics[scale=0.5]{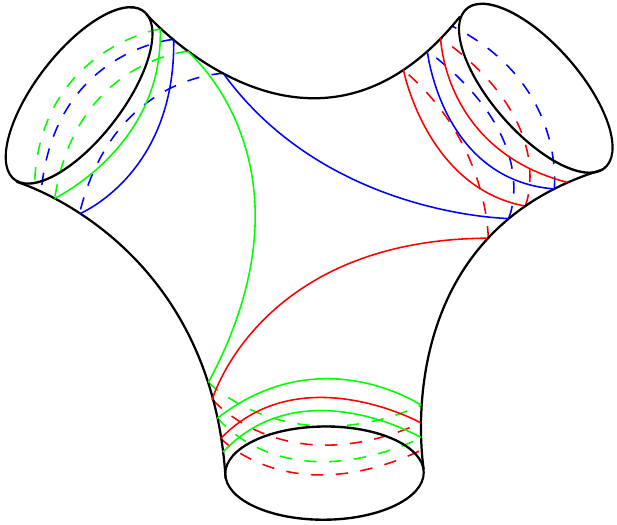}
\caption{Curves in $\Qmc_j$}
\label{figure3}
\end{figure}

It is clear that $\Pmc=F\big(\Pmc_{\nu,S'}\big)$ (recall that $F\colon \Cmc\Gmc(S)\to\Gmc(S)$ sends $[[\gamma]]$ to $[\gamma^-,\gamma^+]$). Also, if we choose a hyperbolic structure on $S$, then for all $j=1,\dots,2g-2$, the three geodesics in $\Qmc_j$ correspond to three simple, pairwise non-intersecting geodesics in the hyperbolic pair of pants $P_j$ that each ``spiral" towards two different boundary components of $P_j$ (see Figure \ref{figure3}).

The ideal triangulation by itself is insufficient to give a finite combinatorial description for the geodesics in $\Cmc\Gmc(S')$. We need to make some additional choices, which we will now specify. 

\textbf{Choose
\begin{itemize}
\item an orientation on each simple closed geodesic in $\Pmc_{\nu,S'}$.
\item a hyperbolic structure $\Sigma$ on $S$.
\end{itemize}}

Since we have chosen orientations on every $c\in\Pmc_{\nu,S'}$, $c$ can be viewed as a conjugacy class in $[\Gamma]$. For any such $c$, let $\gamma_c\in\Gamma$ be a primitive group element so that $[\gamma_c]=c\in[\Gamma]$. Then let 
\[V(\gamma_c^\pm):=\big\{x\in\partial\Gamma\setminus\{\gamma_c^-,\gamma_c^+\}:\{x,\gamma_c^\pm\}\in\widetilde{\Tmc}_{\nu,S'}\big\}\]
and define 
\[\widetilde{\Nmc}(\gamma_c^\pm):=\big\{\{x,y\}\in\widetilde{\Tmc}_{\nu,S'}:x,y\in V(\gamma_c^\pm)\big\}.\]

Observe that $V(\gamma_c^\pm)$ and $\widetilde{\Nmc}(\gamma_c^\pm)$ are both invariant under the cyclic subgroup $\langle\gamma_c\rangle\subset\Gamma$. Also, the geodesics in $\widetilde{\Nmc}(\gamma_c^-)\cup\widetilde{\Nmc}(\gamma_c^+)$ are realized as hyperbolic geodesics in $\Std\simeq\Dbbb$, and their union bounds a simply connected, convex domain $\Omega_{\gamma_c}\subset\Std$ that contains the axis of $\gamma_c$. Let $P_1$ and $P_2$ be the two pairs of pants given by $\Pmc_{\nu,S'}$ that have $c$ as a common boundary component, so that $P_1$ and $P_2$ lie on the left and right of $c$ respectively. (It is possible that $P_1=P_2$).

\textbf{Choose a point $r^\pm$ on a hyperbolic geodesic in $\widetilde{\Nmc}(\gamma_c^\pm)$, and let $p_{\gamma_c}^\pm\in\langle\gamma_c\rangle\cdot r^\pm$ be a point so that 
\[\nu\big(G[p_{\gamma_c}^+,p_{\gamma_c}^-]\big)=\min\big\{\nu\big(G[p^+,p^-]\big):p^\pm\in\langle\gamma_c\rangle\cdot r^\pm\big\}.\]}
Observe that this minimum exists because 
\[\lim_{n-m\to\pm\infty}\nu\big(G[\gamma_c^n\cdot r^-,\gamma_c^m\cdot r^+]\big)=\infty.\]
Also, let $x_{\gamma_c}^\pm,y_{\gamma_c}^\pm\in\partial\Gamma$ be the points so that $\{x_{\gamma_c}^\pm,y_{\gamma_c}^\pm\}\in\Gmc(\Std)$ correspond to the hyperbolic geodesics in $\widetilde{\Nmc}(\gamma_c^\pm)$ that contain $p_{\gamma_c}^\pm$ (see Figure \ref{figure4}). 

\begin{figure}
\includegraphics[scale=0.8]{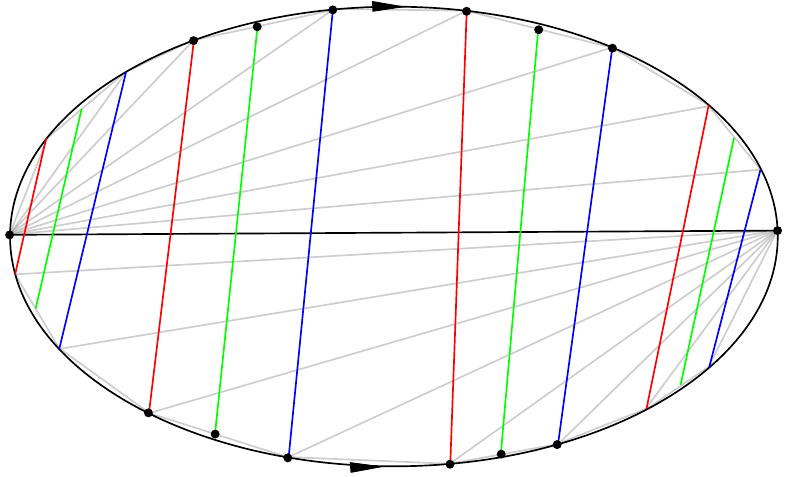}
\put (-237, 94){\makebox[0.7\textwidth][r]{\footnotesize$\gamma_c^-$ }}
\put (-549, 93){\makebox[0.7\textwidth][r]{\footnotesize$\gamma_c^+$ }}
\put (-453, 164){\makebox[0.7\textwidth][r]{\footnotesize$p_{\gamma_c}^+$ }}
\put (-453, 18){\makebox[0.7\textwidth][r]{\footnotesize$p_{\gamma_c}^-$ }}
\put (-472, 173){\makebox[0.7\textwidth][r]{\footnotesize$x_{\gamma_c}^+$ }}
\put (-490, 17){\makebox[0.7\textwidth][r]{\footnotesize$x_{\gamma_c}^-$ }}
\put (-415, 185){\makebox[0.7\textwidth][r]{\footnotesize$y_{\gamma_c}^+$ }}
\put (-433, -1){\makebox[0.7\textwidth][r]{\footnotesize$y_{\gamma_c}^-$ }}
\put (-340, 163){\makebox[0.7\textwidth][r]{\footnotesize$\gamma_c\cdot p_{\gamma_c}^+$ }}
\put (-345, 14){\makebox[0.7\textwidth][r]{\footnotesize$\gamma_c\cdot  p_{\gamma_c}^-$ }}
\put (-360, 184){\makebox[0.7\textwidth][r]{\footnotesize$\gamma_c\cdot  x_{\gamma_c}^+$ }}
\put (-368, -2){\makebox[0.7\textwidth][r]{\footnotesize$\gamma_c\cdot  x_{\gamma_c}^-$ }}
\put (-295, 170){\makebox[0.7\textwidth][r]{\footnotesize$\gamma_c\cdot  y_{\gamma_c}^+$ }}
\put (-310, 6){\makebox[0.7\textwidth][r]{\footnotesize$\gamma_c\cdot  y_{\gamma_c}^-$ }}
\caption{$\{x_{\gamma_c}^-,x_{\gamma_c}^+\}$, $\{y_{\gamma_c}^-,y_{\gamma_c}^+\}$ and $\{p_{\gamma_c}^-,p_{\gamma_c}^+\}$}
\label{figure4}
\end{figure}

By reversing the labeling of $x_{\gamma_c}^+$ and $y_{\gamma_c}^+$ if necessary, we can assume without loss of generality that the hyperbolic geodesics corresponding to $\{x_{\gamma_c}^+,x_{\gamma_c}^-\}$ and $\{y_{\gamma_c}^+,y_{\gamma_c}^-\}$ do not intersect. Then define
\[\widetilde{\Rmc}_1(\gamma_c):=\bigcup_{k\in\Zbbb}\big\{{\gamma_c}^k\cdot\{x_{\gamma_c}^+,x_{\gamma_c}^-\},\gamma_c^k\cdot\{y_{\gamma_c}^+,y_{\gamma_c}^-\}\big\}\subset\Gmc(S),\,\,\,\,\,\,\,\,\widetilde{\Rmc}_1(c):=\bigcup_{\eta\in\Gamma}\eta\cdot\widetilde{\Rmc}_1(\gamma_c),\]
and
\[\widetilde{\Rmc}_2(\gamma_c):=\big\{\gamma_c^k\cdot[p_{\gamma_c}^+,p_{\gamma_c}^-]:k\in\Zbbb\big\},\,\,\,\,\,\,\,\,\widetilde{\Rmc}_2(c):=\bigcup_{\eta\in\Gamma}\eta\cdot\widetilde{\Rmc}_2(\gamma_c).\]

Note that $\gamma_c$ induces orderings on $\widetilde{\Rmc}_1(\gamma_c)$ and $\widetilde{\Rmc}_2(\gamma_c)$. Also, for $i=1,2$, $\widetilde{\Rmc}_i(\gamma_c)/\langle\gamma_c\rangle=\widetilde{\Rmc}_i(c)/\Gamma$, which consists of two geodesics in $\Gmc(S)$ when $i=1$ and one geodesic in $\Gmc(S)$ when $i=2$.

\subsection{Binodal edges and winding}

Let $[\gamma]\in[\Gamma']$ be the conjugacy class of any non-identity element. We can now define (given all the choices made above) a finite combinatorial description for each conjugacy class $[\gamma]\in[\Gamma']$, which is adapted to $\nu$.

Recall that we have already chosen a hyperbolic structure on $S$.

\begin{definition}\label{binodal definition}
Let $I\subset\widetilde{S}$ be either a geodesic or geodesic subsegment. Also, for any $\gamma\in\Gamma\setminus\{\id\}$, let $L_\gamma\subset\Dbbb$ be the axis of $\gamma$.
\begin{itemize}
\item Let $\widetilde{\Amc}(I)$ be the set of geodesics in $\widetilde{\Qmc}$ that intersect $I$ transversely. A point in $\partial\Gamma$ is a \emph{node} of $I$ if it is the common endpoint of two distinct geodesics in $\widetilde{\Amc}(I)$. We call a geodesic in $\widetilde{\Amc}(I)$ \emph{binodal} if both of its endpoints in $\partial\Dbbb$ are nodes. Denote the set of binodal edges in $\widetilde{\Amc}(I)$ by $\widetilde{\Bmc}(I)$.
\item In the case when $I=L_\gamma$, observe that $\widetilde{\Amc}(\gamma):=\widetilde{\Amc}(L_\gamma)$ and $\widetilde{\Bmc}(\gamma):=\widetilde{\Bmc}(L_\gamma)$ are both $\langle \gamma\rangle$-invariant, so we can define $\Amc[\gamma]:=\widetilde{\Amc}(\gamma)/\langle \gamma\rangle$ and $\Bmc[\gamma]:=\widetilde{\Bmc}(\gamma)/\langle \gamma\rangle$.
\end{itemize}
\end{definition}

Observe that we can think of $\Amc[\gamma]$ and $\Bmc[\gamma]$ as cyclic sequences of geodesics in $S$. In that case, they depend only on the conjugacy class of $\gamma$, and not on $\gamma$ itself. Also, $\Bmc[\gamma]$ is finite, and is empty if and only if $L_\gamma\in\widetilde{\Pmc}$. \textbf{For the rest of this section, we will assume that $\Bmc[\gamma]$ is non-empty unless stated otherwise.} 

The orientation on $L_\gamma$ induces a natural ordering $\prec$ on $\widetilde{\Amc}(\gamma)$. Also, since $\widetilde{\Amc}(\gamma)$ does not contain any of its accumulation points, we can define a bijective successor map $\suc:\widetilde{\Amc}(\gamma)\to\widetilde{\Amc}(\gamma)$. Moreover, the ordering $\prec$ induces a cyclic order on $\Amc[\gamma]$, and the successor map $\suc:\widetilde{\Amc}(\gamma)\to\widetilde{\Amc}(\gamma)$ descends to a successor map $\suc:\Amc[\gamma]\to\Amc[\gamma]$.

The orientation on $S$ induces an orientation on $\partial\Dbbb=\partial\Gamma$. Let $s_0(\gamma)$ and $s_1(\gamma)$ be the two connected components of $\partial\Gamma\setminus\{\gamma^-,\gamma^+\}$, oriented from $\gamma^-$ to $\gamma^+$, so that the orientation on $s_0(\gamma)$ agrees with the orientation on $\partial\Gamma$.

\begin{figure}
\includegraphics[scale=0.8]{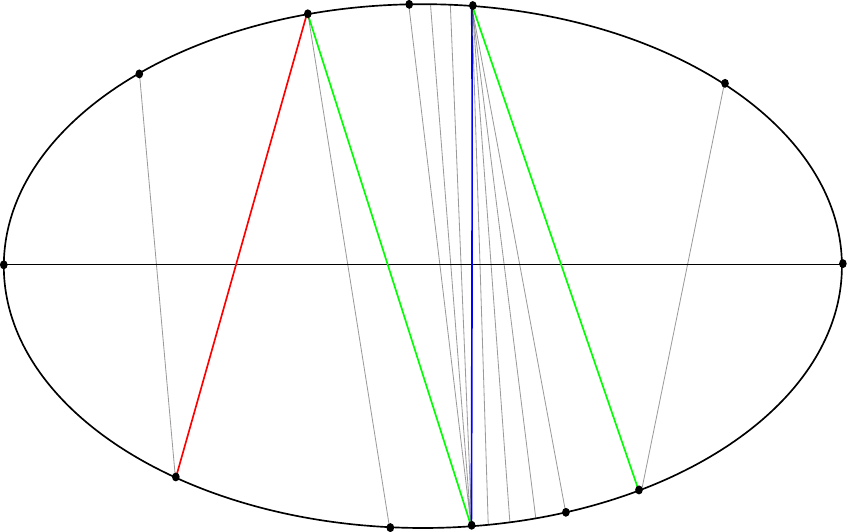}
\put (-388, -7){\makebox[0.7\textwidth][r]{\footnotesize$\eta^-$ }}
\put (-388, 207){\makebox[0.7\textwidth][r]{\footnotesize$\eta^+$ }}
\put (-575, 100){\makebox[0.7\textwidth][r]{\footnotesize$\gamma^-$ }}
\put (-236, 101){\makebox[0.7\textwidth][r]{\footnotesize$\gamma^+$ }}
\put (-413, 60){\makebox[0.7\textwidth][r]{\footnotesize$e_{i+1}$ }}
\put (-433, 50){\makebox[0.7\textwidth][r]{\footnotesize$\suc(e_i)$ }}
\put (-341, 49){\makebox[0.7\textwidth][r]{\footnotesize$e_{i+2}$ }}
\put (-304, 130){\makebox[0.7\textwidth][r]{\footnotesize$\suc(e_{i+2})$ }}
\put (-475, 140){\makebox[0.7\textwidth][r]{\footnotesize$e_i$ }}
\put (-517, 123){\makebox[0.7\textwidth][r]{\footnotesize$\suc^{-1}(e_i)$ }}
\put (-253,155){\makebox[0.7\textwidth][r]{\footnotesize$s_0(\gamma)$ }}
\put (-253, 45){\makebox[0.7\textwidth][r]{\footnotesize$s_1(\gamma)$ }}
\caption{$e_i$ is of S-type. $e_{i+1}$ and $e_{i+2}$ are of Z-type. Notice $\suc(e_i)=\suc^{-1}(e_{i+1})$.}
\label{figure6}
\end{figure}

\begin{definition}
(See Figure \ref{figure6}.) Let $\{x,y\}$ be an edge in $\widetilde{\Bmc}(\gamma)$ and assume without loss of generality that $x$ lies in $s_0(\gamma)$ and $y$ lies in $s_1(\gamma)$. We say $\{x,y\}$ is 
\begin{itemize}
\item \emph{Z-type} if $\big(\suc\{x,y\}\big)\cap\{x,y\}=\{y\}$ and $\big(\suc^{-1}\{x,y\}\big)\cap\{x,y\}=\{x\}$, 
\item \emph{S-type} if $\big(\suc\{x,y\}\big)\cap\{x,y\}=\{x\}$ and $\big(\suc^{-1}\{x,y\}\big)\cap\{x,y\}=\{y\}$.
\end{itemize}
Let $\widetilde{\Zmc}(\gamma)$ be the edges in $\widetilde{\Bmc}(\gamma)$ that are Z-type and $\widetilde{\Smc}(\gamma)$ be the edges in $\widetilde{\Bmc}(\gamma)$ that are S-type. Since $\widetilde{\Zmc}(\gamma)$ and $\widetilde{\Smc}(\gamma)$ are $\langle \gamma\rangle$-invariant, we can define $\Zmc[\gamma]:=\widetilde{\Zmc}(\gamma)/\langle \gamma\rangle$ and $\Smc[\gamma]:=\widetilde{\Smc}(\gamma)/\langle \gamma\rangle$.
\end{definition}

Again, $\Smc[\gamma]$ and $\Zmc[\gamma]$, when viewed as a sequence of geodesics in $S$, depend only on the conjugacy class of $\gamma$. Also, note that $\Zmc[\gamma]\cup\Smc[\gamma]=\Bmc[\gamma]$, and the cyclic order on $\Amc[\gamma]$ induces cyclic orders on $\Zmc[\gamma]$, $\Smc[\gamma]$ and $\Bmc[\gamma]$. Let $e$ and $e'$ be consecutive geodesics in $\Bmc[\gamma]$ so that $e$ precedes $e'$. Then the following must hold:
\begin{enumerate}
\item If $e$ and $e'$ are not of the same type, then there are representatives $\etd, \etd'\in\widetilde{\Bmc}(\gamma)$ of $e$, $e'$ respectively so that $\etd$ and $\etd'$ share a common endpoint in $\partial\Dbbb$, and $\etd\prec\etd'$.
\item If $e$ and $e'$ are of the same type, then there are representatives $\etd, \etd'\in\widetilde{\Bmc}(\gamma)$ of $e$, $e'$ respectively so that there is a geodesic in $\widetilde{\Pmc}$ that has a common endpoint with each of $\etd$ and $\etd'$, and $\etd\prec\etd'$.
\end{enumerate}

If (1) holds, let $\gamma(\etd,\etd')\in\Gamma$ be the primitive group element that has the common vertex of $\etd$ and $\etd'$ as a fixed point, and so that the conjugacy class $[\gamma(\etd,\etd')]$ corresponds to an oriented closed geodesic in $\Pmc_{\nu,S'}$. On the other hand, if (2) holds, let $\gamma(\etd,\etd')$ be the element in $\Gamma$ whose axis is the geodesic in $\widetilde{\Pmc}$ that has common endpoints with $\etd$ and $\etd'$, and so that the conjugacy class $[\gamma(\etd,\etd')]$ corresponds to an oriented closed geodesic in $\Pmc_{\nu,S'}$. If $\gamma\in\Gamma'$, the closed geodesic in $\Pmc_{\nu,S'}$ corresponding to $\gamma(\etd,\etd')$ is in $\Pmc_\nu(S')$.

\begin{notation}\label{t notation}
For $i=1,2$, let $t_i(e,e')=t_{i,\gamma}(e,e')$ be the signed number of edges in $\widetilde{\Rmc}_i\big(\gamma(\etd,\etd')\big)$ that intersect $L_\gamma$. Here, the sign is positive if the orderings on these edges induced by $\gamma(\etd,\etd')$ and by $\gamma$ agree, and is negative otherwise.
\end{notation}

The quantities $t_i(e,e')$ for $i=1,2$ do not depend on the choice of $\etd$ and $\etd'$. Also, they do not depend on the choice of $\gamma\in[\gamma]$ in the following sense: if $\bar{\gamma}=\eta\gamma\eta^{-1}$ for some $\eta\in\Gamma'$, then $\eta\cdot e$ and $\eta\cdot e'$ are consecutive elements in $\Bmc[\bar{\gamma}]$, and $t_{i,\gamma}(e,e')=t_{i,\bar{\gamma}}(\eta\cdot e,\eta\cdot e')$. 

\begin{notation} \label{w notation} Let $[p,q]\subset \overline{\Omega}_{\gamma_c}$ be a geodesic segment that intersects the geodesics in $\widetilde{\Rmc}_1(\gamma_c)\cup\widetilde{\Rmc}_2(\gamma_c)$ transversely. For $i=1,2$, let $w_i[p,q]$ denote the number of edges in $\widetilde{\Rmc}_i(\gamma_c)$ that intersect $[p,q]$ respectively. 
\end{notation}
It is clear that $[\suc^{-1}(\etd)\cap L_\gamma,\suc(\etd')\cap L_\gamma]\subset \overline{\Omega}_{\gamma(\etd,\etd')}$, and that $|t_i(e,e')|=w_i[\suc^{-1}(\etd)\cap L_\gamma,\suc(\etd')\cap L_\gamma]$.

Cyclically enumerate $\Bmc[\gamma]=\{e_{m+1}=e_1,e_2\dots,e_m\}$, and for each $i=1,\dots,m$, let $T_i$ be the type (Z or S) of $e_i$. Then define the cyclic sequence of tuples
\[\psi_{\Pmc_{\nu,S'}}[\gamma]=\psi[\gamma]:=\{(\suc^{-1}(e_i),e_i,\suc(e_i),T_i,t_1(e_i,e_{i+1}))\}_{i=1}^m.\]
This is the combinatorial description of $[\gamma]\in[\Gamma]$ mentioned at the start of the section.\

\begin{figure}
\includegraphics[scale=0.8]{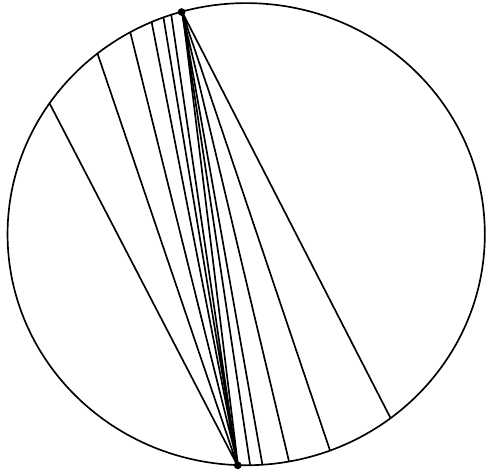}
\put (-338, -4){\makebox[0.7\textwidth][r]{\footnotesize$\eta^-$ }}
\put (-360, 182){\makebox[0.7\textwidth][r]{\footnotesize$\eta^+$ }}
\put (-400, 100){\makebox[0.7\textwidth][r]{\footnotesize$\btd_i$ }}
\put (-388, 120){\makebox[0.7\textwidth][r]{\footnotesize$\ctd_i$ }}
\put (-300, 62){\makebox[0.7\textwidth][r]{\footnotesize$\btd_i^*$ }}
\put (-315, 49){\makebox[0.7\textwidth][r]{\footnotesize$\ctd_i^*$ }}
\caption{$\btd_i,\ctd_i$, $\btd^*_i$ and $\ctd^*_i$}
\label{figure5}
\end{figure}

Let $\Psi$ be the collection of cyclic sequences of the form $\{(a_i,b_i,c_i,T_i,t_i)\}_{i=1}^m$, where $a_i,b_i,c_i$ are the three distinct edges in $\Qmc_j$ for some $j$, $T_i$ is the symbol S or Z, and $t_i\in\Zbbb$. For any term $\{a_i,b_i,c_i,T_i,t_i\}$ of the sequence $\{(a_i,b_i,c_i,T_i,t_i)\}_{i=1}^m\in\Psi$, let 
\[T_i^*:=\left\{\begin{array}{ll}
\mathrm{S}&\text{if }T_i=\mathrm{Z},\\
\mathrm{Z}&\text{if }T_i=\mathrm{S}.
\end{array}\right.\]
Also, let $\btd_i$ and $\ctd_i$ be lifts of $b_i$ and $c_i$ respectively that share a common endpoint in $\partial\Gamma$, and let $\eta\in\Gamma$ be the group element whose repelling fixed point is this common endpoint. Then there are exactly two geodesics $b_i^*$ and $c_i^*$ in $\Qmc$ with lifts $\btd_i^*$ and $\ctd_i^*$ in $\Dbbb$ respectively that have $\eta^+$ as a common endpoint. Let $a_i^*$ be the edge in $\Qmc$ so that $\{a_i^*, b_i^*,c_i^*\}=\Qmc_j$ for some $j$ (see Figure \ref{figure5}). 

\begin{definition}\label{admissible sequence}
We say a sequence $\{(a_i,b_i,c_i,T_i,t_i)\}_{i=1}^m$ in $\Psi$ is \emph{admissible} if for all $i=1,\dots,m$, $(a_{i+1},b_{i+1},c_{i+1},T_{i+1})$ is one of the following:
\[(b_i,c_i,a_i,T_i^*),(c_i,b_i,a_i,T_i^*),(b_i^*,c_i^*,a_i^*,T_i),(c_i^*,b_i^*,a_i^*,T_i).\]
(Notice that the last two cases correspond to $\gamma$ crossing the pants curve $[[\eta]]$.) Let $\Psi'$ denote the set of admissible sequences in $\Psi$.
\end{definition}

Observe that $\psi$ can be viewed as a map from $[\Gamma]$ to $\Psi'$. The most important property of $\psi$ is its injectivity, which was previously proven by the second author (Proposition 4.5 of \cite{Zha2}).

\begin{prop}\label{combinatorial prop}
Let $\gamma_0,\gamma_1$ be elements in $\Gamma'$. Then $\psi[\gamma_0]=\psi[\gamma_1]$ if and only if $[\gamma_0]=[\gamma_1]$.
\end{prop}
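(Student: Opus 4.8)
The plan is to prove that the combinatorial datum $\psi$ separates distinct conjugacy classes in $[\Gamma']$. The easy direction is already built into the construction: if $[\gamma_0]=[\gamma_1]$, write $\gamma_1=\eta\gamma_0\eta^{-1}$ with $\eta\in\Gamma'$; then $\eta$ carries $L_{\gamma_0}$ to $L_{\gamma_1}$, hence carries $\widetilde{\Amc}(\gamma_0)$ to $\widetilde{\Amc}(\gamma_1)$, $\widetilde{\Bmc}(\gamma_0)$ to $\widetilde{\Bmc}(\gamma_1)$, and respects the orderings $\prec$, the S/Z typing, and (as noted right after Notation \ref{t notation}) the integers $t_{i}(e,e')$. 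Therefore $\psi[\gamma_0]$ and $\psi[\gamma_1]$ are the same cyclic sequence of tuples. So the content is entirely in the converse.

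For the converse, suppose $\psi[\gamma_0]=\psi[\gamma_1]$; I want to reconstruct (the conjugacy class of) $\gamma$ from $\psi[\gamma]$. First I would recover the cyclic sequence $\Amc[\gamma]$ of \emph{all} edges of $\widetilde{\Qmc}$ crossing $L_\gamma$, not just the binodal ones: each term $(\suc^{-1}(e_i),e_i,\suc(e_i),T_i,t_i)$ records a binodal edge together with its immediate neighbors in $\widetilde{\Amc}(\gamma)$, and the admissibility condition (Definition \ref{admissible sequence}), together with the geometry of the pair of pants, pins down exactly which non-binodal edges of $\widetilde{\Qmc}$ lie between consecutive binodal ones — inside a single pair of pants $P_j$ the configuration of $\widetilde{\Qmc}$-edges crossing a geodesic segment is completely determined by the entry/exit edges, and the data $(b_i,c_i,a_i,\dots)$ versus $(b_i^*,c_i^*,a_i^*,\dots)$ tells us whether we stay in $P_j$ or cross the pants curve $[[\eta]]$ into the adjacent pair of pants, with $t_i$ recording how the segment winds around that pants curve (via the identity $|t_i(e,e')|=w_i[\suc^{-1}(\etd)\cap L_\gamma,\suc(\etd')\cap L_\gamma]$ from Notation \ref{w notation}). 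Thus from $\psi[\gamma]$ I reconstruct the bi-infinite $\langle\gamma\rangle$-periodic sequence of lifted $\widetilde{\Qmc}$-edges met by $L_\gamma$, together with, for each edge, which of its two endpoints lies on which side $s_0(\gamma)$, $s_1(\gamma)$.

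Next I would argue that this bi-infinite decorated edge-sequence determines the pair of endpoints $\{\gamma^-,\gamma^+\}\subset\partial\Gamma$, hence the axis $L_\gamma$, hence $[[\gamma]]$ — but since the orientations on the pants curves have been fixed, the sequence also records the \emph{direction} of travel along $L_\gamma$, so it actually recovers the conjugacy class $[\gamma]\in[\Gamma']$ and not merely $[[\gamma]]$. The reconstruction of the endpoints goes by a nested-intervals / shrinking-nesting argument: consecutive edges of $\widetilde{\Amc}(\gamma)$ cut $\overline{\Dbbb}$ into regions, and the infinite forward (resp. backward) sequence of edges, each separating the previous one from $\gamma^+$ (resp. $\gamma^-$), forces the forward (resp. backward) tails to accumulate to a single point of $\partial\Gamma$ because $\widetilde{\Amc}(\gamma)$ contains none of its accumulation points and the complementary ``gaps'' shrink; since $\psi[\gamma_0]=\psi[\gamma_1]$ produces, up to the $\Gamma'$-action, identical such sequences, the axes coincide up to $\Gamma'$, giving $[\gamma_0]=[\gamma_1]$.

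The main obstacle I anticipate is the combinatorial bookkeeping in the middle step: carefully showing that the local data in each tuple — especially the integer $t_i$ and the S/Z type — suffices to distinguish "turning inside $P_j$" from "crossing the pants curve and possibly winding $k$ times before re-entering", including getting the signs and the off-by-one relationship $\suc(e_i)=\suc^{-1}(e_{i+1})$ (Figure \ref{figure6}) correct. This is exactly the kind of verification carried out in Proposition 4.5 of \cite{Zha2}, so I would organize the argument to mirror that proof, reducing each inter-tuple transition to one of the four cases of Definition \ref{admissible sequence} and checking that each case, decorated with its $t_i$, has a unique geometric realization once the starting edge in $\Dbbb$ is fixed; uniqueness of that realization up to $\Gamma'$ then yields the claim.
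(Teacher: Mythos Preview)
The paper does not supply its own proof of this proposition: it simply records the statement and attributes the argument to Proposition~4.5 of \cite{Zha2}. Your proposal is a reasonable sketch of exactly that cited argument --- reconstruct from $\psi[\gamma]$ a $\Gamma'$-orbit of bi-infinite ordered sequences of $\widetilde{\Qmc}$-edges by lifting tuple by tuple (using the four transition types of Definition~\ref{admissible sequence} together with $t_i$ to pin down each step), then recover $\{\gamma^-,\gamma^+\}$ as the accumulation points of the forward and backward tails --- and you explicitly acknowledge that you are mirroring \cite{Zha2}. So there is essentially nothing to compare: both the paper and your proposal defer to the same external source, with your write-up giving more of the actual content than the paper does.
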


\begin{notation}\label{combinatorial notation}\
\begin{itemize}
	\item For any cyclic sequence $\sigma=\{(a_i,b_i,c_i,T_i,t_i)\}_{i=1}^m\in\Psi$, let $B(\sigma):= m$ and let $W_1(\sigma):=\sum_{i=1}^m |t_i|$.
	\item If $c=[[\gamma]]\in\Cmc\Gmc(S')$, let
\[p(c):=\sum_{d\in\Pmc_\nu(S')}i(c,d),\,\,\,\, b(c):=|\Bmc[\gamma]|,\]
and for $i=1,2$, let
\[w_i(c):=\sum_{j=1}^m |t_i(e_j,e_{j+1})|=\sum_{j=1}^m w_i[\suc^{-1}(e_j)\cap L_\gamma,\suc(e_{j+1})\cap L_\gamma].\]
\end{itemize}
\end{notation}

Note that $p(c)$, $b(c)$, $w_1(c)$ and $w_2(c)$ are well-defined as they do not depend on the orientation on $c$ induced by $[\gamma]$. Also, note that $b(c)=B(\psi[\gamma])$ and $w_1(c)=W_1(\psi[\gamma])$. Informally, $p(c)$ is the number of times $c$ cuts across pants curves, $b(c)$ is the number of times $c$ crosses a binodal edge in $\Qmc$, and $w_1(c)$ and $w_2(c)$ are two different ways of measuring how many times $c$ ``winds around" collar neighborhoods of the curves in $\Pmc_\nu(S')$. 

The advantage of $w_1(c)$ over $w_2(c)$ is that $w_1(c)$ can be read off the combinatorial description $\psi(\gamma)$. On the other hand, we will later obtain a lower bound for $i(c,\nu)$ in terms of $w_2(c)$. In the following lemma, we make the relationship between $w_1(c)$ and $w_2(c)$ explicit.

\begin{lem}\label{w1 and w2}
Let $\gamma\in\Gamma'$ and let $c=[[\gamma]]\in\Cmc\Gmc(S')$. Then
\[\frac{1}{2}w_1(c)-b(c)\leq w_2(c)\leq \frac{1}{2}w_1(c)+b(c).\]
\end{lem}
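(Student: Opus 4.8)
The plan is to compare, term by term, the two winding counts $w_1(c) = \sum_j |t_1(e_j,e_{j+1})|$ and $w_2(c) = \sum_j |t_2(e_j,e_{j+1})|$ as $e_j$ ranges over $\Bmc[\gamma]$. Recall from Notation \ref{w notation} that $|t_i(e_j,e_{j+1})| = w_i[\suc^{-1}(\etd_j)\cap L_\gamma,\suc(\etd_{j+1})\cap L_\gamma]$, where the segment $[\suc^{-1}(\etd_j)\cap L_\gamma,\suc(\etd_{j+1})\cap L_\gamma]$ lies inside $\overline{\Omega}_{\gamma(\etd_j,\etd_{j+1})}$. So for each consecutive pair of binodal edges we are counting how many geodesics of $\widetilde{\Rmc}_1(\gamma_c)$ versus $\widetilde{\Rmc}_2(\gamma_c)$ a fixed geodesic segment crosses, where $c$ is the pants curve attached to $\gamma(\etd_j,\etd_{j+1})$. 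The first step is therefore a purely planar/hyperbolic-geometry comparison: in the collar region $\overline{\Omega}_{\gamma_c}$, the two families $\widetilde{\Rmc}_1(\gamma_c)$ (two $\langle\gamma_c\rangle$-orbits of edges, namely the $\gamma_c^k\cdot\{x_{\gamma_c}^\pm\}$ and $\gamma_c^k\cdot\{y_{\gamma_c}^\pm\}$ geodesics) and $\widetilde{\Rmc}_2(\gamma_c)$ (one $\langle\gamma_c\rangle$-orbit, the $\gamma_c^k\cdot[p_{\gamma_c}^+,p_{\gamma_c}^-]$ geodesics) are ``interleaved'' along the core: between two consecutive $\widetilde{\Rmc}_2$-geodesics there are exactly two $\widetilde{\Rmc}_1$-geodesics (one from each orbit), and vice versa between two consecutive $\widetilde{\Rmc}_1$-geodesics there is at least one and at most — after accounting for the two orbits — a controlled number of $\widetilde{\Rmc}_2$-geodesics. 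I would make this precise to get the pointwise bound $\tfrac12 w_1[p,q] - 1 \le w_2[p,q] \le \tfrac12 w_1[p,q] + 1$ for any segment $[p,q]$ transverse to both families inside $\overline{\Omega}_{\gamma_c}$ (the $\pm 1$ is a boundary-effect error from partial windings at the two ends of the segment).

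Next I would sum this inequality over the $b(c) = |\Bmc[\gamma]|$ consecutive pairs $(e_j, e_{j+1})$. Summing the lower bound $|t_2(e_j,e_{j+1})| \ge \tfrac12 |t_1(e_j,e_{j+1})| - 1$ over $j = 1,\dots,m$ with $m = b(c)$ gives $w_2(c) \ge \tfrac12 w_1(c) - b(c)$; summing the upper bound gives $w_2(c) \le \tfrac12 w_1(c) + b(c)$. This is exactly the claimed double inequality, so no further manipulation is needed once the pointwise estimate is in hand.

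The main obstacle is the pointwise geometric claim of the first paragraph — establishing that $|\,w_2[p,q] - \tfrac12 w_1[p,q]\,| \le 1$ for a segment crossing both families in the collar. The clean way to see it: order the geodesics of $\widetilde{\Rmc}_1(\gamma_c) \cup \widetilde{\Rmc}_2(\gamma_c)$ along the core geodesic $L_{\gamma_c}$. By construction the $\widetilde{\Rmc}_1$-geodesics come in consecutive $\{x_{\gamma_c}^\pm\}$-then-$\{y_{\gamma_c}^\pm\}$ pairs (they are chosen non-crossing, and each $\widetilde{\Rmc}_2$-geodesic $[p_{\gamma_c}^+,p_{\gamma_c}^-]$ has its endpoints on the $x$- and $y$-edges of a single such pair), so the full ordered sequence is strictly alternating: $\dots, x\text{-edge}, \widetilde{\Rmc}_2\text{-edge}, y\text{-edge}, x\text{-edge}, \widetilde{\Rmc}_2\text{-edge}, y\text{-edge}, \dots$, i.e. one $\widetilde{\Rmc}_2$-geodesic for every two $\widetilde{\Rmc}_1$-geodesics. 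Since a segment $[p,q]$ transverse to this linearly-ordered family crosses some consecutive block of it, the count of $\widetilde{\Rmc}_2$-crossings differs from half the count of $\widetilde{\Rmc}_1$-crossings by at most the contribution of the (at most two) incomplete blocks at the ends, which is bounded by $1$. Once this interleaving structure is verified from the definitions of $x_{\gamma_c}^\pm$, $y_{\gamma_c}^\pm$, $p_{\gamma_c}^\pm$ in Section \ref{Minimal pants decompositions and related structures}, the rest of the proof is the routine summation above.
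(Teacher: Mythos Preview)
Your proposal is correct and is essentially the paper's own argument: the paper's proof consists of exactly the two steps you describe, namely the pointwise bound $\tfrac{1}{2}|t_1(e,e')|-1\le |t_2(e,e')|\le \tfrac{1}{2}|t_1(e,e')|+1$ for each consecutive pair in $\Bmc[\gamma]$, followed by summation over the $b(c)$ pairs. The only difference is that the paper states the pointwise bound as an observation without justification, whereas you spell out the underlying $2{:}1$ periodicity reason (two $\widetilde{\Rmc}_1$-edges versus one $\widetilde{\Rmc}_2$-edge per $\gamma_c$-period, with a $\pm 1$ boundary effect); your interleaving picture is slightly idealized, since $[p_{\gamma_c}^+,p_{\gamma_c}^-]$ is a segment rather than a bi-infinite geodesic and need not sit strictly between the $x$- and $y$-edges along the axis, but the count-per-period argument still yields the stated error bound.
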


\begin{proof}
First, observe that for any consecutive pair $e,e'\in\Bmc[\gamma]$ with $e$ preceding $e'$, we have 
\[\frac{1}{2}|t_1(e,e')|-1\leq |t_2(e,e')|\leq \frac{1}{2}|t_1(e,e')|+1.\]
Summing the above inequality over all consecutive pairs in $\Bmc[\gamma]$ yields the required inequality.
\end{proof}

\section{Lengths and geodesic currents}\label{lengths}

In this section, we will prove some inequalities about lengths of closed geodesics which depend on their intersections with a $\nu$-minimal pants decomposition $\Pmc_\nu(S')$ and the corresponding ideal triangulation $\Tmc_{\nu,S'}$ as defined in Section \ref{combinatorial}. \textbf{For the rest of this section, fix a period minimizing geodesic current $\nu\in\Cmc\Gmc(S)$, a hyperbolic structure $\Sigma$ on $S$, and an essential subsurface $S'$ of $S$.} The goal of this section is to prove Theorem \ref{length bound theorem}. For any $c\in\Cmc\Gmc(S')$, this theorem gives a lower bound of $i(\nu,c)$ in terms of the $\nu$-panted systole length, the $\nu$-systole length, and the combinatorial description $b(c)$ and $w_2(c)$ defined in Section \ref{combinatorial}.

\subsection{Length lower bounds: intersection with pants curves}

We begin by first finding a lower bound for $i(c,\nu)$ in terms of the number of times $c\in\Cmc\Gmc(S')$ intersects $\Pmc_\nu(S')$. To do so, we define the following quantity.

\begin{definition}
Let $\Pmc_\nu(S')$ be a $\nu$-minimal pants decomposition of $S'$. Define the \emph{$\nu$-panted systole length} to be
\[K_\nu(S'):=\min\{i(c,\nu):c\in\Cmc\Gmc(S')\text{ is not a multiple of a geodesic in }\Pmc_\nu(S')\}.\]
\end{definition}

\begin{lem}\label{psl independent} The $\nu$-panted systole length does not depend on the choice of a minimal pants decomposition $\Pmc_\nu(S')$. Namely, 
\[
K_\nu(S')=\min\{i(c,\nu):c\in\Cmc\Gmc(S')\text{ is not a multiple of a geodesic in }\overline{\Pmc_\nu(S')}\}.
\]
for any minimal pants decomposition $\overline{\Pmc_\nu(S')}$.
\end{lem}
\begin{proof} 
Assume there exists a minimal pants decomposition $\overline{\Pmc_\nu(S')}$ such that 
\[
\overline{K_\nu(S')}:=\min\{i(c,\nu):c\in\Cmc\Gmc(S')\text{ is not a multiple of a geodesic in }\overline{\Pmc_\nu(S')}\}
\]
is greater or equal to $K_\nu(S')$. We claim that this implies $K_\nu(S')=\overline{K_\nu(S')}$. Let $c_0=[[\gamma_0]]\in\Cmc\Gmc(S')$ so that $c_0$ is not a multiple of a geodesic in $\Pmc_\nu(S')$, and $i(c_0,\nu)=K_\nu(S')$. The minimality of $K_\nu(S')$ implies that $c_0$ is primitive. 

If $c_0$ is not a simple closed geodesic in $\overline{\Pmc_\nu(S')}$, then we are done because 
\[\overline{K_\nu(S')}\leq i(c_0,\nu)=K_\nu(S')\leq\overline{K_\nu(S')}.\] 
On the other hand, if $c_0\in\overline{\Pmc_\nu(S')}$, then the fact that $c_0$ is simple implies that there are closed geodesics $c\in\Pmc_\nu(S')$ so that $i(c,c_0)\neq 0$. Let $c_1\in \Pmc_\nu(S')$ be such a closed geodesic so that $i(c_1,\nu)$ is minimal, and observe that $i(c_1,\nu)\leq i(c_0,\nu)$ by the definition of a $\nu$-minimal pants decomposition. Also, since $i(c_0,c_1)\neq 0$, we have $c_1\not\in\overline{\Pmc_\nu(S')}$, so $\overline{K_\nu(S')}\leq i(c_1,\nu)$. Therefore, 
\[
\overline{K_\nu(S')} \leq  i(c_1,\nu)\leq i(c_0,\nu)= K_\nu(S') \leq \overline{K_\nu(S')}.
\]
One finishes the proof by reversing the roles of $\Pmc_\nu(S')$ and $\overline{\Pmc_\nu(S')}$ if $\overline{K_\nu(S')}\leq K_\nu(S')$.
\end{proof}

With the notion of a panted systole length, we have the following lemma.

\begin{lem}\label{intersection}
Let $c$ be a simple $\nu$-interior systole of $S'$, and let $p,q\in\Std'\subset\Dbbb$ be points such that the interval $(p,q]$ intersects $\Pi^{-1}(c)$ transversely. Then
\[\nu\big(G(p,q]\big)\geq \Big(\big|[p,q]\cap\Pi^{-1}(c)\big|-2\Big)\cdot\frac{K_\nu(S')}{10},\]
where $G(p,q]\subset\Gmc(S)$ is the set of geodesics defined in Notation \ref{geodesic notation}. 
\end{lem}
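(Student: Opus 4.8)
The plan is to exploit the defining property of the $\nu$-panted systole length: every closed geodesic in $S'$ that is not a multiple of a pants curve has $\nu$-length at least $K_\nu(S')$. So the idea is to manufacture, out of the segment $[p,q]$, a bunch of closed geodesics in $S'$ that are not multiples of curves in $\Pmc_\nu(S')$ and whose $\nu$-lengths are controlled from above by $\nu(G(p,q])$, then count how many such geodesics we get in terms of $|[p,q]\cap\Pi^{-1}(c)|$.

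More concretely, here is the sequence of steps I would carry out. First, let $N := |[p,q]\cap\Pi^{-1}(c)|$ and list the successive lifts of $c$ crossed by $[p,q]$ in order, say $L_1,\dots,L_N$; each $L_i$ is the axis of a conjugate $\gamma_i$ of $\gamma_c$. For indices $i$ with $2\le i\le N-1$ (discarding the two ``boundary'' crossings, which accounts for the $-2$), I would concatenate the subsegment of $[p,q]$ running from $L_{i-1}$ to $L_{i+1}$ with an arc in $\Pi^{-1}(c)$ to close it up into a loop based on the lift, producing a group element $\eta_i\in\Gamma'$ whose axis is ``trapped'' near $L_i$; the resulting closed geodesic $[[\eta_i]]$ lies in $S'$ and, because it genuinely crosses $c$ transversely (it uses a subsegment of $[p,q]$ that passes from one side of $L_i$ to the other), it is not a multiple of any curve in $\Pmc_\nu(S')$. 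Hence $i([[\eta_i]],\nu)\ge K_\nu(S')$. Next, using part (2) of Lemma \ref{compute length} — the inequality $i(c',\nu)\le \nu(G(\ptd,\eta_i\cdot\ptd])$ for $\ptd$ off the axis — I would bound $i([[\eta_i]],\nu)$ above by the $\nu$-mass of a set of geodesics crossing an explicit segment that is built from a bounded number of translates of the relevant piece of $[p,q]$ together with pieces of the axis of $c$; the pieces of the axis of $c$ contribute $\nu$-mass comparable to $i(c,\nu)=L_\nu(S')\le K_\nu(S')$, and they can be arranged to be finitely many (a universally bounded number, which is where the constant $10$ — and the slack it provides — comes from). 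Finally I would sum the resulting estimates over $i=2,\dots,N-1$: each geodesic in $G(p,q]$ gets counted a bounded number of times because consecutive segments overlap only in a bounded number of fundamental-domain translates, so $\sum_i i([[\eta_i]],\nu)$ is, up to the bounded multiplicity, at most a constant times $\nu(G(p,q])$ plus a constant times $(N-2)L_\nu(S')$. Rearranging, and absorbing the $L_\nu(S')$ term using $L_\nu(S')\le K_\nu(S')$ together with the fact that the overlap/length constants are all absolute, yields $\nu(G(p,q])\ge (N-2)\,K_\nu(S')/10$.

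The main obstacle I anticipate is the bookkeeping in the last two steps: getting an honest, absolute bound on (a) how many axis-of-$c$ translates are needed to close up each $\eta_i$ and hence how much of the ``$L_\nu(S')$ budget'' each one costs, and (b) the overlap multiplicity with which a fixed geodesic of $G(p,q]$ is counted across the different $\eta_i$. Both of these are ultimately finiteness statements coming from the local structure of the ideal triangulation $\Tmc_{\nu,S'}$ and the fact that $c$ is a systole (so its collar is ``wide'' in a combinatorial sense), but making the constant explicit and checking that it is at most $10$ after all the inequalities are chained together will require care — in particular one must be careful that the closed geodesics $[[\eta_i]]$ are genuinely non-peripheral and genuinely not multiples of $c$ itself, which is where the transversality of $[p,q]$ with $\Pi^{-1}(c)$ and the discarding of the two extreme crossings are essential. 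I would also double-check the degenerate cases where $S'$ is small (e.g.\ a one-holed torus or four-holed sphere) to make sure the construction of $\eta_i$ never accidentally lands on a boundary curve of $S'$.
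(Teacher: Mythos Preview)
Your overall shape --- build closed geodesics from pieces of $[p,q]$ plus arcs along lifts of $c$, bound their $\nu$--length below by $K_\nu(S')$ and above by pieces of $\nu(G(p,q])$ plus closing--up costs, then sum --- is the right idea, and it is essentially what the paper does in one of its two cases. But as written there is a genuine gap, and it is exactly where you anticipated trouble without resolving it.

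The problem is the claim ``pieces of the axis of $c$ contribute $\nu$-mass comparable to $i(c,\nu)=L_\nu(S')\le K_\nu(S')$'', together with the plan to absorb this correction into the constant $10$. First, $i(c,\nu)$ is not $L_\nu(S')$: $c$ is the \emph{interior} systole, so $i(c,\nu)=L_\nu^{int}(S')$, and there is no reason this equals $L_\nu(S')$. More importantly, the only a priori inequality you have is $i(c,\nu)\le K_\nu(S')$, and this can be (nearly) sharp. If $i(c,\nu)$ is, say, $0.9\,K_\nu(S')$, then each $\eta_i$ you build satisfies $i([[\eta_i]],\nu)\ge K_\nu(S')$, but the closing--up arc along the axis of $c$ costs you at least $i(c,\nu)\approx 0.9\,K_\nu(S')$ per element. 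After subtracting, you get a lower bound of order $0.1\,K_\nu(S')$ per $\eta_i$ \emph{before} accounting for overlap multiplicity; with overlap $2$ you are already below $K_\nu(S')/10$, and the bookkeeping you flagged will not recover the constant. The ideal triangulation and collar combinatorics play no role here and will not help.

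The paper repairs this with a two--case argument on the size of $i(c,\nu)$. When $i(c,\nu)<\tfrac{2}{5}K_\nu(S')$, it runs essentially your argument (with $\eta_j=\gamma_j\gamma_{j+1}$, product of consecutive pants--curve conjugates), and the closing cost $2i(c,\nu)<\tfrac{4}{5}K_\nu(S')$ leaves a positive margin. When $i(c,\nu)\ge\tfrac{2}{5}K_\nu(S')$, it uses a different construction: elements $\gamma_{j,j+2}$ that translate $L_j$ onto $L_{j+2}$, chosen so that the closing cost is at most $\tfrac{1}{2}i(c,\nu)$, and bounds their $\nu$--length below by $i(c,\nu)$ (using that $c$ is the interior systole), not by $K_\nu(S')$. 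That yields $\tfrac{1}{4}i(c,\nu)$ per step, and then the case hypothesis converts this into $\tfrac{1}{10}K_\nu(S')$. You should add this dichotomy; without it the argument does not close.
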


\begin{proof}
First, observe that since $[p,q]\subset \Std'\subset\Dbbb$ is compact, $[p,q]\cap\Pi^{-1}(c)$ is finite. Also, if $[p,q]\cap\Pi^{-1}(c)=0,1$ or $2$, then the desired inequality clearly holds. Thus, we will assume for the rest of this proof that $\big|[p,q]\cap\Pi^{-1}(c)\big|\geq 3$. Let $p_1,p_2,\dots,p_k$ be the points in $[p,q]\cap\Pi^{-1}(c)$ in that order along $[p,q]$, where $k=\big|[p,q]\cap\Pi^{-1}(c)\big|$. For any $j=1,\dots,k$, let $\gamma_j\in\Gamma'$ denote a group element so that 
\begin{itemize}
\item $[[\gamma_j]]=c\in\Cmc\Gmc(S')$,
\item the axis $L_j$ of $\gamma_j$ contains $p_j$.
\end{itemize} 
The proof will proceed in two cases from here. 

\begin{figure}
\includegraphics[scale=0.6]{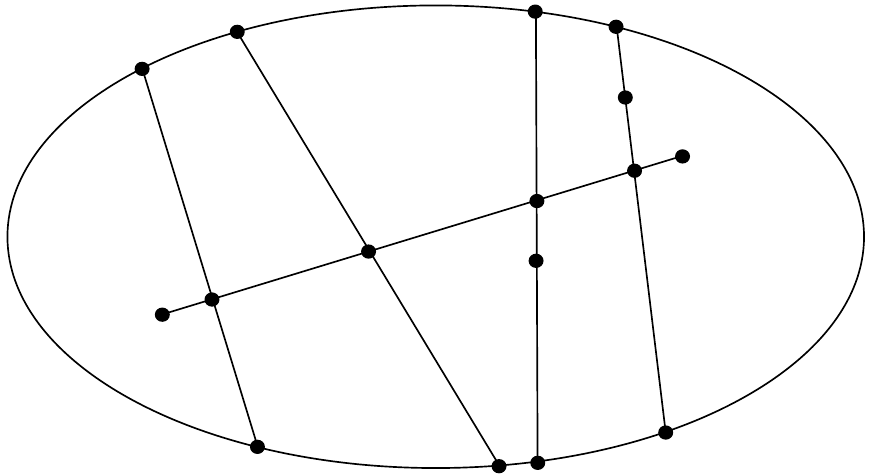}
\put (-452, 40){\makebox[0.7\textwidth][r]{\footnotesize$p$ }}
\put (-295, 90){\makebox[0.7\textwidth][r]{\footnotesize$q$ }}
\put (-427, 47){\makebox[0.7\textwidth][r]{\footnotesize$p_1$ }}
\put (-392, 58){\makebox[0.7\textwidth][r]{\footnotesize$p_2$ }}
\put (-347, 82){\makebox[0.7\textwidth][r]{\footnotesize$p_3$ }}
\put (-320, 90){\makebox[0.7\textwidth][r]{\footnotesize$p_4$ }}
\put (-346, 56){\makebox[0.7\textwidth][r]{\footnotesize$\gamma_{1,3}\cdot p_1$ }}
\put (-288, 105){\makebox[0.7\textwidth][r]{\footnotesize$\gamma_{2,4}\cdot p_2$ }}
\put (-440, 90){\makebox[0.7\textwidth][r]{\footnotesize$L_1$ }}
\put (-405, 102){\makebox[0.7\textwidth][r]{\footnotesize$L_2$ }}
\put (-347, 110){\makebox[0.7\textwidth][r]{\footnotesize$L_3$ }}
\put (-300, 45){\makebox[0.7\textwidth][r]{\footnotesize$L_4$ }}
\caption{Case 1 of proof of Lemma \ref{intersection}}
\label{figure7}
\end{figure}

\textbf{Case 1: $i(c,\nu)\geq \frac{2K_\nu(S')}{5}$.} Then for $j=1,\dots,k-2$, let $\gamma_{j,j+2}\in\Gamma'$ be a group element so that 
\begin{itemize}
\item $\gamma_{j,j+2}\cdot L_j=L_{j+2}$,
\item $\nu\big(G(\gamma_{j,j+2}\cdot p_j,p_{j+2}]\big)=\min\left\{\nu\big(G(\gamma\cdot p_j,p_{j+2}]\big):\gamma\in\Gamma,\gamma\cdot L_j=L_{j+2}\right\}$,
\end{itemize}
and let $c_{j,j+2}\in\Cmc\Gmc(S)$ be the closed geodesic such that $[[\gamma_{j,j+2}]]=c_{j,j+2}$ (see Figure \ref{figure7}). Note that $c_{j,j+2}$ is not a multiple of a curve in $\Pmc_\nu(S')$ because it has positive geometric intersection number with $c$, so $i(c_{j,j+2},\nu)\geq i(c,\nu)$. By (1) of Lemma \ref{compute length}, we have $\nu\big(G(\gamma_{j+2}\cdot r,r]\big)=i(c,\nu)$ for all $r\in L_{j+2}$. The definition of $\gamma_{j,j+2}$ implies that 
\[\nu\big(G[\gamma_{j,j+2}\cdot p_j,p_{j+2})\big)\leq\frac{1}{2}i(c,\nu)\]
for all $j=1,\dots,k-2$. Then by (2) of Lemma \ref{compute length}, we have that 
\begin{eqnarray*}
\nu\big(G(p,q]\big)&\geq&\frac{1}{2}\cdot\sum_{j=1}^{k-2}\nu\big(G(p_j,p_{j+2}]\big)\\
&\geq&\frac{1}{2}\cdot\sum_{j=1}^{k-2}\Big(\nu\big(G(p_j,\gamma_{j,j+2}\cdot p_j]\big)-\nu\big(G[\gamma_{j,j+2}\cdot p_j,p_{j+2})\big)\Big)\\
&\geq&\frac{1}{2}\cdot\sum_{j=1}^{k-2}\Big(i(c_{j,j+2},\nu)-\frac{1}{2}i(c,\nu)\Big)\\
&\geq&(k-2)\cdot\frac{i(c,\nu)}{4}\geq(k-2)\cdot\frac{K_\nu(S')}{10}.
\end{eqnarray*}

\textbf{Case 2: $i(c,\nu)< \frac{2K_\nu(S')}{5}$.} Let $\Pmc_{\nu}(S')$ be a $\nu$-minimal pants decomposition of $S'$ that contains $c$. For any $j=1,\dots,k-1$, let $\gamma_{j,j+1}:=\gamma_j\cdot \gamma_{j+1}\in\Gamma'$ and let $c_{j,j+1}\in\Cmc\Gmc(S')$ be the closed geodesic such that $[[\gamma_{j,j+1}]]=c_{j,j+1}$. Note that $c_{j,j+1}$ is not a multiple of a curve in $\Pmc_\nu(S')$ because $\gamma_j\neq\gamma_{j+1}$ and $c_{j,j+1}$ has positive geometric self-intersection number, so $i(c_{j,j+1},\nu)\geq K_\nu(S')$. Thus, by Lemma \ref{compute length},
\begin{eqnarray*}
\nu\big(G(p,q]\big)&\geq&\frac{1}{2}\cdot\Big(\nu\big(G(p_1,p_k]\big)+\nu\big(G(p_k,p_1]\big)\Big)\\
&=&\frac{1}{2}\cdot\sum_{j=1}^{k-1}\Big(\nu\big(G(\gamma_j^{-1}\cdot p_j,p_j]\big)+\nu\big(G(p_j,p_{j+1}]\big)-\nu\big(G(\gamma_j^{-1}\cdot p_j,p_j]\big)\Big)\\
&&+\frac{1}{2}\cdot\sum_{j=1}^{k-1}\Big(\nu\big(G(p_{j+1},\gamma_{j+1}\cdot p_{j+1}]\big)+\nu\big(G(\gamma_{j+1}\cdot p_{j+1},\gamma_{j+1}\cdot p_j]\big)\\
&&-\nu\big(G(p_{j+1},\gamma_{j+1}\cdot p_{j+1}]\big)\Big)\\
&\geq&\frac{1}{2}\cdot\sum_{j=1}^{k-1}\Big(\nu\big(G(\gamma_j^{-1}\cdot p_j,\gamma_{j+1}\cdot p_j]\big)-2i(c,\nu)\Big)\\
&=&\frac{1}{2}\cdot\sum_{j=1}^{k-1}\Big(\nu\big(G(p_j,\gamma_j\gamma_{j+1}\cdot p_j]\big)-2i(c,\nu)\Big)\\
&\geq&\frac{1}{2}\cdot\sum_{j=1}^{k-1}\Big(i(c_{j,j+1},\nu)-2i(c,\nu)\Big)\\
&\geq&(k-2)\cdot\frac{K_\nu(S')}{10}.\hspace{6.8cm}\qedhere
\end{eqnarray*}
\end{proof}

As a consequence of the above lemma, we obtain the following corollary.

\begin{cor}
Let $c$ be a simple $\nu$-interior systole of $S'$. For any $d\in\Cmc\Gmc(S')$,
\[i(d,\nu)\geq \big(i(d,c)-1\big)\cdot\frac{K_\nu(S')}{10}.\]
\end{cor}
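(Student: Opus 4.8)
The plan is to deduce the inequality from Lemma~\ref{intersection} by applying it along the axis of $d$ over longer and longer periods. First, if $i(d,c)\le 1$ then the right-hand side is $\le 0\le i(d,\nu)$ and there is nothing to prove; so assume $i(d,c)\ge 2$, which in particular forces $d$ not to be a multiple of $c$. Write $d=[[\delta]]$ for a suitable $\delta\in\Gamma'\setminus\{\id\}$ and let $L_d\subset\Std=\Dbbb$ be the axis of $\delta$. Since the endpoints of $L_d$ lie in $\partial\Gamma'$, incompressibility of $S'$ gives $L_d\subset\Std'$. Because $c$ is simple (so that $\Pi^{-1}(c)$ is a disjoint union of complete geodesics) and $L_d$ is not one of these lifts, $L_d$ meets $\Pi^{-1}(c)$ transversely and meets each lift of $c$ in at most one point. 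Fix $p\in L_d\setminus\Pi^{-1}(c)$ and, for $n\ge 1$, put $q_n:=\delta^n\cdot p\in L_d\subset\Std'$.

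The heart of the argument is two exact additivity identities. First, since any complete geodesic other than $L_d$ crosses $L_d$ at most once, the set $G(p,q_n]$ is the disjoint union of $\delta^j\cdot G(p,\delta\cdot p]$ for $j=0,\dots,n-1$; using the $\Gamma$-invariance of $\nu$ together with $(1)$ of Lemma~\ref{compute length} applied to the closed geodesic $d=[[\delta]]$ and the point $p$ on its axis, this gives $\nu\big(G(p,q_n]\big)=n\cdot\nu\big(G(p,\delta\cdot p]\big)=n\cdot i(d,\nu)$. Second, I want $\big|[p,q_n]\cap\Pi^{-1}(c)\big|=n\cdot i(d,c)$; this follows from the same partition of $(p,q_n]$ together with the fact that $(p,\delta\cdot p]$ is a fundamental domain for $\langle\delta\rangle$ acting on $L_d$, so it meets $\Pi^{-1}(c)$ in exactly $i(d,c)$ points — alternatively one can read this off by applying $(1)$ of Lemma~\ref{compute length} with the geodesic current $\mu_c$ in place of $\nu$, giving $i(d,c)=i(d,\mu_c)=\mu_c\big(G(p,\delta\cdot p]\big)$, and noting that, as $c$ is simple, $\mu_c\big(G(p,\delta\cdot p]\big)$ counts exactly the points of $(p,\delta\cdot p]\cap\Pi^{-1}(c)$.

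With these identities in hand, apply Lemma~\ref{intersection} to the geodesic segment $[p,q_n]\subset\Std'$ (whose interior meets $\Pi^{-1}(c)$ transversely since $p,q_n\notin\Pi^{-1}(c)$):
\[n\cdot i(d,\nu)=\nu\big(G(p,q_n]\big)\ \ge\ \big(n\cdot i(d,c)-2\big)\cdot\frac{K_\nu(S')}{10}.\]
Dividing by $n$ and letting $n\to\infty$ yields $i(d,\nu)\ge i(d,c)\cdot\frac{K_\nu(S')}{10}\ge\big(i(d,c)-1\big)\cdot\frac{K_\nu(S')}{10}$, as desired (in fact with the sharper constant $i(d,c)$ in place of $i(d,c)-1$).

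I expect the main obstacle to be the careful justification of the two exact additivity identities — in particular checking that the geodesics contributing to $G(p,q_n]$ and the points contributing to $\big|[p,q_n]\cap\Pi^{-1}(c)\big|$ are each counted without multiplicity, which is precisely where simplicity of $c$ and the ``two distinct geodesics in $\Dbbb$ meet at most once'' principle are used — together with the routine but necessary bookkeeping that $L_d$, and hence every segment $[p,q_n]$, lies inside $\Std'$ so that Lemma~\ref{intersection} genuinely applies.
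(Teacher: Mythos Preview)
Your proof is correct and in fact yields the sharper inequality $i(d,\nu)\geq i(d,c)\cdot\frac{K_\nu(S')}{10}$. The two additivity identities are justified exactly as you say: the first follows either from (1) of Lemma~\ref{compute length} applied to $[[\delta^n]]$ together with bilinearity, or from the disjoint decomposition $G(p,\delta^n\cdot p]=\bigsqcup_{j=0}^{n-1}\delta^j\cdot G(p,\delta\cdot p]$ (disjointness because a geodesic transverse to $L_d$ meets it in a single point); the second from the same partition of $(p,\delta^n\cdot p]$, the choice $p\notin\Pi^{-1}(c)$, and the fact that $(p,\delta\cdot p]\cap\Pi^{-1}(c)$ has exactly $i(d,c)$ points (e.g.\ Lemma~\ref{compute length}(1) with $\nu=\mu_c$).

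The paper takes a different, non-asymptotic route: it chooses the basepoint $\ptd$ to lie on a lift of $c$ (i.e.\ $\ptd\in L_d\cap\Pi^{-1}(c)$), so that the closed segment $[\ptd,\gamma_d\cdot\ptd]$ meets $\Pi^{-1}(c)$ in $i(d,c)+1$ points, and then applies Lemma~\ref{intersection} once to get $i(d,\nu)\geq\big((i(d,c)+1)-2\big)\cdot\frac{K_\nu(S')}{10}$. This is shorter and avoids the limit, at the cost of the ``$-1$'' in the final bound. Your averaging trick over $n$ periods amortizes the fixed loss of $2$ in Lemma~\ref{intersection} and recovers the cleaner constant; neither approach needs anything beyond Lemma~\ref{intersection} and Lemma~\ref{compute length}.
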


\begin{proof}
If $i(d,c)=0$ or $1$, the corollary clearly holds. For the rest of this proof, we will assume that $i(d,c)\geq 2$. Choose a hyperbolic structure $\Sigma$ on $S$. Then $c$ and $d$ are realized as closed geodesics in $S'$. Choose a point $p\in c\cap d$ and a point $\ptd\in\Std$ so that $\Pi(\ptd)=p$. Let $\gamma_d\in\Gamma'$ be a group element so that $[[\gamma_d]]=d$ and $\ptd$ lies in the axis $L_d$ of $\gamma_d$. Then $\big|[\ptd,\gamma_d\cdot\ptd]\cap\Pi^{-1}(c)\big|=i(d,c)+1$. Hence, by (1) of Lemma \ref{compute length} and Lemma \ref{intersection}, we have
\[i(d,\nu)=\nu\big(G(\ptd, \gamma_d\cdot\ptd]\big)\geq \big(i(d,c)-1\big)\cdot\frac{K_\nu(S')}{10}.\qedhere\]
\end{proof}

By applying Lemma \ref{intersection} to all the curves in a $\nu$-minimal pants decomposition on $S'$, we can also obtain the following lower bound on $i(c,\nu)$ in terms of the number of times $c$ intersects the curves in a $\nu$-minimal pants decomposition $\Pmc_\nu(S')$.

\begin{lem}\label{pants curve bound}
Suppose that $S'\subset S$ is a connected essential subsurface of genus $g$ with $n$ boundary components. Let $\Pmc_\nu(S')=\{c_1,\dots,c_{3g-3+2n}\}$ so that the boundary components of $S'$ are $c_{3g-3+n+1},\dots,c_{3g-3+2n}$, and let $c\in\Cmc\Gmc(S')$. Then
\[i(c,\nu)\geq\left(\sum_{j=1}^{3g-3+n}i(c,c_j)\right)\frac{K_\nu(S')}{10\cdot 3^{3g-3+n}}.\]
\end{lem}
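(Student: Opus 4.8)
The plan is to set $K:=K_\nu(S')$, $m:=3g-3+n$, and---since the right-hand side of the asserted inequality is symmetric in $c_1,\dots,c_m$---to reindex the interior pants curves so that they occur in the reverse of the order in which the $\nu$-minimal construction produces them. Thus, writing $S'_j:=S'\setminus(c_{j+1}\cup\dots\cup c_m)$ for $0\le j\le m$, each $c_j$ with $1\le j\le m$ is a $\nu$-interior systole of $S'_j$, which by Corollary~\ref{shortest is simple} we may take to be simple; $S'_m=S'$, and $S'_0$ is a disjoint union of pairs of pants. Since the iterative construction shows $\Pmc_\nu(S'_j)$ and $\Pmc_\nu(S')$ are the same collection of curves, being a multiple of a $\Pmc_\nu(S'_j)$-curve is the same as being a multiple of a $\Pmc_\nu(S')$-curve, and as $\Cmc\Gmc(S'_j)\subseteq\Cmc\Gmc(S')$ this gives $K_\nu(S'_j)\ge K$ for all $j$. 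Fix a hyperbolic structure on $S$ and realise everything by geodesics, with covering map $\Pi\colon\Std\to S$.

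I will prove by induction on $j$ the statement $(\star_j)$: for any half-open geodesic subsegment $(p,q]$ of a bi-infinite geodesic which is contained in a single lift $\widetilde{S'_j}\subset\Std$ of a component of $S'_j$ and is transverse to $\Pi^{-1}(c_i)$ for every $i\le j$, one has
\[ \nu\big(G(p,q]\big)\ \ge\ \frac{K}{5\cdot 3^{j}}\Big(\,\textstyle\sum_{i=1}^{j} n_i\;-\;3^{j}+1\,\Big),\qquad n_i:=\big|(p,q]\cap\Pi^{-1}(c_i)\big|, \]
where $G(p,q]$ is as in Notation~\ref{geodesic notation}. The case $j=0$ holds because the right-hand side equals $0$. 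For the step, put $l:=n_j$ and cut $(p,q]$ at its $l$ transverse intersections with $\Pi^{-1}(c_j)$ into $l+1$ consecutive half-open subsegments $\sigma_0,\dots,\sigma_l$; since $\Pi^{-1}(c_i)\cap\Pi^{-1}(c_j)=\emptyset$ for $i\ne j$, we get $\sum_t n_i(\sigma_t)=n_i$ for $i<j$, each $\sigma_t$ is again transverse to every $\Pi^{-1}(c_i)$ with $i<j$, and each $\sigma_t$---crossing none of $\Pi^{-1}(c_j),\dots,\Pi^{-1}(c_m)$---lies in a single lift of a component of $S'_{j-1}$. Now Lemma~\ref{intersection} applied to the simple $\nu$-interior systole $c_j$ of $S'_j$ (using $K_\nu(S'_j)\ge K$) gives $\nu(G(p,q])\ge(l-2)\tfrac{K}{10}$; and because the sets $G(\sigma_t)$ are pairwise disjoint subsets of $G(p,q]$, summing $(\star_{j-1})$ over $t$ gives $\nu(G(p,q])\ge\tfrac{K}{5\cdot 3^{j-1}}\big(\sum_{i<j}n_i-(l+1)(3^{j-1}-1)\big)$. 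Combining these two bounds with weights $\tfrac23$ and $\tfrac13$ and rearranging, the coefficient of $n_j$ and the coefficient of $\sum_{i<j}n_i$ both come out to exactly $\tfrac{K}{5\cdot 3^{j}}$, while the constant term comes out to exactly $-\tfrac{K}{5\cdot 3^{j}}(3^{j}-1)$, which is $(\star_j)$.

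To conclude, I apply $(\star_m)$ (note $S'=S'_m$ is connected) to a half-open fundamental segment $(\ptd,\gamma\cdot\ptd]$ of the axis $L_\gamma\subset\widetilde{S'}$ of a representative $\gamma\in\Gamma'$ of $c$, with $\ptd$ chosen off every $\Pi^{-1}(c_i)$. If $c$ is not a multiple of a curve in $\Pmc_\nu(S')$, then $L_\gamma$ meets each $\Pi^{-1}(c_i)$ only transversely and $\big|(\ptd,\gamma\cdot\ptd]\cap\Pi^{-1}(c_i)\big|=i(c,c_i)$, so Lemma~\ref{compute length}(1) together with $(\star_m)$ yields $i(c,\nu)\ge\tfrac{K}{5\cdot 3^{m}}\big(N-3^{m}+1\big)$ with $N:=\sum_{i=1}^{m}i(c,c_i)$. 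The asserted inequality then follows by a dichotomy: when $N=0$ it is trivial; when $N\ge1$ the curve $c$ is not a multiple of a $\Pmc_\nu(S')$-curve, so $i(c,\nu)\ge K$, which already suffices if $N\le 10\cdot 3^m$; and if $N>10\cdot 3^m$ then $N-3^m+1>\tfrac{9}{10}N$, whence $i(c,\nu)\ge\tfrac{K}{5\cdot 3^m}\cdot\tfrac{9}{10}N>\tfrac{NK}{10\cdot 3^m}$.

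The main obstacle is the inductive step: Lemma~\ref{intersection} (controlling crossings with $c_j$) and the recursive estimate (controlling crossings with $c_1,\dots,c_{j-1}$) are two lower bounds for the \emph{same} quantity $\nu(G(p,q])$, so they cannot be added and one must instead take a convex combination; the weights $\tfrac23,\tfrac13$ and the constants $3^{j}-1$ are forced precisely so that the factors of $3$ close exactly at each level and the additive error stays a bounded multiple of $K$, which is what allows the final dichotomy to absorb it.
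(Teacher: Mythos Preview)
Your proof is correct and takes a genuinely different route from the paper's. The paper argues by an iterated pigeonhole: it replaces $c$ by $c^{3^m}$ (with $m=3g-3+n$), cuts the resulting fundamental segment at its $3^m\!\cdot\! N$ intersections with $\bigcup_j\Pi^{-1}(c_j)$, and shows that \emph{every} block of $3^m$ consecutive intersection points has $\nu$-measure at least $K_\nu(S')/10$. The latter is obtained by descending through the pants curves one at a time: if a block does not cross $\Pi^{-1}(c_1)$ at least three times, pigeonhole produces a sub-block of a third of the points lying entirely in a lift of a component of $S'\setminus c_1$; iterating forces some level where Lemma~\ref{intersection} fires.

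Your approach replaces this ``go around many times and pigeonhole'' argument by a clean induction on the number of interior pants curves, combining Lemma~\ref{intersection} for the newest curve with the inductive bound for the earlier ones via a $\tfrac23$--$\tfrac13$ convex combination. This makes the factor $3^{j}$ emerge transparently as the price of the weighting, at the cost of an additive error $-(3^j-1)$ that you then absorb by the trichotomy $N=0$ / $N$ small / $N$ large. The paper's method avoids this final dichotomy entirely (the bound comes out in exactly the stated form), while yours avoids the somewhat ad hoc passage to $\gamma^{3^m}$ and gives a sharper intermediate statement $(\star_j)$ valid for arbitrary geodesic segments, not just periodic ones. One small point worth making explicit in your write-up: Lemma~\ref{intersection} is stated for a connected subsurface, so when $S'_j$ is disconnected you are really applying it to the component $S''$ containing $(p,q]$; since $c_j$ is then a simple $\nu$-interior systole of $S''$ as well, and the curves $c_{j,j+1}$ arising in its proof are non-simple hence not multiples of any curve in $\Pmc_\nu(S')$, the conclusion with constant $K=K_\nu(S')$ still follows.
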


\begin{proof}
Assume without loss of generality that $i(\nu,c_j)\leq i(\nu,c_{j+1})$ for all $j=1,\dots,3g-3+n-1$. If 
\[\sum_{j=1}^{3g-3+n}i(c,c_j)=0\] 
(this has to happen when $S'$ is a pair of pants), the desired inequality holds, so we assume that $\sum_{j=1}^{3g-3+n}i(c,c_j)>0$ in the rest of this proof.

Let $\gamma\in\Gamma'$ so that $[[\gamma]]=c\in\Cmc\Gmc(S')$ and let $\ptd\in\Std'\subset\Dbbb$ so that 
\[\ptd\in \Pi^{-1}\left(c\cap\left(\bigcup_{j=1}^{3g-3+n}c_j\right)\right).\] 
Then let $m=3^{3g-3+n}$ and let $\ptd=\ptd_0,\ptd_1,\dots,\ptd_k=\gamma^m\cdot\ptd$ be the points in
\[[\ptd,\gamma^m\cdot\ptd]\cap\left(\bigcup_{j=1}^{3g-3+n}\Pi^{-1}(c_j)\right),\]
enumerated so that $\ptd_j\in(\ptd_{j-1},\ptd_{j+1}]$ for all $j=1,\dots,k-1$. Observe that $k=m\cdot\sum_{j=1}^{3g-3+n}i(c,c_j)$.

Choose any $j\in\{0,\dots,k-m\}$. If we can show that $\nu\big(G(\ptd_j,\ptd_{j+m}]\big)\geq \frac{K_\nu(S')}{10}$, then by (1) of Lemma \ref{compute length}, 
\begin{eqnarray*}
i(c,\nu)&=&\nu\big(G(\ptd,\gamma\cdot\ptd]\big)\\
&=&\frac{1}{m}\cdot\nu\big(G(\ptd,\gamma^m\cdot\ptd]\big)\\
&=&\frac{1}{m}\cdot\sum_{j=0}^{\frac{k}{m}-1}\nu\big(G(\ptd_{j\cdot m},\ptd_{(j+1)\cdot m}]\big)\\
&\geq&\frac{1}{m}\frac{k}{m}\frac{K_\nu(S')}{10}.
\end{eqnarray*}
which proves the lemma.

We will now show that $\nu\big(G(\ptd_j,\ptd_{j+m}]\big)\geq \frac{K_\nu(S')}{10}$ for all $j\in\{0,\dots,k-m\}$. If the interval $(\ptd_j,\ptd_{j+m}]$ intersects $\Pi^{-1}(c_1)$ at least thrice, then Lemma \ref{intersection} implies that 
\[\nu\big(G(\ptd_j,\ptd_{j+m}]\big)\geq \frac{K_\nu(S')}{10},\] 
and we are done. (This is necessarily the case if $3g-3+n=1$.) On the other hand, if $(\ptd_j,\ptd_{j+m}]$ intersects $\Pi^{-1}(c_1)$ at most twice, then by the pigeon hole principle, there is some $j_1\in\{j,\dots,j+\frac{2m}{3}\}$ so that $(\ptd_{j_1},\ptd_{j_1+\frac{m}{3}}]$ does not intersect $\Pi^{-1}(c_1)$. In other words, there is a component $S_1$ of $S'\setminus c_1$ so that the interval $(\ptd_{j_1},\ptd_{j_1+\frac{m}{3}}]$ lies in some lift $\Std_1\subset\Std$ of the subsurface $S_1\subset S$. Since $\frac{m}{3}=3^{3g-3+n-1}\geq 1$, it follows that $S_1$ cannot be a pair of pants.

If $(\ptd_{j_1},\ptd_{j_1+\frac{m}{3}}]$ intersects $\Pi^{-1}(c_2)$ at least thrice, then Lemma \ref{intersection} again implies that 
\[\nu\big(G(\ptd_j,\ptd_{j+m}]\big)\geq\nu\big(G(\ptd_{j_1},\ptd_{j_1+\frac{m}{3}}]\big)\geq \frac{K_\nu(S_1)}{10}\geq \frac{K_\nu(S')}{10}.\] 
(This is necessarily the case if $3g-3+n=2$.) Otherwise, $(\ptd_{j_1},\ptd_{j_1+\frac{m}{3}}]$ intersects $\Pi^{-1}(c_2)$ at most twice, so there must be some $j_2\in\{j_1,\dots,j_1+\frac{2m}{9}\}\subset\{j,\dots,j+\frac{8m}{9}\}$ with the property that $(\ptd_{j_2},\ptd_{j_2+\frac{m}{9}}]$ does not intersect $\Pi^{-1}(c_1\cup c_2)$. Hence, there is a component $S_2$ of $S'\setminus (c_1\cup c_2)$ so that $(\ptd_{j_2},\ptd_{j_2+\frac{m}{9}}]$ lies in some lift $\Std_2\subset\Dbbb$ of the subsurface $S_2\subset S$. As before, $S_2$ cannot be a pair of pants because $\frac{m}{9}=3^{3g-3+n-2}\geq 1$.
 
By iterating this procedure, $3g-3+n-1$ times, we will have either already proven that $\nu\big(G(\ptd_j,\ptd_{j+m}]\big)\geq \frac{K_\nu(S')}{10}$, or have some $j_{3g-3+n-1}\in\{j,\dots,j+m-3\}$ and some component $S_{3g-3+n-1}$ of $S'\setminus (c_1\cup\dots\cup c_{3g-3+n-1})$ so that 
\begin{itemize}
\item $S_{3g-3+n-1}$ is not a pair of pants
\item $(\ptd_{j_{3g-3+n-1}},\ptd_{j_{3g-3+n-1}+3}]$ lies in some lift $\Std_{3g-3+n-1}\subset\Dbbb$ of the subsurface $S_{3g-3+n-1}\subset S$.
\end{itemize}

In this case, the unique simple closed geodesic in $S_{3g-3+n-1}$ is $c_{3g-3+n}$, and $(\ptd_{j_{3g-3+n-1}},\ptd_{j_{3g-3+n-1}+3}]$ necessarily intersects $\Pi^{-1}(c_{3g-3+n})$ at 
\[\ptd_{j_{3g-3+n-1}+1}, \,\,\ptd_{j_{3g-3+n-1}+2}\,\,\text{ and }\,\,\ptd_{j_{3g-3+n-1}+3}.\] 
Lemma \ref{intersection} then implies that 
\[\nu\big(G(\ptd_j,\ptd_{j+m}]\big)\geq\nu\big(G(\ptd_{j_{3g-3+n-1}},\ptd_{j_{3g-3+n-1}+3}]\big)\geq \frac{K_\nu(S_{3g-3+n-1})}{10}\geq \frac{K_\nu(S')}{10}.\qedhere\] 
\end{proof}

\subsection{Length lower bounds: winding and intersection with binodal edges}
In this section, fix a $\nu$-minimal pants decomposition $\Pmc_\nu(S')$. Next, we want a lower bound of $i(c,\nu)$ in terms of $b(c)$ and $w_2(c)$. To do so, we need the following two technical lemmas. Informally, Lemma \ref{binodal bound} tells us how much length $c$ has to pick up if it crosses sufficiently many binodal edges. On the other hand, Lemma \ref{winding bound} tells us how much length $c$ has to pick up if it ``winds around" a lot between binodal edges. 

\begin{lem} \label{binodal bound}
Let $P\subset S'$ be a pair of pants given by $\Pmc_\nu(S')$ and let $\widetilde{P}\subset\Std'$ be the universal cover of $P$. Also, let $p,q\in\widetilde{P}$ be points so that $[p,q]$ intersects the geodesics in $\widetilde{\Qmc}$ transversely (if at all). Then
\[\nu\big(G(p,q]\big)\geq \max\Big\{\big|\widetilde{\Bmc}[p,q]\big|-8,0\Big\}\cdot\frac{K_\nu(P)}{16}.\]
(See Definition \ref{binodal definition} for definition of $\widetilde{\Bmc}[p,q]$.)
\end{lem}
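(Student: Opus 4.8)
The goal is a lower bound for $\nu\big(G(p,q]\big)$ in terms of the number of binodal edges in $\widetilde{\Qmc}$ crossed by the segment $[p,q]$ inside the universal cover $\widetilde P$ of a pair of pants $P$ given by $\Pmc_\nu(S')$. The strategy is to convert a long string of binodal edges into a collection of closed geodesics in $\Gmc(S)$ that are \emph{not} multiples of curves in $\Pmc_\nu(S')$ — hence each has $\nu$-period at least $K_\nu(P)$ — and then estimate $\nu\big(G(p,q]\big)$ from below using the subadditivity/superadditivity statements of Lemma \ref{compute length}. Throughout I fix the hyperbolic structure $\Sigma$ on $S$ so that geodesics are genuine hyperbolic geodesics and so the geometry of $P$ is available.

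\textbf{Step 1: reduce to a single pair of pants and understand $\widetilde{\Qmc}\cap\widetilde P$.} Since $[p,q]\subset\widetilde P$, the only edges of $\widetilde{\Qmc}$ meeting $[p,q]$ are the lifts to $\widetilde P$ of the three spiraling curves in $\Qmc_j$ associated to $P$ (the curves of Figure \ref{figure3}). I would first set up notation for the combinatorial pattern of these lifts: enumerate $\widetilde{\Amc}[p,q]=\{\etd_1,\dots,\etd_N\}$ in the order they meet $[p,q]$, identify which are binodal (i.e. in $\widetilde{\Bmc}[p,q]$), and recall from Definition \ref{binodal definition} that a binodal edge has both endpoints being nodes — common endpoints of two distinct $\widetilde{\Qmc}$-edges. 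The key geometric input is that in a pair of pants, a bi-infinite geodesic arc that crosses many of these spiraling edges must ``wrap'' around one of the three cuffs, and crossing sufficiently many binodal edges in $\widetilde{\Qmc}$ forces the arc $[p,q]$ to project to an arc in $P$ that, when closed up, is not freely homotopic into a cuff.

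\textbf{Step 2: manufacture closed geodesics.} For a binodal edge $\etd_i\in\widetilde{\Bmc}[p,q]$ sufficiently far from both endpoints $p$ and $q$ (this is where the ``$-8$'' comes from: I discard a bounded number of binodal edges near each end), I want to produce a group element $g_i\in\Gamma'$ whose axis is ``close'' to a subsegment of $[p,q]$ around $\etd_i$ and whose conjugacy class $[[g_i]]$ is not a multiple of a curve in $\Pmc_\nu(P)$; typically $g_i$ is built from the pants-curve elements $\gamma_{1,j},\gamma_{2,j},\gamma_{3,j}$ by reading off which cuffs the arc wraps around between consecutive binodal edges — a product of two distinct cuff generators is non-peripheral and has positive self-intersection, so $i([[g_i]],\nu)\geq K_\nu(P)$. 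I then group the binodal edges of $[p,q]$ into blocks so that each block yields one such $g_i$, with a uniformly bounded number (say $\leq 16$, giving the constant $K_\nu(P)/16$) of binodal edges consumed per block.

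\textbf{Step 3: sum up using Lemma \ref{compute length}.} Let $p=q_0, q_1,\dots,q_M=q$ be the endpoints of the blocks along $[p,q]$, chosen so that the subsegment $[q_{i-1},q_i]$ contains the data producing $g_i$. For each block, I compare $\nu\big(G(q_{i-1},q_i]\big)$ with $i([[g_i]],\nu)$: using part (2) of Lemma \ref{compute length} (the axis of $g_i$ is disjoint from $[q_{i-1},q_i]$, or nearly a subsegment of it), together with the $\Gamma$-invariance of $\nu$ and a telescoping argument to control the ``error'' pieces $\nu\big(G[\,g_i\cdot p_i, \cdot\,)\big)$ near the block boundaries — exactly as in Lemma \ref{intersection}, Case 2 — I get $\nu\big(G(q_{i-1},q_i]\big)\gtrsim i([[g_i]],\nu) - (\text{bounded }\nu\text{-error})\geq \tfrac12 K_\nu(P)$ or similar, possibly after discarding a further bounded number of blocks. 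Summing over the $M\geq \big(|\widetilde{\Bmc}[p,q]|-8\big)/16$ good blocks and using superadditivity $\nu\big(G(p,q]\big)\geq \sum_i \nu\big(G(q_{i-1},q_i]\big)$ (valid because the $G(q_{i-1},q_i]$ are disjoint inside $G(p,q]$) yields the stated inequality, with the bookkeeping constants chosen to absorb all the losses into $8$ and $16$.

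\textbf{Main obstacle.} The crux is Step 2: the precise combinatorial claim that a bounded number of consecutive binodal edges in $\widetilde{\Qmc}$ crossed by an arc in $\widetilde P$ forces a non-peripheral winding, i.e. produces a $g_i$ whose conjugacy class is genuinely not a multiple of a cuff. This requires a careful case analysis of the $S$-type versus $Z$-type structure of the binodal edges (Figure \ref{figure6}) and of how the three spiraling curves of $\Qmc_j$ sit in $P$ — essentially showing that if the arc kept wrapping around a single cuff without doing anything else, it could not accumulate binodal edges indefinitely. Extracting the explicit constant $8$ (how many binodal edges near the ends are ``wasted'') and $16$ (block size) is routine once this qualitative picture is nailed down, but getting the qualitative picture right — and making sure the $g_i$'s we extract are non-peripheral in $P$ and not merely in $S$ — is the delicate part.
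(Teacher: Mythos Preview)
Your overall plan is correct and aligned with the paper's approach: both reduce to showing that a bounded-length window of binodal crossings supports a non-peripheral element of $\pi_1(P)$, then sum. Two points of comparison are worth noting.

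First, the paper uses \emph{overlapping} windows rather than disjoint blocks. It shows directly that for any nine consecutive binodal crossings $q_0,\dots,q_8$ one has $\nu\big(G(q_0,q_8]\big)\geq\tfrac12 K_\nu(P)$, and then averages:
\[
\nu\big(G(p_1,p_k]\big)\;\geq\;\frac{1}{8}\sum_{j=1}^{k-8}\nu\big(G(p_j,p_{j+8}]\big)\;\geq\;(k-8)\cdot\frac{K_\nu(P)}{16}.
\]
This sidesteps the bookkeeping of your disjoint-block partition and explains the constants $8$ and $16$ cleanly.

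Second, and more substantively, your Step~3 mechanism is not quite the one that works. You propose applying part (2) of Lemma~\ref{compute length} to the non-peripheral $g_i$ directly, comparing $\nu\big(G(q_{i-1},q_i]\big)$ to $i([[g_i]],\nu)$ via ``the axis of $g_i$ is \dots\ nearly a subsegment.'' The paper instead proves a geometric claim: inside the window $(q_0,q_8]$ there exist two \emph{peripheral} elements $\gamma_{i_1},\gamma_{i_2}$ (cuff generators with distinct axes) and points $x_1,x_2\in(q_0,q_8]$ with $\gamma_{i_t}\cdot x_t\in(q_0,q_8]$ as well. Then
\[
2\,\nu\big(G(q_0,q_8]\big)\;\geq\;\nu\big(G(x_1,\gamma_{i_2}x_2]\big)+\nu\big(G(\gamma_{i_2}x_2,\gamma_{i_2}\gamma_{i_1}x_1]\big)\;\geq\;i\big([[\gamma_{i_2}\gamma_{i_1}]],\nu\big)\;\geq\;K_\nu(P),
\]
the product $\gamma_{i_2}\gamma_{i_1}$ being non-peripheral because the two cuff elements have distinct fixed points. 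The case analysis you anticipate in your ``main obstacle'' is exactly what is needed to find $i_1\neq i_2$ in $\{1,\dots,6\}$ with this translation property; it splits on how many $k\in\{1,\dots,6\}$ satisfy $\suc(L_k)\neq L_{k+1}$. Your intuition that pure spiraling around a single cuff produces no binodal edges is correct and is what drives that analysis, but the extraction of the lower bound goes through the ``translate-the-window'' trick rather than a direct axis comparison.
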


\begin{proof}
If $k:=\big|\widetilde{\Bmc}[p,q]\big|=1,\dots,8$, the desired inequality holds, so we will assume for the rest of this proof that $k\geq 9$. Let $p_1,\dots,p_k$ be the points along $[p,q]$ that also lie in the geodesics in $\widetilde{\Bmc}[p,q]$, enumerated so that they lie along $[p,q]$ in that order. Suppose that for all $j=1,\dots,k-8$, we have $\nu\big(G(p_j,p_{j+8}]\big)\geq \frac{1}{2}K_\nu(P)$. Then 
\begin{eqnarray*}
\nu\big(G(p,q]\big)&\geq&\nu\big(G(p_1,p_k]\big)\\
&\geq&\frac{1}{8}\sum_{j=1}^{k-8}\nu\big(G(p_j,p_{j+8}]\big)\\
&\geq&(k-8)\cdot\frac{K_\nu(P)}{16}.
\end{eqnarray*}

It is thus sufficient to show that $\nu\big(G(p_j,p_{j+8}]\big)\geq\frac{1}{2} K_\nu(P)$ for all $j=1,\dots,k-8$. Fix any $j=1,\dots,k-8$. For all $i=0,\dots,8$, let $q_i:=p_{j+i}$ and let $L_i$ be the geodesic in $\widetilde{\Qmc}$ that contains $q_i$. Observe that $L_i$ and $L_{i+1}$ share a common endpoint in $\partial\Gamma$, which is the repelling fixed point of some primitive $\gamma_i\in\Gamma$ so that $[[\gamma_i]]\in\Cmc\Gmc(S)$ is a boundary component of $S'$. Denote this common endpoint by $\gamma_i^-$. We will first prove the following claim: there exist $i_1,i_2\in\{1,\dots,6\}$ so that $i_1\neq i_2$ and 
\[\big(\gamma_{i_1}\cdot (q_0,q_8]\big)\cap(q_0,q_8]\neq\emptyset\neq\big(\gamma_{i_2}\cdot (q_0,q_8]\big)\cap(q_0,q_8].\]
This will be done in the following cases.

\begin{figure}
\includegraphics[scale=0.8]{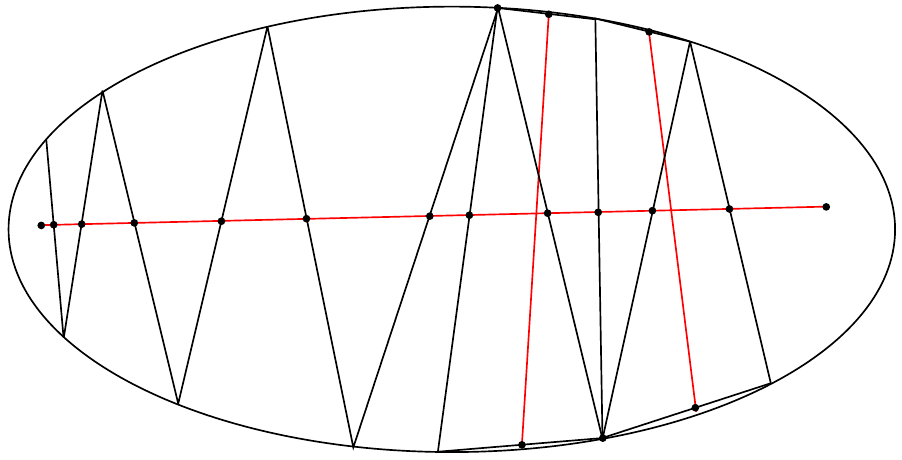}
\put (-584, 89){\makebox[0.7\textwidth][r]{\footnotesize$p$ }}
\put (-271, 96){\makebox[0.7\textwidth][r]{\footnotesize$q$ }}
\put (-567, 94){\makebox[0.7\textwidth][r]{\footnotesize$q_0$ }}
\put (-556, 86){\makebox[0.7\textwidth][r]{\footnotesize$q_1$ }}
\put (-544, 86){\makebox[0.7\textwidth][r]{\footnotesize$q_2$ }}
\put (-503, 87){\makebox[0.7\textwidth][r]{\footnotesize$q_3$ }}
\put (-478, 88){\makebox[0.7\textwidth][r]{\footnotesize$q_4$ }}
\put (-423, 89){\makebox[0.7\textwidth][r]{\footnotesize$q_5$ }}
\put (-377, 99){\makebox[0.7\textwidth][r]{\footnotesize$q_6$ }}
\put (-346, 91){\makebox[0.7\textwidth][r]{\footnotesize$q_7$ }}
\put (-307, 101){\makebox[0.7\textwidth][r]{\footnotesize$q_8$ }}
\put (-575, 70){\makebox[0.7\textwidth][r]{\footnotesize$L_0$ }}
\put (-558, 68){\makebox[0.7\textwidth][r]{\footnotesize$L_1$ }}
\put (-540, 66){\makebox[0.7\textwidth][r]{\footnotesize$L_2$ }}
\put (-507, 64){\makebox[0.7\textwidth][r]{\footnotesize$L_3$ }}
\put (-473, 62){\makebox[0.7\textwidth][r]{\footnotesize$L_4$ }}
\put (-420, 132){\makebox[0.7\textwidth][r]{\footnotesize$L_5$ }}
\put (-395, 130){\makebox[0.7\textwidth][r]{\footnotesize$L_6$ }}
\put (-345, 50){\makebox[0.7\textwidth][r]{\footnotesize$L_7$ }}
\put (-292, 45){\makebox[0.7\textwidth][r]{\footnotesize$L_8$ }}
\put (-400, 180){\makebox[0.7\textwidth][r]{\footnotesize$\gamma_5^-$ }}
\put (-357, 1){\makebox[0.7\textwidth][r]{\footnotesize$\gamma_6^-$ }}
\put (-396, 10){\makebox[0.7\textwidth][r]{\footnotesize$\gamma_5\cdot q_4$ }}
\put (-409, 89){\makebox[0.7\textwidth][r]{\footnotesize$x$ }}
\put (-370, 176){\makebox[0.7\textwidth][r]{\footnotesize$\gamma_5\cdot x$ }}
\put (-359, 99){\makebox[0.7\textwidth][r]{\footnotesize$y$ }}
\put (-331, 22){\makebox[0.7\textwidth][r]{\footnotesize$\gamma_6\cdot y$ }}
\put (-330, 169){\makebox[0.7\textwidth][r]{\footnotesize$\gamma_6\cdot x$ }}
\caption{Case 1 of proof of Lemma \ref{binodal bound}, with $k_1=5$ and $k_2=6$.}
\label{figure8}
\end{figure}

\textbf{Case 1: There is some $k_1,k_2\in\{1,\dots,6\}$ so that $k_1\neq k_2$ and $\suc(L_{k_t})\neq L_{k_t+1}$ for $t=1,2$.} In this case, let $i_t=k_t$. By replacing $\gamma_{i_t}$ with $\gamma_{i_t}^{-1}$ if necessary, we can assume that $\suc^2(L_{i_t})=\gamma_{i_t}\cdot L_{i_t}$. Observe that $\gamma_{i_t}\cdot\suc^{-1}(L_{i_t})$ is an edge in $\widetilde{\Qmc}$ that forms a triangle with $\suc(L_{i_t})$ and $\suc^2(L_{i_t})$. On the other hand, $\gamma_{i_t}\cdot\suc(L_{i_t+1})$ is an edge in $\widetilde{\Qmc}$ whose endpoints in $\partial\Dbbb$ both lie in $(\gamma_{i_t}^-,\gamma_{i_{t+2}}^-)_{\gamma_{i_{t+1}}^-}$ (see Notation \ref{interval notation 2}). Thus, $\big(\gamma_{i_t}\cdot (q_0,q_8]\big)\cap(q_0,q_8]$ is non-empty (see Figure \ref{figure8}).

\begin{figure}
\includegraphics[scale=0.8]{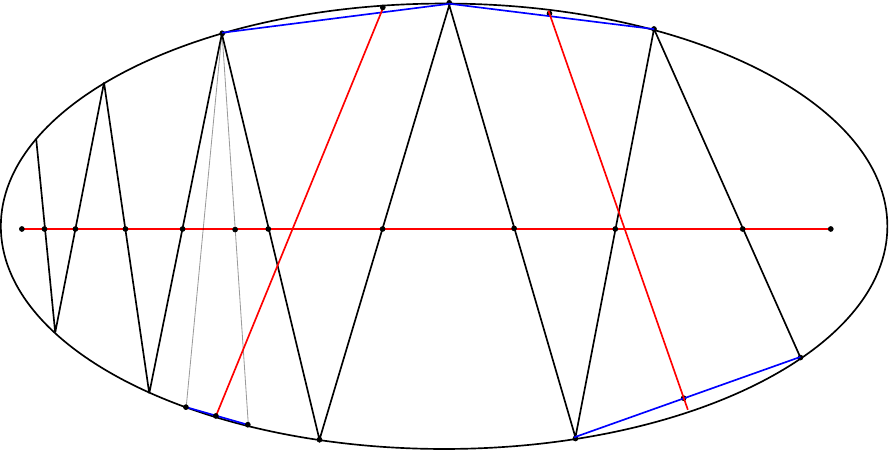}
\put (-583, 87){\makebox[0.7\textwidth][r]{\footnotesize$p$ }}
\put (-266, 87){\makebox[0.7\textwidth][r]{\footnotesize$q$ }}
\put (-501, 87){\makebox[0.7\textwidth][r]{\footnotesize$x$ }}
\put (-566, 88){\makebox[0.7\textwidth][r]{\footnotesize$q_0$ }}
\put (-555, 80){\makebox[0.7\textwidth][r]{\footnotesize$q_1$ }}
\put (-544, 88){\makebox[0.7\textwidth][r]{\footnotesize$q_2$ }}
\put (-521, 88){\makebox[0.7\textwidth][r]{\footnotesize$q_3$ }}
\put (-479, 88){\makebox[0.7\textwidth][r]{\footnotesize$q_4$ }}
\put (-436, 81){\makebox[0.7\textwidth][r]{\footnotesize$q_5$ }}
\put (-393, 81){\makebox[0.7\textwidth][r]{\footnotesize$q_6$ }}
\put (-355, 81){\makebox[0.7\textwidth][r]{\footnotesize$q_7$ }}
\put (-294, 81){\makebox[0.7\textwidth][r]{\footnotesize$q_8$ }}
\put (-573, 68){\makebox[0.7\textwidth][r]{\footnotesize$L_0$ }}
\put (-555, 68){\makebox[0.7\textwidth][r]{\footnotesize$L_1$ }}
\put (-541, 68){\makebox[0.7\textwidth][r]{\footnotesize$L_2$ }}
\put (-525, 66){\makebox[0.7\textwidth][r]{\footnotesize$L_3$ }}
\put (-472, 62){\makebox[0.7\textwidth][r]{\footnotesize$L_4$ }}
\put (-430, 132){\makebox[0.7\textwidth][r]{\footnotesize$L_5$ }}
\put (-396, 130){\makebox[0.7\textwidth][r]{\footnotesize$L_6$ }}
\put (-361, 50){\makebox[0.7\textwidth][r]{\footnotesize$L_7$ }}
\put (-292, 52){\makebox[0.7\textwidth][r]{\footnotesize$L_8$ }}
\put (-505, 163){\makebox[0.7\textwidth][r]{\footnotesize$\gamma_3^-$ }}
\put (-468, -3){\makebox[0.7\textwidth][r]{\footnotesize$\gamma_4^-$ }}
\put (-408, 176){\makebox[0.7\textwidth][r]{\footnotesize$\gamma_5^-$ }}
\put (-360, -2){\makebox[0.7\textwidth][r]{\footnotesize$\gamma_6^-$ }}
\put (-330, 167){\makebox[0.7\textwidth][r]{\footnotesize$\gamma_7^-$ }}
\put (-274, 29){\makebox[0.7\textwidth][r]{\footnotesize$\gamma_8^-$ }}
\put (-370, 173){\makebox[0.7\textwidth][r]{\footnotesize$\gamma_5\cdot q_8$ }}
\put (-330, 21){\makebox[0.7\textwidth][r]{\footnotesize$\gamma_5\cdot x$ }}
\put (-435,175){\makebox[0.7\textwidth][r]{\footnotesize$\gamma_3\cdot q_5$}}
\put (-500,6){\makebox[0.7\textwidth][r]{\footnotesize$\gamma_3\cdot q_1$}}
\caption{Case 2 of proof of Lemma \ref{binodal bound}, with $i_1=3$ and $i_2=5$.}
\label{figure10}
\end{figure}

\textbf{Case 2: There is a unique $k\in\{1,\dots,6\}$ so that $\suc(L_k)\neq L_{k+1}$.} In this case, let $i_1=k$, let $i_2=5$ if $i_1\leq 3$ and let $i_2=2$ if $i_1\geq 4$. The same argument as Case 1 will show that $\big(\gamma_{i_1}\cdot (q_0,q_8]\big)\cap(q_0,q_8]$ is non-empty. We will now prove that $\big(\gamma_{i_2}\cdot (q_0,q_8]\big)\cap(q_0,q_8]$ is non-empty when $i_1\leq 3$; the case when $i_1\geq 4$ is similar. By replacing $\gamma_5$ by $\gamma_5^{-1}$ if necessary, we can assume that $\gamma_5\cdot \gamma_4^-=\gamma_7^-$. Observe then that $\gamma_5\cdot \gamma_3^-=\gamma_6^-$, $\gamma_5\cdot L_8\subset [\gamma_5^-,\gamma_7^-]_{\gamma_6^-}$, and $\gamma_5\cdot \suc^{-1}(L_4)=\{\gamma_6^-,\gamma_8^-\}$ (see Figure \ref{figure10}). In particular, $\big(\gamma_{i_2}\cdot (q_0,q_8]\big)\cap(q_0,q_8]$ is non-empty.

\textbf{Case 3: For all $k\in\{1,\dots,6\}$, $\suc(L_k)=L_{k+1}$.} In this case, let $i_1=2$ and let $i_2=5$. The argument given in Case 2 proves that $\big(\gamma_{i_t}\cdot (q_0,q_8]\big)\cap(q_0,q_8]$ is non-empty for $t=1,2$. This concludes the proof of the claim.

Next, we will use the claim to prove the lemma. Assume without loss of generality that $i_1<i_2$. Let $x_1,x_2\in (q_0,q_8]$ be points so that $\gamma_{i_t}\cdot x_t\in(q_0,q_8]$. (They exist because of the claim.) By replacing each $\gamma_{i_t}$ with $\gamma_{i_t}^{-1}$ if necessary, we can assume that $x_t,\gamma_{i_t}\cdot x_t$ lie along $(q_0,q_8]$ in that order. Observe then that $x_1$ has to lie in $(q_{i_1-1},q_{i_1+1})$, $\gamma_{i_1}\cdot x_1$ has to lie in $(q_{i_1},q_{i_1+2})$, $x_2$ has to lie in $(q_{i_2-1},q_{i_2+1})$ and $\gamma_{i_2}\cdot x_2$ has to lie in $(q_{i_2},q_{i_2+2})$. In particular, $x_1,\gamma_{i_2}\cdot x_2$ lie along $(q_0,q_8]$ in that order, and $\gamma_{i_1}\cdot x_1,x_2\in(x_1,\gamma_{i_2}\cdot x_2]$. It is clear that $[[\gamma_{i_2}\cdot\gamma_{i_1}]]\in\Cmc\Gmc(P)$ is non-peripheral. Hence, Lemma \ref{compute length} implies that
\begin{eqnarray*}
2\nu\big(G(q_0,q_8]\big)& \geq & \nu\big(G(x_1,\gamma_{i_2}\cdot x_2]\big)+\nu\big(G(x_2,\gamma_{i_1}\cdot x_1]\big)\\
& = & \nu\big(G(x_1,\gamma_{i_2}\cdot x_2]\big)+\nu\big(G(\gamma_{i_2}\cdot x_2,(\gamma_{i_2}\gamma_{i_1})\cdot x_1]\big)\\
& \geq & \nu\big(G(x_1,(\gamma_{i_2}\gamma_{i_1})\cdot x_1]\big)\\
& \geq & i([[\gamma_{i_2}\cdot\gamma_{i_1}]],\nu)\\
& \geq & K_\nu(P).\hspace{7cm}\qedhere 
\end{eqnarray*}
\end{proof}

\begin{lem}\label{winding bound}
Let $c\in\Pmc_\nu(S')$, let $\gamma_c\in\Gamma'$ so that $[[\gamma_c]]=c$, and let $p,q\in\overline{\Omega}_{\gamma_c}\subset\Dbbb$ so that $[p,q]$ intersects 
\[\bigcup_{j\in\Zbbb}\gamma_c^j\cdot [p_{\gamma_c}^+,p_{\gamma_c}^-]\]
transversely. Then 
\[\nu\big(G[p,q]\big)\geq (w_2[p,q]-1)\cdot\frac{i(c,\nu)}{2}.\]
(See Section \ref{Minimal pants decompositions and related structures} for the definition of $p^\pm_{\gamma_c}$ and Notation \ref{w notation} for the definition of $w_2[p,q]$.)
\end{lem}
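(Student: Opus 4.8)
The strategy is to slice $[p,q]$ at its intersection points with the edges of $\widetilde{\Rmc}_2(\gamma_c)$ and reduce the estimate to Lemma~\ref{compute length}, using that $\gamma_c$ permutes those edges by a cyclic shift.

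First I would pin down the combinatorics. Write $w:=w_2[p,q]$; if $w\leq 1$ the inequality is vacuous, so assume $w\geq 2$. The edges of $\widetilde{\Rmc}_2(\gamma_c)=\{\gamma_c^k\cdot[p_{\gamma_c}^+,p_{\gamma_c}^-]:k\in\Zbbb\}$ are pairwise disjoint geodesic cross-cuts of the convex domain $\Omega_{\gamma_c}$, so the pieces they cut $\Omega_{\gamma_c}$ into are convex; since $[p,q]\subset\overline{\Omega}_{\gamma_c}$ is a geodesic segment, the edges it meets therefore form a block of consecutive $\gamma_c$-translates. After relabeling (and replacing $\gamma_c$ by $\gamma_c^{-1}$ if necessary) I may assume these are $I_1,\dots,I_w$ with $I_{j+1}=\gamma_c\cdot I_j$, met by $[p,q]$ at points $r_1,\dots,r_w$ occurring in this order along $[p,q]$.

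The core step is a telescoping estimate. Fix $j\in\{1,\dots,w-1\}$. Since $I_{j+1}=\gamma_c\cdot I_j$, the point $\gamma_c\cdot r_j$ lies on $I_{j+1}$, hence $[r_{j+1},\gamma_c\cdot r_j]$ is a sub-segment of $I_{j+1}$ and the three segments $[r_j,r_{j+1}]$, $[r_{j+1},\gamma_c\cdot r_j]$, $[\gamma_c\cdot r_j,r_j]$ bound a geodesic triangle. A geodesic crossing the side $[r_j,\gamma_c\cdot r_j]$ transversely enters this triangle and must leave through one of the other two sides, so the additivity of $\nu$ over the regions cut out by the triangle (exactly the kind of triangle inequality used repeatedly in the proofs of Lemmas~\ref{compute length} and~\ref{binodal bound}) gives
\[\nu\big(G(r_j,r_{j+1}]\big)+\nu\big(G(r_{j+1},\gamma_c\cdot r_j]\big)\;\geq\;\nu\big(G(r_j,\gamma_c\cdot r_j]\big)\;\geq\;i(c,\nu),\]
where the last inequality is part (2) of Lemma~\ref{compute length} applied to $\gamma_c$ (or part (1) if $r_j$ happens to lie on the axis of $\gamma_c$). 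Summing over $j=1,\dots,w-1$, and using that $(r_1,r_2],\dots,(r_{w-1},r_w]$ are disjoint sub-segments of $(p,q]$ so that $\sum_{j}\nu(G(r_j,r_{j+1}])\leq\nu(G(p,q])$, I obtain
\[\nu\big(G(p,q]\big)\;+\;\sum_{j=1}^{w-1}\nu\big(G(r_{j+1},\gamma_c\cdot r_j]\big)\;\geq\;(w-1)\,i(c,\nu).\]

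The remaining, and most delicate, step is to show that the correction term is at most $\nu(G(p,q])$; this would immediately give $2\,\nu(G(p,q])\geq(w-1)\,i(c,\nu)$, which is the asserted bound. Here the key observation is that $\gamma_c\cdot r_j$ is exactly the point at which the translated segment $\gamma_c\cdot[p,q]$ meets $I_{j+1}$, so $[r_{j+1},\gamma_c\cdot r_j]$ is precisely the portion of the cross-cut $I_{j+1}$ lying in the region $R\subset\overline{\Omega}_{\gamma_c}$ trapped between the two geodesic arcs $[p,q]$ and $\gamma_c\cdot[p,q]$. I would then analyze how geodesics pass through $R$: the edges of $\widetilde{\Rmc}_2(\gamma_c)$ cut $R$ into convex cells whose non-cross-cut sides lie on $[p,q]$, on $\gamma_c\cdot[p,q]$, or on two short ``caps'' joining their endpoints, and one controls the geodesics crossing the cross-cut portions by the sides they must also cross, using $\Gamma$-invariance ($\nu(G(\gamma_c\cdot[p,q]))=\nu(G(p,q])$) to identify the two long sides and the minimality in the choice of $p_{\gamma_c}^\pm$ to bound the cap contributions. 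I expect this last inequality to be the real obstacle, because a single geodesic can cross many of the cross-cuts $I_{j+1}$ at once, so the naive per-edge bound $\nu(G(r_{j+1},\gamma_c\cdot r_j])\leq\nu(G[p_{\gamma_c}^+,p_{\gamma_c}^-])$ is far too weak (it can dwarf $i(c,\nu)$ when $c$ is short) and one is forced into the geometry of the region $R$.
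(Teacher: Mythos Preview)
Your setup and the triangle-inequality step are exactly what the paper does: enumerate the crossings $r_1,\dots,r_w$, apply Lemma~\ref{compute length}(2) to $[r_j,\gamma_c\cdot r_j]$, and sum. The gap is entirely in your third paragraph, where you try to bound the correction $\sum_j\nu\big(G(r_{j+1},\gamma_c\cdot r_j]\big)$ globally by analyzing the region between $[p,q]$ and $\gamma_c\cdot[p,q]$. You correctly flag this as the obstacle, and indeed that route looks hard to close.

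The point you are missing is that the minimality in the choice of $p_{\gamma_c}^\pm$ gives the needed bound \emph{term by term}, not just on the caps. Translate so that $I_j=[p_{\gamma_c}^+,p_{\gamma_c}^-]$; then $\gamma_c^{-1}\cdot r_{j+1}\in I_j$ as well, and decomposing $I_j$ at $r_j$ and $\gamma_c^{-1}\cdot r_{j+1}$ gives
\[
\nu\big(G[\gamma_c\cdot r_j,r_{j+1})\big)=\nu\big(G[r_j,\gamma_c^{-1}\cdot r_{j+1})\big)
=\nu\big(G[p_{\gamma_c}^+,p_{\gamma_c}^-]\big)-\nu\big(G[p_{\gamma_c}^+,r_j)\big)-\nu\big(G[\gamma_c^{-1}\cdot r_{j+1},p_{\gamma_c}^-]\big).
\]
Now use the defining minimality $\nu\big(G[p_{\gamma_c}^+,p_{\gamma_c}^-]\big)\le\nu\big(G[p_{\gamma_c}^+,\gamma_c\cdot p_{\gamma_c}^-]\big)$ and the $\gamma_c$-invariance $\nu\big(G[\gamma_c^{-1}\cdot r_{j+1},p_{\gamma_c}^-]\big)=\nu\big(G[r_{j+1},\gamma_c\cdot p_{\gamma_c}^-]\big)$ to bound the right side by
\[
\nu\big(G[p_{\gamma_c}^+,\gamma_c\cdot p_{\gamma_c}^-]\big)-\nu\big(G[p_{\gamma_c}^+,r_j)\big)-\nu\big(G[r_{j+1},\gamma_c\cdot p_{\gamma_c}^-]\big)\le\nu\big(G[r_j,r_{j+1})\big),
\]
the last step because any geodesic crossing $[p_{\gamma_c}^+,\gamma_c\cdot p_{\gamma_c}^-]$ must cross the broken path $p_{\gamma_c}^+\to r_j\to r_{j+1}\to\gamma_c\cdot p_{\gamma_c}^-$. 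So each correction term is bounded by the corresponding piece $\nu\big(G[r_j,r_{j+1})\big)$ of $(p,q]$, and summing gives exactly $\sum_j\nu\big(G(r_{j+1},\gamma_c\cdot r_j]\big)\le\nu\big(G(p,q]\big)$, which is what you wanted. The global analysis of the region $R$ is unnecessary.
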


\begin{proof}

\begin{figure}
\includegraphics[scale=0.9]{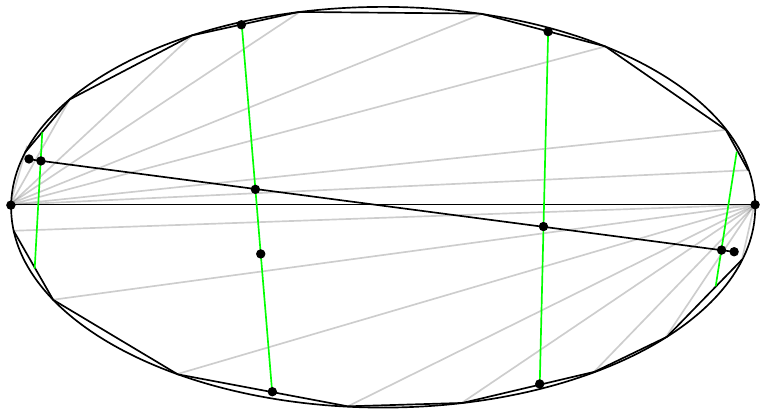}
\put (-566, 105){\makebox[0.7\textwidth][r]{\footnotesize$p$ }}
\put (-257, 75){\makebox[0.7\textwidth][r]{\footnotesize$q$ }}
\put (-553, 104){\makebox[0.7\textwidth][r]{\footnotesize$p_1$ }}
\put (-460, 102){\makebox[0.7\textwidth][r]{\footnotesize$p_2$ }}
\put (-345, 77){\makebox[0.7\textwidth][r]{\footnotesize$p_3$ }}
\put (-269, 67){\makebox[0.7\textwidth][r]{\footnotesize$p_4$ }}
\put (-456, 70){\makebox[0.7\textwidth][r]{\footnotesize$r_2$ }}
\put (-475, 160){\makebox[0.7\textwidth][r]{\footnotesize$p_{\gamma_c}^+$ }}
\put (-447, 13){\makebox[0.7\textwidth][r]{\footnotesize$p_{\gamma_c}^-$ }}
\put (-347, 19){\makebox[0.7\textwidth][r]{\footnotesize$\gamma_c\cdot p_{\gamma_c}^-$ }}
\put (-343, 158){\makebox[0.7\textwidth][r]{\footnotesize$\gamma_c\cdot p_{\gamma_c}^+$ }}
\put (-575, 91){\makebox[0.7\textwidth][r]{\footnotesize$\gamma_c^-$ }}
\put (-238, 91){\makebox[0.7\textwidth][r]{\footnotesize$\gamma_c^+$ }}
\caption{Proof of Lemma \ref{winding bound} when $j=2$.}
\label{figure9}
\end{figure}

Let $k=w_2[p,q]$ and note that if $k=0,1$ there is nothing to prove. Therefore, assume $k\geq 2$ and let $p_1,\dots,p_k$ be the points in
\[[p,q]\cap\bigg(\bigcup_{j\in\Zbbb}\gamma_c^j\cdot [p_{\gamma_c}^+,p_{\gamma_c}^-]\bigg)\]
in that order along $[p,q]$. Fix any $j=1,\dots,k-1$, let $r_j:=\gamma_c^{-1}\cdot p_{j+1}$ and assume without loss of generality that $p_j\in[p_{\gamma_c}^+,p_{\gamma_c}^-]$. Also, assume that $p_{\gamma_c}^+,p_j,r_j,p_{\gamma_c}^-$ lie along $[p_{\gamma_c}^+,p_{\gamma_c}^-]$ in that order; the other case is similar. Then
\begin{eqnarray*}
\nu\big(G[p_j,r_j)\big)&=&\nu\big(G[p_{\gamma_c}^+,p_{\gamma_c}^-]\big)-\nu\big(G[p_{\gamma_c}^+,p_j)\big)-\nu\big(G[r_j,p_{\gamma_c}^-]\big)\\
&\leq&\nu\big(G[p_{\gamma_c}^+,\gamma_c\cdot p_{\gamma_c}^-]\big)-\nu\big(G[p_{\gamma_c}^+,p_j)\big)-\nu\big(G[p_{j+1},\gamma_c\cdot p_{\gamma_c}^-]\big)\\
&\leq&\nu\big(G[p_j,p_{j+1})\big),
\end{eqnarray*}
where the first inequality above is a consequence of the way $p_{\gamma_c}^+$ and $p_{\gamma_c}^-$ are defined.

By (2) of Lemma \ref{compute length}, we have 
\begin{eqnarray*}
i(c,\nu) &\leq&\nu\big(G[p_{j+1},r_j)\big)\\
&\leq&\nu\big(G[p_{j+1},p_j)\big)+\nu\big(G[p_j,r_j)\big)\\
&\leq&\nu\big(G[p_{j+1},p_j)\big)+\nu\big(G[p_j,p_{j+1})\big).
\end{eqnarray*}
Hence,
\begin{eqnarray*}
\nu\big(G[p,q]\big)&\geq&\frac{1}{2}\Big(\nu\big(G[p_1,p_k)\big)+\nu\big(G(p_1,p_k]\big)\Big)\\
&=&\frac{1}{2}\sum_{j=1}^{k-1}\Big(\nu\big(G[p_j,p_{j+1})\big)+\nu\big(G(p_j,p_{j+1}]\big)\Big)\\
&\geq&(k-1)\cdot\frac{i(c,\nu)}{2}. \hspace{6cm} \qedhere
\end{eqnarray*}
\end{proof}

\subsection{Length lower bounds: the combinatorial description}
Combining the previous lemmas in this section, we can obtain the following lower bound for $i(c,\nu)$ in terms of the $\nu$-panted systole length and the $\nu$-systole length.

\begin{thm}\label{length bound theorem}
Let $S'\subset S$ be a connected essential subsurface of genus $g$ with $n$ boundary components, let
\[\overline{K}_\nu(S'):=\frac{K_\nu(S')}{400\cdot 3^{3g-3+n}+96}\,\,\,\,\,\text{and let}\,\,\,\,\,\overline{L}_\nu(S'):=\frac{L_\nu(S')}{400\cdot 3^{3g-3+n}+96}.\] 
Then 
\[i(c,\nu)\geq b(c)\cdot\overline{K}_\nu(S')+w_1(c)\cdot\overline{L}_\nu(S').\]
\end{thm}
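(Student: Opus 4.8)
The plan is to reduce everything to counting, via the four lemmas already proved in this section, how much $\nu$-mass the axis $L$ of $\gamma$ (where $c=[[\gamma]]$) accumulates over one period. First I would reduce to the case that $c$ is primitive and not a multiple of a curve in $\Pmc_\nu(S')$: each of $i(c,\nu)$, $b(c)$, $w_1(c)$ is multiplied by $k$ under $c\mapsto c^k$, and if $c$ is a multiple of a curve in $\Pmc_\nu(S')$ --- in particular if $c$ is peripheral in $S'$ --- then $L\in\widetilde{\Pmc}$, so $\Bmc[\gamma]=\emptyset$, whence $b(c)=w_1(c)=0$ and there is nothing to prove. In the remaining case $i(c,\nu)\geq K_\nu(S')\geq L_\nu(S')$, the geodesic $c$ lies in the interior of $S'$, and by Lemma \ref{compute length}(1) we have $i(c,\nu)=\nu\big(G(p,\gamma\cdot p]\big)$ for any $p\in L$.

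Next I would decompose $[p,\gamma\cdot p]$ in two ways. The transverse intersections of $[p,\gamma\cdot p]$ with the interior pants curves $c_1,\dots,c_{3g-3+n}$ of $\Pmc_\nu(S')$ (it meets no boundary curve of $S'$) cut it into $N_0:=\max\{\sum_{j=1}^{3g-3+n}i(c,c_j),1\}$ pairwise disjoint sub-arcs, each contained in a single lifted pair of pants $\widetilde{P}$ of $\Pmc_{\nu,S'}$ inside $S'$. Separately, the $b(c)$ consecutive pairs $(e_i,e_{i+1})$ of the cyclic sequence $\Bmc[\gamma]$ determine the sub-arcs $W_i:=[\suc^{-1}(e_i)\cap L,\suc(e_{i+1})\cap L]$; each $W_i$ lies in the region $\overline{\Omega}_{\gamma(\etd_i,\etd_{i+1})}$ attached to a curve of $\Pmc_\nu(S')$, and by the definition of $w_2(c)$ one has $\sum_i w_2[W_i]=w_2(c)$ with $w_2[W_i]=|t_2(e_i,e_{i+1})|$. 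Since any geodesic in $\Gmc(\Std)$ crosses the arc $[p,\gamma\cdot p]$ in at most one point, the disjointness of the first decomposition gives $i(c,\nu)=\sum_{\text{arcs}}\nu\big(G(\text{arc})\big)$, while in the second the $W_i$ cover each point of $[p,\gamma\cdot p]$ at most $4$ times, so $\sum_i\nu\big(G(W_i)\big)\leq 4\,i(c,\nu)$.

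Then I would run three estimates. First, Lemma \ref{pants curve bound} gives $\big(\sum_{j=1}^{3g-3+n}i(c,c_j)\big)K_\nu(S')\leq 10\cdot 3^{3g-3+n}\,i(c,\nu)$, hence (using $K_\nu(S')\leq i(c,\nu)$ and $3^{3g-3+n}\geq 1$) also $N_0\cdot K_\nu(S')\leq 10\cdot 3^{3g-3+n}\,i(c,\nu)$. Second, Lemma \ref{binodal bound} applied on each of the $N_0$ disjoint arcs, together with $K_\nu(P)\geq K_\nu(S')$ and the fact that every edge of $\Bmc[\gamma]$ remains binodal relative to the arc inside which it is crossed (so that $\sum_{\text{arcs}}|\widetilde{\Bmc}[\text{arc}]|=b(c)$), gives $i(c,\nu)\geq\big(b(c)-8N_0\big)\tfrac{K_\nu(S')}{16}$; combining with the first estimate, $b(c)\,K_\nu(S')\leq\big(16+80\cdot 3^{3g-3+n}\big)i(c,\nu)$. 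Third, Lemma \ref{winding bound} applied on each $W_i$, together with $i([[\gamma(\etd_i,\etd_{i+1})]],\nu)\geq L_\nu(S')$ and the overlap bound above, gives $4\,i(c,\nu)\geq\big(w_2(c)-b(c)\big)\tfrac{L_\nu(S')}{2}$; feeding in $w_2(c)\geq\tfrac12 w_1(c)-b(c)$ from Lemma \ref{w1 and w2} and $L_\nu(S')\leq K_\nu(S')$ yields $w_1(c)\,L_\nu(S')\leq 16\,i(c,\nu)+4\,b(c)\,K_\nu(S')$.

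Finally I would assemble: with $N:=400\cdot 3^{3g-3+n}+96$, the second and third estimates give $b(c)\,K_\nu(S')+w_1(c)\,L_\nu(S')\leq 5\,b(c)\,K_\nu(S')+16\,i(c,\nu)\leq 5\big(16+80\cdot 3^{3g-3+n}\big)i(c,\nu)+16\,i(c,\nu)=N\,i(c,\nu)$, which is precisely the asserted inequality $i(c,\nu)\geq b(c)\,\overline{K}_\nu(S')+w_1(c)\,\overline{L}_\nu(S')$. The hard part is the combinatorial input in the second estimate: one must verify, using the structure of the triangulation $\Tmc_{\nu,S'}$ from Section \ref{combinatorial} (cf. \cite{Zha2}), that cutting the period of $L$ at its pants-curve crossings destroys no binodal edge and that the arcs $W_i$ and the pair-of-pants arcs overlap only in the controlled manner used above; the remainder is routine bookkeeping of the universal constants so that they fit inside $400\cdot 3^{3g-3+n}+96$.
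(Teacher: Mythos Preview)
Your proposal is correct and follows essentially the same route as the paper: both combine Lemma~\ref{pants curve bound}, Lemma~\ref{binodal bound}, Lemma~\ref{winding bound}, and Lemma~\ref{w1 and w2} in the same order, with the same overlap constant $4$ for the winding arcs, and arrive at the identical denominator $400\cdot 3^{3g-3+n}+96$. Your bookkeeping is in fact slightly cleaner than the paper's --- you explicitly dispose of the cases where $c$ is a multiple of a pants curve and where $p(c)=0$ (via $N_0=\max\{p(c),1\}$ and $i(c,\nu)\geq K_\nu(S')$), and you run the winding estimate over the full period rather than segment-by-segment, which avoids the truncation $\cap[\qtd_j,\qtd_{j+1}]$ in the paper's display.
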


\begin{proof}
By Lemma \ref{pants curve bound}, we know that 
\begin{equation}\label{pants curve}
5\cdot 3^{3g-3+n}\cdot i(c,\nu)\geq p(c)\cdot \frac{K_\nu(S')}{2}.
\end{equation}
Let $\gamma\in\Gamma'$ so that $[[\gamma]]=c\in\Cmc\Gmc(S')$, let $\qtd\in\Std'\subset\Dbbb$ so that 
\[\qtd\in\Pi^{-1}\bigg(c\cap\bigg(\bigcup_{j=1}^{3g-3+n}c_j\bigg)\bigg),\]
(recall $\Pmc_\nu(S')=\{c_1,\dots,c_{3g-3+2n}\}$, where $c_{3g-3+n+1},\dots, c_{3g-3+2n}$ are the boundary components of $S'$) and let $\qtd=\qtd_0,\qtd_1,\dots,\qtd_{p(c)}=\gamma\cdot\qtd$ be the points in
\[[\qtd,\gamma\cdot\qtd]\cap\bigg(\bigcup_{j=1}^{3g-3+n}\Pi^{-1}(c_j)\bigg),\]
enumerated so that $\qtd_j\in(\qtd_{j-1},\qtd_{j+1})$ for all $j=1,\dots,k-1$. 

Note that for any pair of pants $P$ given by $\Pmc_\nu(S')$, $K_\nu(P)\geq K_\nu(S')$. Hence, by Lemma \ref{binodal bound}, we have 
\begin{eqnarray}\label{binodal}
i(c,\nu)&=&\sum_{j=0}^{p(c)-1}\nu\big(G(\qtd_j,\qtd_{j+1}]\big)\nonumber\\
&\geq&\sum_{j=0}^{p(c)-1} \max\Big\{\big|\widetilde{\Bmc}[\qtd_j,\qtd_{j+1}]\big|-8,0\Big\}\frac{K_\nu(P)}{16}\\
&\geq&\sum_{j=0}^{p(c)-1}\Big(\big|\widetilde{\Bmc}[\qtd_j,\qtd_{j+1}]\big|-8\Big)\cdot\frac{K_\nu(S')}{16}\nonumber\\
&\geq&b(c)\cdot\frac{K_\nu(S')}{16}-p(c)\cdot\frac{K_\nu(S')}{2}.\nonumber
\end{eqnarray}
Adding the inequalities (\ref{pants curve}) and (\ref{binodal}) then gives
\begin{equation}\label{intermediate}
(5\cdot 3^{3g-3+n}+1)i(c,\nu)\geq b(c)\cdot\frac{K_\nu(S')}{16}.
\end{equation}

Let $k_j:=\big|\widetilde{\Bmc}[\qtd_j,\qtd_{j+1}]\big|$. For each interval $[\qtd_j,\qtd_{j+1}]$, let $\etd_{j,1},\dots,\etd_{j,k_j}$ be the edges in $\widetilde{\Bmc}[\qtd_j,\qtd_{j+1}]$, enumerated so that $\ptd_{j,i}\in(\ptd_{j,i-1},\ptd_{j,i+1})$ for all $i=2,\dots,k_j-1$, where $\ptd_{j,i}:=\etd_{j,i}\cap[\qtd_j,\qtd_{j+1}]$. We previously observed (see discussion after Notation \ref{t notation}) that the interval $\big(\suc^{-1}(\etd_{j,i})\cap[\qtd_j,\qtd_{j+1}],\suc(\etd_{j,i+1})\cap[\qtd_j,\qtd_{j+1}]\big]$ lies in $\Omega_{\gamma(\etd_{j,i},\etd_{j,i+1})}$. Also, it is clear that for every point $r\in[\qtd_j,\qtd_{j+1}]$, there are at most four different values of $i$ so that $r\in\big[\suc^{-1}(\etd_{j,i})\cap[\qtd_j,\qtd_{j+1}],\suc(\etd_{j,i+1})\cap[\qtd_j,\qtd_{j+1}]\big]$. Thus, by Lemma \ref{winding bound},
\begin{eqnarray}\label{winding}
4i(c,\nu)&=&\sum_{j=0}^{p(c)-1}4\nu\big(G(\qtd_j,\qtd_{j+1}]\big)\nonumber\\
&\geq&\sum_{j=0}^{p(c)-1}\sum_{i=1}^{k_j}\nu\big(G\big[\suc^{-1}(\etd_{j,i})\cap[\qtd_j,\qtd_{j+1}],\suc(\etd_{j,i+1})\cap[\qtd_j,\qtd_{j+1}]\big]\big)\nonumber\\
&\geq&\sum_{j=0}^{p(c)-1}\sum_{i=1}^{k_j}\Big(w_2\big[\suc^{-1}(\etd_{j,i})\cap[\qtd_j,\qtd_{j+1}],\suc(\etd_{j,i+1})\cap[\qtd_j,\qtd_{j+1}]\big]-1\Big)\cdot\frac{L_\nu(S')}{2}\nonumber\\
&=&(w_2(c)-b(c))\cdot\frac{L_\nu(S')}{2}\nonumber
\end{eqnarray}

We finish the proof by combining the above inequalities to obtain a positive lower bound for $i(c,\nu)$. To ensure positivity of this lower bound, we need to add the above inequality to a multiple (strictly greater than 16) of inequality (\ref{intermediate}). We choose this multiple to be 20 to improve readability. 
\begin{eqnarray*}
(100\cdot 3^{3g-3+n}+24)i(c,\nu)&\geq& (w_2(c)-b(c))\cdot\frac{L_\nu(S')}{2}+b(c)\cdot \frac{5\cdot K_\nu(S')}{4}\\
&\geq &w_2(c)\cdot\frac{L_\nu(S')}{2}+b(c)\cdot \frac{3\cdot K_\nu(S')}{4}\\
&\geq &\left(\frac{1}{2}w_1(c)-b(c)\right)\cdot\frac{L_\nu(S')}{2}+b(c)\cdot \frac{3\cdot K_\nu(S')}{4}\\
&\geq &w_1(c)\cdot\frac{L_\nu(S')}{4}+b(c)\cdot \frac{K_\nu(S')}{4},
\end{eqnarray*}
where the third inequality follows from Lemma \ref{w1 and w2}. Dividing both sides by $100\cdot 3^{3g-3+n}+24$ yields the required inequality.
\end{proof}

\section{Vanishing of entropy and a systolic inequality}\label{entropy}

\textbf{For the rest of this section, let $S'\subset S$ be a connected essential subsurface of genus $g$ with $n$ boundary components.} If we choose a period minimizing $\nu\in\Cmc(S)$, we can associate to $S'$ a quantity which we call the topological entropy.

\begin{definition}
Let $\nu\in\Cmc(S)$ be a period minimizing geodesic current. The \emph{$\nu$-topological entropy} of $S'$ is
\[h_\nu(S'):=\limsup_{T\to\infty}\frac{1}{T}\log\big|\{c\in\Cmc\Gmc(S'):i(c,\nu)\leq T\}\big|.\]
When $S'=S$, we will use the notation $h_\nu:=h_\nu(S)$.
\end{definition}

\subsection{A systolic inequality}
The goal of this section is to prove Theorem \ref{main theorem}. The constant $C$ that arises from our proof is $400\cdot 3^{3g-3+n}+96$. We will divide the proof of Theorem \ref{main theorem} into three lemmas. The first two lemmas give us the first inequality.

\begin{lem}\label{nicepants}
Let $\nu\in\Cmc(S)$ be a period minimizing geodesic current and $\Pmc_{\nu}(S')$ be a $\nu$-minimal pants decompositions of $S'$. Then there exists a pair of pants $P\subset S'$ and a closed geodesic $e\in\Cmc\Gmc(P)$ so that
\begin{itemize}
\item $e$ has a unique self-intersection point $p$,
\item $i(e,\nu)\leq 4K_\nu(S')$, 
\item the three closed geodesics obtained by performing surgery to $e$ at $p$ are the three boundary components of $P$.
\end{itemize}
\end{lem}

\begin{proof} By Lemma \ref{panted systole}, it is sufficient to construct a primitive, non-simple $\bar{e}\in\Cmc\Gmc(S')$ so that $i(\bar{e},\nu)\leq 4K_\nu(S')$.

Let $d\in\Cmc\Gmc(S')$ be a closed geodesic that is not a multiple of a curve in $\Pmc_\nu(S')$, and so that $i(d,\nu)=K_\nu(S')$. Note that $d$ is primitive. If $d$ is non-simple, set $\bar{e}$ to be $d$ and we are done. If $d$ is simple, then there is some $c\in\Pmc_\nu(S')$ that intersects $d$ transversely, so that $i(c,\nu)\leq i(d,\nu)$. Choose a hyperbolic structure $\Sigma$ on $S$. There exists $\gamma,\eta\in\Gamma'$ so that $[[\gamma]]=c$, $[[\tau]]=d$, and the axes $L_\tau$ and $L_\gamma$ of $\tau$ and $\gamma$ respectively intersect transversely. Let $p\in\Std$ be the intersection point of $L_\tau$ and $L_\gamma$. 

By Lemma \ref{compute length}, we have that 
\begin{align*}
i\big([[\tau\gamma\tau\gamma^{-1}]],\nu\big)&\leq\nu\big(G(\gamma\cdot p,\tau\gamma\tau\gamma^{-1}\cdot (\gamma\cdot p)]\big)\\
&\leq\nu\big(G(\gamma\cdot p,p]\big)+\nu\big(G(p,\tau\cdot p]\big)+\nu\big(G(\tau\cdot p,\tau\gamma\cdot p]\big)\\
&\hspace{1cm}+\nu\big(G(\tau\gamma\cdot p,\tau\gamma\tau\cdot p]\big)\\
&=\nu\big(G(\gamma\cdot p,p]\big)+\nu\big(G(p,\tau\cdot p]\big)+\nu\big(G(p,\gamma\cdot p]\big)+\nu\big(G(p,\tau\cdot p]\big)\\
&= 2i(c,\nu)+2i(d,\nu)\\
&\leq 4i(d,\nu)\\
&=4K_\nu(S').
\end{align*}
It is easy to see that $[[\tau\gamma\tau\gamma^{-1}]]$ is non-simple, so we can set $\bar{e}$ to be $[[\tau\gamma\tau\gamma^{-1}]]$.
\end{proof}

\begin{lem}\label{first inequality}
Let $\nu\in\Cmc(S)$ be a period minimizing geodesic current. Then 
\[h_\nu(S')\geq\frac{\log(2)}{4K_\nu(S')}.\]
\end{lem}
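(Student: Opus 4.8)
The plan is to establish the inequality by producing exponentially many short closed geodesics on $S'$. Write $K:=K_\nu(S')$. I will show that for every large integer $N$ there are at least $2^{N}/N$ pairwise distinct elements of $\Cmc\Gmc(S')$ whose $\nu$-period is at most $4KN$. Evaluating the definition of $h_\nu(S')$ along the values $T=4KN$ then gives $h_\nu(S')\ge\limsup_{N\to\infty}\frac{1}{4KN}\log(2^{N}/N)=\frac{\log 2}{4K}$, which is the desired inequality. (The construction below in fact yields $\nu$-period at most $KN$, so the constant $4$ is slack and no careful bookkeeping of constants is needed.)

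The heart of the matter is to choose two ``building blocks'' $\gamma_1,\gamma_2\in\Gamma'$ satisfying: (a) the axes $L_1,L_2\subset\Std=\Dbbb$ of $\gamma_1,\gamma_2$ cross, say at a common point $\ptd$; (b) $i([[\gamma_1]],\nu)\le K$ and $i([[\gamma_2]],\nu)\le K$; (c) $\gamma_1$ and $\gamma_2$ do not commute. Property (c) automatically forces $\{\gamma_1,\gamma_2\}$ to be a free basis of $\langle\gamma_1,\gamma_2\rangle$: a non-cyclic $2$-generated subgroup of a surface group (or of a free group) is free of rank $2$, and a rank-$2$ free group is Hopfian, so any generating pair is a basis. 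To build $\gamma_1,\gamma_2$, let $c^{\ast}=[[\delta^{\ast}]]$ be a primitive panted systole of $S'$, so $i(c^{\ast},\nu)=K$ and $c^{\ast}$ is not a multiple of a curve of $\Pmc_\nu(S')$. If $c^{\ast}$ is non-simple, a transverse self-intersection lifts to a configuration $L^{\ast}\cap\eta L^{\ast}\neq\emptyset$, $L^{\ast}\neq\eta L^{\ast}$, for some $\eta\in\Gamma'$; set $\gamma_1=\delta^{\ast}$ and $\gamma_2=\eta\delta^{\ast}\eta^{-1}$, which have $\nu$-period $K$ each by Lemma \ref{compute length}(1). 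If $c^{\ast}$ is simple, then $c^{\ast}$ cannot be disjoint from $\Pmc_\nu(S')$ (otherwise it would be isotopic into a complementary pair of pants, hence peripheral there, hence a multiple of a curve of $\Pmc_\nu(S')$); let $j$ be least with $i(c^{\ast},c_j)>0$. Then $c^{\ast}$ is a non-peripheral curve of $S'\setminus(c_1\cup\cdots\cup c_{j-1})$, so by construction of the $\nu$-minimal pants decomposition $c_j$ is a $\nu$-interior systole of that subsurface and $i(c_j,\nu)\le i(c^{\ast},\nu)=K$; set $\gamma_1=\delta^{\ast}$ and $\gamma_2=\gamma_{c_j}$. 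In either case $\gamma_1,\gamma_2$ are hyperbolic with crossing, hence distinct, hence non-commuting axes, so (a)--(c) hold.

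Now, for $\vec{s}=(s_1,\dots,s_N)\in\{1,2\}^{N}$ put $w(\vec{s}):=\gamma_{s_1}\gamma_{s_2}\cdots\gamma_{s_N}\in\Gamma'$. Since $\{\gamma_1,\gamma_2\}$ is a free basis, $w(\vec{s})$ is cyclically reduced, $w(\vec{s})$ and $w(\vec{t})$ are conjugate in $\langle\gamma_1,\gamma_2\rangle$ exactly when $\vec s$ and $\vec t$ are cyclic rotations of each other, and $w(\vec s)$ is never conjugate to $w(\vec t)^{-1}$ (a word in $\gamma_1^{-1},\gamma_2^{-1}$). Hence the classes $[[w(\vec s)]]$ realize at least as many distinct elements of $\Cmc\Gmc(S')$ as there are binary necklaces of length $N$, namely $\ge 2^{N}/N$. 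For the period bound, Lemma \ref{compute length} gives $i([[w(\vec s)]],\nu)\le\nu\big(G(\ptd,\,w(\vec s)\cdot\ptd]\big)$; the piecewise-geodesic path $\ptd\to\gamma_{s_1}\ptd\to(\gamma_{s_1}\gamma_{s_2})\ptd\to\cdots\to w(\vec s)\ptd$ joins the two endpoints of the geodesic segment $[\ptd,w(\vec s)\cdot\ptd]$, so any bi-infinite geodesic crossing that segment separates its endpoints and therefore crosses one of the $N$ legs of the path; thus $G(\ptd,w(\vec s)\cdot\ptd]\subseteq\bigcup_{i=1}^{N}G\big((\gamma_{s_1}\cdots\gamma_{s_{i-1}})\ptd,\ (\gamma_{s_1}\cdots\gamma_{s_i})\ptd\big]$. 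By $\Gamma$-invariance of $\nu$ the $\nu$-mass of the $i$-th term equals $\nu\big(G(\ptd,\gamma_{s_i}\ptd]\big)$, and since $\ptd\in L_1\cap L_2$ lies on the axis of $\gamma_{s_i}$, Lemma \ref{compute length}(1) gives $\nu\big(G(\ptd,\gamma_{s_i}\ptd]\big)=i([[\gamma_{s_i}]],\nu)\le K$. Summing over $i$, $i([[w(\vec s)]],\nu)\le KN\le 4KN$, which supplies the required family of closed geodesics and proves the lemma.

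The main obstacle is the second step: producing building blocks whose $\nu$-periods are controlled by $K_\nu(S')$ alone, independently of the (possibly far smaller) $\nu$-systole length, which is exactly what forces the case distinction on whether the panted systole is simple. One must also check that conjugacy in $\Gamma'$ among the words $w(\vec s)$ does not collapse the necklace count below an exponential rate; this is immediate when $\langle\gamma_1,\gamma_2\rangle$ is $\pi_1$ of the incompressible subsurface it fills, and in general follows from quasiconvexity of $\langle\gamma_1,\gamma_2\rangle$ in $\Gamma'$, which bounds the multiplicity of $\vec s\mapsto[[w(\vec s)]]$ subexponentially.
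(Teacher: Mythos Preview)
Your construction of the two building blocks $\gamma_1,\gamma_2$ with crossing axes and $\nu$-period at most $K$, and your length estimate $i([[w(\vec s)]],\nu)\le KN$ via the broken-geodesic inequality, are both correct; in fact this part is sharper than what the paper proves. The issue is the final step, counting conjugacy classes in $\Gamma'$. You correctly note that $H=\langle\gamma_1,\gamma_2\rangle$ is free of rank $2$ and that distinct necklaces give $H$-nonconjugate words, but you then need that the map from necklaces to $\Cmc\Gmc(S')$ (i.e.\ to $\Gamma'$-conjugacy classes) does not collapse exponentially. In your Case~1 this is not automatic: since $\gamma_2=\eta\gamma_1\eta^{-1}$, the elements $\gamma_1^{N}$ and $\gamma_2^{N}$ are $\Gamma'$-conjugate but not $H$-conjugate, so $H$ is certainly not the fundamental group of an embedded incompressible subsurface here. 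Your fallback assertion that ``quasiconvexity bounds the multiplicity subexponentially'' is true, but it is not a triviality: what one needs is that for each $\Gamma'$-conjugacy class $[g]$ the number of $H$-conjugacy classes in $H\cap[g]$ is bounded by a constant depending only on $H\le\Gamma'$, and this follows from the finite-width theorem for quasiconvex subgroups of hyperbolic groups (Gitik--Mitra--Rips--Sageev). Without citing or proving such a statement, the argument is incomplete.

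The paper avoids this difficulty altogether. It first manufactures (using your same panted systole, but passing through the commutator $\tau\gamma\tau\gamma^{-1}$ when $c^{\ast}$ is simple, and then Lemma~\ref{panted systole}) a figure-eight curve $e$ with $i(e,\nu)\le 4K_\nu(S')$ sitting in an \emph{incompressible} pair of pants $P\subset S'$. The words are then taken in $\pi_1(P)$, and because $P$ is incompressible, two non-peripheral elements of $\pi_1(P)$ that are conjugate in $\Gamma'$ are already conjugate in $\pi_1(P)$; the counting thus reduces to an elementary free-group computation. The cost of this reduction is the factor $4$ in front of $K_\nu(S')$, which your direct approach would save if the conjugacy issue were handled. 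So: either invoke finite width explicitly, or follow the paper and pass to an incompressible pair of pants first.
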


\begin{proof} Let $\Pmc_{\nu}(S')$, $P$, $e$ and $p$ be as in Lemma \ref{nicepants}. Let $\gamma_1,\gamma_2,\gamma_3\in\pi_1(P)$ be primitive elements so that $\gamma_3\cdot\gamma_2\cdot\gamma_1=\id$, and so that the closed geodesics $[[\gamma_1]]$, $[[\gamma_2]]$, $[[\gamma_3]]$ are the boundary components of $P$. Then $e$ has to be either
\[[[\gamma_3^{-1}\cdot\gamma_2]],\,\,\,[[\gamma_2^{-1}\cdot\gamma_1]]\,\,\,\text{ or }\,\,\,[[\gamma_1^{-1}\cdot\gamma_3]].\]

Assume without loss of generality that $e=[[\gamma_2^{-1}\cdot\gamma_1]]$. Then 
\begin{align*}
i(e,\nu)&=\nu\big(G(p,\gamma_2^{-1}\gamma_1\cdot p]\big)\nonumber\\
&=\nu\big(G(\gamma_2\cdot p,\gamma_1\cdot p]\big)\nonumber\\
&=\nu\big(G(\gamma_2\cdot p, p]\big)+\nu\big(G(p,\gamma_1\cdot p]\big)\\
&=\nu\big(G(p,\gamma_2^{-1}\cdot p]\big)+\nu\big(G(p,\gamma_1^{-1}\cdot p]\big),\nonumber
\end{align*}
so $i(e,\nu)\geq \nu\big(G(\gamma_2\cdot p, p]\big),\nu\big(G(p,\gamma_1\cdot p]\big),\nu\big(G(p,\gamma_2^{-1}\cdot p]\big)$ and $\nu\big(G(p,\gamma_1^{-1}\cdot p]\big)$.
Since $\pi_1(P)\subset\Gamma'$ is a free group of rank 2 generated by $\gamma_1$ and $\gamma_2$, no two distinct elements of the form
\[\gamma_1^{\epsilon_1}\gamma_2^{\delta_1}\dots\gamma_1^{\epsilon_t}\gamma_2^{\delta_t}\gamma_1^2\]
are conjugate, where $\epsilon_i,\delta_i=\pm1$. 

By Lemma \ref{compute length}, we have that for any $f=[[\gamma_1^{\epsilon_1}\gamma_2^{\delta_1}\dots\gamma_1^{\epsilon_t}\gamma_2^{\delta_t}\gamma_1^2]]\in\Cmc\Gmc(S')$, 
\begin{align*}
i(f,\nu)&\leq \nu\big(G(p,\gamma_1^{\epsilon_1}\gamma_2^{\delta_1}\dots\gamma_1^{\epsilon_t}\gamma_2^{\delta_t}\gamma_1^2\cdot p]\big)\\
&\leq \nu\big(G(p,\gamma_1^{\epsilon_1}\cdot p]\big)+\nu\big(G(\gamma_1^{\epsilon_1}\cdot p,\gamma_1^{\epsilon_1}\gamma_2^{\delta_1}\cdot p]\big)+\dots\\
&\hspace{.4cm}+\nu\big(G(\gamma_1^{\epsilon_1}\gamma_2^{\delta_1}\dots\gamma_1^{\epsilon_t}\gamma_2^{\delta_t}\cdot p,\gamma_1^{\epsilon_1}\gamma_2^{\delta_1}\dots\gamma_1^{\epsilon_t}\gamma_2^{\delta_t}\gamma_1^2\cdot p]\big)\\
&=\nu\big(G(p,\gamma_1^{\epsilon_1}\cdot p]\big)+\nu\big(G(p,\gamma_2^{\delta_1}\cdot p]\big)+\dots+\nu\big(G(p,\gamma_2^{\delta_t}\cdot p]\big)+2\nu\big(G(p,\gamma_1\cdot p]\big)\\
&\leq (2t+2)i(e,\nu)\\
&\leq  8(t+1) K_\nu(S').
\end{align*}
This means that 
\begin{align*}
&\hspace{.45cm}\big|\big\{[\gamma]\in[\Gamma'\setminus\{\id\}]:i\big([[\gamma]],\nu\big)\leq T\big\}\big|\\
&\geq\left|\left\{[\gamma_1^{\epsilon_1}\gamma_2^{\delta_1}\dots\gamma_1^{\epsilon_t}\gamma_2^{\delta_t}\gamma_1^2]\in[\pi_1(P)\setminus\{\id\}]:\epsilon_i,\delta_i\in\{-1,1\} \text{ and }t\leq \frac{T}{8K_\nu(S')}-1\right\}\right|\\
&\geq 4^{\left\lfloor\frac{T}{8K_\nu(S')}\right\rfloor-1},
\end{align*}
so $h_\nu(S')\geq \frac{\log(2)}{4K_\nu(S')}$.
\end{proof}

Now, we finish the proof of Theorem \ref{main theorem} by proving the second inequality.

\begin{lem}\label{second inequality}
There is a constant $C\in\Rbbb^+$ which depends only on the topology of $S'$, so that for any period minimizing $\nu\in\Cmc(S)$, we have 
\[h_\nu(S')K_\nu(S')\leq C\cdot\left(\log(4)+1+\log\left(1+\frac{1}{x_0}\right)\right),\]
where $x_0$ is the unique positive solution to the equation $(1+x)^{\left\lceil\frac{K_\nu(S')}{L_\nu(S')}-1\right\rceil}x=1$.
\end{lem}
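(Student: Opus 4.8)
The plan is to bound the number of closed geodesics in $\Cmc\Gmc(S')$ of $\nu$-length at most $T$ by counting their combinatorial descriptions $\psi[\gamma]$ from Section \ref{combinatorial}, using the injectivity statement of Proposition \ref{combinatorial prop} and the lower bound from Theorem \ref{length bound theorem}. Concretely, if $c=[[\gamma]]\in\Cmc\Gmc(S')$ satisfies $i(c,\nu)\le T$, then by Theorem \ref{length bound theorem} we have $b(c)\cdot\overline K_\nu(S') + w_1(c)\cdot\overline L_\nu(S')\le T$, so both $b:=b(c)$ and $w_1:=w_1(c)$ are constrained. First I would count, for fixed $b$ and fixed $w_1$, the number of admissible sequences $\{(a_i,b_i,c_i,T_i,t_i)\}_{i=1}^{b}$ in $\Psi'$ with $\sum_{i=1}^b|t_i|=w_1$: there are boundedly many choices (depending only on the topology of $S'$, via the number of pairs of pants and curves in $\Pmc_{\nu,S'}$) for the starting tuple $(a_1,b_1,c_1,T_1)$, then at each of the $b$ steps the data $(a_{i+1},b_{i+1},c_{i+1},T_{i+1})$ has at most $4$ possibilities by Definition \ref{admissible sequence}, contributing a factor $\le N_0\cdot 4^{b}$ for a topological constant $N_0$; and the number of integer tuples $(t_1,\dots,t_b)$ with $\sum|t_i|=w_1$ is at most $\binom{b+w_1-1}{b-1}2^{b}$ (composition count with signs). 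Summing over all $w_1$ with $w_1\cdot\overline L_\nu(S')\le T$, i.e. $w_1\le w_{\max}:=T/\overline L_\nu(S')$, and over all $b$ with $b\le b_{\max}:=T/\overline K_\nu(S')$, gives an explicit upper bound for $\big|\{c\in\Cmc\Gmc(S'):i(c,\nu)\le T\}\big|$.

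The next step is to extract the exponential growth rate. Taking logarithms and letting $T\to\infty$, the dominant contribution is $h_\nu(S')\le \limsup_{T\to\infty}\frac1T\log\big(b_{\max}\cdot w_{\max}\cdot N_0\cdot 8^{b_{\max}}\cdot \binom{b_{\max}+w_{\max}}{b_{\max}}\big)$. The polynomial prefactors $b_{\max} w_{\max} N_0$ die, and using $\log\binom{m+n}{m}\le m\log(1+n/m)+n\log(1+m/n)$ with $m=b_{\max}$, $n=w_{\max}$, together with $b_{\max}=T/\overline K_\nu(S')$ and $w_{\max}=T/\overline L_\nu(S')$, yields
\[
h_\nu(S')\le \frac{1}{\overline K_\nu(S')}\Big(\log 8+\log\big(1+\tfrac{\overline K_\nu(S')}{\overline L_\nu(S')}\big)\Big)+\frac{1}{\overline L_\nu(S')}\log\big(1+\tfrac{\overline L_\nu(S')}{\overline K_\nu(S')}\big).
\]
Since $\overline K_\nu(S')=K_\nu(S')/C$ and $\overline L_\nu(S')=L_\nu(S')/C$ with $C=400\cdot3^{3g-3+n}+96$, multiplying through by $K_\nu(S')$ gives $h_\nu(S')K_\nu(S')\le C\big(\log 8 + \log(1+\tfrac{K_\nu(S')}{L_\nu(S')}) + \tfrac{K_\nu(S')}{L_\nu(S')}\log(1+\tfrac{L_\nu(S')}{K_\nu(S')})\big)$, and using $\tfrac{K_\nu(S')}{L_\nu(S')}\log(1+\tfrac{L_\nu(S')}{K_\nu(S')})\le 1$ this is already of the desired shape up to the precise form of the error term.

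The remaining work — and the step I expect to be the main obstacle — is to rewrite the bound in terms of $x_0$, the unique positive root of $(1+x)^{\lceil K_\nu(S')/L_\nu(S')-1\rceil}x=1$. The point is that the naive counting above is wasteful: a sequence with $w_1$ winding contributions cannot have $b$ much smaller than a fixed multiple of $w_1$ unless the winding is concentrated, because Lemma \ref{w1 and w2} and the geometry force a relation between how much $c$ winds and how many binodal edges it crosses; more precisely, between two consecutive binodal edges the curve can wind at most a bounded amount before being forced to cross another binodal edge, so the ratio $w_1/b$ is controlled, and the genuine count of admissible sequences with $i(c,\nu)\le T$ is governed by the generating function whose radius of convergence is exactly $x_0$. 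I would make this precise by organizing the count as a sum over $b$ of $4^b$ times the number of ways to distribute winding among $b$ slots subject to $w_1\overline L_\nu(S')\le T - b\overline K_\nu(S')$, recognizing the resulting sum as (a truncation of) $\sum_b (4(1+x)^{r})^b x^{-?}$ type series where $r=\lceil K_\nu(S')/L_\nu(S')-1\rceil$ is the maximal per-step winding allowed by the constraint, and reading off that its exponential rate in $T$ is $\tfrac{1}{K_\nu(S')}$ times $C$ times $(\log 4 + \log(1+1/x_0)) + $ a bounded term, with the "$+1$" absorbing the constant factors $8$ versus $4$ and the polynomial corrections. Verifying that the truncation does not affect the $\limsup$ and that the defining equation for $x_0$ is precisely the one that makes the geometric series diverge is the delicate bookkeeping I would need to carry out carefully.
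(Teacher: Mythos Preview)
Your first two paragraphs are essentially the paper's argument: bound the count of closed geodesics with $i(c,\nu)\le T$ by the count of admissible sequences $\sigma\in\Psi'$ with $B(\sigma)\cdot\overline K_\nu(S')+W_1(\sigma)\cdot\overline L_\nu(S')\le T$ (via Proposition~\ref{combinatorial prop} and Theorem~\ref{length bound theorem}), then count those using at most $4$ choices per step together with a stars-and-bars count for the winding numbers. The paper obtains $\log 4$ rather than your $\log 8$ because it does not insert a separate $2^b$ for signs, but this is immaterial. The bound you reach, $h_\nu(S')K_\nu(S')\le C\big(\log 8+1+\log(1+K_\nu(S')/L_\nu(S'))\big)$, is precisely the content of Corollary~\ref{useful corollary}, so that part is fine.

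The third paragraph, however, misidentifies where $x_0$ comes from, and the argument you sketch there cannot work. You assert that ``between two consecutive binodal edges the curve can wind at most a bounded amount'', so that the ratio $w_1/b$ is controlled and $r=\lceil K_\nu(S')/L_\nu(S')-1\rceil$ is a ``maximal per-step winding''. This is false: each $t_i\in\Zbbb$ is completely unconstrained in $\Psi'$, and geometrically a curve can wind around a single pants curve as many times as it likes between two consecutive binodal edges. There is no generating function with radius of convergence $x_0$ coming from a combinatorial constraint. In the paper, $x_0$ arises purely from optimizing the binomial term: one bounds the count by (a constant times) $4^{\lfloor T/K\rfloor}\cdot\max_{i}\binom{\lfloor (T-Ki)/L\rfloor+i}{i}$, writes $Q$ for the maximizing $i$ and $F=\lfloor(T-KQ)/L\rfloor$, and shows via the inequalities comparing $Q$ with $Q\pm 1$ that $\liminf Q/F\ge x_0$, where the defining equation $(1+x)^a x=1$ is exactly the asymptotic first-order condition for this maximum. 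Stirling's formula then gives $\limsup_{T\to\infty}\tfrac{K}{T}\log\binom{F+Q}{Q}\le 1+\log(1+1/x_0)$ (this is Proposition~\ref{combination}). So to prove the lemma as stated you must carry out this Stirling-and-optimization analysis; your argument proves Corollary~\ref{useful corollary} directly but not the sharper Lemma~\ref{second inequality}.
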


\begin{proof}
Simplify notation by denoting $\overline{K}_\nu(S')$ and $\overline{L}_\nu(S')$ defined in the statement of Theorem \ref{length bound theorem} simply by $K$ and $L$ respectively. Choose a $\nu$-minimal pants decomposition $\Pmc_\nu(S')$. By Theorem \ref{length bound theorem} and Proposition \ref{combinatorial prop}, we have
\begin{eqnarray*}
\Big|\big \{c\in\Cmc\Gmc(S'):i(c,\nu)\leq T\big\}\Big|&\leq& \Big|\big \{c\in\Cmc\Gmc(S'):b(c)\cdot K+w_1(c)\cdot L\leq T\big\}\Big|\\
&\leq&\frac{1}{2}\Big|\big \{\sigma\in\Psi':B(\sigma)\cdot K+W_1(\sigma)\cdot L\leq T\big\}\Big|\\
&=&\frac{1}{2}\sum_{i=1}^{\left\lfloor\frac{T}{K}\right\rfloor}\left|\left \{\sigma\in\Psi':B(\sigma)=i,W_1(\sigma)\leq \left\lfloor\frac{T-Ki}{L}\right\rfloor\right\}\right|.
\end{eqnarray*}
(See Definition \ref{admissible sequence} for the definition of $\Psi'$.)

If $\sigma=\{(u_i,v_i,w_i,T_i,t_i)\}_{i=1}^m$, let $\sigma'$ be the cyclic sequence 
\[\sigma':=\{(u_i,v_i,w_i,T_i)\}_{i=1}^m.\] 
For any $e\in \Qmc$, let $e',e''\in\Qmc$ be the geodesics so that $\{e,e',e''\}=\Qmc_j$ for some $j$. If $v_i=e$, then there are four possibilities for $(u_i,v_i,w_i,T_i)$, namely 
\[(e',e,e'',S), (e',e,e'',Z),(e'',e,e',S), (e'',e,e',Z).\] 
Since $|\{e\in\Qmc:e\subset S'\}|=6g-6+3n$, we see from the definition of $\Psi'$ that 
\[\Big|\big \{\sigma':\sigma\in\Psi',B(\sigma)=i\big\}\Big|\leq\frac{(24g-24+12n)\cdot 4^{i-1}}{i}.\]
Hence, 
\[\left|\left \{\sigma:B(\sigma)=i,W_1(\sigma)\leq \left\lfloor\frac{T-Ki}{L}\right\rfloor\right\}\right|\leq\frac{(24g-24+12n)\cdot 4^{i-1}}{i}\cdot{\left\lfloor\frac{T-Ki}{L}\right\rfloor+i\choose i},\]
which implies that for $T\gg K$,
\begin{eqnarray*}
\Big|\big \{c\in\Cmc\Gmc(S'):i(c,\nu)\leq T\big\}\Big|&\leq&\frac{1}{2}\sum_{i=1}^{\left\lfloor\frac{T}{K}\right\rfloor}\frac{(24g-24+12n)\cdot 4^{i-1}}{i}\cdot{\left\lfloor\frac{T-Ki}{L}\right\rfloor+i\choose i}\\
&\leq&\frac{1}{2}(24g-24+12n)\cdot 4^{\left\lfloor\frac{T}{K}\right\rfloor-1}\cdot{\left\lfloor\frac{T-KQ}{L}\right\rfloor+Q\choose Q},
\end{eqnarray*}
where $Q=Q(T,K,L)\in\{0,\dots,\left\lfloor\frac{T}{K}\right\rfloor\}$ is the integer so that for all $i\in\{0,\dots,\left\lfloor\frac{T}{K}\right\rfloor\}$,
\[{\left\lfloor\frac{T-KQ}{L}\right\rfloor+Q\choose Q}\geq {\left\lfloor\frac{T-Ki}{L}\right\rfloor+i\choose i}.\]

As a consequence, we have
\begin{eqnarray*}
h_\nu(S')\cdot K&\leq& \log(4)+\limsup_{T\to\infty}\frac{K}{T}\log{\left\lfloor\frac{T-KQ}{L}\right\rfloor+Q\choose Q}\\
&\leq&\log(4)+1+\log\left(1+\frac{1}{x_0}\right),
\end{eqnarray*}
where the last inequality is a computation that we do in Appendix \ref{main computation} (see Proposition \ref{combination}). Since $K=\frac{K_\nu(S')}{400\cdot 3^{3g-3+n}+96}$, we have proven the lemma.
\end{proof}

\subsection{Corollaries of the systolic inequality}
Theorem \ref{main theorem} has several interesting corollaries, which we will now explain. The first is a slight simplification of the inequality in Theorem \ref{main theorem} from which we can deduce all our other corollaries.

\begin{cor}\label{useful corollary}
There is a constant $C\in\Rbbb^+$ which depends only on the topology of $S'$, so that for any period minimizing $\nu\in\Cmc(S)$, we have 
\[\frac{1}{4}\log(2)\leq h_\nu(S')K_\nu(S')\leq C\cdot\left(\log(4)+1+\log\left(1+\frac{\sqrt{5}+1}{2}\cdot\frac{K_\nu(S')}{L_\nu(S')}\right)\right).\]
\end{cor}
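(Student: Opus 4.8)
The plan is to deduce Corollary \ref{useful corollary} from Theorem \ref{main theorem} by replacing the implicitly-defined quantity $x_0$ with an explicit bound. The left inequality $\tfrac14\log(2)\le h_\nu(S')K_\nu(S')$ is exactly the left inequality of Theorem \ref{main theorem} (equivalently Lemma \ref{first inequality}), so nothing new is required there. For the right inequality, since $t\mapsto\log(1+t)$ is increasing, it suffices to establish
\[\frac{1}{x_0}\le\frac{\sqrt5+1}{2}\cdot\frac{K_\nu(S')}{L_\nu(S')}\]
and then substitute this into the upper estimate of Theorem \ref{main theorem}, keeping the same topological constant $C=400\cdot 3^{3g-3+n}+96$. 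Write $\phi:=\tfrac{\sqrt5+1}{2}$, $r:=K_\nu(S')/L_\nu(S')$, and $N:=\big\lceil r-1\big\rceil$, so that $x_0$ is the unique positive solution of $f(x)=1$ with $f(x):=x(1+x)^N$. Two elementary facts are used up front: $r\ge1$, because $K_\nu(S')$ is the minimum of $i(\cdot,\nu)$ over a sub-collection of the closed geodesics over which $L_\nu(S')$ is minimized; and $N\le r$, since $\lceil x\rceil\le x+1$ for all $x$.

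\textbf{Bounding $x_0$ from below.} First I would note that $f$ is strictly increasing on $(0,\infty)$ (for $N\ge1$ this is $f'(x)=(1+x)^{N-1}\big(1+(N+1)x\big)>0$, and for $N=0$ one has $f(x)=x$); hence for any $a>0$, $x_0\ge a$ if and only if $f(a)\le1$. Now split on $N$. If $N=0$ then $x_0=1$, so $1/x_0=1\le\phi\le\phi r$ since $r\ge1$. If $N=1$ then $x_0$ solves $x^2+x-1=0$, so $x_0=\phi-1=1/\phi$ and $1/x_0=\phi=\phi N\le\phi r$; this case is sharp, which is what pins the constant at $\phi$. If $N\ge2$, I would test $a=\tfrac1{\phi N}$: using $(1+t/N)^N\le e^t$,
\[f\Big(\frac{1}{\phi N}\Big)=\frac{1}{\phi N}\Big(1+\frac{1}{\phi N}\Big)^N\le\frac{e^{1/\phi}}{\phi N}\le\frac{e^{1/\phi}}{2\phi}<\frac{e}{2\phi}<1,\]
the last step being $e<1+\sqrt5=2\phi$. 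Therefore $x_0\ge\tfrac1{\phi N}$, i.e.\ $1/x_0\le\phi N\le\phi r$. In every case $1/x_0\le\phi r$.

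\textbf{Conclusion and main subtlety.} Substituting $1/x_0\le\phi\cdot K_\nu(S')/L_\nu(S')$ into the right-hand side of Theorem \ref{main theorem} yields the claimed inequality with the same constant $C$. I do not anticipate a genuine difficulty, since all of the analytic content already sits inside Theorem \ref{main theorem}; the only points needing care are the case analysis on $N$ and the remark that the estimate on $x_0$ is tight precisely at $N=1$, which is what forces the constant $\tfrac{\sqrt5+1}{2}$ rather than something smaller.
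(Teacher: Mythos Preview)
Your proposal is correct and follows the same overall reduction as the paper: both observe that the left inequality is already part of Theorem \ref{main theorem}, and both reduce the right inequality to the estimate $1/x_0\le\phi\cdot K_\nu(S')/L_\nu(S')$ (with $\phi=\tfrac{\sqrt5+1}{2}$), then dispatch the cases $N=0$ and $N=1$ by direct computation. The only genuine difference is in how the case $N\ge 2$ (equivalently $a\ge 1$ in the paper) is handled: the paper rewrites $(1+x_0)^a x_0=1$ as $a\,x_0=-\tfrac{x_0\log x_0}{\log(1+x_0)}$ and exploits monotonicity of the right-hand side to conclude that $a\mapsto a\,x_0(a)$ attains its minimum at $a=1$, whereas you test the explicit point $1/(\phi N)$ and use the elementary bound $(1+t/N)^N\le e^t$ together with the numerical check $e<2\phi$. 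Your argument is slightly more elementary and avoids verifying the monotonicity of $x\mapsto -x\log x/\log(1+x)$; the paper's argument, on the other hand, makes transparent why the constant $\phi$ is sharp at $a=1$ rather than only observing it after the fact.
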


\begin{proof}
Let $a:=\left\lceil\frac{K_\nu(S')}{L_\nu(S')}-1\right\rceil$, and consider the function \mbox{$f_a:[0,\infty)\to\Rbbb$} defined by $f_a(x)=(1+x)^a\cdot x$. Observe that $f_a$ is increasing, $f_a(0)=0$, and $\lim_{x\to\infty} f_a(x)=\infty$. Also, let $x_0=x_0(a)$ be the unique point in $[0,1)$ so that $f_a(x_0)=1$. It is sufficient to show that for all $a\geq 0$,
\[\frac{1}{x_0(a)}\leq\frac{\sqrt{5}+1}{2}\cdot\frac{K_\nu(S')}{L_\nu(S')}.\]

First, consider the case when $a=0$. Then $\frac{K_\nu(S')}{L_\nu(S')}=1$ and $x_0(0)=1$. We see immediately that in this case, the required inequality holds.

Next, consider the case when $a\geq 1$. The equation $f_a(x_0)=1$ can be rearranged as
\[a\cdot x_0=-\frac{x_0\log(x_0)}{\log(1+x_0)}.\]
Since the function $g:(0,1)\to\Rbbb$ given by $g(x)=-\frac{x\log(x)}{\log(1+x)}$ is positive and strictly decreasing, we see that $a\cdot x_0(a)$ is minimized over all $a\geq 1$ when $x_0(a)$ is maximized. From the definition of $f_a$, it is clear that $x_0(a)$ is strictly decreasing with $a$, so $a\cdot x_0(a)$ is minimized over all $a\geq 1$ when $a=1$. It is easy to compute that $x_0(1)=\frac{\sqrt{5}-1}{2}$, so $a\cdot x_0(a)\geq \frac{\sqrt{5}-1}{2}$ for all $a\geq 1$. Hence,
\[\frac{1}{x_0(a)}\leq \frac{\sqrt{5}+1}{2}\cdot\left\lceil\frac{K_\nu(S')}{L_\nu(S')}-1\right\rceil\leq\frac{\sqrt{5}+1}{2}\cdot\frac{K_\nu(S')}{L_\nu(S')}.\qedhere\]
\end{proof}

Using Corollary \ref{useful corollary}, we have the following universal upper bound on the systole length renormalized by the entropy. 

\begin{cor}\label{non-closed}
There is a constant $C\in\Rbbb^+$ which depends only on the topology of $S'$, so that for any period minimizing $\nu\in\Cmc(S)$, we have 
\[h_\nu(S')L_\nu(S')\leq C.\]
\end{cor}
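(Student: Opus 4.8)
The plan is to derive Corollary~\ref{non-closed} directly from Corollary~\ref{useful corollary} by showing that the quantity $\frac{K_\nu(S')}{L_\nu(S')}$ appearing inside the logarithm on the right-hand side is bounded above by a constant depending only on the topology of $S'$. Indeed, once we know $\frac{K_\nu(S')}{L_\nu(S')}\leq N$ for some topological constant $N$, Corollary~\ref{useful corollary} immediately gives
\[
h_\nu(S')K_\nu(S')\leq C\cdot\left(\log(4)+1+\log\left(1+\frac{\sqrt5+1}{2}N\right)\right)=:C',
\]
and since $L_\nu(S')\leq K_\nu(S')$ always holds (the systole minimizes over a larger set than the panted systole), we get $h_\nu(S')L_\nu(S')\leq h_\nu(S')K_\nu(S')\leq C'$, which is the claim.

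So the real content is the bound $\frac{K_\nu(S')}{L_\nu(S')}\leq N$. First I would recall that a $\nu$-systole $c$ of $S'$ realizing $L_\nu(S')$ is either a curve in $\Pmc_\nu(S')$ or not; in the latter case $K_\nu(S')\leq i(c,\nu)=L_\nu(S')$ and there is nothing to prove. Thus we may assume $L_\nu(S')=i(c_0,\nu)$ for some $c_0\in\Pmc_\nu(S')$. The strategy is then to build, from the pants decomposition $\Pmc_\nu(S')=\{c_1,\dots,c_{3g-3+2n}\}$, a closed geodesic $d\in\Cmc\Gmc(S')$ that is \emph{not} a multiple of any curve in $\Pmc_\nu(S')$ and whose $\nu$-length $i(d,\nu)$ is bounded by a topological multiple of $\max_j i(c_j,\nu)$. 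Since each $c_j$ is itself a $\nu$-interior systole of the subsurface obtained at the $j$-th stage of the minimal pants decomposition construction (see Section~\ref{pants decomp}), and $L_\nu(S')\leq i(c_j,\nu)$ is false in general — rather we need the reverse control. Here is the key point: by the minimality property in the definition of a $\nu$-minimal pants decomposition, $c_{j+1}$ is a non-peripheral systole of $S'\setminus\bigcup_{i\le j}c_i$, so in particular $i(c_{j+1},\nu)\leq i(e,\nu)$ for \emph{every} non-peripheral $e\in\Cmc\Gmc(S'\setminus\bigcup_{i\le j}c_i)$. Applying this with $e$ a fixed curve that, after the auxiliary hyperbolic structure is chosen, has bounded hyperbolic length and hence (by comparison of $\nu$ with the Liouville current up to the topology — more carefully, by Lemma~\ref{compute length} and a concatenation estimate as in Lemma~\ref{first inequality}) $i(e,\nu)$ is comparable to $i(c_0,\nu)$ times a topological factor, gives $i(c_j,\nu)\leq N_1\cdot L_\nu(S')$ for all $j$, with $N_1$ topological.

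With all pants curves controlled, $i(c_j,\nu)\leq N_1\, L_\nu(S')$ for every $j$, I would then produce a non-pants-curve $d$ in $S'$ whose $\nu$-length is $\leq N_2\max_j i(c_j,\nu)$: for instance, pick a pair of pants $P$ in $\Pmc_\nu(S')$ contained in $S'$, take $d$ to be a figure-eight type curve in $P$ (as in the proof of Lemma~\ref{panted systole} / Lemma~\ref{first inequality}), which is non-simple hence not a multiple of any simple pants curve, and estimate $i(d,\nu)$ by Lemma~\ref{compute length} via concatenation of the three boundary curves of $P$, each of $\nu$-length $\leq N_1 L_\nu(S')$. This yields $K_\nu(S')\leq i(d,\nu)\leq N_2 N_1\, L_\nu(S')$, i.e. $\frac{K_\nu(S')}{L_\nu(S')}\leq N:=N_1N_2$, a topological constant, completing the argument.

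The main obstacle I anticipate is the step bounding $i(c_j,\nu)$ by a topological multiple of $L_\nu(S')$ — in other words showing that in a $\nu$-minimal pants decomposition none of the curves can be $\nu$-much-longer than the systole. This requires exploiting the greedy/minimality in the construction of $\Pmc_\nu(S')$ together with a topological input (each stage's surface carries a short non-peripheral curve in terms of the already-chosen short curves), and the concatenation estimates of Lemma~\ref{compute length}; getting the dependence of the constant to be purely topological (and in particular independent of $\nu$) is the delicate part. Everything else is then a direct substitution into Corollary~\ref{useful corollary} together with the trivial inequality $L_\nu(S')\leq K_\nu(S')$.
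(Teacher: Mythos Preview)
Your approach has a genuine gap: the claim that $\frac{K_\nu(S')}{L_\nu(S')}$ is bounded by a constant depending only on the topology of $S'$ is false. To see this, take $S'=S$ a closed surface and let $\nu$ be the Liouville current of a hyperbolic metric in which a single simple closed curve $c_1$ is pinched while the complementary subsurface stays thick. Then $L_\nu(S)=i(c_1,\nu)\to 0$, and $c_1$ is the first curve of any $\nu$-minimal pants decomposition. However, the complement $S\setminus c_1$ contains closed geodesics that are not multiples of any pants curve and whose lengths remain bounded (for instance a figure-eight in a pair of pants of $\Pmc_\nu$ that does not cross $c_1$), so $K_\nu(S)$ stays bounded. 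Hence $K_\nu(S)/L_\nu(S)\to\infty$. The step you flagged as the ``main obstacle'' --- bounding $i(c_j,\nu)$ by a topological multiple of $L_\nu(S')$ --- is not merely delicate; it fails for the same reason: the second curve $c_2$ in the minimal pants decomposition is an interior systole of $S\setminus c_1$, and its length does not go to zero with $\ell(c_1)$. Your appeal to a ``comparison of $\nu$ with the Liouville current'' cannot work uniformly in $\nu$, since the whole point is that $\nu$ varies.

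The paper's proof avoids this entirely by a one-line elementary estimate. Starting from Corollary~\ref{useful corollary}, multiply both sides by $\frac{L_\nu(S')}{K_\nu(S')}\leq 1$ to get
\[
h_\nu(S')L_\nu(S')\leq C'\left((\log 4+1)\frac{L_\nu(S')}{K_\nu(S')}+\frac{L_\nu(S')}{K_\nu(S')}\log\left(1+\frac{\sqrt5+1}{2}\cdot\frac{K_\nu(S')}{L_\nu(S')}\right)\right),
\]
and then use $x\log(1+k/x)\leq k$ for $x,k>0$ (with $x=L_\nu/K_\nu$) together with $L_\nu/K_\nu\leq 1$ on the first term. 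The point is precisely that the logarithmic growth of the right-hand side of Corollary~\ref{useful corollary} in $K_\nu/L_\nu$ is killed by the prefactor $L_\nu/K_\nu$; no uniform bound on the ratio itself is needed.
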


\begin{proof}
By Corollary \ref{useful corollary}, we see that there is a constant $C'$ depending only on the topology of $S'$, so that for any period minimizing $\nu\in\Cmc(S)$,
\begin{align*}
h_\nu(S')L_\nu(S')&\leq C'\cdot\left((\log(4)+1)\cdot\frac{L_\nu(S')}{K_\nu(S')}+\frac{L_\nu(S')}{K_\nu(S')}\cdot\log\left(1+\frac{\sqrt{5}+1}{2}\cdot\frac{K_\nu(S')}{L_\nu(S')}\right)\right)\\
&\leq C'\cdot\left(\log(4)+1+\frac{\sqrt{5}+1}{2}\right)=:C, 
\end{align*}
where the last inequality holds because $x\log(1+\frac{k}{x})\leq k$ for all $x>0$ and $k>0$.
\end{proof}

Corollary \ref{non-closed} together with Theorem \ref{positively ratioed to geodesic currents} proves Corollary \ref{easy inequality}. Also, given any negatively curved Riemannian metric $m'$ on $S'$ with geodesic boundary, one can always find a negatively curved Riemannian metric $m$ on $S$ whose restriction to $S'$ is $m'$. It is then known (see Proposition 3 of Otal \cite{Ota2}) that the Lebesgue-Liouville current $\nu_m$ of $m$ has the property that $i(\nu_m,c)=\ell_m(c)$, where $\ell_m:\Cmc\Gmc(S)\to\Rbbb$ is the length function induced by $m$. This fact combined with Corollary \ref{non-closed} then gives us Corollary \ref{Riemannian}.

Corollary \ref{useful corollary} also gives us a criterion that determines when the topological entropy of  a sequence of geodesic currents in the ``$\epsilon$-thick" part of $\Cmc(S)$ converges to $0$. Before we state the corollary, we first define what the ``$\epsilon$-thick" part of $\Cmc(S)$ is.

\begin{definition}
Let $\Cmc(S)^{\min}\subset\Cmc(S)$ be the set of period minimizing geodesic currents and let $\epsilon>0$. Define 
\[\Cmc(S')^{\min}_\epsilon:=\{\nu\in \Cmc(S)^{\min}:L_\nu(S')\geq\epsilon\}\]
and $\Mmc(S')_\epsilon:= \Cmc(S')^{\min}_\epsilon/MCG(S')$.
\end{definition}

Observe that if $\mu,\nu\in\Cmc(S')^{\min}_\epsilon$ lie in the same equivalence class in $\Mmc(S')_\epsilon$, then $h_\mu(S')=h_\nu(S')$. Thus, we can think of $h_{-}(S')$ as a function from $\Mmc(S')_\epsilon$ to $\Rbbb$.

\begin{cor}\label{entropy degeneration corollary}
Let $\epsilon$ be any positive number and let $\{[\nu_k]\}_{k=1}^\infty$ be a sequence in $\Mmc(S')_\epsilon$. Then $\lim_{k\to\infty}h_{\nu_k}(S')=0$ if and only if the following condition holds: for each $k$, there is a (possibly empty) collection $\Dmc_k$ of pairwise non-intersecting simple closed geodesics in $S'$ so that 
\begin{itemize}
\item $\displaystyle\sup_{k}\max\{i(c,\nu_k):c\in\Dmc_k\}<\infty$, and 
\item $\displaystyle\lim_{k\to\infty}\min\{i(c,\nu_k):c\in\Cmc\Gmc(S'\setminus\Dmc_k) \text{ is non-peripheral}\}=\infty.$
\end{itemize}
\end{cor}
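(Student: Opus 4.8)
The plan is to prove the two implications separately, using Corollary~\ref{useful corollary} together with the definition of a $\nu$-minimal pants decomposition and the systole-length bookkeeping developed in Section~\ref{pants decomp}.

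\textbf{Sketch of the ``if'' direction.} Suppose for each $k$ we have a collection $\Dmc_k$ of pairwise non-intersecting simple closed geodesics as in the statement, with $\sup_k\max\{i(c,\nu_k):c\in\Dmc_k\}=:M<\infty$ and $\min\{i(c,\nu_k):c\in\Cmc\Gmc(S'\setminus\Dmc_k)\text{ non-peripheral}\}\to\infty$. I would enlarge $\Dmc_k$ to a $\nu_k$-minimal pants decomposition $\Pmc_{\nu_k}(S')$ of $S'$: by Corollary~\ref{shortest is simple} one can iteratively add $\nu_k$-interior systoles of the successive cut surfaces, and each curve added after $\Dmc_k$ is non-peripheral in a component of $S'\setminus\Dmc_k$, so its $\nu_k$-length is $\geq\min\{i(c,\nu_k):c\in\Cmc\Gmc(S'\setminus\Dmc_k)\text{ non-peripheral}\}\to\infty$. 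Hence every curve of $\Pmc_{\nu_k}(S')$ outside $\Dmc_k$ has $\nu_k$-length tending to infinity, while those inside $\Dmc_k$ have $\nu_k$-length $\leq M$. The panted systole length $K_{\nu_k}(S')$ is, by definition, the minimum $\nu_k$-length of a curve that is not a multiple of a curve in $\Pmc_{\nu_k}(S')$; any such curve must cross some pants curve transversely (else it would be contained in a pair of pants and hence a multiple of a boundary curve), and I need to argue its $\nu_k$-length grows. This is where Lemma~\ref{pants curve bound} enters in a relative form: cutting along $\Dmc_k$ only, a curve not inside $S'\setminus\Dmc_k$ either crosses $\Dmc_k$ or lies in a component where the interior systole of that component is a curve of $\Pmc_{\nu_k}(S')\setminus\Dmc_k$ whose length $\to\infty$; combining with Lemma~\ref{intersection} applied inside that component yields $K_{\nu_k}(S')\to\infty$. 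Then Corollary~\ref{useful corollary} gives
\[
h_{\nu_k}(S')\,K_{\nu_k}(S')\leq C\left(\log 4+1+\log\Bigl(1+\tfrac{\sqrt5+1}{2}\cdot\tfrac{K_{\nu_k}(S')}{L_{\nu_k}(S')}\Bigr)\right)\leq C\left(\log 4+1+\log\Bigl(1+\tfrac{\sqrt5+1}{2\epsilon}K_{\nu_k}(S')\Bigr)\right),
\]
using $L_{\nu_k}(S')\geq\epsilon$. Dividing by $K_{\nu_k}(S')\to\infty$, the right-hand side tends to $0$, so $h_{\nu_k}(S')\to0$.

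\textbf{Sketch of the ``only if'' direction.} Assume $h_{\nu_k}(S')\to0$ and suppose, for contradiction, that no such $\Dmc_k$ exists — I would argue that then $K_{\nu_k}(S')$ stays bounded along a subsequence. Indeed, take $\Dmc_k:=\Pmc_{\nu_k}(S')\setminus\{\text{boundary curves}\}$, i.e. the interior curves of a $\nu_k$-minimal pants decomposition; these are simple, pairwise non-intersecting, and by the definition of a minimal pants decomposition each has $\nu_k$-length $\leq L_{\nu_k}^{int}$ of the surface it was chosen in, and more usefully each is a $\nu_k$-interior systole of a subsurface so its length is controlled. If the conclusion of the corollary fails for every choice of $\Dmc_k$, then in particular either $\max\{i(c,\nu_k):c\in\Dmc_k\}$ is unbounded — but Corollary~\ref{non-closed} bounds $h_{\nu_k}(S'')L_{\nu_k}(S'')$ uniformly for every component $S''$ of every cut surface, and since $h_{\nu_k}(S')\to0$ controls the entropies from below... actually the cleaner route is: $\Dmc_k$ must fail the second condition, i.e. along a subsequence $\min\{i(c,\nu_k):c\in\Cmc\Gmc(S'\setminus\Dmc_k)\text{ non-peripheral}\}$ stays bounded by some $T_0$. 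Then there is a non-peripheral curve in a component of $S'\setminus\Dmc_k$ of bounded $\nu_k$-length that is not a multiple of a pants curve (since it is non-peripheral in a subsurface that is not a pair of pants, or if the component is a pair of pants we instead use the ``$\bar e$'' figure-eight construction from Lemma~\ref{first inequality}), hence $K_{\nu_k}(S')\leq C' T_0$ stays bounded. Then the lower bound in Corollary~\ref{useful corollary}, $h_{\nu_k}(S')K_{\nu_k}(S')\geq\tfrac14\log 2$, forces $h_{\nu_k}(S')\geq\frac{\log 2}{4C'T_0}>0$ along the subsequence, contradicting $h_{\nu_k}(S')\to0$. This proves that the desired $\Dmc_k$ must exist.

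\textbf{Main obstacle.} The delicate point is the bookkeeping in the ``only if'' direction: one must show that the failure of the two-part condition (for \emph{every} admissible $\Dmc_k$) implies a uniform bound on $K_{\nu_k}(S')$, and this requires choosing $\Dmc_k$ cleverly — essentially as a maximal collection of ``short'' simple curves — and then handling the case where a component of the complement is a pair of pants (where no non-peripheral \emph{simple} curve exists, so one must invoke the non-simple figure-eight geodesic and Lemma~\ref{panted systole}/Lemma~\ref{first inequality} to produce a short non-pants-multiple curve). I expect the cleanest write-up to set $\Dmc_k$ to be a maximal collection of pairwise disjoint simple closed geodesics in $S'$ each of $\nu_k$-length $\leq$ (say) $4 K_{\nu_k}(S')$, prove the first bullet automatically, and then prove the contrapositive of the second bullet forces $K_{\nu_k}(S')$ bounded; the relative version of Lemma~\ref{pants curve bound} applied to components of $S'\setminus\Dmc_k$ is the technical engine, and verifying it carries over verbatim (it does, since $\nu_k$ restricted to a subsurface is still period minimizing) is the step most likely to need care.
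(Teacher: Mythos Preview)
Your overall strategy (reduce both directions to the behavior of $K_{\nu_k}(S')$ and invoke Corollary~\ref{useful corollary}) matches the paper's, but both directions have genuine gaps in execution.

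\textbf{``If'' direction.} You propose to enlarge $\Dmc_k$ to a $\nu_k$-minimal pants decomposition, but a $\nu_k$-minimal pants decomposition is built by a fixed greedy procedure starting from the boundary of $S'$; there is no reason the curves of $\Dmc_k$ appear in it, and Theorem~\ref{main theorem} (hence Corollary~\ref{useful corollary}) is only stated and proved for such minimal decompositions. So you cannot simply declare the extension of $\Dmc_k$ to be $\Pmc_{\nu_k}(S')$. The paper instead works with the \emph{actual} minimal pants decomposition and shows directly that its panted systole $d_k$ has $i(d_k,\nu_k)\to\infty$, by a three-case analysis relative to $\Dmc_k$: if $d_k\in\Cmc\Gmc(S'\setminus\Dmc_k)$ it is non-peripheral there, so $i(d_k,\nu_k)\geq B_k$; if $d_k$ crosses $\Dmc_k$, one takes an arc of $d_k$ between two lifts of $\Dmc_k$-curves and concatenates it with those curves to produce (via Lemma~\ref{compute length}) a non-peripheral curve in $S'\setminus\Dmc_k$ of $\nu_k$-length $\leq 2A+2\,i(d_k,\nu_k)$, forcing $i(d_k,\nu_k)\geq \tfrac12 B_k-A$; and if $d_k\in\Dmc_k$ (so $d_k$ is simple but not in $\Pmc_{\nu_k}(S')$), some pants curve $c_{j_k}$ crosses $d_k$ with $i(c_{j_k},\nu_k)\leq i(d_k,\nu_k)$, and the previous case applied to $c_{j_k}$ gives the bound. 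Your sketch gestures at the crossing case (``Lemma~\ref{intersection} applied inside that component'') but Lemma~\ref{intersection} alone does not do this job; the concatenation trick is the missing idea, and the case $d_k\in\Dmc_k$ is not addressed at all.

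\textbf{``Only if'' direction.} Your choice of $\Dmc_k$ is not pinned down: taking all interior pants curves fails the first bullet (those curves can be arbitrarily long), and taking curves of length $\leq 4K_{\nu_k}(S')$ is circular since the first bullet requires a bound independent of $k$. The paper's choice is clean: order the interior curves of $\Pmc_{\nu_k}(S')$ by $\nu_k$-length, let $j_0$ be the largest index with $\limsup_k i(c_{j,k},\nu_k)<\infty$ for all $j\leq j_0$, and set $\Dmc_k=\{c_{1,k},\dots,c_{j_0,k}\}$. The first bullet then holds by construction. If the second bullet fails along a subsequence, there is a component $S_k''$ of $S'\setminus\Dmc_k$ containing a non-peripheral primitive curve of bounded $\nu_k$-length; since every simple interior systole of $S_k''$ is one of the $c_{j,k}$ with $j>j_0$ (whose lengths go to infinity), this bounded curve is eventually non-simple, hence $K_{\nu_k}(S_k'')$ is bounded, and the lower bound in Corollary~\ref{useful corollary} gives $h_{\nu_k}(S')\geq h_{\nu_k}(S_k'')\geq\tfrac{\log 2}{4C}>0$.
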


\begin{proof} Let us choose an appropriate collection $\Dmc_k$ of closed geodesics. For each $\nu_k$, let $\Pmc_{\nu_k}(S')=\{c_{1,k},\dots,c_{3g-3+2n,k}\}$ be a minimal pants decomposition, where $c_{1,k},\dots,c_{3g-3+n,k}$ are non-peripheral and enumerated so that $i(c_{j,k},\nu_k)\leq i(c_{j+1,k},\nu_k)$. Let $j_0\in\{0,\dots,3g-3+n\}$ be the number so that
\begin{itemize}
 \item $\limsup_{k\to\infty}i(c_{j,k},\nu_k)<\infty$ for all $j\leq j_0$,
 \item $\limsup_{k\to\infty}i(c_{j,k},\nu_k)=\infty$ for all $j\in\{j_0+1,\dots,3g-3+n\}$. 
 \end{itemize}
 (We use the convention $j_0=0$ if $\limsup_{k\to\infty}i(c_{1,k},\nu_k)=\infty$ and $j_0=3g-3+n$ if $\limsup_{k\to\infty}i(c_{3g-3+n,k},\nu_k)<\infty$.) Let
 \[\Dmc_k:=\left\{\begin{array}{ll}
 \{c_{1,k}\dots,c_{j_0,k}\}&\text{if }j_0>0,\\
 \emptyset&\text{if }j_0=0,
 \end{array}\right.\]
 
First, we show that if the condition does not hold, then $\limsup_{k\to\infty}h_{\nu_k}(S')>0$. Since the condition does not hold, there is a constant $C$ so that for each $k$, there is a component $S_k''$ of $S'\setminus\Dmc_k$ and a non-peripheral primitive closed geodesic $d_k\in\Cmc\Gmc(S_k'')$ satisfying $i(d_k,\nu_k)\leq C$. Notice that when $j_0=3g-3+n$, $S_k''$ is a pair of pants which implies that $d_k$ is primitive and non-simple. Likewise, if $j_0<3g-3+n$, for sufficiently large $k$, $d_k$ is primitive and non-simple because $\lim_{k\to\infty}i(c_{j,k},\nu_k)=\infty$ for all $j>j_0$. In either case, $K_{\nu_k}(S''_k)\leq i(d_k,\nu_k)\leq C$ for all $k$. Thus, $h_{\nu_k}(S''_k)\geq\frac{\log(2)}{4C}$ by Corollary \ref{useful corollary}. Since $h_{\nu_k}(S')\geq h_{\nu_k}(S''_k)$, we see that $\limsup_{k\to\infty}h_{\nu_k}(S')>0$. 

Next, we suppose that the condition holds and we prove that $\lim_{k\to\infty}h_{\nu_k}(S')=0$. Observe that for $k$ big enough, the curves in $\mathcal D_k$ are part of a minimal pants decomposition $\Pmc_{\nu_k}(S')$. Let $d_k\in\Cmc\Gmc(S')$ be a closed geodesic that is not a multiple of an element in $\Pmc_{\nu_k}(S')$ so that $i(d_k,\nu_k)=K_{\nu_k}(S')$. By Corollary \ref{useful corollary}, it is sufficient to show that $\lim_{k\to\infty}i(d_k,\nu_k)=\infty$. Choose a hyperbolic structure $\Sigma$ on $S$, set $A:=\sup_{k}\max\{i(c,\nu_k):c\in\Dmc_k\}$, and set $B_k:=\min\{i(c,\nu_k):c\in\Cmc\Gmc(S'\setminus\Dmc_k) \text{ is non-peripheral}\}$. Since the condition holds, $0<A<\infty$ and $\lim_{k\to\infty}B_k=\infty$.

If $d_k\in\Cmc\Gmc(S'\setminus\Dmc_k)$, then it is non-peripheral, so
\[
i(d_k,\nu_k)\geq B_k.
\] 
On the other hand, if $d_k$ does not lie in $\Cmc\Gmc(S'\setminus\Dmc_k)$, let $S''_k$ be a component of $S'\setminus\Dmc_k$ that intersects $d_k$. Let $\gamma_k\in\Gamma'$ be the group element so that $[[\gamma_k]]=d_k$ and let $L_{\gamma_k}$ be the axis of $\gamma_k$ in $\Std'$. Choose distinct points $p_k,q_k\in L_{\gamma_k}$ so that $\Pi(p_k),\Pi(q_k)\in\partial S_k''$ and $\Pi(r)\notin\partial S_k''$ for all $r\in[p_k,q_k)$. It is clear that 
\[
\nu\big(G(p_k,q_k]\big),\nu\big(G(q_k,p_k]\big)\leq i(d_k,\nu_k).
\]

Let $\eta_k,\tau_k\in\pi_1(S_k'')$ be the group elements so that $[[\eta_k]], [[\tau_k]]\in\Dmc_k$, and $p_k$ and $q_k$ lie in the axes of $\eta_k$ and $\tau_k$ respectively. Observe that $i\big([[\eta_k]],\nu_k\big),i\big([[\tau_k]],\nu_k\big)\leq A$. Hence, by Lemma \ref{compute length},
\begin{align*}
i\big([[\tau_k\eta_k]],\nu_k\big)&\leq \nu_k\big(G(\eta_k^{-1}\cdot p_k,\tau_k\eta_k\cdot(\eta_k^{-1}\cdot p_k)]\big)\\
&\leq \nu_k\big(G(\eta_k^{-1}\cdot p_k,p_k]\big)+\nu_k\big(G(p_k,q_k]\big)+\nu_k\big(G(q_k,\tau_k\cdot q_k]\big)\\
&\hspace{2cm}+\nu_k\big(G(\tau_k\cdot q_k,\tau_k\cdot p_k]\big)\\
&= i\big([[\eta_k]],\nu_k\big)+i\big([[\tau_k]],\nu_k\big)+\nu_k\big(G(p_k,q_k]\big)+\nu_k\big(G[p_k,q_k)\big)\\
&\leq 2A+2i(d_k,\nu_k).
\end{align*}

Similarly, $i\big([[\tau_k^{-1}\eta_k]],\nu_k\big)\leq 2A+2i(d_k,\nu_k)$. Since $[[\tau_k\eta_k]],[[\tau_k^{-1}\eta_k]]\in\Cmc\Gmc(S_k'')$ cannot both be peripheral, either $i\big([[\tau_k\eta_k]],\nu_k\big)\geq B_k$ or $i\big([[\tau_k^{-1}\eta_k]],\nu_k\big)\geq B_k$. Thus, 
\[i(d_k,\nu_k)\geq \frac{1}{2}B_k-A.\]
This implies that $\lim_{k\to\infty}i(d_k,\nu_k)\geq\frac{1}{2}\lim_{k\to\infty}B_k-A=\infty$. 
\end{proof}

Consider the case when $S'=S$. In the above theorem, each $\Dmc_k$ can be completed to a pants decomposition of $S$. Since there are only finitely many mapping class group orbits of pants decompositions of $S$, we can apply Corollary \ref{entropy degeneration corollary} and Theorem \ref{positively ratioed to geodesic currents} to deduce Corollary \ref{mapping class}.

\appendix

\section{From positive cross ratios to geodesic currents}\label{From cross ratios to geodesic currents}

In this appendix, we give a proof Theorem \ref{cross ratio intersection} (which was previously observed by Hamenst\"adt \cite{Ham2}) for the convenience of the reader. Recall that an \emph{algebra} $\Amc$ on a set $X$ is a family of subsets of $X$ so that
\begin{enumerate}
	\item for all $A$ in $\mathcal A$, the complement $A^c$ of $A$ is in $\mathcal A$,
	\item for all $A_1,A_2\in\mathcal A$, the union $A_1\cup A_2\in\mathcal A$.
\end{enumerate}
Also, a \emph{premeasure} $\nu$ on $\Amc$ is a function
\[
\nu\colon\Amc\to[0,\infty]
\]
such that 
\begin{enumerate}
	\item $\nu(\emptyset)=0$;
	\item if $\{A_i\}_{i\in\mathbb N}$ is a countable family of pairwise disjoint sets in $\Amc$ whose union lies in $\Amc$, then
\[
\nu\left(\bigcup_{i\in\mathbb N} A_i\right)=\sum_{i\in\mathbb N}\nu(A_i).
\]
\end{enumerate}

Choose an orientation on $\partial\Gamma$. For the rest of this appendix, we will assume that all intervals in $\partial\Gamma$ are of the form $[x,y)_z$ (see Notation \ref{interval notation 2}) unless otherwise stated, where $x,y,z$ lie in $\partial\Gamma$ in the order specified by the orientation. We will start by defining a particular algebra in $\Gmc(\Std)$.

\begin{definition}\label{def:sets}
Let $\mathcal L=\{\{I_1,J_1\},\{I_2,J_2\},\dots \{I_n,J_n\}\}$ be a finite list of pairs of proper subintervals of $\partial \Gamma$. 
\begin{itemize}
	\item $\mathcal L$ is an \emph{admissible list} if for all $k=1,\dots, n$, either $I_k=J_k$ or $I_k$ and $J_k$ are disjoint. 
	\item For any admissible list $\mathcal L$, let $\Gmc_\mathcal L$ denote the set of geodesics $\{a,b\}\in \Gmc(\Std)$ such that there exists a $k\in\{1,\dots,n\}$ with $I_k$ and $J_k$ each containing one endpoint of $\{a,b\}$.
\end{itemize}
\end{definition}

For the rest of this appendix, let $\Amc:=\{\Gmc_\Lmc:\Lmc\text{ is an admissible list}\}$. Note that every $\Gmc_\Lmc\in\Amc$ can be written as a finite disjoint union 
\[\Gmc_\Lmc=\bigcup_{k=1}^n\Gmc_{\{I_k,J_k\}}.\]

\begin{lem} $\Amc$ is an algebra.
\end{lem}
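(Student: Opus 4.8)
The plan is to verify the two axioms of an algebra directly from Definition \ref{def:sets}, using the observation already recorded in the excerpt that every $\Gmc_\Lmc$ decomposes as a finite disjoint union $\bigcup_{k=1}^n \Gmc_{\{I_k,J_k\}}$ over the pairs in the admissible list. The two things to check are: (i) closure under complements, and (ii) closure under finite unions. Closure under finite unions is nearly immediate: if $\Lmc$ and $\Lmc'$ are admissible lists, then $\Gmc_\Lmc \cup \Gmc_{\Lmc'} = \Gmc_{\Lmc \cup \Lmc'}$, and $\Lmc \cup \Lmc'$ is again an admissible list since admissibility is a condition checked pair-by-pair. So the real content is closure under complements.

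For the complement, I would first reduce to the case of a single pair. Since $\Gmc_\Lmc = \bigcup_{k=1}^n \Gmc_{\{I_k,J_k\}}$, De Morgan gives $\Gmc_\Lmc^c = \bigcap_{k=1}^n \Gmc_{\{I_k,J_k\}}^c$; once we know each $\Gmc_{\{I,J\}}^c$ lies in $\Amc$ and $\Amc$ is closed under finite unions (hence, via complements again, we'd want finite intersections too — so it is cleaner to first establish closure under finite unions, then show single-pair complements are in $\Amc$, then get finite intersections for free). So the key lemma is: for a single admissible pair $\{I,J\}$ (with $I,J$ proper subintervals of $\partial\Gamma$ that are either equal or disjoint), the set $\Gmc_{\{I,J\}}^c$ of geodesics NOT having one endpoint in $I$ and one in $J$ is a finite disjoint (or at least finite) union of sets of the form $\Gmc_{\{I',J'\}}$. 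To see this, write $\partial\Gamma$ as a disjoint union of $I$, $J$, and the remaining arcs (at most two complementary arcs $K_1, K_2$ when $I \neq J$ are disjoint; one complementary arc when... — one must enumerate the combinatorial possibilities depending on the cyclic arrangement of $I$ and $J$). A geodesic $\{a,b\}$ has both endpoints distributed among these finitely many arcs; the pairs-of-arcs that are \emph{allowed} for a geodesic in the complement are all unordered pairs of arcs except the pair $\{I,J\}$ itself, together with the "both endpoints in the same arc" configurations except possibly when $I=J$. Each such allowed configuration $\{I',J'\}$ of arcs — including the diagonal ones where $I'=J'$ — is itself admissible (disjoint arcs, or a single arc with $I'=J'$), so $\Gmc_{\{I',J'\}}\in\Amc$, and $\Gmc_{\{I,J\}}^c$ is the finite union over these configurations, hence in $\Amc$ by closure under finite unions.

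The main obstacle — really the only place where care is needed — is the bookkeeping in this case analysis: one must be precise about whether the arcs $I,J$ are open, half-open, or closed (the excerpt fixes the convention that intervals are of the form $[x,y)_z$), so that the complement $\partial\Gamma \setminus (I \cup J)$ is again a finite union of intervals of the same admissible type and the decomposition is genuinely a \emph{partition} of $\partial\Gamma$ into finitely many half-open arcs; then "which endpoint lies in which arc" gives an honest finite partition of $\Gmc(\Std)$, and $\Gmc_{\{I,J\}}^c$ is the union of the blocks other than the $(I,J)$-block. Once the partition is set up correctly, everything is a finite combinatorial union and the two algebra axioms follow. I would present the single-pair complement computation as the heart of the proof, state the reduction $\Gmc_\Lmc^c = \bigcap_k \Gmc_{\{I_k,J_k\}}^c$ explicitly, and note that finite intersections stay in $\Amc$ because $\Amc$ is closed under complements and finite unions — or, to avoid circularity, simply observe directly that intersecting two partitions-into-arcs refines to a common finite partition, so finite intersections of sets $\Gmc_{\{I',J'\}}$ are again finite unions of such sets.
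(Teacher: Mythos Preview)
Your proposal is correct and follows essentially the same approach as the paper: both prove union-closure via $\Gmc_\Lmc\cup\Gmc_{\Lmc'}=\Gmc_{\Lmc\cup\Lmc'}$, then reduce complement-closure via De Morgan to the two lemmas that (a) $\Gmc_{\{I,J\}}^c\in\Amc$ (handled by partitioning $\partial\Gamma$ into $I$, $J$, and the complementary arcs and listing all admissible pairs except $\{I,J\}$) and (b) $\Gmc_{\{I_1,I_2\}}\cap\Gmc_{\{J_1,J_2\}}\in\Amc$ (handled by intersecting the arcs and forming pairs). The paper carries out the intersection step (b) by an explicit formula rather than your ``refine the two partitions'' phrasing, but the content is the same, and your remark about the half-open convention ensuring an honest finite partition of $\partial\Gamma$ is exactly the point that makes the bookkeeping work.
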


\begin{proof} For all $\Gmc_{\mathcal L},\Gmc_{\mathcal L'}\in\Amc$, note that $\Gmc_{\mathcal L}\cup\Gmc_{\mathcal L'}=\Gmc_{\mathcal L\cup \mathcal L'}\in\Amc$. To prove closure under complements, it is sufficient to show that $\Gmc_{\{I,J\}}^c\in\Amc$ and $\Gmc_{\{I_1,I_2\}}\cap\Gmc_{\{J_1,J_2\}}\in\Amc$ because of De Morgan's laws.

First, we will show that $\Gmc_{\{I,J\}}^c\in\Amc$. If $I$ and $J$ have disjoint interiors, let $x,y$ be the endpoints of $I$ and $z,w$ be the endpoints of $J$. Also, let $K:=[y,z)_x$ and $L:=[w,x)_y$, and observe that $\partial\Gamma=K\cup L\cup I\cup J$ is a disjoint union. It is then easy to see that $(\Gmc_{\{I,J\}})^c=\Gmc_{\Lmc}$, where
\begin{align*}
\Lmc&:=\{\{I,I\},\{I,K\},\{I,L\},\{K,K\},\{K,J\},\{K,L\},\{J,J\},\{J,L\},\{L,L\}\}.
\end{align*}
On the other hand, if $I=J$, let $K:=\partial \Gamma-I$. Then $(\Gmc_{\{I,J\}})^c=\Gmc_{\Lmc}$, where $\Lmc:=\{\{I,K\},\{K,K\}\}$. 

Next, we will show that $\Gmc_{\{I_1,I_2\}}\cap\Gmc_{\{J_1,J_2\}}\in\Amc$. Let $K_{i,j,1},\dots,K_{i,j,t_{i,j}}$ be intervals so that $I_i\cap J_j=\bigcup_{k=1}^{t_{i,j}}K_{i,j,k}$ is a disjoint union. (Note that $t_{i,j}$ is either $0$, $1$ or $2$.) Then observe that $\Gmc_{\{I_1,I_2\}}\cap\mathcal \Gmc_{\{J_1,J_2\}}=\Gmc_{\Lmc}$, where
\begin{align*}
\Lmc&:=\big\{\{K_{1,1,1},K_{2,2,1}\},\{K_{1,1,t_{1,1}},K_{2,2,1}\},\{K_{1,1,1},K_{2,2,t_{2,2}}\},\{K_{1,1,t_{1,1}},K_{2,2,t_{2,2}}\},\\
&\hspace{1.3cm}\{K_{1,2,1},K_{2,1,1}\},\{K_{1,2,t_{1,2}},K_{2,1,1}\},\{K_{1,2,1},K_{2,1,t_{2,1}}\},\{K_{1,2,t_{1,2}},K_{2,1,t_{2,1}}\}\big\}.
\end{align*}
\end{proof}

Next, we will use the positive cross ratio $B$ to define a premeasure on $\Amc$. Let $\nu_B\colon \Amc\to [0,\infty]$ be a function defined as follows:
\begin{enumerate}
\item $\nu_B(\emptyset)=0$.
\item $\nu_B(\Gmc_{\{I,J\}})=\infty$ when $I=J$ or $I$ and $J$ share a common endpoint.
\item If the intervals $I$ and $J$ are non-empty and have disjoint closures, let $x$ and $y$ be the endpoints of $I$ and $z$ and $w$ the endpoints of $J$, so that $x$, $y$, $z$, $w$ lie in this cyclic order along $\partial\Gamma$. Then define 
\[\nu_B(\Gmc_{\{I,J\}}):=B(x,y,z,w).\]
\item If $\Gmc_\Lmc=\bigcup_{k=1}^n \Gmc_{\{I_k,J_k\}}$ is a disjoint union, define
\[\nu_B(\Gmc_\Lmc):=\sum_{1\leq k\leq n} \nu_B(\Gmc_{\{I_k,J_k\}}).\]
\end{enumerate}

Since $B(x,y,z,w)=B(z,w,x,y)$, $\nu_B(\Gmc_{\{I,J\}})$ is well-defined. Also, if 
\[\Gmc_\Lmc=\bigcup_{k=1}^n \Gmc_{\{I_k,J_k\}}=\bigcup_{k=1}^m \Gmc_{\{I'_k,J'_k\}}\] 
are two ways to write $\Gmc_\Lmc$ as disjoint unions, then by taking intersections, we can write $\Gmc_\Lmc$ as the disjoint union 
\[\Gmc_\Lmc=\bigcup_{k=1}^l \Gmc_{\{I''_k,J''_k\}}\] 
so that for all $k=1,\dots,l$, $\Gmc_{\{I''_k,J''_k\}}$ is a connected component of $\Gmc_{\{I_s,J_s\}}\cap\Gmc_{\{I'_t,J'_t\}}$ for some $s\in\{1,\dots,n\}$ and $t\in\{1,\dots,m\}$.
By the additive property of the cross ratio, if $I_1, I_2,I,J$ are subintervals of $\partial\Gamma$ so that $I_1\cup I_2=I$, then $\nu_B(\Gmc_{\{I_1,J\}})+\nu_B(\Gmc_{\{I_2,J\}})=\nu_B(\Gmc_{\{I,J\}}).$ This implies that 
\[\sum_{k=1}^n\nu_B(\Gmc_{\{I_k,J_k\}})=\sum_{k=1}^l\nu_B(\Gmc_{\{I''_k,J''_k\}})=\sum_{k=1}^m\nu_B(\Gmc_{\{I'_k,J'_k\}}),\]
so $\nu_B$ is well-defined.

It is also clear from the definition that $\nu_B$ is finitely additive, i.e. if $\Gmc_{\Lmc_1},\dots,\Gmc_{\Lmc_n}$ are pairwise disjoint, then
\[\nu_B\left(\bigcup_{i=1}^n\Gmc_{\Lmc_i}\right)=\sum_{i=1}^n\nu_B\left(\Gmc_{\Lmc_i}\right).\]
Furthermore, the positivity of $B$, ensures that $\nu_B$ takes values in $[0,\infty]$. 

\begin{prop} For any positive cross ratio $B$, $\nu_B$ is a premeasure on $\Amc$.
\end{prop}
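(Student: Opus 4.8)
The plan is to establish the only nontrivial axiom of a premeasure: countable additivity for countable disjoint families whose union lies in $\Amc$. Since $\nu_B(\emptyset)=0$ holds by construction and finite additivity has already been verified, the remaining task is to upgrade finite additivity to countable additivity. The standard way to do this is to prove that $\nu_B$ is \emph{continuous from above at $\emptyset$} in a suitable sense, or more directly, to exploit a compactness argument. First I would observe that $\Gmc(\Std)$ is a locally compact, second-countable space, and that every set $\Gmc_{\{I,J\}}$ with $I,J$ having disjoint closures is a \emph{compact} subset of $\Gmc(\Std)$ (its closure in $\Gmc(\Std)$ is obtained by taking the closed intervals $\overline I,\overline J$, and the set of geodesics with one endpoint in $\overline I$ and one in $\overline J$ is closed in the compact space $\partial\Gamma\times\partial\Gamma$ modulo the involution), while the sets $\Gmc_{(I^\circ,J^\circ)}$ with $I^\circ,J^\circ$ the open interiors are open. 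This interplay between a compact inner approximation and an open outer approximation is exactly what drives the argument.

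The key steps, in order, are as follows. Suppose $\Gmc_\Lmc=\bigcup_{i\in\Nbbb}\Gmc_{\Lmc_i}$ is a disjoint union with all terms in $\Amc$. Finite additivity together with monotonicity (which follows from finite additivity and nonnegativity of $\nu_B$, using that complements lie in $\Amc$) gives $\sum_{i=1}^N \nu_B(\Gmc_{\Lmc_i})=\nu_B(\bigcup_{i=1}^N\Gmc_{\Lmc_i})\leq\nu_B(\Gmc_\Lmc)$ for every $N$, hence $\sum_{i\in\Nbbb}\nu_B(\Gmc_{\Lmc_i})\leq\nu_B(\Gmc_\Lmc)$. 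For the reverse inequality, I would first reduce to the case where $\Gmc_\Lmc=\Gmc_{\{I,J\}}$ with $I,J$ having disjoint closures, so that $\nu_B(\Gmc_\Lmc)<\infty$ (if $\nu_B(\Gmc_\Lmc)=\infty$ one handles it by exhausting $\Gmc_\Lmc$ from inside by such "good" rectangles using the additivity of $B$ and the fact that $B(x,y,z,w)\to\infty$ as the intervals degenerate onto the geodesic — this degeneration statement is where one uses that $B$ is a positive cross ratio, together with a periodicity/equivariance argument as in Lemma~\ref{compute length}; alternatively one invokes that only finitely many "infinite" configurations can occur). Then, given $\varepsilon>0$, shrink $I,J$ slightly to closed subintervals $I',J'$ with $\nu_B(\Gmc_{\{I',J'\}})>\nu_B(\Gmc_{\{I,J\}})-\varepsilon$, using additivity of $B$ and continuity of $B$ to control the loss; the set $\Gmc_{\{I',J'\}}$ is compact. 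Next, replace each $\Gmc_{\Lmc_i}=\bigcup_k\Gmc_{\{I_{i,k},J_{i,k}\}}$ by a slightly enlarged \emph{open} set $\Gmc_{\Lmc_i'}\supset\Gmc_{\Lmc_i}$ with $\nu_B(\Gmc_{\Lmc_i'})<\nu_B(\Gmc_{\Lmc_i})+\varepsilon 2^{-i}$, again by enlarging each interval pair slightly and invoking continuity of $B$. These open sets cover the compact set $\Gmc_{\{I',J'\}}$, so finitely many of them do; finite additivity (more precisely finite subadditivity, which follows from finite additivity applied to a disjointification within $\Amc$) then yields $\nu_B(\Gmc_{\{I',J'\}})\leq\sum_{i=1}^N\nu_B(\Gmc_{\Lmc_i'})\leq\sum_{i\in\Nbbb}\nu_B(\Gmc_{\Lmc_i})+\varepsilon$. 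Combining the inequalities and letting $\varepsilon\to0$ gives $\nu_B(\Gmc_\Lmc)\leq\sum_i\nu_B(\Gmc_{\Lmc_i})$, completing countable additivity.

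I expect the main obstacle to be the bookkeeping around the "infinite" configurations — the pairs $\{I,J\}$ where $I=J$ or $I$ and $J$ share an endpoint, on which $\nu_B=\infty$. One must check that when $\nu_B(\Gmc_\Lmc)=\infty$ this is genuinely matched by $\sum_i\nu_B(\Gmc_{\Lmc_i})=\infty$, which requires the unbounded-growth statement $\lim B(x,y,z,w)=\infty$ as $\{x,w\}$ and $\{y,z\}$ both degenerate onto a common geodesic $\ell$; this is proved by choosing $\gamma$ with axis near $\ell$ and using that $B$ is a cross ratio (so $\ell_B(\gamma^n)=n\,\ell_B(\gamma)\to\infty$) together with additivity to compare the period with the relevant $B$-values, exactly in the spirit of the computations already carried out in Section~\ref{Background on geodesic currents}. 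The other mildly delicate point is the approximation of a closed "rectangle" $\Gmc_{\{I,J\}}$ from inside by compact ones and of $\Gmc_{\Lmc_i}$ from outside by open ones with controlled $\nu_B$-error: this is routine given the additivity and continuity of $B$, but one should be careful that shrinking/enlarging an interval by a small amount changes the relevant $B$-value by a small amount, which is precisely the content of continuity of $B$ on $\partial\Gamma^{[4]}$ combined with additivity to localize the change. Once the premeasure property is established, Carathéodory's extension theorem produces a Borel measure, and a final verification (not part of this proposition) that it is locally finite, $\Gamma$-invariant, and has the correct periods identifies it as the desired intersection current.
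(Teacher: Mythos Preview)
Your proposal is correct and follows essentially the same approach as the paper's proof: one inequality from finite additivity, the reverse via inner compact and outer open approximation using continuity of $B$, with the infinite-measure case handled by exhausting from inside by finite-measure rectangles. The paper phrases the infinite case slightly differently (a one-parameter family $\bar J_q$ with $\nu_B(\Gmc_{\{\bar I,\bar J_q\}})$ sweeping out $[0,\infty)$, then applying the finite case), but this is the same idea, and like you it tacitly uses that $B$ blows up as two adjacent arguments collide.
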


\begin{proof} Set $\nu:=\nu_B$ to simplify the notation. By definition, $\nu(\emptyset)=0$, so we need only to prove countable additivity. Let $\{\Gmc_{\Lmc_k}\}_{k=1}^\infty$ be a family of disjoint sets in $\Amc$ with $\bigcup_{k=1}^\infty \Gmc_{\Lmc_k}=\Gmc_\Lmc$ for some admissible list $\Lmc$. Up to repartitioning and renumbering, we can assume the following:
\begin{itemize}
\item For all $k$, $\Lmc_k=\{I_k,J_k\}$ for some intervals $I_k,J_k$, 
\item There is some admissible list $\{\{I'_1,J'_1\},\dots,\{I'_t,J'_t\}\}$ so that $\Gmc_\Lmc=\bigcup_{s=1}^t\Gmc_{\{I'_s,J'_s\}}$ is a disjoint union.
\end{itemize}
By finite additivity, we have that for all $n\in\Zbbb^+$,
\begin{align*}
\nu(\Gmc_{\Lmc})&=\nu\left(\bigcup_{k=1}^n \Gmc_{\Lmc_k}\right)+\nu\left(\Gmc_{\Lmc}\setminus \bigcup_{k=1}^n \Gmc_{\Lmc_k}\right)\\
&\geq\nu\left(\bigcup_{k=1}^n\Gmc_{\Lmc_k}\right)=\sum_{k=1}^n\nu(\Gmc_{\Lmc_k}).
\end{align*}
Thus, $\nu(\Gmc_{\Lmc})\geq \sum_{k=1}^\infty\nu(\Gmc_{\Lmc_k})$. 

To finish the proof, we need to show that $\nu(\Gmc_{\Lmc})\leq \sum_{k=1}^\infty\nu(\Gmc_{\Lmc_k})$. First consider the case where $\nu(\Gmc_{\Lmc})<\infty$, then $\nu(\Gmc_{\{I_s',J_s'\}}),\nu(\Gmc_{\Lmc_k})<\infty$ for all $s=1,\dots,t$, $k\in\Zbbb^+$. Since $B$ is continuous, for any $\varepsilon>0$ and any $s=1,\dots,t$, we can find compact subintervals $I_s''\subset I_s'$ and $J_s''\subset J_s'$ such that
\begin{equation}\label{eqn:finite_cont}
\nu(\Gmc_{\{I_s',J_s'\}})-\nu(\Gmc_{\{I_s'',J_s''\}})<\frac{\varepsilon}{t}.
\end{equation}
Similarly, for any $k\in \mathbb N$ we can find open intervals $I^*_k\supset I_k$  and $J^*_k\supset J_k$ such that
\begin{equation}\label{eqn:infinite_cont}
\nu(\Gmc_{\{I^*_k,J^*_k\}})-\nu(\Gmc_{\Lmc_k})<\frac{\varepsilon}{2^k}.
\end{equation}

Observe that the open sets $\{\Gmc_{\{I^*_k,J^*_k\}}\}_{k=1}^\infty$ is an open cover of the compact set $\bigcup_{s=1}^t\Gmc_{\{I_s'',J_s''\}}$, so it has a finite subcover $\{\Gmc_{\{I^*_k,J^*_k\}}\}_{k=1}^N$. By using inequalities (\ref{eqn:finite_cont}) and (\ref{eqn:infinite_cont}), we have
\begin{align*}
\nu(\Gmc_{\Lmc})&<\varepsilon + \nu\left(\bigcup_{s=1}^t\Gmc_{\{I_s'',J_s''\}}\right)\leq\varepsilon +\sum_{k=1}^N\nu(\Gmc_{\{I^*_k,J^*_k\}})\\
&\leq\varepsilon +\sum_{k=1}^N\left(\nu(\Gmc_{\Lmc_k})+\frac{\varepsilon}{2^k}\right)\leq 2\varepsilon +\sum_{k=1}^\infty\nu(\Gmc_{\Lmc_k}).
\end{align*}
Since $\varepsilon$ was arbitrary, this proves that $\nu(\Gmc_{\Lmc})\leq \sum_{k=1}^\infty\nu(\Gmc_{\Lmc_k})$ when $\nu(\Gmc_{\Lmc})<\infty$. 

Next, consider the case where $\nu(\Gmc_{\Lmc})=\infty$. This means that $\nu(\Gmc_{\{I_{s_0}',J_{s_0}'\}})=\infty$ for some $s_0=1,\dots,t$, so either $I_{s_0}'$ and $J_{s_0}'$ are disjoint and share an endpoint, or $I_{s_0}'=J_{s_0}'$. In either case, we can find subintervals $\bar{I}\subset I_{s_0}'$ and $\bar{J}\subset J_{s_0}'$ that are disjoint and share an endpoint. Let $p$ be the common endpoint of $\bar{I}$ and $\bar{J}$, let $p_{\bar{J}}$ be the endpoint of $\bar{J}$ that is not $p$, and for any $q\in\bar{J}$ let $\bar{J}_q$ be the subinterval of $\bar{J}$ with endpoints $q$ and $p_{\bar{J}}$. Since 
\[\lim_{q\to p_{\bar{J}}}\nu(\Gmc_{\{\bar{I},\bar{J}_q\}})=0,\,\,\,\lim_{q\to p}\nu(\Gmc_{\{\bar{I},\bar{J}_q\}})=\infty,\] 
we know that for every $t\in\Rbbb$, there exists $q_t$ in the interior of $\bar{J}$ such that $\nu(\Gmc_{\{\bar{I},\bar{J}_{q_t}\}})=t$. Using the previous case, we know that for all $q\in\bar{J}$,
\[
\nu(\Gmc_{\{\bar{I},\bar{J}_q\}})\leq \sum_{k=1}^\infty \nu(\Gmc_{\Lmc_k}).
\]
Thus, $\sum_{k=1}^\infty \nu(\Gmc_{\Lmc_k})=\infty=\nu(\Gmc_{\Lmc})$.
\end{proof}

We will now recall a standard procedure to obtain a unique measure extending a premeasure. See for example Chapter 1 of Folland \cite{Fol1} for more details. Given a premeasure $\nu$ on $\mathcal A$ and $E\subset \Gmc(\tilde{S})$, define the \emph{outer measure}
\[
\nu^*(E)=\inf\left\{\sum_{k=1}^\infty\nu(A_k)\colon A_k\in\Amc,\ E\subset\bigcup_{k=1}^\infty A_k \right\}.
\]
A premeasure $\nu\colon \Amc\to [0,\infty]$ is \emph{$\sigma$-finite} if $X$ can be written as a union of countably many sets with finite outer measure. The following theorem (Theorem 1.14 of \cite{Fol1}) relates $\sigma$-finite premeasures and measures.

\begin{thm}\label{thm:extension_measure}
Let $\mathcal M$ be the $\sigma$-algebra generated by $\mathcal A$. Then, $\mu:=\nu^*_{\vert_\mathcal M}$ is a measure on $\mathcal M$ and the restriction of $\mu$ to $\Amc$ is $\nu$. If $\nu$ is $\sigma$-finite, then $\mu$ is unique.
\end{thm}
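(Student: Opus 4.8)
The plan is to recognize this statement as the Carathéodory extension theorem and to reproduce its standard proof, specialized to our algebra $\Amc$ of subsets of $\Gmc(\Std)$ and the premeasure $\nu:=\nu_B$. Write $X:=\Gmc(\Std)$. First I would check that $\nu^*$ is an \emph{outer measure} on $X$: $\nu^*(\emptyset)=0$ because $\emptyset\in\Amc$ covers $\emptyset$; $\nu^*$ is monotone because any $\Amc$-cover of a larger set is an $\Amc$-cover of a smaller one; and $\nu^*$ is countably subadditive, since given $E\subset\bigcup_n E_n$ and $\varepsilon>0$ we may pick $\Amc$-covers $\{A_{n,k}\}_k$ of $E_n$ with $\sum_k\nu(A_{n,k})\le\nu^*(E_n)+\varepsilon 2^{-n}$, and then $\{A_{n,k}\}_{n,k}$ is a countable $\Amc$-cover of $E$ witnessing $\nu^*(E)\le\sum_n\nu^*(E_n)+\varepsilon$. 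These are formal consequences of the definition of $\nu^*$ as an infimum over countable $\Amc$-covers.

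The crux is showing that every $A\in\Amc$ is \emph{Carathéodory measurable} for $\nu^*$, i.e.\ $\nu^*(E)=\nu^*(E\cap A)+\nu^*(E\setminus A)$ for all $E\subset X$. The inequality ``$\le$'' is subadditivity; for ``$\ge$'' one may assume $\nu^*(E)<\infty$, take an $\Amc$-cover $\{A_k\}$ of $E$ with $\sum_k\nu(A_k)\le\nu^*(E)+\varepsilon$, and use finite additivity of the premeasure $\nu$ on the algebra $\Amc$ to write $\nu(A_k)=\nu(A_k\cap A)+\nu(A_k\setminus A)$, which is legitimate because $A_k\cap A$ and $A_k\setminus A$ lie in $\Amc$ ($\Amc$ being an algebra, as shown earlier). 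Summing over $k$ and observing that $\{A_k\cap A\}$ covers $E\cap A$ while $\{A_k\setminus A\}$ covers $E\setminus A$ gives $\nu^*(E\cap A)+\nu^*(E\setminus A)\le\nu^*(E)+\varepsilon$; letting $\varepsilon\to0$ finishes the step. By the general Carathéodory lemma, the collection $\Mmc^*$ of $\nu^*$-measurable sets is a $\sigma$-algebra on which $\nu^*$ restricts to a measure. Since $\Amc\subset\Mmc^*$ and $\mathcal M$ is by definition the smallest $\sigma$-algebra containing $\Amc$, we conclude $\mathcal M\subset\Mmc^*$, so $\bar\nu=\nu^*|_{\mathcal M}$ is a measure on $\mathcal M$.

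Next I would verify $\bar\nu|_{\Amc}=\nu$. For $A\in\Amc$, the single-set cover $\{A\}$ gives $\nu^*(A)\le\nu(A)$. Conversely, if $A\subset\bigcup_k A_k$ with $A_k\in\Amc$, set $B_k:=\big(A\cap A_k\big)\setminus\bigcup_{j<k}A_j\in\Amc$, so that $A=\bigsqcup_k B_k$ is a disjoint union in $\Amc$; the countable additivity of the premeasure $\nu$ (established in the preceding proposition) together with $B_k\subset A_k$ and finite additivity on $\Amc$ yield $\nu(A)=\sum_k\nu(B_k)\le\sum_k\nu(A_k)$, and taking the infimum over covers gives $\nu(A)\le\nu^*(A)$. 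Hence $\nu^*(A)=\nu(A)$ for $A\in\Amc$.

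Finally, for uniqueness under $\sigma$-finiteness, suppose $\mu$ is another measure on $\mathcal M$ with $\mu|_{\Amc}=\nu$. For $E\in\mathcal M$ and any $\Amc$-cover $\{A_k\}$ of $E$, countable subadditivity of $\mu$ gives $\mu(E)\le\sum_k\mu(A_k)=\sum_k\nu(A_k)$, so $\mu(E)\le\bar\nu(E)$; in particular $\mu(A)\le\bar\nu(A)=\nu(A)$ for $A\in\Amc$, and reversing this with a complementary argument shows $\mu(A)=\bar\nu(A)$ whenever $A\in\Amc$ has $\nu(A)<\infty$. For $E\in\mathcal M$ with $E\subset A$, $A\in\Amc$, $\nu(A)<\infty$, we then get $\mu(E)=\mu(A)-\mu(A\setminus E)\ge\bar\nu(A)-\bar\nu(A\setminus E)=\bar\nu(E)$, so $\mu(E)=\bar\nu(E)$. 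When $\nu$ is $\sigma$-finite, covering $X$ by countably many sets of finite outer measure and refining through $\Amc$ lets us write $X=\bigsqcup_j A_j$ with $A_j\in\Amc$ and $\nu(A_j)<\infty$; then for arbitrary $E\in\mathcal M$, $\mu(E)=\sum_j\mu(E\cap A_j)=\sum_j\bar\nu(E\cap A_j)=\bar\nu(E)$, proving uniqueness. The main obstacle is the Carathéodory-measurability step in the second paragraph; everything else is routine bookkeeping with the algebra structure of $\Amc$ and the countable additivity of $\nu_B$ already proved.
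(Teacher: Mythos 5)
Your proof is correct and is precisely the standard Carath\'eodory--Hahn extension argument; the paper does not prove this theorem itself but cites it as Theorem 1.14 of Folland, and your write-up faithfully reproduces that reference's proof (outer measure, Carath\'eodory measurability of sets in $\Amc$, agreement on $\Amc$ via disjointification and countable additivity of the premeasure, and uniqueness by exhausting $X$ with finite-premeasure sets from $\Amc$). No gaps.
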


\begin{lem}\label{lem:sigmafinite} If $B$ is a positive cross ratio, then the premeasure $\nu_B$ is $\sigma$-finite.
\end{lem}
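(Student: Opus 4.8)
The plan is to show that the whole space $\Gmc(\Std)$ can be covered by countably many elements of $\Amc$, each of finite $\nu_B$-measure; since $\Gmc(\Std)$ covers every subset, this immediately gives $\sigma$-finiteness in the sense of the definition preceding the lemma.

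First I would fix an orientation on $\partial\Gamma$ and pick $2N$ distinct points $z_1,\dots,z_{2N}$ arranged in cyclic order along $\partial\Gamma$, for some $N\geq 2$, so that the resulting $2N$ open arcs $A_1,\dots,A_{2N}$ (with $A_j$ running from $z_j$ to $z_{j+1}$, indices mod $2N$) form a partition of $\partial\Gamma$ minus the points $z_j$. The key geometric observation is that any geodesic $\{a,b\}\in\Gmc(\Std)$ with $a$ and $b$ in the interiors of arcs has its two endpoints in two arcs $A_j$, $A_k$; if these two arcs are \emph{non-adjacent} (i.e.\ do not share an endpoint and are not equal), then $\{a,b\}\in\Gmc_{\{A_j,A_k\}}$ and $\nu_B\big(\Gmc_{\{A_j,A_k\}}\big)=B(\,\cdot\,,\cdot\,,\cdot\,,\cdot\,)<\infty$ because $B$ is a (finite-valued) continuous function on $\partial\Gamma^{[4]}$ and $\overline{A_j}\cap\overline{A_k}=\emptyset$. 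So the finitely many sets $\Gmc_{\{A_j,A_k\}}$ with $A_j,A_k$ non-adjacent all have finite $\nu_B$-measure, and their union is precisely the set of geodesics both of whose endpoints lie in non-adjacent arcs. It therefore remains to cover the geodesics that either have an endpoint at some $z_j$, or have both endpoints in the same arc or in two adjacent arcs.

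Next I would handle the leftover geodesics by iterating the construction at finer scales. The point is that a single partition into $2N$ arcs is not enough, but a nested sequence of partitions is: for each $m$, choose a partition of $\partial\Gamma$ into $2^m$ (say) arcs refining the previous one, with the new division points all distinct from the old ones, and take the finitely many product sets $\Gmc_{\{A,A'\}}$ over non-adjacent arcs $A,A'$ of the $m$-th partition. Each such set has finite $\nu_B$-measure by the same continuity argument. I claim the countable union over all $m$ of all these sets is all of $\Gmc(\Std)$: given any $\{a,b\}$ with $a\neq b$, the two points $a,b$ are at positive distance in any metric on $\partial\Gamma$, so for $m$ large enough the arc of the $m$-th partition containing $a$ and the arc containing $b$ are disjoint and non-adjacent (here I use that the mesh of the partitions can be made to tend to $0$, and that $a,b$ avoid the countable set of all division points for all but — at worst — finitely many of them, or one simply perturbs the division points of each partition to avoid $a,b$; the cleanest route is to note it suffices to cover $\Gmc(\Std)$ up to a $\nu_B$-null set plus finitely many extra finite-measure sets, and null sets of single geodesics cause no trouble since $\nu_B(\{x,z\})=0$ for all $x\neq z$). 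Thus $\Gmc(\Std)$ is a countable union of sets in $\Amc$ of finite $\nu_B$-measure, hence of finite outer measure, which is exactly $\sigma$-finiteness.

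The main obstacle is the bookkeeping around geodesics whose endpoints land on the division points or in adjacent arcs: one must be careful that the refinement process genuinely catches every geodesic and that each set used in the cover is honestly an element of $\Amc$ with $\nu_B$-value given by a finite cross-ratio value rather than an $\infty$ coming from a shared endpoint. I expect this to be routine once the nested-partition setup is in place — the essential input is just that $B$ is real-valued and continuous on $\partial\Gamma^{[4]}$, so $\nu_B(\Gmc_{\{I,J\}})<\infty$ whenever $\overline I\cap\overline J=\emptyset$ — so the proof will be short, mostly a matter of writing down the cover and invoking continuity of $B$.
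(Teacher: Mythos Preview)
Your proposal is correct and follows essentially the same strategy as the paper: cover $\Gmc(\Std)$ by countably many sets of the form $\Gmc_{\{I,J\}}$ with $\overline I\cap\overline J=\emptyset$, each of which has finite $\nu_B$-measure by definition. The paper's implementation is slightly slicker: instead of a nested sequence of partitions, it fixes a countable dense set $\Delta\subset\partial\Gamma$ and uses the overlapping intervals $I_m^p$ of width $1/m$ centered at $p\in\Delta$; the cover is then $\bigcup_m\bigcup_{(p,q)\in\Delta_m^2}\Gmc_{\{I_m^p,I_m^q\}}$ where $\Delta_m^2$ picks out pairs whose intervals have disjoint closures. This sidesteps entirely the bookkeeping about division points and adjacent arcs that you flag as an obstacle --- though, as you suspected, that bookkeeping is routine anyway once one remembers the arcs in $\Amc$ are half-open, so every point of $\partial\Gamma$ lies in exactly one arc of each partition and non-adjacency follows for fine enough mesh.
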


\begin{proof} Let $\Delta$ be a countable dense subset of $\partial \Gamma$. For any $m\in\Nbbb$ and $p\in\Delta$, let $I_m^p$ be the open interval in $\partial\Gamma$ centered at $p$ of width $1/m$. Define 
\[
\Delta^2_m:=\{(p,q)\in\Delta^2\colon I_m^p \mbox{ and }I_m^q \mbox{ have disjoint closures and } p \mbox{ preceeds }q\}.
\] 
Clearly
\begin{align}\label{ctblcovering}
\Gmc(\Std)=\bigcup_{m=1}^\infty\bigcup_{(p,q)\in\Delta_m^2}\Gmc_{\{I_m^p,I_m^q\}}.
\end{align}
Observe that the right hand side of Equation \ref{ctblcovering} is a countable union of sets, each with finite measure. 
\end{proof}

\begin{proof}[Proof of Theorem \ref{cross ratio intersection}]
Since the premeasure $\nu_B$ is $\sigma$-finite, Theorem \ref{thm:extension_measure} ensures that there is a unique measure $\mu_B$ on the $\sigma$-algebra $\Mmc$ generated by $\Amc$. The $\Gamma$-invariance of $B$ ensures that $\mu_B$ is also $\Gamma$-invariant. It is easy to see that the topology on $\Gmc(\Std)$ lies in $\Mmc$. Thus, $\mu_B$ is a geodesic current. 

Next, we show that for all $c\in\Cmc\Gmc(S)$, $i(c,\mu_B)=\ell_B(c)$. By (1) of Lemma \ref{compute length}, we know that 
\begin{align*}
i(c,\mu_B)&=\mu_B(\Gmc_{\{[\gamma^+,\gamma^-)_z,[z,\gamma\cdot z)_{\gamma^-}\}})\\
&=\nu_B(\Gmc_{\{[\gamma^+,\gamma^-)_z,[z,\gamma\cdot z)_{\gamma^-}\}})\\
&=B(\gamma^-,\gamma^+,\gamma\cdot z,z)\\
&=\ell_B(c).\qedhere
\end{align*}
\end{proof}
\section{Computation for proof of Theorem \ref{main theorem}}\label{main computation}

The goal of this appendix is to prove the following statement, which we need to finish the proof of Theorem \ref{main theorem}. 

\begin{prop}\label{combination}
Let $T,K,L$ be positive numbers so that $K\geq L$. Also, let $Q=Q(T,K,L)\in\{0,\dots,\left\lfloor\frac{T}{K}\right\rfloor\}$ be an integer so that for all $i\in\{0,\dots,\left\lfloor\frac{T}{K}\right\rfloor\}$, we have
\[{\left\lfloor\frac{T-KQ}{L}\right\rfloor+Q\choose Q}\geq {\left\lfloor\frac{T-Ki}{L}\right\rfloor+i\choose i}.\] Then
\[\limsup_{T\to\infty}\frac{K}{T}\log{\left\lfloor\frac{T-KQ}{L}\right\rfloor+Q\choose Q}\leq 1+\log\left(1+\frac{1}{x_0}\right),\]
where $x_0$ is the unique positive solution to the equation $(1+x)^{\left\lceil\frac{K}{L}-1\right\rceil}x=1$.
\end{prop}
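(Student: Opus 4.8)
The quantity to bound is a binomial coefficient $\binom{N+Q}{Q}$ where $N = \lfloor (T-KQ)/L \rfloor$, maximized over the choice of $Q \in \{0,\dots,\lfloor T/K\rfloor\}$. The plan is to (1) replace the floors by their real-number approximations, incurring only a bounded multiplicative error that disappears in the $\limsup_{T\to\infty}\frac{K}{T}\log(\cdot)$; (2) use the standard asymptotics $\frac{1}{n}\log\binom{\lfloor an\rfloor + n}{n} \to (1+a)\log(1+a) - a\log a$ (equivalently, $\log\binom{m+n}{n} \approx (m+n)H(n/(m+n))$ with $H$ the binary entropy in nats), to convert the problem into a one-variable optimization over a continuous parameter $q \in [0, 1/K]$ (where $q$ is the limiting value of $Q/T$); (3) solve that optimization explicitly and identify the maximum value with $1 + \log(1 + 1/x_0)$, where $x_0$ solves $(1+x)^{\lceil K/L - 1\rceil} x = 1$.

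More concretely: write $T - KQ = LM$ roughly, so $M \approx (T - KQ)/L$, and set $x := Q/M$ (the ratio appearing inside the binomial coefficient $\binom{M+Q}{Q}$). Then $\frac{1}{T}\log\binom{M+Q}{Q} \approx \frac{M}{T}\big((1+x)\log(1+x) - x\log x\big)$, and the constraint $T = KQ + LM$ becomes $1 = K\frac{Q}{T} + L\frac{M}{T}$, i.e. $\frac{M}{T} = \frac{1}{L + Kx}$ and $\frac{Q}{T} = \frac{x}{L+Kx}$. So after multiplying by $K$, the function to maximize over $x \ge 0$ is
\[
g(x) := \frac{K}{L + Kx}\big((1+x)\log(1+x) - x\log x\big).
\]
I would then compute $g'(x) = 0$. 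Differentiating, the $(1+x)\log(1+x) - x\log x$ factor has derivative $\log(1+x) - \log x = \log\frac{1+x}{x}$, and after clearing denominators the critical point equation simplifies (this is the routine calculation I would not grind through here) to a relation of the form $K \log\frac{1+x}{x} = $ (something linear), which upon exponentiating becomes exactly $(1+x)^{K/L} = (1/x)\cdot(\text{const})$, matching the defining equation of $x_0$ up to the ceiling $\lceil K/L - 1\rceil$ that comes from the floor functions. Substituting back, $g(x_0)$ telescopes to $1 + \log(1 + 1/x_0)$: one uses $(1+x_0)\log(1+x_0) - x_0\log x_0 = \log(1+x_0) + x_0\log\frac{1+x_0}{x_0}$ together with the critical-point relation to eliminate the logarithm terms, leaving precisely $\frac{L + Kx_0}{K} \cdot \big(1 + \log(1+1/x_0)\big) \cdot \frac{K}{L+Kx_0}$.

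The main obstacle — and the reason the statement has a ceiling $\lceil K/L - 1\rceil$ rather than the cleaner $K/L$ — is handling the floor functions $\lfloor (T-KQ)/L\rfloor$ and $\lfloor T/K\rfloor$ rigorously while still getting an upper bound (not just an asymptotic). My approach would be: since we only want a $\limsup$ upper bound, I can enlarge the binomial coefficient by dropping floors downward in the denominator-position arguments and rounding appropriately; the $-1$ inside the binomial upper index and the restriction $Q \le \lfloor T/K\rfloor$ translate into the constraint $T - KQ \ge 0$, i.e. $M \ge 1$ must be achievable, which forces the effective exponent to be $\lceil K/L - 1\rceil$ rather than $K/L - 1$. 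The bookkeeping that converts "$Q$ is a positive integer and $M \ge 1$" into "the relevant exponent in the equation for $x_0$ is the ceiling" is the delicate step; everything else is Stirling's formula plus one-variable calculus. I would also need to note that $g$ is unimodal (its unique critical point is a maximum, since $g \to 0$ as $x\to\infty$ and $g$ is bounded near $0$), so the critical point genuinely gives the global max over the relevant range; this follows from sign analysis of $g'$.
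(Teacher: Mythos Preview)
Your overall strategy---Stirling plus optimizing the continuous surrogate $g(x)=\frac{K}{L+Kx}\big((1+x)\log(1+x)-x\log x\big)$ over $x=Q/F$---is sound and is essentially what the paper does. But two points of your write-up are off, and it is worth seeing how the paper handles them.

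First, your computation of the maximum value is wrong. Carrying out the calculus you outline, the stationarity condition $g'(x)=0$ reduces (after writing $(1+x)\log(1+x)-x\log x=\log(1+x)+x\log\frac{1+x}{x}$) to
\[
L\log\tfrac{1+x}{x}=K\log(1+x),
\]
i.e.\ $(1+x)^{K/L-1}x=1$, with the real exponent $K/L-1$, not its ceiling. Substituting back one gets $g(x^*)=\frac{K}{L}\log(1+x^*)=\log(1+1/x^*)$, \emph{not} $1+\log(1+1/x_0)$. Since the ceiling only enlarges the exponent, $x_0\le x^*$ and hence $\log(1+1/x^*)\le \log(1+1/x_0)\le 1+\log(1+1/x_0)$, so your argument still proves the stated inequality---indeed something strictly stronger---but your ``telescopes to $1+\log(1+1/x_0)$'' claim is incorrect.

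Second, the ceiling does not arise from your floor-handling at all; in the continuous optimization it simply never appears. In the paper the ceiling enters for a different reason: rather than optimizing $g$ directly, the paper uses the \emph{discrete} first-order condition $\binom{F+Q}{Q}\ge\binom{F'+Q\pm1}{Q\pm1}$ at the maximizer $Q$. Expanding these ratios produces a telescoping product with between $\lfloor K/L\rfloor-1$ and $\lceil K/L\rceil-1$ factors, and bounding that product yields $(1+\liminf Q/F)^a\cdot\liminf Q/F\ge 1$ with $a=\lceil K/L-1\rceil$, hence $\limsup F/Q\le 1/x_0$. The extra $+1$ in the final bound then comes from the crude estimate $\frac{F}{Q}\log(1+Q/F)\le 1$ combined with $KQ/T\le 1$, not from any exact evaluation of the optimum. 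So the paper's route trades the explicit calculus you propose for a pair of elementary inequalities, at the cost of the looser constant; your route gives the sharper $\log(1+1/x^*)$ but requires the derivative computation and a verification that $g$ is unimodal (which, as you note, follows from $g\to 0$ at both ends and uniqueness of the critical point).
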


The proof of Proposition \ref{combination} is a refinement of the argument given in Appendix B of \cite{Zha1}. However, we will make this appendix self-contained for the convenience of the reader. The main tool in this computation is an old result known as Stirling's formula, which we state here.

\begin{thm}[Stirling's Formula]
$\displaystyle\lim_{n\to\infty}\frac{n!}{\left(\frac{n}{e}\right)^n\sqrt{2\pi n}}=1$.
\end{thm}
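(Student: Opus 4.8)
The key observation is that the asserted bound in fact holds \emph{uniformly in $T$}, so the $\limsup$ costs nothing and Stirling's formula is not really needed: a single application of the binomial theorem with an optimally chosen parameter suffices. (A Stirling-based route also works — feed $N=\lfloor\frac{T-KQ}{L}\rfloor+Q\approx\frac{(1-s)T}{L}+\frac{sT}{K}$ into Stirling's formula and maximize the resulting entropy expression over $s\in[0,1]$ — but the explicit optimization there is exactly the nuisance the argument below avoids.)

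First I would record the elementary estimate: for every $x>0$ and all integers $0\le i\le N$,
\[
\binom{N}{i}\le\frac{(1+x)^N}{x^i},
\]
which is immediate from $(1+x)^N=\sum_{j=0}^N\binom{N}{j}x^j\ge\binom{N}{i}x^i$. Note that $N:=\big\lfloor\frac{T-KQ}{L}\big\rfloor+Q$ is a nonnegative integer with $0\le Q\le N$, so applying this with $i=Q$ and then using $\big\lfloor\frac{T-KQ}{L}\big\rfloor\le\frac{T-KQ}{L}$ together with $1+x\ge1$ gives, for every $x>0$,
\[
\binom{\left\lfloor\frac{T-KQ}{L}\right\rfloor+Q}{Q}\le\frac{(1+x)^{\frac{T-KQ}{L}+Q}}{x^Q}=(1+x)^{T/L}\left(\frac{(1+x)^{1-K/L}}{x}\right)^{Q}.
\]
This is a bound on the maximal term regardless of the exact value of $Q$, which is all we use about $Q$.

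Next I would choose $x$ to annihilate the $Q$-dependent factor. Since $K\ge L$, the map $x\mapsto x(1+x)^{K/L-1}$ is a continuous strictly increasing bijection of $[0,\infty)$ onto $[0,\infty)$, so there is a unique $x^*>0$ with $x^*(1+x^*)^{K/L-1}=1$; equivalently $(1+x^*)^{1-K/L}=x^*$, i.e. $\frac{(1+x^*)^{1-K/L}}{x^*}=1$. Substituting $x=x^*$ in the displayed inequality yields the uniform bound $\binom{\left\lfloor\frac{T-KQ}{L}\right\rfloor+Q}{Q}\le(1+x^*)^{T/L}$ for all $T>0$, hence $\frac{K}{T}\log\binom{\left\lfloor\frac{T-KQ}{L}\right\rfloor+Q}{Q}\le\frac{K}{L}\log(1+x^*)$ for all $T$.

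Finally I would simplify and compare with $x_0$. Taking logs in $x^*(1+x^*)^{K/L-1}=1$ gives $(K/L-1)\log(1+x^*)=-\log x^*$, so
\[
\frac{K}{L}\log(1+x^*)=\log(1+x^*)+(K/L-1)\log(1+x^*)=\log(1+x^*)-\log x^*=\log\!\left(1+\tfrac1{x^*}\right).
\]
It then remains only to check $x^*\ge x_0$: for $x\ge0$ we have $\lceil K/L-1\rceil\ge K/L-1\ge0$ and $1+x\ge1$, so $x(1+x)^{\lceil K/L-1\rceil}\ge x(1+x)^{K/L-1}$; since both sides are strictly increasing bijections of $[0,\infty)$, and the left attains the value $1$ at $x_0$ while the right attains it at $x^*$, comparing gives $x_0\le x^*$. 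Therefore
\[
\limsup_{T\to\infty}\frac{K}{T}\log\binom{\left\lfloor\frac{T-KQ}{L}\right\rfloor+Q}{Q}\le\frac{K}{L}\log(1+x^*)=\log\!\left(1+\tfrac1{x^*}\right)\le\log\!\left(1+\tfrac1{x_0}\right)\le 1+\log\!\left(1+\tfrac1{x_0}\right),
\]
which is the claim. The only genuinely delicate points are the choice of the parameter $x^*$ so the base of the $Q$-th power becomes exactly $1$, and the monotonicity comparison $x^*\ge x_0$; everything else is the binomial theorem and bookkeeping with the floor function.
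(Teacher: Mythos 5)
There is a basic mismatch here: the statement you were asked to prove is Stirling's formula itself, $\lim_{n\to\infty}n!\big/\big((n/e)^n\sqrt{2\pi n}\big)=1$, but nothing in your write-up addresses the asymptotics of $n!$ --- you never produce the factor $\sqrt{2\pi n}$, never estimate $\log n!$, and factorials appear only implicitly inside binomial coefficients. What you have actually written is a proof of Proposition \ref{combination}, the statement for whose proof the paper invokes Stirling's formula in Appendix \ref{main computation}. As a proof of the quoted theorem this is therefore a complete miss. If the target really is Stirling's formula, you need a genuine argument for it, e.g.\ comparing $\log n!=\sum_{k=1}^n\log k$ with $\int_1^n\log t\,dt$ (trapezoid rule or Euler--Maclaurin) to get $n!\sim C\,(n/e)^n\sqrt{n}$ for some constant $C$, and then identifying $C=\sqrt{2\pi}$ via Wallis's product; the paper itself offers no proof to compare against, since it quotes the formula as a classical fact.

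That said, as a proof of Proposition \ref{combination} your argument is correct and is genuinely different from, and cleaner than, the paper's. The paper feeds $\binom{F+Q}{Q}$ into Stirling's formula and must first control the ratio $Q/F$ through Lemma \ref{F and Q} before optimizing; you instead use the elementary bound $\binom{N}{Q}\le(1+x)^N/x^Q$ from the binomial theorem, choose $x=x^*$ so that the $Q$-dependent factor collapses to $1$, and compare $x^*$ with $x_0$ by monotonicity. This gives a bound that is uniform in $T$ (no $\limsup$ needed), requires no information about $Q$ beyond $0\le Q\le N$, and even drops the additive $1$, so it is strictly stronger than the proposition as stated. It would be a worthwhile replacement for Appendix \ref{main computation} --- but it does not prove the theorem you were given.
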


In order to use Stirling's formula, we need to prove the following lemma. For the rest of this appendix, let 
\[F=F(T,K,L):=\left\lfloor\frac{T-KQ}{L}\right\rfloor.\] 

\begin{lem}\label{F and Q}
Let $K\geq L>0$ be fixed numbers, $a:=\left\lceil\frac{K}{L}-1\right\rceil$ and $b:=\left\lfloor\frac{K}{L}-1\right\rfloor$. Then the following hold:
\begin{enumerate}[(1)]
\item $\displaystyle\lim_{T\to\infty}F=\infty$.
\item $\displaystyle\lim_{T\to\infty}Q=\infty$.
\item $\displaystyle 1\leq\left(1+\liminf_{T\to\infty}\frac{Q}{F}\right)^a\cdot \liminf_{T\to\infty}\frac{Q}{F}$. 
\item $\displaystyle 1\geq\left(1+\limsup_{T\to\infty}\frac{Q}{F}\right)^b\cdot \limsup_{T\to\infty}\frac{Q}{F}$. 
\end{enumerate}
\end{lem}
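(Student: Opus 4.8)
The plan is to prove the four claims of Lemma \ref{F and Q} in the order $(1), (2), (3), (4)$, since later claims reuse the limiting behaviour established in earlier ones. The underlying principle throughout is that $Q$ is the value of $i$ maximizing the binomial coefficient $\binom{\lfloor (T-Ki)/L\rfloor + i}{i}$ over the admissible range $0 \le i \le \lfloor T/K\rfloor$, and the recursive inequality $\binom{F(Q)+Q}{Q} \ge \binom{\lfloor(T-Ki)/L\rfloor + i}{i}$ for neighbouring values $i = Q\pm 1$ is what pins down the ratio $Q/F$. I would first record the elementary identity $\binom{n+1}{k+1}/\binom{n}{k} = (n+1)/(k+1)$ and, more to the point, compute the ratio $\binom{F(Q-1)+Q-1}{Q-1}\big/\binom{F(Q)+Q}{Q}$, noting that passing from $i=Q$ to $i=Q-1$ increases the ``top'' quantity $\lfloor(T-Ki)/L\rfloor + i$ by roughly $K/L - 1$ (up to a bounded floor error, which is why the ceiling $a$ and floor $b$ of $K/L-1$ appear) while increasing the ``bottom'' $i$ by $1$.

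For $(1)$ and $(2)$: I would argue by contradiction. If $Q$ stayed bounded along a sequence $T\to\infty$, then $F = \lfloor(T-KQ)/L\rfloor \to \infty$, and one computes directly that $\binom{F+Q}{Q}$ grows only polynomially in $T$ (degree $Q$), whereas choosing $i = \lfloor T/(K+L)\rfloor$ (say) gives a binomial coefficient growing exponentially in $T$ — contradicting maximality of $Q$. This proves $(2)$; a symmetric argument, swapping the roles of $F$ and $Q$ (if $F$ stayed bounded, $\binom{F+Q}{Q} \le (Q+1)^F$ grows polynomially, again contradicted by an interior choice of $i$), proves $(1)$. In fact it is cleanest to first show both $F\to\infty$ and $Q\to\infty$ together by observing that the maximizing $i$ must satisfy both $i\to\infty$ and $(T-Ki)/L\to\infty$.

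For $(3)$ and $(4)$: using maximality, $\binom{F(Q)+Q}{Q} \ge \binom{F(Q+1)+Q+1}{Q+1}$ and $\binom{F(Q)+Q}{Q}\ge\binom{F(Q-1)+Q-1}{Q-1}$. Writing these out and using $\binom{n}{k}/\binom{n'}{k'}$ with $n-n' = O(1) \pm (K/L-1)$, $k - k' = \pm 1$, and dividing through, one gets after simplification inequalities of the shape $(1 + Q/F + o(1))^{a} \cdot (Q/F) \gtrsim 1$ and $(1 + Q/F + o(1))^{b}\cdot(Q/F) \lesssim 1$ as $T\to\infty$, where the $o(1)$ absorbs the floor discrepancies and the $\pm 1$'s; here $a = \lceil K/L-1\rceil$ comes from bounding the number of extra factors from above and $b = \lfloor K/L - 1\rfloor$ from below. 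Taking $\liminf$ in the first and $\limsup$ in the second, and using the continuity of $x\mapsto (1+x)^a x$, yields $(3)$ and $(4)$ respectively. The main obstacle I anticipate is purely bookkeeping: carefully controlling the floor functions $\lfloor (T-Ki)/L\rfloor$ so that the discrepancy between $F(Q)$ and $F(Q\pm1)$ is genuinely $K/L - 1$ up to an error that is $O(1)$ and hence negligible after dividing by $F\to\infty$; this is where one must be scrupulous, and it is the reason the statement is phrased with the ceiling $a$ and floor $b$ rather than $K/L-1$ itself. With Lemma \ref{F and Q} in hand, the plan for Proposition \ref{combination} is then to apply Stirling's formula to $\binom{F+Q}{Q} = \frac{(F+Q)!}{F!\,Q!}$, obtaining $\frac{1}{T}\log\binom{F+Q}{Q} \sim \frac{1}{T}\big[(F+Q)\log(F+Q) - F\log F - Q\log Q\big]$; substituting $F \sim (T - KQ)/L$ and letting $r := \limsup Q/F$, the expression $\frac{K}{T}\log\binom{F+Q}{Q}$ converges (along the relevant subsequence) to $\frac{K}{L}\cdot\frac{1}{1 + rK/L + \cdots}\big[(1+r)\log(1+r) - r\log r\big]$-type quantity, which one then bounds above using the constraint $(1+r)^a r \le 1$ from part $(4)$ — equivalently $r \le x_0$ where $x_0$ solves $(1+x)^a x = 1$ — together with monotonicity, to arrive at the bound $1 + \log(1 + 1/x_0)$. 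The routine but slightly delicate point here is checking that the function being maximized is monotone in $r$ on the relevant interval so that the extremal value is attained at $r = x_0$.
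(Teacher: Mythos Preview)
Your approach to the lemma is correct. For parts (3) and (4) it coincides with the paper's: both compare $\binom{F(Q)+Q}{Q}$ with its neighbours at $i=Q\pm 1$ using maximality, expand the ratio as a product of roughly $K/L-1$ factors of the form $1+Q/F+o(1)$ times one factor $Q/F+o(1)$, and pass to the limit (the ceiling $a$ and floor $b$ arise exactly as you say, from bounding the number of such factors). For parts (1) and (2) you take a genuinely different route: you argue that if $Q$ (resp.\ $F$) stayed bounded the maximal binomial coefficient would grow only polynomially in $T$, while an interior choice such as $i\approx T/(K+L)$ yields exponential growth, contradicting maximality. This is a clean and self-contained argument. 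The paper instead extracts (1) and (2) from the very same ratio inequalities used for (3) and (4): for instance, rewriting the $i=Q-1$ comparison gives $Q/(F+1)\le 1$, which immediately rules out $F$ bounded with $Q\to\infty$. The paper's route is more economical (one pair of inequalities does all four parts), while yours makes the growth intuition explicit and avoids having to set up the ratio computation before knowing $F,Q\to\infty$.

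One caution on the Proposition~\ref{combination} sketch you appended (which is beyond the lemma, but since you included it): you set $r:=\limsup Q/F$ and invoke part (4) to get ``$(1+r)^a r\le 1$'', but part (4) involves $b=\lfloor K/L-1\rfloor$, not $a$, and more importantly the upper bound on $\frac{K}{T}\log\binom{F+Q}{Q}$ after Stirling is governed by $\log(1+F/Q)$, so what is needed is an \emph{upper} bound on $\limsup F/Q$, not on $\limsup Q/F$. The paper accordingly works with $D:=\limsup F/Q$ and uses part (3), which gives $(1+1/D)^a(1/D)\ge 1$ and hence $D\le 1/x_0$. Your lemma argument is fine; just swap the roles of $F/Q$ and $Q/F$, and of (3) and (4), when you feed it into the proposition.
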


\begin{proof}
From the definition of $Q$, we see that
\begin{align}\label{inequality 1}
1&\leq{\lfloor\frac{T-QK}{L}\rfloor+Q\choose Q}\Bigg/{\lfloor\frac{T-(Q+1)K}{L}\rfloor+Q+1\choose Q+1}\nonumber\\
&=\frac{(\lfloor\frac{T-QK}{L}\rfloor+Q)(\lfloor\frac{T-QK}{L}\rfloor+Q-1)\dots(\lfloor\frac{T-(Q+1)K}{L}\rfloor+Q+2)(Q+1)}{\lfloor\frac{T-QK}{L}\rfloor(\lfloor\frac{T-QK}{L}\rfloor-1)\cdots(\lfloor\frac{T-(Q+1)K}{L}\rfloor+1)},
\end{align}
which can be rearranged to be
\begin{align}\label{Qbound1}
\frac{Q+1}{F}&\geq\frac{(\lfloor\frac{T-QK}{L}\rfloor-1)(\lfloor\frac{T-QK}{L}\rfloor-2)\cdots(\lfloor\frac{T-(Q+1)K}{L}\rfloor+1)}{(\lfloor\frac{T-QK}{L}\rfloor+Q)(\lfloor\frac{T-QK}{L}\rfloor+Q-1)\dots(\lfloor\frac{T-(Q+1)K}{L}\rfloor+Q+2)}.
\end{align}

Similarly, the definition of $Q$ also tells us that
\begin{align}\label{inequality 2}
1&\geq{\lfloor\frac{T-(Q-1)K}{L}\rfloor+Q-1\choose Q-1}\Bigg/{\lfloor\frac{T-QK}{L}\rfloor+Q\choose Q}\nonumber\\
&=\frac{(\lfloor\frac{T-(Q-1)K}{L}\rfloor+Q-1)(\lfloor\frac{T-(Q-1)K}{L}\rfloor+Q-2)\dots(\lfloor\frac{T-QK}{L}\rfloor+Q+1)Q}{\lfloor\frac{T-(Q-1)K}{L}\rfloor(\lfloor\frac{T-(Q-1)K}{L}\rfloor-1)\cdots(\lfloor\frac{T-QK}{L}\rfloor+1)}
\end{align}
which implies 
\begin{align}\label{Qbound2}
\frac{Q}{F+1}&\leq\frac{(\lfloor\frac{T-(Q-1)K}{L}\rfloor)\cdots(\lfloor\frac{T-QK}{L}\rfloor+2)}{(\lfloor\frac{T-(Q-1)K}{L}\rfloor+Q-1)\dots(\lfloor\frac{T-QK}{L}\rfloor+Q+1)}\leq 1.
\end{align}

Proof of (1). Suppose for contradiction that $\displaystyle\liminf_{T\to\infty}F<\infty$. By the definition of $F$, we see that $\displaystyle\limsup_{T\to\infty}Q=\infty$. Thus,
\begin{equation*}
\limsup_{T\to\infty}\frac{Q}{F+1}=\infty,
\end{equation*}
which contradicts (\ref{Qbound2}).  

Proof of (2). Suppose again for contradiction that $\displaystyle\liminf_{T\to\infty}Q<\infty$. By taking an appropriate subsequence, we can assume that $\displaystyle\lim_{T\to\infty}Q<\infty$. Hence, $\displaystyle\lim_{T\to\infty} F=\infty$, so we have
\begin{equation*}
\lim_{T\to\infty}\frac{F}{Q+1}=\infty.
\end{equation*}
On the other hand, if $\displaystyle\lim_{T\to\infty}Q<\infty$ and $\displaystyle\lim_{T\to\infty} F=\infty$, then the right hand side of the inequality (\ref{Qbound1}) converges to $1$ as $T\to\infty$, which implies that 
\begin{equation*}
\lim_{T\to\infty}\frac{Q+1}{F}\geq 1.
\end{equation*}
This is a contradiction. 

Proof of (3). Since (1) and (2) hold, taking the limit infimum of (\ref{inequality 1}) as $T\to\infty$ gives
\begin{align*}
1\leq& \liminf_{T\to\infty}\left(1+\frac{Q}{\lfloor\frac{T-QK}{L}\rfloor}\right)\left(1+\frac{Q}{\lfloor\frac{T-QK}{L}\rfloor-1}\right)\dots\\
&\hspace{2cm}\left(1+\frac{Q}{\lfloor\frac{T-(Q+1)K}{L}\rfloor+2}\right)\cdot\frac{Q+1}{\lfloor\frac{T-(Q+1)K}{L}\rfloor+1}\\
\leq &\left(1+\liminf_{T\to\infty}\frac{Q}{F}\right)^a\cdot \liminf_{T\to\infty}\frac{Q}{F}.
\end{align*}

Proof of (4). Similarly, by taking limit supremum of (\ref{inequality 2}) as $T\to\infty$, we get
\begin{align*}
1\geq& \limsup_{T\to\infty}\left(1+\frac{Q-1}{\lfloor\frac{T-(Q-1)K}{L}\rfloor}\right)\left(1+\frac{Q-1}{\lfloor\frac{T-(Q-1)K}{L}\rfloor-1}\right)\dots\\
&\hspace{2cm}\left(1+\frac{Q-1}{\lfloor\frac{T-QK}{L}\rfloor+2}\right)\cdot\frac{Q}{\lfloor\frac{T-QK}{L}\rfloor+1}\\
\geq &\left(1+\limsup_{T\to\infty}\frac{Q}{F}\right)^b\cdot \limsup_{T\to\infty}\frac{Q}{F}.\qedhere 
\end{align*}
\end{proof}

By (3) and (4) of Lemma \ref{F and Q}, we see that $\displaystyle\limsup_{T\to\infty}\frac{F}{Q}$ is a positive real number, which we will denote by $D$ in the sequel. We now use (3) of Lemma \ref{F and Q} to find an inequality relating $D$ to $a:=\left\lceil\frac{K}{L}-1\right\rceil$.

\begin{lem}\label{D}
For any positive numbers $K\geq L>0$, let $a:=\left\lceil\frac{K}{L}-1\right\rceil$. Then 
\[D\leq\frac{1}{x_0},\]
where $x_0$ is the unique positive solution to the equation $(1+x)^ax=1$.
\end{lem}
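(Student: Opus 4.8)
Looking at Lemma \ref{D}, I need to prove that $D \leq \frac{1}{x_0}$ where $D = \limsup_{T\to\infty}\frac{F}{Q}$ and $x_0$ is the unique positive solution to $(1+x)^a x = 1$.

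The plan is to start from part (3) of Lemma \ref{F and Q}, which states
\[
1 \leq \left(1+\liminf_{T\to\infty}\frac{Q}{F}\right)^a\cdot \liminf_{T\to\infty}\frac{Q}{F}.
\]
First I would set $y := \liminf_{T\to\infty}\frac{Q}{F}$, which is a positive real number by the combination of parts (3) and (4) of Lemma \ref{F and Q} (indeed $y = 1/D$ after checking that $\liminf \frac{Q}{F}$ and $\limsup \frac{F}{Q}$ are reciprocal — here I should be slightly careful, since in general $\liminf \frac{Q}{F} \le 1/\limsup\frac{F}{Q}$, but actually I want $D = \limsup \frac{F}{Q}$, so $1/D = \liminf \frac{Q}{F} = y$, using that $F/Q$ and $Q/F$ are reciprocals of each other pointwise so $\limsup(F/Q) = 1/\liminf(Q/F)$). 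So part (3) reads $1 \leq (1+y)^a y$, i.e. $y$ lies in the region where $f_a(x) := (1+x)^a x \geq 1$.

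Next I would invoke the elementary analysis of the function $f_a(x) = (1+x)^a x$ on $(0,\infty)$: it is continuous, strictly increasing (as a product of the positive increasing functions $(1+x)^a$ and $x$), satisfies $f_a(0)=0$ and $\lim_{x\to\infty}f_a(x)=\infty$, hence has a unique positive root $x_0$ of $f_a(x) = 1$, and $f_a(x) \geq 1 \iff x \geq x_0$. Combining with $f_a(y)\geq 1$ gives $y \geq x_0$. Therefore $D = 1/y \leq 1/x_0$, which is the claim.

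The main obstacle — really the only subtlety — is the bookkeeping around turning the $\liminf$ of $Q/F$ into a statement about $D = \limsup F/Q$; I need to make sure I am using the correct inequality direction, namely that for a positive sequence $\limsup \frac{1}{a_n} = \frac{1}{\liminf a_n}$, so that $D = \limsup \frac{F}{Q} = \frac{1}{\liminf Q/F} = \frac 1 y$, and then conclude $y \geq x_0$ forces $D \leq 1/x_0$. Everything else is the routine monotonicity argument for $f_a$. I do not expect to need Stirling's formula for this particular lemma; that enters later when assembling Proposition \ref{combination}.

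\begin{proof}
Write $f_a(x):=(1+x)^ax$ for $x\in[0,\infty)$. Since $f_a$ is a product of the two positive, strictly increasing functions $x\mapsto(1+x)^a$ and $x\mapsto x$ on $(0,\infty)$, it is strictly increasing there; moreover $f_a(0)=0$ and $\lim_{x\to\infty}f_a(x)=\infty$. Hence there is a unique positive number $x_0$ with $f_a(x_0)=1$, and by monotonicity $f_a(x)\geq 1$ if and only if $x\geq x_0$.

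By (3) and (4) of Lemma \ref{F and Q}, the quantity $y:=\liminf_{T\to\infty}\frac{Q}{F}$ is a positive real number, and since $\frac{F}{Q}$ is the pointwise reciprocal of $\frac{Q}{F}$ we have $D=\limsup_{T\to\infty}\frac{F}{Q}=\frac{1}{y}$. Part (3) of Lemma \ref{F and Q} states exactly that
\[
1\leq\left(1+y\right)^a\cdot y=f_a(y),
\]
so by the previous paragraph $y\geq x_0$. Therefore $D=\frac{1}{y}\leq\frac{1}{x_0}$, as claimed.
\end{proof}
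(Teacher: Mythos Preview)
Your proof is correct and follows essentially the same approach as the paper's: define $f_a(x)=(1+x)^ax$, note it is increasing with $f_a(0)=0$ and $f_a\to\infty$, and use part (3) of Lemma \ref{F and Q} together with the identity $\liminf Q/F=1/\limsup F/Q=1/D$ to conclude $f_a(1/D)\geq 1$, hence $1/D\geq x_0$. The only difference is that you spell out the $\liminf$/$\limsup$ reciprocal identity explicitly, which the paper uses without comment.
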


\begin{proof}
Consider the function \mbox{$f_a:[0,\infty)\to\Rbbb$} defined by $f_a(x)=(1+x)^a\cdot x$, and observe that $f_a$ is increasing, $f_a(0)=0$, and $\lim_{x\to\infty} f(x)=\infty$. By (3) of Lemma \ref{F and Q}, we know that $f_a\left(\frac{1}{D}\right)\geq 1$, so $\frac{1}{D}\geq f_a^{-1}(1)=x_0$.
\end{proof}

With Lemma \ref{F and Q} and Lemma \ref{D}, we are now ready to prove Proposition \ref{combination}.

\begin{proof}[Proof of Proposition \ref{combination}]
Since (1) and (2) of Lemma \ref{F and Q} hold, we can apply Stirling's formula to obtain
\[\lim_{T\to\infty}\left({F+Q\choose Q}\cdot\sqrt{\frac{2\pi Q F}{F+Q}}\cdot\left(\frac{Q}{F+Q}\right)^Q\cdot\left(\frac{F}{F+Q}\right)^F\right)=1.\]
Taking the logarithm and multiplying by $\frac{K}{T}$ then gives an expression that can be rearranged to yield
\begin{align}\label{Stirling application}
\limsup_{T\to\infty}\frac{K}{T}\log{F+Q\choose Q}=&\limsup_{T\to\infty}\frac{K}{2T}\log\left(\frac{1}{Q}+\frac{1}{F}\right)+\limsup_{T\to\infty}\frac{KQ}{T}\log\left(1+\frac{F}{Q}\right)\nonumber\\
&+\limsup_{T\to\infty}\frac{KQ}{T}\cdot\frac{F}{Q}\log\left(1+\frac{Q}{F}\right)+\limsup_{T\to\infty}\frac{K}{2T}\log\left(\frac{1}{2\pi}\right)\\
=&\limsup_{T\to\infty}\frac{KQ}{T}\log\left(1+\frac{F}{Q}\right)+\limsup_{T\to\infty}\frac{KQ}{T}\cdot\frac{F}{Q}\log\left(1+\frac{Q}{F}\right).\nonumber
\end{align}

By the definition of $F$, we have
\[\limsup_{T\to\infty}\frac{Q}{F}=\frac{L}{K}\limsup_{T\to\infty}\frac{1}{\frac{T}{KQ}-1},\]
which implies that 
\[\limsup_{T\to\infty}\frac{KQ}{T}=\frac{K\cdot\displaystyle\limsup_{T\to\infty}\frac{Q}{F}}{L+K\cdot\displaystyle\limsup_{T\to\infty}\frac{Q}{F}}\leq 1.\]
Applying this to the inequality (\ref{Stirling application}) then gives
\begin{align*}
\limsup_{T\to\infty}\frac{K}{T}\log{F+Q\choose Q}&\leq\limsup_{T\to\infty}\log\left(1+\frac{F}{Q}\right)+\limsup_{T\to\infty}\frac{F}{Q}\log\left(1+\frac{Q}{F}\right)\\
&\leq\log(1+D)+1\\
&\leq\log\left(1+\frac{1}{x_0}\right)+1,
\end{align*}
where the second inequality is a consequence of the fact that $x\log(1+\frac{1}{x})\leq 1$ for all $x>0$, and the final inequality is Lemma \ref{D}.
\end{proof}

\end{document}